\documentclass[reqno,11pt]{amsart}

\usepackage{amsmath}
\usepackage{amsthm}
\usepackage{amssymb}
\usepackage{amscd}
\usepackage{xypic}
\usepackage{verbatim}
\usepackage{mathabx}
\usepackage{graphicx}
\usepackage{palatino}
\usepackage{bbm}
\usepackage{extarrows}

\usepackage{xr}
\externaldocument{transfer1}

\usepackage[plainpages=false,colorlinks,hyperindex,pdfpagemode=None,bookmarksopen,linkcolor=red,citecolor=blue,urlcolor=blue]{hyperref}
\usepackage{pdflscape}
\usepackage{stmaryrd}

\usepackage{multirow}

\usepackage{comment}

\swapnumbers

\theoremstyle{plain}
\newtheorem{theorem}[subsubsection]{Theorem}

\newtheorem*{theorem*}{Theorem}
\newtheorem*{mainconclusions*}{Main Conclusions}
\newtheorem{proposition}[subsubsection]{Proposition}
\newtheorem*{proposition*}{Proposition}
\newtheorem{lemma}[subsubsection]{Lemma}
\newtheorem*{lemma*}{Lemma}

\newtheorem{corollary}[subsubsection]{Corollary}
\newtheorem*{corollary*}{Corollary}

\theoremstyle{definition}
\newtheorem{definition}[subsubsection]{Definition}
\theoremstyle{remark}
\newtheorem{remark}[subsubsection]{Remark}
\newtheorem{remarks}[subsubsection]{Remarks}

\DeclareFontFamily{OT1}{rsfs}{}
\DeclareFontShape{OT1}{rsfs}{n}{it}{<-> rsfs10}{}
\DeclareMathAlphabet{\mathscr}{OT1}{rsfs}{n}{it}

\newcommand{\inv}{{\operatorname{inv}}}

\newcommand{\sgn}{\mathrm{sgn}}

\newcommand{\Ad}{\mathrm{Ad}}
\newcommand{\Res}{\mathrm{Res}}

\newcommand{\Z}{\mathbb{Z}}
\newcommand{\C}{\mathfrak{C}}

\newcommand{\ad}{{\operatorname{ad}}}

\newcommand{\CC}{\mathbb{C}}

\newcommand{\RR}{\mathbb{R}}
\newcommand{\Rplus}{{\RR^\times_+}}

\newcommand{\QQ}{\mathbb{Q}}

\newcommand{\Ind}{\operatorname{Ind}}

\newcommand{\Aut}{{\operatorname{Aut}}}

\newcommand{\Gm}{\mathbb{G}_m}
\newcommand{\Ga}{\mathbb{G}_a}

\newcommand{\GL}{\operatorname{GL}}
\newcommand{\Mat}{\operatorname{Mat}}
\newcommand{\Sym}{\operatorname{Sym}}

\newcommand{\PGL}{\operatorname{PGL}}

\newcommand{\SL}{\operatorname{SL}}

\newcommand{\SO}{{\operatorname{SO}}}

\newcommand{\Gal}{\operatorname{Gal}}

\newcommand{\tr}{\operatorname{tr}}

\newcommand{\Vol}{\operatorname{Vol}}
\newcommand{\diag}{{\operatorname{diag}}}
\newcommand{\adiag}{{\operatorname{adiag}}}

\newcommand{\Id}{\operatorname{Id}}

\newcommand{\Std}{{\operatorname{Std}}}
\newcommand{\st}{{\operatorname{st}}}

\newcommand{\temp}{{\operatorname{temp}}}

\newcommand{\KTF}{{\operatorname{KTF}}}

\newcommand{\AvgVol}{{\operatorname{AvgVol}}}
\newcommand{\PW}{{\operatorname{PW}}}

\newcommand{\fin}{{\operatorname{fin}}}

\newcommand{\U}{\mathcal{U}}

\newcommand{\Meas}{{\operatorname{Meas}}}

\newcommand{\LG}{{^LG}}

\newcommand{\Dfrac}[2]{ 
  \ooalign{ 
    $\genfrac{}{}{2.0pt}0{\phantom{#1}}{\phantom{#2}}$\cr 
    $\color{white}\genfrac{}{}{1.2pt}0{\normalcolor{#1}}{\normalcolor{#2}}$} 
}

\begin{document}

\numberwithin{equation}{section}
\setcounter{tocdepth}{2}
\setcounter{section}{5}
\title[Transfer operators and Hankel transforms, II]{Transfer operators and Hankel transforms between relative trace formulas, II: Rankin--Selberg theory}
\author{Yiannis Sakellaridis}
\email{sakellar@jhu.edu}
\address{Department of Mathematics, Johns Hopkins University, Baltimore, MD 21218, USA.}

\subjclass[2010]{11F70}
\keywords{Relative trace formula, Langlands program, beyond endoscopy}

\begin{abstract}
The Langlands functoriality conjecture, as reformulated in the ``beyond endoscopy'' program, predicts comparisons between the (stable) trace formulas of different groups $G_1, G_2$ for every morphism $\LG_1\to \LG_2$ between their $L$-groups. This conjecture can be seen as a special case of a more general conjecture, which replaces reductive groups by spherical varieties and the trace formula by its generalization, the relative trace formula. 

The goal of this article and its precursor \cite{SaTransfer1} is to demonstrate, by example, the existence of ``transfer operators'' betweeen relative trace formulas, which generalize the scalar transfer factors of endoscopy. These transfer operators have all properties that one could expect from a trace formula comparison: matching, fundamental lemma for the Hecke algebra, transfer of (relative) characters. Most importantly, and quite surprisingly, they appear to be of abelian nature (at least, in the low-rank examples considered in this paper), even though they encompass functoriality relations of non-abelian harmonic analysis. Thus, they are amenable to application of the Poisson summation formula in order to perform the global comparison. Moreover, we show that these abelian transforms have some structure --- which presently escapes our understanding in its entirety --- as deformations of well-understood operators when the spaces under consideration are replaced by their ``asymptotic cones''. 

In this second paper we use Rankin--Selberg theory to prove the local transfer behind Rudnick's 1990 thesis (comparing the stable trace formula for $\SL_2$ with the Kuznetsov formula) and Venkatesh's 2002 thesis (providing a ``beyond endoscopy'' proof of functorial transfer from tori to $\GL_2$). As it turns out, the latter is not completely disjoint from endoscopic transfer --- in fact, our proof ``factors'' through endoscopic transfer. We also study the functional equation of the symmetric-square $L$-function for $\GL_2$, and show that it is governed by an explicit ``Hankel operator'' at the level of the Kuznetsov formula, which is also of abelian nature. A similar theory for the standard $L$-function was previously developed (in a different language) by Jacquet. 
\end{abstract}

\maketitle

\tableofcontents

\section{Introduction to the second part} \label{sec:intro2}

This paper is a continuation of \cite{SaTransfer1}.\footnote{Any references to sections or equations numbered 5 or lower refer to \cite{SaTransfer1}. I continue using the notation of that paper, see \S \ref{ssnotation}.} In this part, I use Rankin--Selberg theory to prove the geometric statements about transfer between the Kuz\-netsov formula and the stable trace formula for $\SL_2$, to calculate a formula for the Hankel transform (on the Kuznetsov formula) for the symmetric-square $L$-function of $\GL_2$, and to develop the local transfer behind Venka\-tesh's thesis, for the ``beyond endoscopy'' transfer from tori to the Kuznetsov formula of $\GL_2$. 

It is surprising, at first, that Rankin--Selberg theory is so powerful that it can be used to prove all these theorems that, in their previous treatments  using analytic number theory (whenever available, such as in Rudnick's and Venkatesh's theses), seemed like an unrelated collection of identities and tricks. It may also appear disappointing to whom hopes to discover a ``beyond endoscopy'' approach to functoriality that is unrelated to current methods for studying $L$-functions. Eventually, though, what seems to be happening is that different methods to study the same problem ``descend'' to the same operators of functorial transfer when viewed from the point of view of trace formulas. The hope is that this transfer operator generalizes to cases where methods such as the Rankin--Selberg method do not.

For example, in the course of proving functorial transfer from one-dimen\-sional tori to the Kuznetsov formula of $\GL_2$ (or, more correctly, of the group $G=\Gm\times \SL_2$), we run into the endoscopic transfer from tori to the trace formula of $\SL_2$. Our transfer operator, in that case, factors as:
\begin{equation}
\xymatrix{ \mathcal S(N,\psi\backslash G/N,\psi) \ar[r] &  \mathcal S^\kappa_\eta (\frac{\SL_2}{\SL_2}) \ar@{<->}[r]^{LL} & \mathcal S(T)^W }
\end{equation}
between the pertinent spaces of test functions, where $\mathcal S^\kappa_\eta (\frac{\SL_2}{\SL_2})$ denotes the space of ``kappa-orbital integrals'' on the conjugacy classes represented by the torus $T$ (which is indicated in our notation by the appearance of the quadratic character $\eta$ associated to the torus $T$), and the second arrow is the local endoscopic transfer of Labesse and Langlands. The first arrow can now be constructed directly using the Rankin--Selberg method.

The same method provides a trace formula-theoretic approach to the functional equation of the symmetric-square $L$-function of $\GL_2$. The result is a ``Hankel transform'' between test measures for the Kuznetsov formula of $G$, which acts on relative characters by the gamma factor of the local functional equation of this $L$-function. This is an analog of the descent of Fourier transform from the space of $2\times 2$ matrices, computed by Jacquet in \cite{Jacquet}, which corresponds to the standard $L$-function. Quite agreeably, these Hankel transforms are all expressed in terms of abelian Fourier transforms, despite the fact that they express functional equations for non-abelian $L$-functions. Thus, they are amenable to an application of a global Poisson summation formula, to give an independent proof of the functional equation (as was done for the standard $L$-function and its square in \cite{Herman, SaBE2}). 

The Hankel transform of the symmetric-square $L$-function and the transfer operator for the functorial lift from tori to $\GL_2$ (which, in terms of $L$-functions, is detected by poles of the symmetric-square $L$-function) are closely related: the former can naturally be written as a symmetric sequence of abelian operators, and the latter is, roughly, ``half'' of this sequence. I do not know if this is just a byproduct of the proof, or reflects something deeper.

Let me now describe in more detail the basic constructions of this paper.

\subsection{The Rankin--Selberg variety}

Unlike the first part of the paper, in this second part we fix notation for certain groups and spaces, that will be used consistently. We will denote by $\tilde G$ the group 
$$\tilde G=(\Gm\times\SL_2^2)/\{\pm 1\}^\diag,$$ which can also be identified with the subgroup of those pairs of elements in $\GL_2\times \GL_2$ which have the same determinant. The notation $G$ is used for the group 
$$ G=\Gm\times \SL_2.$$

More canonically, $\SL_2$ is understood as $\SL(V)$, the group of symplectic linear transformations of a two-dimensional symplectic space $V$, and the $\Gm$-factors appearing in the definitions of $\tilde G$ and $G$ have a different interpretation: the group $\tilde G$ can be more canonically written
$$\tilde G=(A\times\SL(V)^2)/\{\pm 1\}^\diag,$$
where $A=B/N$ is the universal Cartan of $\SL(V)$, identified with $\Gm$ via the positive half-root character, while 
$$ G=A_\ad\times \SL(V),$$
where $A_\ad = A/\{\pm 1\}$ is the universal Cartan of $\PGL(V)$, identified with $\Gm$ via the positive root character.

The group $\tilde G$ acts on the ``Rankin--Selberg variety''
$$\bar X= V\times^{\SL(V)^\diag} \SL(V)^2,$$ 
which is a two-dimensional symplectic vector bundle over $\SL(V)^\diag\backslash \SL(V)^2 \simeq \SL(V)$, with the ``factor'' $A$ of $\tilde G$ acting by a scalar on the fibers through the positive half-root character. The open $\tilde G$-orbit on $\bar X$ will be denoted by $X$.

Rankin--Selberg theory uses an Eisenstein series on $\PGL_2^\diag\subset (\Gm\backslash \tilde G)$, which can be constructed from a Schwartz function on $V$(=the fiber of $\bar X$ over $1\in \SL(V)$). There is no harm in smoothening this Schwartz function as a generalized function on $\bar X$, and multiplying it by an invariant measure, thus our point of departure will be the Schwartz space $\mathcal S(\bar X)$ of measures on $\bar X$.

We consider the diagonal action of $\SL(V)=\SL_2$ on $\bar X$. Notice that $X/\SL(V) = \frac{\SL(V)}{N}$ (where $N$ denotes, as before, a maximal unipotent subgroup, and the horizontal quotient line refers to the conjugation action), so the quotient $[\bar X/\SL_2^\diag]$ can be though of as a stacky embedding of the adjoint quotient $\frac{\SL_2}{N}$. The corresponding invariant-theoretic quotients are equal, and we fix an isomorphism 
\begin{eqnarray*} \Dfrac{\SL_2}{N} & \simeq & \mathbbm A^2 \\
                  \begin{pmatrix} a & b \\ c & d \end{pmatrix} & \mapsto & (c, t=\tr=a+d).
\end{eqnarray*}
We let $\mathcal S(\bar X/\SL_2)$ denote the push-forward of $\mathcal S(\bar X)$ to the invariant-theoretic quotient $\bar X\sslash \SL_2^\diag = \Dfrac{\SL_2}{N}$.

Taking the quotient by the $\SL_2^\diag$ action can be thought of as the analog of imposing the condition $\pi_1 = \widetilde{\pi_2}\otimes (\chi\circ \det)=:\pi$ to the Rankin--Selberg $L$-function. In that case, this $L$-function factors:
\begin{equation}\label{RSfactor}
L(\pi_1\times \pi_2, s) = L(\chi\times\Sym^2(\pi),s) L(\chi,s). 
\end{equation}
Thus, it is natural to think of the space $\mathcal S(\bar X/\SL_2)$ as the geometric incarnation of the $L$-function $L(\chi\times \Sym^2(\pi),1)L(\chi,1)$.

In  Section \ref{sec:RS} we extract a subspace $\mathcal S(\bar X/\SL_2)^\circ$ of $\mathcal S(\bar X/\SL_2)$, that is responsible for  the factor $L(\chi\Sym^2(\pi),s)$ in \eqref{RSfactor}. 
In the non-Archimedean case, this ``$\Sym^2$-space'' of measures is relatively easy to describe, since the inverse of the local $L$-function $L(\chi,s)$ can be thought of as an element of the completed Hecke algebra of the torus $A_\ad$, which acts on $\mathcal S(\bar X/\SL_2)$, and can be used to ``kill'' the factor $L(\chi,s)$. In the Archimedean case, one needs a more delicate description based on Mellin transforms. The technical, although quite elementary, results that are needed for these definitions are presented in Section \ref{sec:2dim}, which the reader can skip at first reading, and consult when necessary.

Fiberwise Fourier transform on the symplectic vector bundle $\bar X$ defines an endomorphism of $\mathcal S(\bar X)$, which descends to an endomorphism $\mathcal H_X$ of $\mathcal S(\bar X/\SL_2)$ --- this endomorphism acts on the appropriate notion of relative characters by local gamma factors $$\gamma(\chi\times \Sym^2(\pi),1,\psi)\gamma(\chi,1,\psi).$$

In Theorem \ref{thmsubspaceX} we describe a factor $\mathcal H_X^\circ$ of $\mathcal H_X$ which preserves the subspace $\mathcal S(\bar X/\SL_2)^\circ$ and acts on relative characters by the local gamma factor 
$$\gamma(\chi\times \Sym^2(\pi),1,\psi).$$

The space $\mathcal S(\bar X/\SL_2)^\circ$, and its endomorphism $\mathcal H_X^\circ$, are the basis for all the constructions in this paper.

\subsection{Functional equation of the symmetric square $L$-function} \label{Exsym2}

The space $\mathcal S(\bar X/\SL_2)^\circ$ is used, in Section \ref{sec:sym2}, to construct a non-standard space of test measures $$\mathcal S^-_{L(\Sym^2, 1)} (N,\psi\backslash G /N,\psi)$$
for the Kuznetsov formula of the group $G=\Gm\times \SL_2$, via the \emph{unfolding method}:
 at the level of spaces ``upstairs'', the Rankin--Selberg method relies on ``unfolding'' the Rankin--Selberg period to the Whittaker model; this can be understood as a morphism
$$\U: \mathcal S(\bar X) \xrightarrow\sim \mathcal S^-(\tilde N,\tilde\psi\backslash \tilde G),$$
where $\tilde N = N\times N$ is a maximal unipotent subgroup of $\tilde G$, $\tilde\psi = \psi\times \psi^{-1}$, and $\mathcal S^-$ denotes some enlargement of the usual space of test measures. If we ``descend'' the morphism modulo $\SL_2^\diag$ and restrict to the ``$\Sym^2$-subspace'' $\mathcal S(\bar X/\SL_2)^\circ$, we obtain the desired map
$$ \bar\U: \mathcal S(\bar X/\SL_2)^\circ \xrightarrow\sim \mathcal S^-_{L(\Sym^2, 1)} (N,\psi\backslash G /N,\psi).$$
(Notice that $[\tilde N,\tilde\psi\backslash \tilde G/\SL_2^\diag] = [N,\psi\backslash G /N,\psi]$; the characters here define complex line bundles over the $F$-points of the stacks indicated.)

The unfolding map has various applications. First of all, taking $\Gm$-coinvariants, we obtain the geometric comparison between the Kunzetsov formula and the stable trace formula for $\SL_2$ (see Theorem \ref{Rudnickpt3}), mentioned already in the first part of this paper (Theorem \ref{thmRudnick}). The reason is that through the embedding $\frac{\SL_2}{N}\hookrightarrow[\bar X/\SL_2]$, inducing $\frac{\SL_2}{B}\hookrightarrow[\bar X/\SL_2\times \Gm]$, the push-forward of $\mathcal S(\SL_2)$ to $\Dfrac{\SL_2}{B} = \Dfrac{\SL_2}{\SL_2}$ coincides with the push-forward image of $\mathcal S(\bar X/\SL_2)^\circ$. Thus, the unfolding map descends to an isomorphism between the space $\mathcal S(\frac{\SL_2}{\SL_2})$ of test measures for the stable trace formula of $\SL_2$, and an extended space of test measures for the Kuznetsov formula of $G/\Gm=\SL_2$. 

Moreover, combining $\bar\U$ with the endomorphism $\mathcal H_X^\circ$ of $\mathcal S(\bar X/\SL_2)^\circ$, we compute the \emph{Hankel transform} $\mathcal H_{\Sym^2}$ for the symmetric-square $L$-function.

The Hankel transform is best described as a map between spaces of half-densities, so let us denote by 
$$\mathcal D^-_{L(\Sym^2, \frac{1}{2})} (N,\psi\backslash G /N,\psi)$$
the space of half-densities for the Kuznetsov formula that is analogous to $\mathcal S^-_{L(\Sym^2, 1)} (N,\psi\backslash G /N,\psi)$. (The notational difference $L(\Sym^2, 1)$ vs.\ $L(\Sym^2, \frac{1}{2})$ is due to a volume factor.)
This space contains a ``basic vector'' which corresponds to the Dirichlet series of the local unramified $L$-value $L(\Sym^2, \frac{1}{2})$. By inverting the $\Gm$-coordinate, we get the corresponding space 
$$\mathcal D^-_{L((\Sym^2)^\vee, \frac{1}{2})} (N,\psi\backslash G /N,\psi)$$
for the dual of the symmetric square representation. 

The Hankel transform is an isomorphism:
$$\mathcal H_{\Sym^2}: \mathcal D^-_{L(\Sym^2, \frac{1}{2})} (N,\psi\backslash G /N,\psi) \xrightarrow\sim \mathcal D^-_{L((\Sym^2)^\vee, \frac{1}{2})} (N,\psi\backslash G /N,\psi)$$
which satisfies the fundamental lemma for basic vectors, as well as several other properties, the most important of which being that it acts by the gamma factor of the local functional equation of the symmetric-square $L$-function on relative characters of irreducible generic representations:
$$\mathcal H_{\Sym^2}^* J_\pi = \gamma(\pi, \Sym^2, \frac{1}{2}, \psi) \cdot J_\pi.$$

I use the unfolding map and the operator $\mathcal H_X^\circ$ to compute an explicit formula for $\mathcal H_{\Sym^2}$, see Theorem \ref{thmSym2}: 
\begin{equation}\label{Hankel-Sym2-intro}
  \mathcal H_{\Sym^2} = \lambda(\eta_{\zeta^2-4},\psi)^{-1} \mathscr F_{-\check\lambda_+,\frac{1}{2}} \circ \delta_{1-4\zeta^{-2}} \circ \eta_{\zeta^2-4} \circ \mathscr F_{-\check\lambda_0,\frac{1}{2}} \circ \eta_{\zeta^2-4} \circ \mathscr F_{-\check\lambda_-,\frac{1}{2}},\end{equation}
where the notation is as follows:
\begin{itemize}
 \item Generic orbits for the quotient $N\backslash G/N$ are represented by the elements $\left(a, \begin{pmatrix} & -\zeta^{-1} \\ \zeta \end{pmatrix}\right)\in \Gm\times\SL_2$; the set of such elements is identified with the universal Cartan $A_G$ of $G$, with $\zeta$ being the value of the character $\frac{\alpha}{2}$.
 \item The coweights $\check\lambda_-, \check\lambda_0, \check\lambda_+$ into $A_G$ are the weights of the symmetric square representation, with the first being anti-dominant and the last dominant. The multiplicative Fourier convolutions $\mathscr F_{-\check\lambda_*,\frac{1}{2}}$ associated to those cocharacters were defined in \S \ref{sssFourierconv} of the first part of this paper.
 \item Then, we have the intermediate factors $\delta_{1-4\zeta^{-2}}$ and $\eta_{\zeta^2-4}$. The first is multiplicative translation by the factor $(1-4\zeta^{-2})$ along the $\Gm$-coordinate. The second is multiplication by the quadratic character associated to the extension $F(\sqrt{\zeta^2-4})$, again in the same variable. Finally, $\lambda(\eta_{\zeta^2-4},\psi)$ denotes a certain scalar attached to that extension, expressing a ratio of abelian gamma factors.
\end{itemize}

We notice that every one of the factors of the above operator satisfies, in principle, a global Poisson summation formula for the sum over rational pairs $(a,\zeta)$. There will be analytic intricacies in order to prove such a formula, but one can expect that a variant of the methods employed in \cite{SaBE2} will work, leading to a \emph{trace formula-theoretic} proof of the functional equation of the symmetric square $L$-function. Moreover, the formula is similar to a formula proved by Jacquet \cite{Jacquet} for the Hankel transform associated to the \emph{standard} $L$-function for $\GL_2$, see \S \ref{ssstandard} (which should also underlie the proof of the functional equation of the standard $L$-function by Herman in \cite{Herman}). The formula there reads:
 \begin{equation}
  \mathcal H_{\Std} = \mathscr F_{-\check\epsilon_1,\frac{1}{2}} \circ \psi(-e^{-\alpha}) \circ \mathscr F_{-\check\epsilon_2, \frac{1}{2}},
 \end{equation}
where $\check\epsilon_1$, $\check\epsilon_2$ are the weights of the standard representation of the dual group, and $e^{-\alpha}$ denotes the negative root character. 

In both cases, if we omit the intermediate factors and keep only the multiplicative Fourier convolutions, we obtain \emph{the Hankel transform for the degenerate Kuznetsov formula} (when $\psi$ is replaced by the trivial character), see \S \ref{ssstandard}; this is operator on functions on the universal Cartan $A_G$ of the group which acts by the gamma factor
$$\gamma(\chi, r\circ j, \frac{1}{2}, \psi)$$
on characters, where $j$ denotes the canonical embedding of $L$-groups $j: {^LA_G} \hookrightarrow \LG$, and $r=\Sym^2$ or $\Std$, depending on the case that we are considering. Hence, we see that there is a lot of structure in these Hankel transforms, which hopefully can be generalized to other $L$-functions.

\subsection{Functorial transfer from tori to $\GL_2$}

Finally, the ``symmetric square subspace'' $\mathcal S(\bar X/\SL_2)^\circ$ gives us insights into Venkatesh's thesis, and functorial lifts from tori to $\GL_2$ or, more correctly, to $G=\Gm\times\SL_2$, corresponding to a map of $L$-groups:
\begin{equation}\label{Lgps}{^LT} \to \LG = \Gm\times \PGL_2.
\end{equation}
In fact, it gives a direct link between the construction of Venkatesh and the endoscopic construction of the same functorial lift.

Here is how this works (see Section \ref{sec:Venkatesh}): We have already seen that the inverse $\bar\U^{-1}$ of the unfolding map is an isomorphism, by definition, between the ``symmetric square space'' $\mathcal S^-_{L(\Sym^2, \frac{1}{2})} (N,\psi\backslash G /N,\psi)$ of orbital integrals for the Kuznetsov formula, and the symmetric square subspace $\mathcal S(\bar X/\SL_2)^\circ$ of measures on the Rankin--Selberg variety. An element $\varphi$ of the latter is understood, as we have seen, as a measure on the affine plane $\mathbbm A^2$ with coordinates $(c,t)$, where $t$ denotes the trace. This measure is smooth away from $c= 0$, but its behavior as $c\to 0$ depends, it turns out, \emph{on the $\kappa$-orbital integrals of a Schwartz function $\Phi$ on $\SL_2$}, where ``$\kappa$-orbital integral'' means, for us, the usual orbital interal over a split regular semisimple orbit (i.e., when $t^2-4$ is a square), and the alternating sum of orbital integrals inside of a non-split, stable regular semisimple orbit. The function $\Phi$ is the restriction on the zero section $\SL_2\subset \bar X$ of a measure $\Phi dx$ which maps to $\varphi$ under the quotient map composed with a certain projector $\mathcal S(\bar X/\SL_2)\to \mathcal S(\bar X/\SL_2)^\circ$.

It turns out that, if we fix the trace coordinate $t\ne \pm 2$, a measure $\varphi \in \mathcal S(\bar X/\SL_2)^\circ$ is oscillating in the $c$-coordinate, as $c\to 0$, by (essentially) the quadratic character $\eta_{t^2-4}$ attached to the extension $F(\sqrt{t^2-4})$. Thus, fixing a character $\eta$, the asymptotic behavior of $\varphi$ at points where $\eta_{t^2-4}=\eta$ (these are the conjugacy classes represented by the torus $T$ associated to $\eta$!), can be ``captured'' by the pole of a Tate integral, as in Venkatesh's thesis. Locally, this detects poles of the symmetric-square $L$-function at zero; the good news is that these local poles manifest themselves geometrically, at the ``boundary'' $c=0$ of the Rankin--Selberg quotient $\bar X\sslash \SL_2$.

More precisely, the transfer operator
\begin{equation}\label{Ttransfer-intro}\mathcal S^-_{L(\Sym^2, 1)} (N,\psi\backslash G /N,\psi) \to \mathcal S(T),
\end{equation}
which is $(\Gm,\eta)$-equivariant (this is the $\Gm$-component of the dual map \eqref{Lgps}) is constructed as follows: First, we will apply the inverse $\bar\U^{-1}$ of the unfolding map, followed by the endomorphism $\mathcal H_X^\circ$ of $\mathcal S(\bar X/\SL_2)^\circ$, and a Tate zeta integral against the character $\eta$. The pole of the local Tate integral at $s=0$ will pick up the $\kappa$-orbital integrals of some Schwartz function on $\SL_2$ only at those conjugacy classes represented by elements of $T$. This establishes the link between the Kuznetsov formula for $G$ and the endoscopic parts of the trace formula for $\SL_2$ (Theorem \ref{transfertokappa}). By the endoscopic transfer of Labesse and Langlands, these are the same (up to scalar transfer factors) as the ``orbital integrals'', that is, the values, of some Schwartz function on $T$. This is the transfer map \eqref{Ttransfer-intro} behind Venkatesh's thesis. We verify that it satisfies our usual list of properties for transfer operators, namely the fundamental lemma for the Hecke algebra, and transfer of relative characters (Theorem \ref{thmVenkatesh}).

\subsection{Acknowledgements} 

As with the first part, this paper would not have been possible without the constant encouragement, numerous conversations, and many references and ideas provided by Ng\^o Bao Ch\^au, who invited me to spend the winter and spring quarters of 2017 at the University of Chicago. I also thank Daniel Johnstone for a presentation of Venkatesh's thesis which initiated my understanding of it. I thank Valentin Blomer for various references on related results in analytic number theory. Finally, I thank the referees for their corrections and suggestions for improving the exposition. 

This work was supported by NSF grants DMS-1502270, DMS-1801429, DMS-1939672, and by a stipend to the IAS from the Charles Simonyi Endowment.

\section{Orbital integrals and Mellin transforms on a 2-dimensional space} \label{sec:2dim}

This section collects relatively elementary, but technical facts about the Schwartz space of the quotient of a  2-dimensional vector space by a torus, Paley--Wiener spaces and Mellin transforms. The reader might use it as a reference, rather than reading it linearly.

\subsection{Review of orbital integrals for $\mathbbm A^2/T$}\label{ssA2}

Let $E/F$ be a quadratic extension (possibly split, i.e., $F\oplus F$), $V_E=\Res_{E/F}\Ga$ under the action of $T=\ker N^E_F$, where $N^E_F$ is the norm map. For the rest of this subsection, we feel free to denote $V_E$ simply by $V$ --- in our application to the symplectic bundle $\bar X\to \SL(V)$ in later subsections, $V_E$ will be the fiber over a regular semisimple point of $\SL(V)$. 

The norm map defines an isomorphism of $V\sslash T$ with $\Ga$. We let $\mathcal S([V/T])$ denote a simplified version of the Schwartz space of the quotient stack $[V/T]$ over $F$, defined in \cite{SaStacks}; namely, $\mathcal S([V/T])$ will be \emph{a space of measures on $\Ga(F)=F$}, equal to the push-forward of $H^0$ of the Schwartz complex attached to the stack $[V/T]$ in \cite{SaStacks}.\footnote{Lemma \ref{coinvariantA2} shows that this push-forward can actually be identified with $H^0$ of the Schwartz complex.} This is less complicated than it sounds, and the reader does not need to know the formalism of Schwartz spaces on stacks in order to understand the definition; here is an explicit description of this space: If $E$ is split, it coincides with the image of the push-forward map $\mathcal S(E)\xrightarrow{(N^E_F)_!} \Meas(F)$. However, if $E$ is non-split, it is the image of the sum of push-forwards: 
$$ \mathcal S(E) \oplus \mathcal S(E^\alpha) \xrightarrow{(N^E_F)_!} \Meas(V\sslash T).$$
Here, $\alpha$ stands for the class of a non-trivial $T$-torsor, and for such a $T$-torsor $R^\alpha$, we set $V^\alpha = V\times^T R^\alpha$ and $E^\alpha = V^\alpha(F)$. Then $E^\alpha$ is a free $E$-module of rank one, but without a distinguished base point. Moreover, the norm map $N^E_F$ canonically extends to $E^\alpha$, and has image equal to the non-norms (and zero). This construction is completely analogous to the construction of the Schwartz space of $[V/\{\pm 1\}]$ in \S \ref{scattorus}. 

Notice that $E^\alpha$ is a vector space and, hence, has a distinguished zero point, which will be denoted by $0^\alpha$; moreover, any Haar measure $dx$ on $E$ induces a Haar measure $dx^\alpha$ on the vector space $E^\alpha$ as follows: choose a base point $\kappa\in E^\alpha$ to identify $E\ni e\overset{\sim}\mapsto e\kappa\in E^\alpha$, and define $dx^\alpha(\kappa e) = |\kappa| dx(e)$, where, by definition, $|\kappa| = |N_F^E\kappa|$. 

Choosing a Haar measure $dt$ on $T$, we can define an push-forward (orbital integral) map $\Phi\mapsto O_\bullet (\Phi)$ from Schwartz functions on $E$ to functions on $F\smallsetminus\{0\}$. Given a Haar measure $dx$ on $E$, we have an integration formula
\begin{equation}\label{integration-2dim}
 \int_{\tilde V} \Phi(v) dx(v) = \int_F O_\xi(\Phi) d\xi,
\end{equation}
for a suitable choice of Haar measure $d\xi$ on $F$. The orbital integrals and the integration formula extend to $E^\alpha$, for a unique Haar measure $dx^\alpha$ on $E^\alpha$. In practice, of course, we have fixed a measure $d\xi$ on $F$, so we will make sure to choose the measures $dx$ on $E$ and $dt$ on $T$ compatibly. The space of functions on $F^\times$ obtained this way will be denoted by $\mathcal F([V/T])$.

We can also define a push-forward of Schwartz half-densities, by sending the half-density $\Phi(v) (dv)^\frac{1}{2}$ on $E$ to the half-density $O_\xi(\Phi) (d\xi)^\frac{1}{2}$ on $F^\times$ (and similarly for $E^\alpha$). By abuse of language, we will be saying ``half-density on $V\sslash T$'', although it is not defined at zero. The push-forwards of functions and half-densities depend on choices of measures, but the image spaces $\mathcal F([V/T])$, $\mathcal D([V/T])$ do not.

There is another, equivalent, description of the space $\mathcal S([V/T])$, which will be the one relevant to our intended application. Consider an embedding $T\hookrightarrow \SL_2$, and the variety $\tilde V=V\times^T \SL_2$. Then the stack quotients $[V/T]$ and $[\tilde V/\SL_2]$ coincide, hence by \cite[Theorem 2]{SaStacks-erratum} have the same Schwartz space. In this case, this means the following: The affine quotient $\tilde V\sslash \SL_2$ coincides with $V\sslash T\simeq \mathbbm A^1$, and $\mathcal S([V/T])$ is equal to the image of the push-forward map: $\mathcal S(\tilde V)\to \Meas(F)$. If instead we work with functions on $\tilde V$, and $\SL_2$-orbital integrals, the integration formula \eqref{integration-2dim} remains true for an appropriate choice of Haar measure on $\SL_2$.

The elements of $\mathcal S([V/T])$ can be explicitly described as follows, cf.\ \cite[Propositions 2.5 and 2.14]{SaBE1}: they are smooth measures of rapid decay away from a neighborhood of $0\in \Ga(F)$, and in a neighborhood of zero they are of the form
\begin{equation}f(\xi) = C_1(\xi) + C_2(\xi) \eta(\xi),\label{germs-non-split}
\end{equation}
if $E$ is non-split, with $\eta$ the quadratic character associated to $E$, and 
\begin{equation} f(\xi) = C_1(\xi) + C_2(\xi) \log(|\xi|), \label{germs-split}
\end{equation}
 if $E$ is split, with $C_1, C_2$ two smooth measures.

We would like to think of elements of $\mathcal S([V/T])$ as sections of a \emph{cosheaf} over $\Ga(F)=F$. In particular, we define the \emph{fiber} $\mathcal S([V/T])_0$ as the quotient $\mathcal S([V/T])/I\mathcal S([V/T])$, where $I\subset C^\infty_\temp(F)$ is the ideal of those smooth, tempered functions that vanish at $0$. (The word ``tempered'' means ``moderate growth for all derivatives'', and it is included to ensure, in the Archimedean case, that these functions act on $\mathcal S([V/T])$.) Explicitly,  $\mathcal S([V/T])_0$ is a two-dimensional space, whose dual is spanned, in the notation of \eqref{germs-non-split}, \eqref{germs-split}, by the distributions
$$ f\mapsto \frac{C_1}{d\xi}(0), \,\,\, \mbox{ and } f\mapsto \frac{C_2}{d\xi}(0).$$

For any of the above descriptions of the space $\mathcal S([V/T])$, the push-forward map can be identified with the coinvariant quotient: 

\begin{lemma}\label{coinvariantA2}
 The quotient maps $\mathcal S(E)\oplus \mathcal S(E^\alpha) \to \mathcal S([V/T])$ (with $E^\alpha$ to be ignored in the split case), resp.\ $\mathcal S(\tilde V)\to \mathcal S([V/T])$ identify $\mathcal S([V/T])$ with the $T$-coinvariant space, resp.\ $\SL_2$-coinvariant space, of the source.
\end{lemma}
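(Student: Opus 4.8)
The plan is to prove the two statements ($T$-coinvariants and $\SL_2$-coinvariants) in parallel, since the second reduces to the first by the fibration $\tilde V = V\times^T\SL_2$. In each case the quotient (push-forward) map is $\tilde G$-equivariant for the trivial action on the target, so it certainly factors through the coinvariant space; the content is that the induced map from the coinvariants is an \emph{isomorphism}. Surjectivity is immediate from the definition of $\mathcal S([V/T])$ as the image of the push-forward, so the whole difficulty is injectivity: I must show that every Schwartz measure (or pair of measures, in the non-split case) with vanishing orbital integrals lies in the span of differences $\Phi - \Phi\cdot t$, $t\in T$ (resp.\ $\Phi - \Phi\cdot g$, $g\in \SL_2$).

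First I would record the integration formula \eqref{integration-2dim}, which says exactly that the orbital integral map $\Phi\mapsto O_\bullet(\Phi)$ has the push-forward map to $\Meas(V\sslash T)$ as its ``integrated'' form; combined with the explicit germ expansions \eqref{germs-nonsplit}, \eqref{germs-split} near $0$, this identifies the kernel of the push-forward with the space of Schwartz functions whose orbital integrals vanish identically on $F^\times$ \emph{and} whose germ data $C_1, C_2$ at $0$ vanish — i.e., genuinely $O_\bullet(\Phi)\equiv 0$ on all of $F$, accounting for the zero orbit. Next, the key local input: a Schwartz function on $E$ (away from the closed orbit $\{0\}$, the action of $T$ on $V_E^\times$ is free with Hausdorff quotient $F^\times$) with vanishing orbital integral over every fiber is a finite sum of functions of the form $\Phi_1 - \Phi_1^{t}$. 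This is the standard ``functions with vanishing integral over the fibers of a free action come from the augmentation ideal'' statement; in the non-Archimedean case it is elementary (partition of unity, compact support), in the Archimedean case it requires a Borel-lemma / smooth-Tietze argument to retain Schwartz decay. One then has to deal with the behavior near $0\in V_E$: this is precisely where the non-split case requires the auxiliary torsor $E^\alpha$, since a single orbit-integral germ ($C_1$ alone, or $C_2$ alone) is only realized after allowing the $E^\alpha$-summand — and this is exactly the content of \cite[Propositions 2.5 and 2.14]{SaBE1}, which I would invoke.

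Concretely the steps are: (i) reduce the $\SL_2$-statement to the $T$-statement by noting $\mathcal S(\tilde V)_{\SL_2} = \bigl(\mathcal S(V)\otimes \mathcal S(\SL_2)\bigr)_{\SL_2, \mathrm{diag}} \cong \mathcal S(V)_T$ via Frobenius reciprocity / Shapiro for the induced space $\tilde V = V\times^T\SL_2$, using that $\SL_2$-coinvariants of $C_c^\infty(\SL_2)$-valued functions compute $T$-coinvariants (and checking the measure normalizations match, which is the ``appropriate choice of Haar measure on $\SL_2$'' already granted in the text); (ii) in the $T$-case, given $\Phi$ (or $(\Phi, \Phi^\alpha)$) with vanishing push-forward, use \eqref{integration-2dim} plus the germ description to conclude $O_\xi(\Phi) = 0$ for all $\xi\in F$; (iii) away from $0$, write $\Phi$ as a sum of coboundaries for the free $T$-action using a section of $V_E^\times \to F^\times$ and a partition of unity (Archimedean: plus a Hadamard-type division lemma to preserve Schwartz class); (iv) handle a neighborhood of $0$ using the structure theory of \cite{SaBE1} cited above, which says the only obstruction to being a coboundary there is carried by the two germ functionals $C_1/d\xi(0)$, $C_2/d\xi(0)$, both of which vanish by (ii).

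The main obstacle I anticipate is step (iii)–(iv) in the \emph{Archimedean} case: producing coboundaries with controlled Schwartz decay uniformly up to the origin, where the $T$-action degenerates, so that the global patching of the ``away from $0$'' construction with the ``near $0$'' construction stays in $\mathcal S(E)\oplus\mathcal S(E^\alpha)$. This is the standard soft-analysis heart of such coinvariant computations; since all the hard germ analysis near $0$ has been outsourced to \cite[Props.\ 2.5, 2.14]{SaBE1}, what remains is a partition-of-unity argument plus a division lemma, which I would present compactly rather than in full detail.
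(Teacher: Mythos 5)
Your overall architecture agrees with the paper's: both arguments split $V\sslash T$ into the open part, where the action is free and the coinvariants are immediately identified with $\mathcal S(F^\times)$, and a neighborhood of $0$, where the two germ functionals $f\mapsto \frac{C_1}{d\xi}(0)$, $f\mapsto \frac{C_2}{d\xi}(0)$ carry all the information; your reduction of the $\SL_2$-statement to the $T$-statement via $\mathcal S(\tilde V)\simeq \mathcal S(V)\hat\otimes_{\mathcal S(T)}\mathcal S(\SL_2)$ is likewise consistent with how these spaces are handled elsewhere in the paper.

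The gap is in your step (iv), which is where the entire content of the lemma sits and which you have delegated to \cite[Propositions 2.5 and 2.14]{SaBE1}. Those propositions describe the \emph{image} of the push-forward (the germ expansions \eqref{germs-nonsplit}, \eqref{germs-split}); they say nothing about the \emph{kernel}, so they do not supply the assertion that ``the only obstruction to being a coboundary near $0$ is carried by the two germ functionals'' --- that assertion is precisely what has to be proved. The paper's proof establishes it by viewing $\mathcal S(\tilde V)_{\SL_2}$ as a cosheaf over $\Ga = V\sslash T$ and then (a) checking that the two functionals span the dual of the \emph{fiber} at $0$, i.e.\ of $\mathcal S(\tilde V)_{\SL_2}/I\,\mathcal S(\tilde V)_{\SL_2}$ with $I\subset C^\infty_\temp(F)$ the ideal of functions vanishing at $0$ --- a classification of invariant distributions that your sketch never addresses --- and (b), in the Archimedean case, invoking \cite[Proposition B.4.1]{SaBE1}, a Nakayama-type statement for these Fr\'echet cosheaves, to pass from the fiber to the \emph{stalk}; in the non-Archimedean case fiber and stalk coincide, so (b) is automatic. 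Your ``partition of unity plus a division lemma'' does not substitute for (b): the difficulty is not cutting off near the origin but showing that an element of the coinvariant space whose image in the fiber vanishes actually lies in $I$ times the coinvariant space (equivalently, comes from the punctured neighborhood), with Schwartz control as the orbits degenerate at $0$. A minor further point: in the Archimedean case the coinvariant space is by definition the quotient by the \emph{closure} of the span of $v-g\cdot v$, so the ``finite sum of coboundaries'' in your step (iii) must be weakened accordingly.
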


The coinvariant space, in the Archimedean case, is defined as the quotient of the original space by the \emph{closure} of the subspace spanned by vectors of the form $v-g\cdot v$.

\begin{proof}
 This is stated as Lemma 2.3 in \cite{SaBE1}, but a couple of typos and omissions oblige me to briefly revisit this argument: First, the coinvariant space $\mathcal S(\tilde V)_{\SL_2}$ (and similarly for the other model) can be seen as the space of sections of a \emph{cosheaf} over $\Ga= V\sslash T$. The quotient $[\tilde V/\SL_2]\to \Ga = \tilde V\sslash \SL_2 =V\sslash T$ is a stack isomorphism away from $0\in \Ga$, and this immediately identifies the subspace $\mathcal S(F^\times)$ with the coinvariants of the Schwartz space of $\tilde V^\circ:=$ the preimage of $\Ga\smallsetminus\{0\}$. Then, one checks that the distributions $f\mapsto \frac{C_1}{d\xi}(0)$ and $f\mapsto \frac{C_2}{d\xi}(0)$ above span the space of distributions on the \emph{fiber} of the cosheaf $\mathcal S(\tilde V)_{\SL_2}$ over $0$, i.e., on the space $\mathcal S(\tilde V)_{\SL_2}/I\mathcal S(\tilde V)_{\SL_2}$, where $I\subset C^\infty_\temp(F)$ is the ideal of smooth, tempered functions on $F$ vanishing at zero. This completes the proof in the non-Archimedean case, where the fiber coincides with the \emph{stalk} $\mathcal S(\tilde V)_{\SL_2}/\mathcal S(\tilde V^\circ)_{\SL_2}$. In the Archimedean case, \cite[Proposition B.4.1]{SaBE1} implies that the stalk is generated by any set of generators of the fiber under the action of $C^\infty_\temp(F)$, and this implies that the kernel of the map $\mathcal S(\tilde V)_{\SL_2}\to \mathcal S([V/T])$ is trivial.
\end{proof}

\subsection{Paley--Wiener spaces}\label{ssPaleyWiener}

Let $T$ be a torus over $F$. We want to describe various spaces of measures on $T=T(F)$ via  their Mellin transforms, generalizing the Paley--Wiener Theorem \ref{PW-torus} for the Schwartz space of $T$, and its extension of Proposition \ref{Tateimage} to the Schwartz space of the affine line. For that purpose, we generalize the Paley--Wiener space of Proposition \ref{Tateimage} as follows:

Let $\rho:{^LT}\to \GL_n(\mathbb C)$ be a diagonalizable representation of the $L$-group of $T$. Here, we are using the version ${^LT} = \check T\rtimes \mathcal W_F$ of the $L$-group which is an extension of the Weil group of $F$, but for simplicity of exposition we will only allow \emph{non-negative half-integer twists of the Galois version}, that is, we have $\rho = \bigoplus_{i=1}^n \rho_i$, where $\rho_i = $ the product of a character of $\check T \rtimes \Gal(\bar F/F)$ by the character $|\bullet|^s$ of the Weil group, where $|\bullet|$ is the absolute value (normalized, in the non-Archimedean case with residue field of order $q$, to send a Frobenius element to $q^{-1}$), and $2s \in \mathbb Z_{\ge 0}$. 

This gives rise to a local $L$-function $\chi\mapsto L(\chi, \rho, 0)$, which  we will consider as a function on the character group $\widehat{T_\CC}$. (Recall that $\widehat{T_\CC}$ denotes the group of all complex characters of $T$, and $\widehat T$ its subgroup of unitary characters.) Given the possibility of Weil twists, the point of evaluation $0$ is really a red herring, and everything that follows will be applied later, without further explanation, to $L$-values of the form $L(\chi, \rho, s)$, simply by understanding them as $L(\chi, \rho |\bullet|^s, 0)$. 

Our restrictions on $s$ imply the following about the poles of $L(\chi, \rho, 0)$ in the Archimedean case: Recall that there is a canonical factorization of the character group of $T$ as $\widehat{T_0} \times \mathfrak t_{\CC}^*$ (see \eqref{dualArchimedean}), with $T_0=$the maximal compact subgroup of $T$. We will define a meromorphic, non-vanishing function $\mathfrak G_\rho$ on $\mathfrak t_{\CC}^*$, such that, for every element of the discrete subgroup $\widehat{T_0}$, the restriction of $L(\chi,\rho,0)$ to the coset represented by this element is a holomorphic multiple of $\mathfrak G_\rho$. This function is 
\begin{equation}\label{Grho}
 \mathfrak G_\rho := \prod_i \rho_i^* \mathfrak G,
\end{equation}
where $\mathfrak G$ was defined in \S \ref{ssTate} as $\Gamma(s)$, when $F=\mathbb R$, and $\Gamma(2s)$, when $F=\mathbb C$, and by $\rho_i^*: \mathfrak t_{\CC}^* \to \mathfrak g_{m,\CC}^* = \mathbb C$ we denote the map induced by the restriction of the one-dimensional factor $\rho_i$ of $\rho$ to $\check T$. 

We state this as a lemma:
\begin{lemma}\label{lemmapoles}
For $F$ Archimedean, the function $L(\chi, \rho, 0)$ on $\widehat{T_\CC}$ is a holomorphic multiple of the pullback of $\mathfrak G_\rho$ from the factor $\mathfrak t_{\CC}^*$. 
\end{lemma}

\begin{proof}
It is enough to treat the case when $\rho$ is one-dimensional; then, by our assumptions, it has the form
\[ \check T \rtimes \mathcal W_F \ni (r,w)\mapsto \lambda(r) \mu (w) |w|^s \in \mathbb C^\times,\]
where $\lambda$ is a Galois-stable character of $\check T$, $\mu$ is a character of the Galois group pulled back to the Weil group, and $s$ is a non-negative half-integer. We can consider $\lambda$ as a cocharacter $\Gm\to T$, and $\mu$ as a (quadratic, at most) character of $F^\times$, and then the local $L$-factor can be written 
\[ L(\chi, \rho, 0) = L((\chi\circ\lambda) \cdot \mu, s).\]
The statement then follows from the analogous statement on Archimedean $L$-factors of Hecke characters, recalled in \S \ref{ssTate}.
\end{proof}

\begin{definition}\label{def:PaleyWiener}
The Paley--Wiener space
$$ \mathbb H^\PW_{L(\bullet,\rho,0)} (\widehat{T_\CC})$$
of ($\CC$-valued) functions on the character group $\widehat{T_\CC}$ consists of polynomial multiples, in the non-Archimedean case, or holomorphic multiples, in the Archimedean case, of the function
$\chi \mapsto L(\chi, \rho, 0)$, which have the following properties: 
 \begin{itemize} 
  \item in the non-Archimedean case, they are supported on a finite number of connected components of $\widehat{T_\CC}$;
  \item in the Archimedean case, factoring the character group of $T$ as $\widehat{T_0} \times \mathfrak t_{\CC}^*$, as above, they belong to the completed tensor product
  $$ \mathscr C(\widehat{T_0}) \hat\otimes \mathbb H^\PW_{\mathfrak G_\rho}(\CC),$$ 
  where $\mathbb H^\PW_{\mathfrak G_\rho}(\CC)$ denotes the Fr\'echet space of holomorphic multiples of the function $\mathfrak G_\rho$, which are of rapid decay in bounded vertical strips, and $\mathscr C(\widehat{T_0})$ is the Harish-Chandra--Schwartz space of the discrete abelian group $\widehat{T_0}$, consisting of those of functions $\varphi$ such that, for any norm $\Vert \bullet \Vert$ on the vector space $\widehat{T_0}\otimes_{\Z} \RR$, and any $N\ge 0$, the function $\Vert n \Vert^N \varphi(n)$ is bounded.
  
  In the Archimedean case, the space $\mathbb H^\PW_{L(\bullet,\rho,0)} (\widehat{T_\CC})$ inherits the structure of a  Fr\'echet space, as a closed subspace of $\mathscr C(\widehat{T_0}) \hat\otimes \mathbb H^\PW_{\mathfrak G_\rho}(\CC)$.
 \end{itemize}
\end{definition}

\subsection{Mellin transforms, and the space $\mathcal S([V/T])^\circ$}

We return to the 2-di\-mensional space of \S \ref{ssA2} with an action of the torus $T$, to study the Mellin transforms of measures.
 
For any $f\in \mathcal S([V/T])$, we define its \emph{Mellin transform} $\check f(\chi)$, where $\chi$ varies in the characters of $F^\times$, as
\begin{equation}\label{MellinVTdef} \check f(\chi) = \int_{F^\times} f(x) \chi^{-1}(x)|x|^{-\frac{1}{2}},
\end{equation}
for $\Re(\chi)\ll 0$ (i.e., $\chi(x) = \chi_0(x) |x|^s$ for some unitary $\chi_0$  and $\Re(s)\ll 0$ --- $\Re(s)<1$ suffices here), and by meromorphic continuation in general. The shift by $|x|^{-\frac{1}{2}}$ is such that, if we normalize the action of the group $A:=\Gm$ of scalar dilations on $\mathcal S(V)$ to be unitary:
\begin{equation}\label{normVmodT} a\cdot f(v) = |a|^{-1} f(v),
\end{equation}
and normalize the action of its quotient $A_\ad:=A/\{\pm 1\} \simeq \Gm$ on $\mathcal S([V/T])$ accordingly, the map $f\mapsto \check f(\chi)$ is $(A_\ad,\chi)$-equivariant.

The following proposition describes Mellin transforms of the elements of $\mathcal S([V/T])$, and the fiber at $0$ in terms of those, generalizing Proposition \ref{Tateimage}. 

\begin{proposition} \label{VmodTimage}
Mellin transform defines an isomorphism between $\mathcal S([V/T])$ and the space 
$$ \mathbb H^\PW_{L(\bullet^{-1}, \frac{1}{2}) L(\bullet^{-1}\eta, \frac{1}{2})} (\widehat{F^\times_\CC}).$$
 
 Moreover, if $f\in \mathcal S([V/T])$ is the image of a measure $\Phi dx \in \mathcal S(E)$, in the split case, and $(\Phi dx, \Phi^\alpha dx^\alpha) \in \mathcal S(E)\oplus \mathcal S(E^\alpha)$, in the non-split case (where the measures $dx$ and $dx^\alpha$ correspond as above), and $f$ has the form of \eqref{germs-non-split} or \eqref{germs-split}, we have the following expressions for the coefficients of the Laurent expansion of $\check f$ at $\chi=1$ and $\eta$: 
 \begin{itemize}
  \item in the non-split case, 
  \begin{align} 
   \Res_{s=\frac{1}{2}} \check f(|\bullet|^s) &= - \AvgVol(F^\times) \frac{C_1}{d\xi} (0) = - \AvgVol(E^\times) (\Phi(0) + \Phi(0^\alpha)) , \mbox{ and } \nonumber
  \\
 \label{residue-non-split} \Res_{s=\frac{1}{2}} \check f(\eta |\bullet|^s) &= - \AvgVol(F^\times) \frac{C_2}{d\xi} (0) =  - \AvgVol(E^\times) (\Phi(0)-\Phi(0^\alpha));
  \end{align}
  \item in the split case, 
  \begin{align} 
   \lim_{s\to \frac{1}{2}} (s-\frac{1}{2})^2  \check f(|\bullet|^s) =  \Res_{s=\frac{1}{2}} \widecheck{C_2}(|\bullet|^s) & = -\AvgVol(F^\times) \frac{C_2}{d\xi}(0) = \AvgVol(E^\times) \Phi(0),  \nonumber
  \\
 \label{residue-split} \mbox{and if }C_2=0, \,  \Res_{s=\frac{1}{2}} \check f(|\bullet|^s) &= - \AvgVol(F^\times) \frac{C_1}{d\xi}(0) .
  \end{align}
 \end{itemize}
 
\end{proposition}

The notion of ``average volume'' was introduced in \eqref{avgvol}. Note that \- $L(\bullet^{-1}, \frac{1}{2}) L(\bullet^{-1}\eta, \frac{1}{2})$ is shorthand (in the context of Definition \ref{def:PaleyWiener}) for $L(\bullet, \rho, 0)$, where $\rho = (\Std^\vee \oplus \Std^\vee\otimes \hat\eta)\otimes |\bullet|^\frac{1}{2}$, where $\hat\eta$ is the associated quadratic character of the Weil/Galois group.

\begin{proof}
The pullback of Mellin transform $\check f(\chi)$ to the space $\mathcal S(E)$ is the Tate integral 
$$ Z(\Phi, \chi^{-1}\circ N_F^E, \frac{1}{2}-s)$$
on $E^\times$. The results now follow from Proposition \ref{Tateimage} (and an easy adaptation to the space $E^\alpha$), except for the relations with the coefficients $C_1, C_2$, which can be established as follows:

In the non-split case, the Mellin transform of the sum $C_1(\xi) + C_2(\xi) \eta(\xi)$ can be broken up into two Tate integrals on $F^\times$, and the result follows again by Proposition \ref{Tateimage}; same for the split case, when $C_2=0$.

For the relation of $\frac{C_2}{d\xi}(0)$ with $\AvgVol(E^\times) \Phi(0)$ in the split case, see \cite[Proposition 2.5]{SaBE1}.
\end{proof}

We can also express the functionals above in terms of the space $\tilde V = V\times^T \SL_2$. Let $[v,g]$ represent the class of an element of $V\times \SL_2$ in $\tilde V$.

\begin{corollary}\label{corresidues}
Fix an invariant measure on $T\backslash G$, and use it to construct, for any Haar measure $dx$ on $V$, an invariant measure $\widetilde{dx}$ on $\tilde V$.

In the split case, if $f\in \mathcal S([V/T])$ is the image of an element $\Phi \widetilde{dx} \in \mathcal S(\tilde V)$, then we have
\begin{equation}\label{residue-split-group}
    \lim_{s\to \frac{1}{2}} (s-\frac{1}{2})^2  \check f(|\bullet|^s) =\AvgVol(E^\times) \int_{T\backslash \SL_2} \Phi([0,g]) dg,
\end{equation}
where $\AvgVol(E^\times)$ is taken with respect to the measure $d^\times x$. 

In the non-split case, if $f\in \mathcal S([V/T])$ is the image of an element $\Phi \tilde{dx} \in \mathcal S(\tilde V)$, then we have
  \begin{align} 
   \Res_{s=\frac{1}{2}} \check f(|\bullet|^s) &= - \AvgVol(E^\times) \int_{(T\backslash \SL_2)(F)} \Phi([0,g]) dg, \mbox{ and }
 \nonumber \\
 \label{residue-non-split-group} \Res_{s=\frac{1}{2}} \check f(\eta |\bullet|^s) &=- \AvgVol(E^\times) \int_{(T\backslash \SL_2)(F)} \Phi([0,g]) \eta(g) dg,
  \end{align}
  where, on the right hand side, $\eta$ is identified with the function on $(T\backslash \SL_2)(F)$ which is equal to $1$ on the $\SL_2(F)$-orbit represented by the identity, and $-1$ on the other orbit.
\end{corollary}

In our application, $T\backslash \SL_2$ will correspond to a stable semisimple conjugacy class in the group $\SL_2$, in which case the integrals on the right hand side of \eqref{residue-split-group}, \eqref{residue-non-split-group} are \emph{stable} and ``\emph{$\kappa$-orbital integrals}''.

We let $\mathcal S([V/T])^\circ \subset\mathcal S([V/T])$ denote the subspace whose elements are of the form
\begin{equation}\label{germs-circ}
 C(\xi) \eta(\xi)
\end{equation}
in a neighborhood of zero, in both the split and non-split cases, where $C$ is a smooth measure. The corresponding subspaces of functions and half-densities will be denoted by $\mathcal F([V/T])^\circ$, $\mathcal D([V/T])^\circ$, respectively.

Proposition \ref{VmodTimage} easily implies:

\begin{corollary} \label{MellinVcirc}
Mellin transform defines an isomorphism between $\mathcal S([V/T])^\circ$ and the space 
$$ \mathbb H^\PW_{L(\bullet^{-1}\eta, \frac{1}{2})} (\widehat{F^\times_\CC}).$$
 
 Moreover, if $f\in \mathcal S([V/T])^\circ$ is of the form \eqref{germs-circ} in a neighborhood of zero, then
   \begin{equation}\label{residue-circ}\Res_{s=\frac{1}{2}} \check f(\eta|\bullet|^s) = - \AvgVol(F^\times) \frac{C}{d\xi} (0).
   \end{equation}

In the non-Archimedean case, let $h$ be the element of the completed Hecke algebra $\widehat{\mathcal S(F^\times)}$ (see \S \ref{ssmultipliers}) whose Mellin transform is $\check h(\chi)=L(\chi, \frac{1}{2})^{-1}$. Then, the \emph{normalized} action  of $h$ (descending from \eqref{normVmodT}) gives rise to an isomorphism:
$$ \mathcal S([V/T])\xrightarrow{h\cdot }\mathcal S([V/T])^\circ.$$

\end{corollary}

\begin{proof}
 By its definition, and Proposition \ref{VmodTimage}, the space $\mathcal S([V/T])^\circ$ corresponds precisely to the subspace 
 $$ \mathbb H^\PW_{ L(\bullet^{-1}\eta, \frac{1}{2})} (\widehat{F^\times_\CC})\subset
 \mathbb H^\PW_{L(\bullet^{-1}, \frac{1}{2}) L(\bullet^{-1}\eta, \frac{1}{2})} (\widehat{F^\times_\CC})$$
under Mellin transform. 

The residue formula \eqref{residue-circ} follows from \eqref{residue-non-split} and \eqref{residue-split}.

In the non-Archimededean case, the function $L(\chi, \frac{1}{2})^{-1}$ is polynomial in $\chi$, hence is the Fourier transform of an element of $\widehat{\mathcal S(F^\times)}$. For the normalized action \eqref{normVmodT} on $\mathcal S(V)$ (we don't normalize the action on $\mathcal S(\Ga)$), we have $\widecheck{(h\cdot f)}(\chi)= \check h(\chi^{-1}) \check f(\chi)$ for any $h\in \widehat{\mathcal S(F^\times)}$, hence the result follows.
\end{proof}

In the Archimedean case, the corresponding element is not a multiplier (the reciprocal of the Gamma function is not of polynomial growth in vertical strips), and that creates some technical complications when studying the space $\mathcal S([V/T])^\circ$ and all related spaces.

\subsection{Hankel transform on $\mathbbm A^2/T$.}

We will define two Fourier transforms on a two-dimensional vector space $V$: one, denoted by $\mathscr F$, by identifying $V$ with the additive group of a quadratic extension $E$; and another, denoted $\mathfrak F$, by endowing $V$ with a symplectic structure.

For now, we continue as in the previous subsection, where $V=\Res_{E/F} \Ga$.
We use the trace pairing $(x,y)\mapsto \tr(xy)$ to identify $V$ with its linear dual,  
but keeping in mind that the $\Gm$-action is inverted under this identification.

Consider Fourier transform, as an endomorphism of the space $\mathcal D(V)$ of Schwartz half-densities on $V$,
defined by the formula
$$\mathscr F (\Phi(x)(dx)^\frac{1}{2}) (y) = \left(\int_V \Phi(x) \psi(\tr(xy)) dx\right) \cdot  (dy)^\frac{1}{2},$$
using a self-dual measure on $V$ with respect to the character $\psi\circ \tr$, or, equivalently, 
\begin{equation}\label{FourierE}  \mathscr F(\varphi)(y) = \int_{E^\times} \varphi(xy^{-1}) \psi(\tr(x)) |x|^\frac{1}{2} d^\times x.\end{equation}
Note that, since we are using the orthogonal pairing $(x,y)\mapsto \psi(\tr(xy))$ to define Fourier transform on $V$, we are using different notation, $\mathscr F$, from the Fourier transform $\mathfrak F$ defined by a symplectic pairing.

When $E/F$ is non-split, \eqref{FourierE} gives a natural way to extend this transform to the ``pure inner form'' $E^\alpha$, so Fourier transform becomes an endomorphism of $\mathcal D(E)\oplus \mathcal D(E^\alpha)$.
The transform is anti-equivariant with respect to the action of $T$ on $V$,  
hence descends to $T$-coinvariants, which can be identified with the space $\mathcal D([V/T])$ (Lemma \ref{coinvariantA2}).  
We denote the descent by $\mathscr H$, for ``Hankel'':
$$ \mathscr H : \mathcal D([V/T])\xrightarrow\sim \mathcal D([V/T]).$$
Let $\lambda(E/F,\psi)$ be the scalar that satisfies
\begin{equation}\label{lambda} \gamma(1, s, \psi)\gamma(\eta, s,\psi) = \lambda(E/F,\psi) \gamma_E(1,s, \psi\circ\tr)
\end{equation}
for the gamma factors of the Tate zeta integrals (see \S \ref{ssTate}), the one on the right hand side being on the field $E$; this is the $\lambda$-constant defined in \cite{Langlands-epsilon}, cf.\ also \cite[Lemma 1.1]{JL}.  We will also denote this scalar by $\lambda(\eta,\psi)$, where $\eta$ is the quadratic character associated to the extension $E/F$.

\begin{proposition}\label{propHankel2D}
 Fixing the norm coordinate for $V\sslash T$, we have 
\begin{align} \mathscr H \varphi(\xi^{-1}) &= \int^*_{E^\times} \varphi(\xi N^E_F(x)) \psi(\tr(x)) |x|^\frac{1}{2} d^\times x  \nonumber \\
& = \int^*_{F^\times} \varphi(\xi z) (N^E_F)_!\left(\psi(\tr(\bullet)) |\bullet|^\frac{1}{2} d^\times \bullet\right)(z)\end{align}
(for $\varphi \in \mathcal D([V/T])$),
where the notation $\int^*$ means that the integral, which does not, in general, converge, should be interpreted as a Fourier transform of a generalized half-density (or, equivalently, of an $L^2$-half density). Moreover, we have
\begin{equation}\label{Hankel2D}
 \mathscr H \varphi (\xi^{-1}) = \lambda(E/F,\psi)^{-1}(\psi(\frac{1}{\bullet}) |\bullet|^{-\frac{1}{2}} d^\times\bullet) \star (\psi(\frac{1}{\bullet}) \eta(\bullet)|\bullet|^{-\frac{1}{2}} d^\times\bullet) \star \varphi (\xi),
\end{equation}
where $\eta$ is the quadratic character of $F^\times$ associated to $E$, and the convolution is understood in the regularized sense of \S \ref{sssFourierconv}.
\end{proposition}

\begin{proof}
 This was proven in \cite[Propositions 2.15 and 2.16]{SaBE1}, but was missing the factor $\lambda$. (The mistake originates in an erroneous version of \eqref{lambda} in \cite[(2.29)]{SaBE1}.) This scalar of course is $1$ in the split case $E=F\oplus F$; in the non-split case, it can be computed by taking the leading coefficients at $s=1$ and employing \eqref{AvgVolFtimes}:
\begin{equation}\label{lambdaconstant}
\lambda(E/F,\psi) =   \gamma(\eta, 1, \psi) \frac{\gamma^*(1,1,\psi)}{\gamma_E^*(1,1,\psi\circ\tr)} = \gamma(\eta, 1, \psi) \frac{\AvgVol(F^\times)}{\AvgVol(E^\times)} = \frac{2\gamma(\eta, 1, \psi)}{\Vol(T)},
\end{equation}
where the average volume of $F^\times$ is taken with respect to the measure $\frac{dx}{x}$, where $dx$ is self-dual for the character $\psi$, the average volume of $E^\times$ is similarly taken using the self-dual measure for $\psi\circ \tr$, and $T$ is the kernel of the norm map, endowed with the fiber measure of the latter with respect to the former.

\end{proof}

\begin{remark}\label{remarkchangeorder}
 This formula is the result of the following formal calculation, for functions:
\begin{align}  
\int_T \int_E \Phi(x) \psi(\tr(xt\tilde \xi^{-1})) dx dt  &\overset{*}{=} |\xi|^{-1} \int_E^* \int_T \Phi(x t^{-1} \tilde\xi) dt  \psi(\tr(x)) dx \\
& =|\xi|^{-1} \int^*_{F^\times} O_{z\xi}(\Phi) (N^E_F)_! \left(\psi\circ\tr(\bullet) d\bullet\right)(z),
 \end{align}
where $\tilde\xi$ is a lift of $\xi\in V\sslash T$ to $V$, and we have set $O_\xi(\Phi) = \int_T \Phi(\tilde \xi t) dt$. By $(N^E_F)_! \left(\psi\circ\tr(\bullet) d\bullet\right)$ we denote the push-forward of the distribution $\psi\circ\tr(x) dx$ via the norm map, which can be decomposed into a convolution of distributions as in \eqref{Hankel2D}. I leave it to the reader to check the details of the exponents when one translates from functions to half-densities. The point of this remark is that, while interchanging the two integrals in the step denoted by $\overset{*}{=}$ is not justified in the sense of convergent integrals, it is justified when we interpret the resulting exterior integral in terms of Fourier transforms of generalized half-densities (or $L^2$-half densities).
\end{remark}

We now study the subspace $\mathcal D([V/T])^\circ \subset \mathcal D([V/T])$, defined by the asymptotic condition \eqref{germs-circ} in a neighborhood of zero.  

\begin{proposition}\label{H02D}
 The map 
 $$\mathscr H^\circ \varphi(\xi^{-1}) := \lambda(E/F,\psi)^{-1}(\psi(\frac{1}{\bullet}) \eta(\bullet)|\bullet|^{-\frac{1}{2}} d^\times\bullet) \star \varphi(\xi),$$
 where the convolution is understood again in the regularized sense of \S \ref{sssFourierconv}, is an automorphism of the space
 $ \mathcal D([V/T])^\circ$.
\end{proposition}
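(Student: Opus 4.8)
The plan is to prove this by passing to Mellin transforms, where $\mathscr H^\circ$ becomes multiplication by a single local $\gamma$-factor and the assertion collapses to a bookkeeping of $L$-factors.

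First I would record the Mellin-transform picture of the target space: by the half-density analogue of the last part of Corollary \ref{MellinVcirc}, Mellin transform identifies $\mathcal D([V/T])^\circ$ with the Paley--Wiener space $\mathbb H^\PW_{L(\bullet^{-1}\eta,\frac{1}{2})}(\widehat{F^\times}_\CC)$ of polynomial (non-Archimedean), resp.\ holomorphic of rapid decay in bounded vertical strips (Archimedean), multiples of $L(\chi^{-1}\eta,\frac{1}{2})$. Next I would compute the effect of $\mathscr H^\circ$ on Mellin transforms. Since its defining formula is a regularized multiplicative convolution on $(V\sslash T)^\times = F^\times$ followed by the coordinate inversion $\xi\mapsto\xi^{-1}$, and since regularized convolution of half-densities passes to multiplication under Mellin transform, one gets $\widecheck{\mathscr H^\circ\varphi}(\chi) = m(\chi^{-1})\,\check\varphi(\chi^{-1})$, where $m(\chi)$ is $\lambda(E/F,\psi)^{-1}$ times the (meromorphically continued) Mellin transform of the generalized half-density $\psi(\tfrac{1}{\bullet})\eta(\bullet)|\bullet|^{-\frac{1}{2}}\,d^\times\bullet$. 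The substitution $x\mapsto x^{-1}$ turns the latter into a Tate integral on $F^\times$ against the character $\chi^{-1}\eta$, whose meromorphic continuation, by Tate's local functional equation (\S \ref{ssTate}), is $\gamma(\chi\eta,\frac{1}{2},\psi)$ up to an elementary normalization; thus $m(\chi^{-1}) = \lambda(E/F,\psi)^{-1}\gamma(\chi\eta,\tfrac{1}{2},\psi) = \lambda(E/F,\psi)^{-1}\epsilon(\chi\eta,\tfrac{1}{2},\psi)\,L(\chi^{-1}\eta,\tfrac{1}{2})/L(\chi\eta,\tfrac{1}{2})$.

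The conclusion then follows formally. Writing $\check\varphi(\chi) = P(\chi)\,L(\chi^{-1}\eta,\frac{1}{2})$ with $P$ in the allowed class, the coordinate inversion gives $\check\varphi(\chi^{-1}) = P(\chi^{-1})\,L(\chi\eta,\frac{1}{2})$, and multiplying by $m(\chi^{-1})$ cancels the two copies of $L(\chi\eta,\frac{1}{2})$, leaving $\lambda(E/F,\psi)^{-1}\epsilon(\chi\eta,\tfrac{1}{2},\psi)\,P(\chi^{-1})\,L(\chi^{-1}\eta,\tfrac{1}{2})$. This is again an allowed multiple of $L(\chi^{-1}\eta,\frac{1}{2})$: the allowed class is stable under $\chi\mapsto\chi^{-1}$, and $\epsilon(\chi\eta,\frac{1}{2},\psi)$ is a nowhere-vanishing monomial (non-Archimedean) or exponential (Archimedean), hence a unit of the relevant multiplier algebra. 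So $\mathscr H^\circ$ preserves $\mathcal D([V/T])^\circ$, and it is bijective because it is the composition of the invertible coordinate inversion with multiplication by this unit; the inverse is given by the same recipe with $\gamma(\chi\eta,\frac{1}{2},\psi)$ replaced by its reciprocal (itself the Mellin transform of a regularized kernel of the same shape, with $\psi$ inverted).

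The main obstacle, and the step I would treat most carefully, is justifying that the ``regularized convolution'' of the statement is computed on the Mellin side by the meromorphic multiplier $m$ --- equivalently, that the generalized (or $L^2$) half-density $\psi(\tfrac{1}{\bullet})\eta(\bullet)|\bullet|^{-\frac{1}{2}}\,d^\times\bullet$ genuinely has the stated Mellin transform and that convolving with it intertwines into multiplication. In the non-Archimedean case this is immediate from the structure of the completed Hecke algebra of $F^\times$ (\S \ref{ssmultipliers}). In the Archimedean case the reciprocal $\gamma$-factor is not a multiplier, as flagged after Corollary \ref{MellinVcirc}, so one must verify by hand that the operation respects the completed tensor product $\mathscr C(\widehat{T_0})\,\hat\otimes\,\mathbb H^\PW_{\mathfrak G}(\CC)$ and the rapid-decay conditions in vertical strips. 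The remaining bookkeeping --- that the powers of $|\bullet|$ and the constant $\lambda(E/F,\psi)$ emerge exactly as in \eqref{Hankel2D}, and that no spurious pole at $\chi=\eta$ is introduced --- is routine given the half-density conventions already fixed.
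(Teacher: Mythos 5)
Your proposal is correct and follows essentially the same route as the paper: the paper's proof is exactly the observation that, by Corollary \ref{MellinVcirc}, $\mathcal D([V/T])^\circ$ is the space of Paley--Wiener multiples of $L(\chi^{-1}\eta,\frac{1}{2})$ under Mellin transform, and that the regularized convolution by $\psi(\frac{1}{\bullet})\eta(\bullet)|\bullet|^{-\frac{1}{2}}d^\times\bullet$ acts on Mellin transforms by the factor $\gamma(\chi^{-1}\eta,\frac{1}{2},\psi)$ via \eqref{FE}, so that the $L$-factors cancel exactly as in your bookkeeping. Your Archimedean worry is already resolved by the fact (used elsewhere in the paper, e.g.\ in Proposition \ref{factorFourier}) that the $\gamma$-factor itself --- unlike the reciprocal $L$-function --- is of polynomial growth in vertical strips by Stirling, hence preserves the Paley--Wiener class.
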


\begin{proof}
This follows immediately from Corollary \ref{MellinVcirc} and the fact that regularized convolution by $(\psi(\frac{1}{\bullet}) \eta(\bullet)|\bullet|^{-\frac{1}{2}} d^\times\bullet) $ multiplies Mellin transforms by the factor 
$\gamma(\chi^{-1}\eta_E, \frac{1}{2}, \psi)$, see \eqref{FE}. 
\end{proof}

Now assume that the same space $V$ is also endowed with an alternating form $\omega$. There is a unique $D\in E\smallsetminus\{0\}$ with $\tr(D)=0$ such that 
$$\omega(x, y) = \tr(D x \bar y),$$
where $\bar y$ denotes the Galois conjugate of $y$. A standard basis for this alternating form is the pair $(1,-(2D)^{-1})$, and the discriminant of the quadratic form $x\mapsto N_F^E(x)$ in such a basis is $- \frac{1}{4D^2}$. 

The symplectic Fourier transform $\mathfrak F$, defined as in \eqref{Fourierconvention} but now for half-densities:
$$ \mathfrak F(\Phi(v)|\omega|^\frac{1}{2}(v)) (v^\vee) = \left(\int_V \Phi(v) \psi(\omega(v, v^\vee)) |\omega|(v)\right) \cdot  |\omega|^\frac{1}{2}(v^\vee)$$
is related to the ``orthogonal'' Fourier transform by
\begin{equation}\label{symplorthoFourier}
 \mathfrak F\varphi (y) = \mathscr F\varphi(D\bar y).
\end{equation}

Again, the transform extends canonically to the ``pure inner form'' $E^\alpha$, in a completely analogous way as in \S \ref{scattorus}, and it keeps satisfying \eqref{symplorthoFourier}.
In particular, since $y$ and $\bar y$ are equivalent modulo $T$, it descends to the space $[V/T]$, and we have a commutative diagram

\begin{equation}\label{FourierHankelV}
 \xymatrix{
 \mathcal D(E)\oplus \mathcal D(E^\alpha) \ar[r]^{\mathfrak F} \ar[d] & \mathcal D(E)\oplus \mathcal D(E^\alpha) \ar[d] \\
 \mathcal D([V/T]) \ar[r]^{\mathcal H} & \mathcal D([V/T]),
 }
\end{equation}
(the space $E^\alpha$ is to be ignored in the split case), with the \emph{Hankel transform} $\mathcal H$ given by the following formula:
\begin{multline*} \mathcal H\varphi(\xi^{-1}) = \mathscr H\varphi((N_F^E D) \xi^{-1}) \\
= \lambda(E/F,\psi)^{-1}(\psi(\frac{1}{\bullet}) |\bullet|^{-\frac{1}{2}} d^\times\bullet) \star (\psi(\frac{1}{\bullet}) \eta_E(\bullet)|\bullet|^{-\frac{1}{2}} d^\times\bullet) \star \varphi ((N_F^E D)^{-1}\xi),
\end{multline*}
by applying \eqref{Hankel2D}.

We summarize the results, by taking a different point of view: assume that we start with a symplectic space $(V,\omega)$, and a quadratic form $Q$ on it of discriminant $d$. Then we can identify $V$ with the space of the quadratic extension $F(\sqrt{-d})$, in such a way that $Q$ corresponds to the norm map. Then we apply the above with $d = -\frac{1}{4D^2}$, and we get:

\begin{proposition}\label{symplecticHankel}
 Let $(V,\omega)$ be a symplectic space, $Q$ a quadratic form on it of discriminant $d$, and $T=\SO(V,Q)$. Identify the quotient $V\sslash T$ with $\Ga$ via $Q$, then the symplectic Fourier transform $\mathfrak F$ on $\mathcal D(V)$ (or $\mathcal D(V)\oplus \mathcal D(V^\alpha)$, if $T$ is non-split) descends to a Hankel transform on $\mathcal D([V/T])$, and we have the formula:
\begin{equation}\label{Hankel2D-sympl} \mathcal H\varphi(\xi^{-1}) = \lambda(\eta,\psi)^{-1} (\psi(\frac{1}{\bullet}) |\bullet|^{-\frac{1}{2}} d^\times\bullet) \star (\psi(\frac{1}{\bullet}) \eta(\bullet)|\bullet|^{-\frac{1}{2}} d^\times\bullet) \star \varphi (4d \cdot \xi),
\end{equation}
where $\eta$ is the quadratic character associated with the extension $E=F(\sqrt{-d})$, and $\lambda(\eta,\psi) = \lambda(E/F,\psi)$ for the same extension.
\end{proposition}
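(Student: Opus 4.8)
The plan is to deduce Proposition~\ref{symplecticHankel} from the two Fourier-transform descriptions already in hand --- the ``orthogonal'' picture governed by Proposition on $\mathscr H$ and formula \eqref{Hankel2D}, and the ``symplectic'' picture related to it by \eqref{symplorthoFourier} --- by keeping careful track of the change-of-coordinates on the quotient $V\sslash T$. First I would start from an abstract symplectic space $(V,\omega)$ equipped with a quadratic form $Q$ of discriminant $d$, and use the elementary linear-algebra fact that such a pair can be realized by a quadratic \'etale $F$-algebra $E$: namely, there is an isomorphism $V\simeq \Res_{E/F}\Ga$, with $E=F(\sqrt{-d})$ (split if $-d$ is a square), under which $Q$ becomes the norm form $N^E_F$. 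The point is to identify the scalar $D$ of the preceding discussion: since in the model $E$ the alternating form is $\omega(x,y)=\tr(Dx\bar y)$ with $\tr(D)=0$, and the quadratic form $x\mapsto N^E_F(x)$ has discriminant $-\tfrac{1}{4D^2}$ in the standard symplectic basis $(1,-(2D)^{-1})$, matching $-\tfrac{1}{4D^2}=d$ gives $N^E_F(D) = \tfrac{1}{4D^2}\cdot(\ldots)$; more simply, $D^2 = -\tfrac{1}{4d}$, hence $N^E_F(D) = -D^2 = \tfrac{1}{4d}$ (using $\bar D = -D$, so $N^E_F(D)=D\bar D = -D^2$). Thus $(N^E_F D)^{-1} = 4d$, which is exactly the dilation factor appearing in \eqref{Hankel2D-sympl}.

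Next I would assemble the formula. By the commutative diagram \eqref{FourierHankelV} and the relation \eqref{symplorthoFourier}, the symplectic Hankel transform $\mathcal H$ is obtained from the orthogonal Hankel transform $\mathscr H$ by post-composition with the substitution $\xi\mapsto (N^E_F D)\,\xi$ on $V\sslash T$, i.e. $\mathcal H\varphi(\xi^{-1}) = \mathscr H\varphi((N^E_F D)\,\xi^{-1})$, as displayed just before the statement. Plugging in formula \eqref{Hankel2D} for $\mathscr H$ and substituting the value $(N^E_F D)^{-1} = 4d$ computed above turns the argument $\xi$ of the convolution into $4d\cdot\xi$, while the regularized convolution kernels $\psi(\tfrac1\bullet)|\bullet|^{-1/2}d^\times\bullet$ and $\psi(\tfrac1\bullet)\eta(\bullet)|\bullet|^{-1/2}d^\times\bullet$ and the scalar $\lambda(E/F,\psi)^{-1}$ are unchanged. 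Since $\eta$ is by definition the quadratic character attached to $E=F(\sqrt{-d})$, and $\lambda(\eta,\psi)$ was defined to equal $\lambda(E/F,\psi)$ for that extension, this is precisely \eqref{Hankel2D-sympl}. The descent of $\mathfrak F$ to $\mathcal D([V/T])$ (and to $\mathcal D(V)\oplus\mathcal D(V^\alpha)$ in the non-split case), together with the fact that it keeps satisfying \eqref{symplorthoFourier} on the pure inner form $E^\alpha$, is already recorded in the paragraph preceding the statement, so nothing new is needed there.

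The only genuinely delicate point is bookkeeping of the dilation factor and of the measure normalizations when one passes between the ``$y\mapsto D\bar y$'' substitution on $V$ (as in \eqref{symplorthoFourier}) and the induced substitution on the one-dimensional quotient $V\sslash T\simeq\Ga$ with its norm coordinate: the map $y\mapsto D\bar y$ on $V$ descends to $\xi\mapsto N^E_F(D\bar\bullet)(\xi) = N^E_F(D)\,\xi$ on the quotient (using $N^E_F(\bar y)=N^E_F(y)$), which is why a \emph{single} factor $N^E_F(D)$, and not its square or square root, appears; and one must check that this is compatible with the Jacobian/half-density conventions so that no extra absolute-value factor $|N^E_F(D)|^{1/2}$ is left over in the half-density normalization --- this is the content of the parenthetical ``I leave it to the reader to check the details of the exponents'' in Remark~\ref{remarkchangeorder}, applied once more here. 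I expect this normalization check to be the main (though entirely routine) obstacle; everything else is a direct substitution into the already-established formula \eqref{Hankel2D} via \eqref{FourierHankelV} and \eqref{symplorthoFourier}.
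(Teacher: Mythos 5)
Your argument is exactly the paper's: identify $(V,Q)$ with $(E,N^E_F)$ for $E=F(\sqrt{-d})$, note $d=-\tfrac{1}{4D^2}$ so $N^E_F(D)=-D^2=\tfrac{1}{4d}$, and substitute $(N^E_F D)^{-1}=4d$ into the formula $\mathcal H\varphi(\xi^{-1})=\mathscr H\varphi((N^E_F D)\xi^{-1})$ obtained from \eqref{symplorthoFourier}, \eqref{FourierHankelV} and \eqref{Hankel2D}. The normalization check you flag at the end is likewise left implicit in the paper, so the proposal matches its proof in both substance and level of detail.
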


For the subspace $\mathcal D([V/T])^\circ$, we have the following corollary of Proposition \ref{H02D}:

\begin{corollary}\label{Hcirc}
 The map 
 \begin{equation}\label{Hankel2D-sympl-circ}\mathcal H^\circ \varphi(\xi^{-1}) := \lambda(\eta,\psi)^{-1}(\psi(\frac{1}{\bullet}) \eta(\bullet)|\bullet|^{-\frac{1}{2}} d^\times\bullet) \star \varphi(4d\cdot \xi),
 \end{equation}
 where the convolution is understood again in the regularized sense of \S \ref{sssFourierconv}, is an automorphism of the space
 $ \mathcal D([V/T])^\circ$.
\end{corollary}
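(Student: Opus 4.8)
The plan is to deduce this from Proposition~\ref{H02D} in the same way that Proposition~\ref{symplecticHankel} was deduced from the full Hankel formula~\eqref{Hankel2D} --- that is, via the relation $\mathfrak F\varphi(y)=\mathscr F\varphi(D\bar y)$ of~\eqref{symplorthoFourier} between the symplectic and orthogonal Fourier transforms. Passing to $[V/T]$-coinvariants this becomes $\mathcal H\varphi(\xi^{-1})=\mathscr H\varphi((N_F^E D)\xi^{-1})$, and since $\tr(D)=0$ forces $\bar D=-D$ we have $N_F^E(D)=-D^2=\tfrac{1}{4d}$ with $d=-\tfrac{1}{4D^2}$. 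Hence the map $\mathcal H^\circ$ of~\eqref{Hankel2D-sympl-circ} is precisely $\mathscr H^\circ$ precomposed with the multiplicative translation $\delta_{4d}\colon\varphi(\xi)\mapsto\varphi(4d\xi)$ along the $\Gm$-coordinate of $V\sslash T\simeq\Ga$; the only thing one uses to match the closed form is that regularized multiplicative convolution commutes with multiplicative translation.

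First I would check that $\delta_{4d}$ is an automorphism of $\mathcal D([V/T])^\circ$. Rapid decay away from $0\in\Ga(F)$ is obviously preserved, and near $\xi=0$ an element of $\mathcal D([V/T])^\circ$ has the shape $C(\xi)\eta(\xi)$ of~\eqref{germs-circ} with $C$ a smooth half-density; applying $\delta_{4d}$ gives $C(4d\xi)\eta(4d\xi)=\eta(4d)\cdot\bigl(C(4d\xi)\,\eta(\xi)\bigr)$, again of that shape since $C(4d\,\cdot)$ is smooth and $\eta(4d)$ is a nonzero constant. Being invertible (with inverse $\delta_{(4d)^{-1}}$), $\delta_{4d}$ is therefore an automorphism of $\mathcal D([V/T])^\circ$.

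Now $\mathscr H^\circ$ is an automorphism of $\mathcal D([V/T])^\circ$ by Proposition~\ref{H02D}, so the composite $\mathcal H^\circ=\mathscr H^\circ\circ\delta_{4d}$ is one as well, which is the assertion. Alternatively, one can avoid the comparison with $\mathscr H^\circ$ and argue directly on the Mellin side: by Corollary~\ref{MellinVcirc}, $\mathcal D([V/T])^\circ$ is identified with the space of suitable multiples of $L(\chi^{-1}\eta,\tfrac12)$, on which $\delta_{4d}$ acts by the nowhere-vanishing entire factor $\chi\mapsto\chi(4d)$ times a fixed power of $|4d|$, while regularized convolution by $\psi(\tfrac1\bullet)\eta(\bullet)|\bullet|^{-\frac12}d^\times\bullet$ acts by multiplication by $\gamma(\chi^{-1}\eta,\tfrac12,\psi)$ --- both of which manifestly preserve the space.

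I do not expect any genuine obstacle here: all the analytic content is already in Proposition~\ref{H02D} (equivalently, Corollary~\ref{MellinVcirc}), and what remains is bookkeeping --- the inversion $\xi\mapsto\xi^{-1}$ built into the definitions of $\mathscr H^\circ$ and $\mathcal H^\circ$, and the elementary identity $N_F^E(D)=(4d)^{-1}$. The one step that deserves a word of justification is ``convolution commutes with translation,'' which here takes place in the regularized (generalized half-density) sense; this is immediate once one notes, as above, that multiplicative translation acts on Mellin transforms by multiplication by a character value and so is continuous for the relevant topology.
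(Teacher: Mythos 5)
Your proof is correct and follows exactly the route the paper intends: Corollary \ref{Hcirc} is stated there without proof as an immediate consequence of Proposition \ref{H02D}, the only extra ingredient being the reparametrization $\xi\mapsto 4d\,\xi$ coming from $N_F^E(D)=(4d)^{-1}$ already used in Proposition \ref{symplecticHankel}. Your explicit check that $\delta_{4d}$ preserves the germ condition \eqref{germs-circ} (equivalently, acts by the unit $\chi(4d)$ on Mellin transforms) is the small detail the paper leaves implicit.
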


\subsection{The Hankel transform in terms of Mellin transforms}\label{ssVcircMellin}

We will also need another description of the Hankel transform $\mathcal H^\circ$, which presents it as the descent of some ``Fourier transform'', as in \eqref{FourierHankelV}. For this, we need to find a way to ``erase'' the factor $(\psi(\frac{1}{\bullet}) |\bullet|^{-\frac{1}{2}} d^\times\bullet) $ of \eqref{Hankel2D-sympl} (compare with \eqref{Hankel2D-sympl-circ}) already from the Fourier transform $\mathfrak F$. Unfortunately, since the action of the multiplicative group $A_\ad$ on $V\sslash T$ does not lift to $V$, this is not directly possible on $\mathcal S(V)$, and we will need to use the language of Schwartz spaces on stacks, and the invariance of the Schwartz space on the presentation of a stack (see \cite[Theorem 2]{SaStacks-erratum}). 

Namely, we will use the isomorphism of stacks:
$$ [V/T] = [V_\ad/T_\ad],$$
where $V_\ad=[V/\{\pm 1\}]$ and $T_\ad = T/\{\pm 1\}$. (The notation is adapted to our later use where $T$ will be a torus in $\SL_2$, and $T_\ad$ its image in the adjoint group $\PGL_2$.) 
As we will see, the spaces $\mathcal S([V/T])^\circ$, $\mathcal D([V/T])^\circ$ descend from certain spaces $\mathcal S(V_\ad)^\circ$, $\mathcal D(V_\ad)^\circ$ of measures and half-densities on $V_\ad$, and the Hankel transform $\mathcal H^\circ$ descends (Proposition \ref{VHankelascoinv}) from a transform $\mathfrak F^\circ$ on them, which is a ``factor'' of Fourier transform.

\subsubsection{Mellin transforms on $V$}

Let us start by discussing Mellin transforms on $V$ with respect to the $\Gm$-action. The main result will be Proposition \ref{MellinVimage}, describing the Mellin image of $\mathcal S(V)$ as Paley--Wiener sections of a certain Fr\'echet bundle $\mathscr E_V$ over the character group $\widehat{A_\CC}$. 

In order to distinguish the $\Gm$-action on $V$ from the $\Gm$-action on $V\sslash T$ (where the former descends to the square of the latter), we will now denote the group of scalar multiplication on $V$ by $A$, its defining character by $e^{\frac{\alpha}{2}}$, and the absolute value of that by $\delta^\frac{1}{2}$. (This notation is adjusted to thinking of $A$ as the universal Cartan of $\SL(V)$.) We can define Mellin transform $f\mapsto \check f$ on $\mathcal S(V)$, where, for every $\chi\in \widehat{A}_\CC$, $\check f(\chi)$ is valued in the space $\mathcal S(A\backslash V^*, \mathcal L_\chi)$ of measures on $A\backslash V^*$ valued in a line bundle $\mathcal L_\chi$, where $V^*$ is, as before, the complement of zero on $V$, and $\mathcal L_\chi$ is the line bundle whose sections are $(A,\chi)$-equivariant functions on $V^*$ under the normalized action. It is given, for $|\chi(a)|=|a|^\sigma$ with $\sigma \ll 0$, by the formula
$$ \check f(v) = \int_{A} a\cdot f(v) \chi^{-1}(a),$$
where $a\cdot f$ denotes the normalized action \eqref{normVmodT}. Notice that we do not need a measure on $A$ --- the result is naturally a measure valued in the aforementioned line bundle over $A\backslash V^*$. The Mellin transform extends rationally/meromorphically (in the non-Archimedean, resp.\ Archimedean case) to all characters of $A$; we will give a complete description of its image.

We let $\mathscr D_V$ denote the bundle over $\widehat A_\CC$ whose fiber over the character $\chi$ is $\mathcal S(A\backslash V^*, \mathcal L_\chi)$. In the non-Archimedean case, it is naturally a direct limit of finite-dimensional algebraic vector bundles. In the Archimedean case, choosing a smooth section of the map $V^*\to \Rplus\backslash V^*$ (where $\Rplus \subset A$ is embedded in the split subtorus of the restriction of scalars to $\RR$), and a smooth non-vanishing measure on $A\backslash V^*$, we can identify all spaces $\mathcal S(A\backslash V^*, \mathcal L_\chi)$ as subspaces of $C^\infty(\Rplus\backslash V^*)$, and this allows us to consider it as a holomorphic Fr\'echet bundle. For convenience, by abuse of language, we will often just say ``Fr\'echet bundle'' to refer to both cases. 

In fact, in the Archimedean case, it has more structure than that of a Fr\'echet bundle: it can be identified as a subbundle of a constant Fr\'echet bundle, hence it makes sense (in the Archimedean case) to talk about its ``Paley--Wiener sections'', extending Definition \ref{def:PaleyWiener} to sections of this bundle. That is, for a diagonalizable representation $\rho$ of the $L$-group of $A$, we define $\mathbb H^\PW_{L(\bullet, \rho, 0)} (\widehat{A_\CC}, \mathscr D_V)$ as in Definition \ref{def:PaleyWiener}, with the only difference that it does not consist of scalar-valued meromorphic functions on $\widehat{A_\CC}$, but of meromorphic functions valued in the Fr\'echet bundle $\mathscr D_V$. Note that this notion does not depend on the choice of a smooth section of the quotient map $V^*\to \Rplus\backslash V^*$, since any two such sections are ``polynomially equivalent''.

We have an isomorphism $\mathcal S(V) = \mathcal F(V)dv$, where $dv$ is a Haar measure. Consider the blow-up $\tilde V\to V$ at the zero section. By pullback, we get an embedding $\mathcal F(V) \hookrightarrow \mathcal F(\tilde V)$. 
The measure $dv$ on $V$ pulls back to a measure on $\tilde V$ which is not smooth, but vanishes to order $1$ close to the exceptional divisor, i.e., if $\epsilon$ is a local coordinate for the divisor, it is of the form $|\epsilon(v)| \mu(v)$, where $\mu$ is a smooth measure --- these can be thought of as smooth measures valued in a complex line bundle $\mathscr L_{\delta^\frac{1}{2}}$ over $\tilde V$, where the notation is to suggest the character by which $A = \Gm$ acts on the fibers of this line bundle over the exceptional divisor of $\tilde V$. Thus, the pullback of smooth measures from $V$ to the blow-up gives rise to an injective map:
$$ \mathcal S(V)\hookrightarrow \mathcal S(\tilde V, \mathscr L_{\delta^\frac{1}{2}}).$$
If we embed $A \hookrightarrow \Ga$ via its defining character, we have an isomorphism $\tilde V= V^*\times^A \Ga$, in terms of which the space $ \mathcal S(\tilde V, \mathscr L_{\delta^\frac{1}{2}})$ has the following description: 

Notice that, for a space of the form $X\times^G Y = (X\times Y)/G^\diag$, where $G$ acts freely on the product $X\times Y$ and the quotient map is surjective on $F$-points, for the \emph{unnormalized} action of $G$ on the various Schwartz spaces of measures, the convolution (push-forward) with respect to the map $X\times Y\to X\times^G Y$ gives rise to an isomorphism:
$$ \mathcal S(X\times^G Y) \simeq \mathcal S(X)\hat\otimes_{\mathcal S(G)} \mathcal S(Y).$$
Here, $\hat\otimes_{\mathcal S(G)}$ denotes the quotient of the completed tensor product $\hat\otimes$ by the closed subspace generated by the kernel of the tensor product over $\mathcal S(G)$.
However, for the \emph{normalized} action of $A$ on $\mathcal S(V^*)$, we are multiplying the unnormalized action by the character $\delta^{-\frac{1}{2}}$. This gives rise to a canonical isomorphism:

\begin{equation}\label{blowupasproduct}\mathcal S(\tilde V, \mathscr L_{\delta^\frac{1}{2}} ) = \mathcal S(V^*)\hat\otimes_{\mathcal S(A)} S(\Ga).
\end{equation}

Under Mellin transform, the tensor product $\hat\otimes_{\mathcal S(A)}$ translates to multiplication. This shows:

\begin{lemma}\label{MellinVblowup}
 Mellin transform gives rise to an isomorphism: 
$$\mathcal S(\tilde V, \mathscr L_{\delta^\frac{1}{2}} ) \xrightarrow\sim \mathbb H^\PW_{L(\bullet^{-1}, 1)} (\widehat A_\CC, \mathscr D_V).$$
\end{lemma}

\begin{proof}
 This follows from \eqref{blowupasproduct} and Proposition \ref{Tateimage}. (Recall that Mellin transform on $\mathcal S(\Ga)$, evaluated at a character $\chi$, corresponds to the zeta integral $Z(\bullet, \chi^{-1}, 1)$.)
\end{proof}

The image of $\mathcal S(V)$ in $\mathbb H^\PW_{L(\bullet^{-1}, 1)} (\widehat A_\CC,\mathscr D_V)$ is characterized as follows:  Note that the poles of $\chi\mapsto L(\chi^{-1}, 1)$ coincide with the characters $\chi$ such that the $\SL(V)$-representation $\mathscr D_{V,\chi}=\mathcal S(A\backslash V^*, \mathcal L_\chi)$ has a finite-dimensional submodule; indeed, there is an isomorphism of representations
$$ \mathcal S(A\backslash V^*, \mathcal L_\chi) \simeq  I(\chi^{-1}),$$
where $I(\chi^{-1}) = \Ind_B^{\SL(V)} (\chi^{-1} \delta^\frac{1}{2})$, a principal series representation, where $\chi$ is understood as a character of the universal Cartan of $\SL(V)$ via the identification $\Gm \xrightarrow\sim A$ defined by the positive coroot. In the non-Archimedean case, $L(\chi^{-1}, 1)$ has a pole only at $\chi= \delta^\frac{1}{2}$, which is precisely the point where the trivial representation is a subrepresentation of $I(\chi^{-1})$. In the Archimedean case, compare with \cite[Theorems 5.11 and 6.2]{JL}.

Let $\mathscr D_V^\fin$ be the subsheaf of $\mathscr D_V$ whose sections belong to the finite-dimensional subrepresentation of $\mathcal S(A\backslash V^*, \mathcal L_\chi)$, at the poles of $L(\chi^{-1}, 1) = L(\chi, -\check\alpha, 1)$. Let $\mathscr E_V = \mathscr D_V^\fin(-[L(\bullet^{-1}, 1)])$, that is, the bundle whose sections are rational/meromorphic sections of $\mathscr D_V^\fin$, with poles bounded by the poles of the function $\chi\mapsto L(\chi^{-1}, 1)$. It is the subsheaf of $\mathscr D_V(-[L(\bullet^{-1}, 1)])$, where the Paley--Wiener sections of Lemma \ref{MellinVblowup} are valued, determined by the condition that the residues are valued in the finite-dimensional subrepresentation, at all poles of $L(\bullet^{-1},1)$.

We have the following description of the image of Mellin transform:

\begin{proposition}\label{MellinVimage}
Mellin transform defines an isomorphism 
$$ \mathcal S(V) \xrightarrow\sim \mathbb H^\PW(\widehat A_\CC,\mathscr E_V).$$ 
The sheaf $\mathscr E_V$ is generated by its global Paley--Wiener sections.
\end{proposition}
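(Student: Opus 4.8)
The plan is to deduce both assertions from Lemma \ref{MellinVblowup}, which already identifies $\mathcal S(\tilde V,\mathscr L_{\delta^{1/2}})$ with the Paley--Wiener sections $\mathbb H^\PW_{L(\bullet^{-1},1)}(\mathscr D_V)$. The subspace $\mathcal S(V)\hookrightarrow \mathcal S(\tilde V,\mathscr L_{\delta^{1/2}})$ is cut out, as a space of sections of a cosheaf over $V$, by the condition that a section extends across the exceptional divisor of the blow-up $\tilde V\to V$ — equivalently, by a growth/vanishing condition along that divisor. First I would make this condition precise on the Mellin side: a section of $\mathscr D_V$ pulls back from $V$ precisely when, at each pole $\chi_0$ of $L(\chi^{-1},1)$, the residue (which a priori lies in the $\SL(V)$-representation $\mathcal S(A\backslash V^*,\mathcal L_{\chi_0})\simeq I(\chi_0^{-1})$) lies in the finite-dimensional subrepresentation. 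This is exactly the defining condition of $\mathscr E_V = \mathscr D_V^{\fin}(-[L(\bullet^{-1},1)])$ as set up just before the proposition, so the content is to verify that the blow-down condition on $\tilde V$ translates to this residue condition. Concretely, the only pole of $\chi\mapsto L(\chi^{-1},1)$ is at the trivial character (shifted appropriately), and there the function on $V$ that fails to extend to $\tilde V$, once averaged against $\chi^{-1}$, produces a simple pole whose residue is a constant function on $V^*$, i.e., the spherical/trivial vector — which spans the finite-dimensional (trivial) subrepresentation $\CC\subset I(1)$. Running this in the other direction — any section with residue in the finite-dimensional submodule comes from a function on $V$, not merely on $V^*$ — is the crux; in the non-Archimedean case it is a finite computation with the one bad component, and in the Archimedean case one invokes \cite[Proposition B.4.1]{SaBE1} (as in the proof of Lemma \ref{coinvariantA2}) to pass from the fiber to the stalk of the cosheaf at $0$, exactly as in the proof of Proposition \ref{VmodTimage}.

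For the first assertion, then, I would proceed in three steps. (i) Restrict Mellin transform on $\mathcal S(\tilde V,\mathscr L_{\delta^{1/2}})$ to the subspace $\mathcal S(V)$ and observe, using the integration-formula/zeta-integral identification of Mellin transform on $\mathcal S(\Ga)$ with $Z(\bullet,\chi^{-1},1)$ recalled in Lemma \ref{MellinVblowup}, that the image is contained in $\mathbb H^\PW(\mathscr E_V)$: the only thing to check beyond Lemma \ref{MellinVblowup} is the residue condition, which follows from computing the residue of $\check f$ at the unique pole and identifying it, via the integration formula, with the evaluation of $f$ on the zero section, i.e.\ with a multiple of the constant function — a spherical vector. (ii) Conversely, given a Paley--Wiener section valued in $\mathscr E_V$, Lemma \ref{MellinVblowup} produces a unique element of $\mathcal S(\tilde V,\mathscr L_{\delta^{1/2}})$; one must show it lies in $\mathcal S(V)$, i.e.\ descends along $\tilde V\to V$. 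This is the cosheaf-theoretic step above: the descent is automatic on $V^*$, and at $0$ one checks that the residue condition kills exactly the obstruction to extending across the exceptional divisor. (iii) Injectivity of Mellin transform on $\mathcal S(V)$ is inherited from its injectivity on $\mathcal S(\tilde V,\mathscr L_{\delta^{1/2}})$. Together (i)--(iii) give the claimed isomorphism $\mathcal S(V)\xrightarrow{\sim}\mathbb H^\PW(\mathscr E_V)$.

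For the generation statement, the goal is to exhibit enough global Paley--Wiener sections of $\mathscr E_V$ that their values at every $\chi$ span the fiber $\mathscr E_{V,\chi}=\mathscr D_{V,\chi}$ (away from poles, where one also needs to span the residue data correctly). I would produce these by transporting, under the isomorphism just proved, an explicit family of Schwartz functions on $V$: for instance, products of a fixed Gaussian (or, non-archimedean, the characteristic function of a lattice) in the radial $\Ga$-direction with an arbitrary element of $\mathcal S(V^*)$ averaged over $A$. Concretely, the identification $\tilde V = V^*\times^A\Ga$ and \eqref{blowupasproduct} exhibit $\mathscr D_{V,\chi}$ as a quotient of $\mathcal S(V^*)$, so $\mathcal S(V^*)$-translates of any one section whose $\Ga$-factor has non-vanishing Mellin transform already span each fiber; one then arranges the $\Ga$-factor to be one whose zeta integral $Z(\bullet,\chi^{-1},1)$ realizes $L(\chi^{-1},1)$ up to an invertible Paley--Wiener factor (the basic function / Gaussian), so that the pole and residue structure is exactly $\mathscr E_V$ and not larger. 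Finiteness of the generation (in the sense appropriate to each case) follows from the corresponding finiteness in Lemma \ref{MellinVblowup}.

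The main obstacle I anticipate is step (ii) above: proving that the residue condition defining $\mathscr E_V$ is not merely necessary but \emph{sufficient} for a Paley--Wiener section of $\mathscr D_V(-[L(\bullet^{-1},1)])$ to descend from $\tilde V$ to $V$. On $V^*$ there is nothing to prove, so the entire difficulty is concentrated at the origin, and — as in Lemma \ref{coinvariantA2} — the non-Archimedean case reduces to a finite-dimensional linear-algebra check at the single bad component of $\widehat A_\CC$, while the Archimedean case requires the passage from fiber to stalk of the relevant cosheaf, for which one cites \cite[Proposition B.4.1]{SaBE1}. Everything else is bookkeeping with the already-established Lemma \ref{MellinVblowup}, Proposition \ref{Tateimage}, and the principal-series identification $\mathcal S(A\backslash V^*,\mathcal L_\chi)\simeq I(\chi^{-1})$.
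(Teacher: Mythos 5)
Your overall route is the same as the paper's: both reduce to Lemma \ref{MellinVblowup} and identify the condition for an element of $\mathcal S(\tilde V,\mathscr L_{\delta^{1/2}})$ to descend to $\mathcal S(V)$ with the residue condition cutting out $\mathscr E_V$ inside $\mathscr D_V(-[L(\bullet^{-1},1)])$, and both derive the generation statement from the surjectivity of the resulting residue maps together with the (obvious) generation away from the poles by Mellin transforms of elements of $\mathcal S(V^*)$.

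There is, however, a concrete error in your ``Concretely...'' sentence and it propagates into both halves of your argument: it is not true that $\chi\mapsto L(\chi^{-1},1)$ has a single pole at a shift of the trivial character. That holds only in the non-Archimedean case. In the Archimedean case the local factor is a Gamma function and has infinitely many poles $\chi_0$, and at the $n$-th such pole the obstruction to descending from the blow-up, equivalently the residue of $\check f$, is controlled not by $\Phi(0)$ alone but by the $n$-th derivatives of the delta function at $0\in V$; the span of these functionals inside the fiber $\mathscr D_{V,\chi_0}\simeq I(\chi_0^{-1})$ is the higher-dimensional finite-dimensional subrepresentation (a symmetric power of the standard representation of $\SL(V)$), not the trivial line. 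This is exactly why the paper introduces $\mathscr D_V^\fin$ with general finite-dimensional subrepresentations rather than just the spherical vector, and its proof explicitly invokes ``derivatives of the delta function at zero'' to get the surjection from $\mathcal S(V)$ onto those subrepresentations. As written, your step (i) only verifies the residue condition at the first pole, and your generation argument (Gaussian times $\mathcal S(V^*)$-translates) spans the fibers of $\mathscr D_V$ at generic $\chi$ but does not explain how to generate $\mathscr E_V$ locally at the remaining poles, where one must realize all of $\Sym^n$ through jets of Schwartz functions at the origin. Once you replace the single-pole claim by the statement that at every pole the residue is computed by the appropriate derivatives of delta at $0$ (and that these surject onto the finite-dimensional submodule), your argument — including the crux step (ii) and the fiber-to-stalk passage in the Archimedean case — matches the paper's.
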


I remind that in the non-Archimedean case, the notation $\mathbb H^\PW$ for Paley--Wiener sections refers, simply, to polynomial sections, and that the sheaf being generated by a space $\Gamma$ of sections means that elements of $\Gamma$ generate the sections of the sheaf in any small neighborhood of any point, by multiplying them by the sections of the appropriate structure sheaf (=polynomial functions on $\widehat{A}_\CC$ in the non-Archimedean case, holomorphic functions in the Archimedean case).

\begin{proof}
Indeed, by the above, the Schwartz space $\mathcal S(\tilde V, \mathscr L_{\delta^\frac{1}{2}} )$ of the blow-up is identified with 
\[\mathbb H^\PW_{L(\bullet^{-1}, 1)} \left(\widehat A_\CC, \mathscr D_V\right) = H^\PW \left(\widehat A_\CC,\mathscr D_V(-[L(\bullet^{-1}, 1)])\right).\] The condition that the residue of a section at the poles of $L(\chi^{-1}, 1)$ lie in the finite-dimensional submodule is precisely the condition that the element of $\mathcal F(\tilde V) dv = \mathcal S(\tilde V, \mathscr L_{\delta^\frac{1}{2}} )$ descend to $\mathcal F(V) dv = \mathcal S(V)$. At every pole of $L(\bullet^{-1}, 1)$, then, the residue of the Mellin transform (considered as a meromorphic section of $\mathscr D_V$) is determined by derivatives of the delta function at zero, which give rise to a surjection from $\mathcal S(V)$ to the finite-dimensional subrepresentation in the fiber of $\mathscr D_V$. This shows that Paley--Wiener sections generate the sheaf $\mathscr E_V$ around those points; away from poles of $L(\bullet^{-1}, 1)$ this is obvious, already by considering Mellin transforms of elements of $\mathcal S(V^*)$.
\end{proof}

Locally around the poles of $L(\chi^{-1}, 1)$, those are precisely the sections that are obtained as the image of $\mathfrak R_\chi^{-1}$, the inverse of the standard intertwining operator (see \S \ref{scatWhittaker}). Thus, the bundle $\mathscr E_V$ can be identified with the bundle $\chi\mapsto \mathcal S(A\backslash V^*, \mathcal L_{\chi^{-1}})$ (notice the \emph{inverse} character!) around the poles of the $L$-function (but not globally over $\widehat A_\CC$!).

If $V$ is endowed with a symplectic form $\omega$, we can divide measures by the Haar half-density $|\omega|^\frac{1}{2}$ to replace $\mathcal S(V)$ in Proposition \ref{MellinVimage} by the space of Schwartz half-densities $\mathcal D(V)$. 
Notice that, by our normalization of the $A$-action, this map $\mathcal S(V)\to \mathcal D(V)$ is equivariant, and also recall that the argument $\chi$ of Mellin transform refers to the normalized action of $A$. The symplectic Fourier transform
$$ \mathfrak F: \mathcal D(V)\xrightarrow\sim \mathcal D(V)$$
is $A$-anti-equivariant, hence induces isomorphisms
\begin{equation}\label{Fchi} \mathfrak F_\chi: \mathscr E_{V,\chi^{-1}} \xrightarrow\sim \mathscr E_{V,\chi}.
\end{equation}
(Compare with \S \ref{scatWhittaker}.)

\subsubsection{Mellin transforms on $V_\ad$}

We now carry over this discussion to the stack $V_\ad = [V/\{\pm 1\}]$, which is a stack with an action of $\PGL(V)$. Its (isomorphism classes of) $F$-points can be identified, as in \S \ref{scattorus}, with the disjoint union 
$$ \bigoplus_\alpha V^\alpha(F)/\{\pm 1\},$$
where $\alpha$ runs over all quadratic extensions of $F$, including the split one $F\oplus F$, and $V^\alpha$ is the tensor product of $V$ with the imaginary line of that extension. Correspondingly, the Schwartz space $\mathcal S(V_\ad)$ is the direct sum:
$$ \mathcal S(V_\ad) = \bigoplus_\alpha \mathcal S(V^\alpha(F))_{\{\pm 1\}},$$
where the index $\{\pm 1\}$ denotes coinvariants, but of course we can identify those with invariants.

This has an action of $A_\ad=\Gm/\{\pm 1\}$, which also acts faithfully on the quotient $V\sslash T$. Thus, we will define Mellin transform $f\mapsto \check f$ on $\mathcal S(V_\ad)$, with the image lying in the space of sections over $\widehat{A_\ad}_\CC$ of the bundle $\chi\mapsto  \mathcal S(A_\ad\backslash V_\ad^*, \mathcal L_\chi)$. This is also a Fr\'echet bundle, which will be denoted by $\mathscr D_{V_\ad}$.
If $s: A\to A_\ad$ is the quotient map (for ``square'', when we identify these groups with $\Gm$), it induces a pullback map of characters $s^* :\widehat{A_\ad}_\CC \to \widehat A_\CC$, and there is a canonical isomorphism 
$$ \mathcal S(A_\ad\backslash V_\ad^*, \mathcal L_\chi) \simeq \mathcal S(A\backslash V^*, \mathcal L_{s^*\chi}),$$
induced by the isomorphism of stacks $A\backslash V = A_\ad \backslash V_\ad$, so $\mathscr D_{V_\ad}$, as a Fr\'echet bundle, is really the pullback: 
$$\mathscr D_{V_\ad} = s^{**}\mathscr D_V,$$
but endowed with an action of $\PGL(V)$, so that the fiber over $\chi\in \widehat{A_\ad}_\CC$ is isomorphic the principal series representation obtained by normalized induction from $\chi^{-1}$.

Let $\tilde V_\ad$ be the quotient of the blow-up $\tilde V$ by $\pm 1$; there is a pullback map
$$ \mathcal S(V_\ad) \hookrightarrow \mathcal S(\tilde V_\ad, \mathscr L_{\delta^\frac{1}{2}}),$$
where $\mathscr L_{\delta^\frac{1}{2}}$ denotes the pullback of the line bundle denoted by the same notation on $\tilde V$.
We have isomorphisms
$$ \tilde V_\ad = V^*_\ad \times^A \Ga =  V^* \times^A [\Ga/\{\pm 1\}] = V_\ad^* \times^{A_\ad} [\Ga/\{\pm 1\}],$$
which give rise to an isomorphism of Schwartz spaces:
\begin{equation}\label{blowupasproduct-ad} \mathcal S(\tilde V_\ad, \mathscr L_{\delta^\frac{1}{2}}) = \mathcal S(V_\ad^*) \hat\otimes_{\mathcal S(A_\ad)} \mathcal S([\Ga/\{\pm 1\}]),
\end{equation}
as in \eqref{blowupasproduct}. 

The $F$-points and Schwartz space of $[\Ga/\pm 1]$ are described in a completely analogous way as the $F$-points and Schwartz space of $V_\ad$, and the space $\mathcal S([\Ga/\pm 1])$ can be identified, through the push-forward under the map $\Ga\ni x\to \xi= x^2 \in \Ga \sslash \{\pm 1\}$, with the space of measures on the line which away from zero coincide with Schwartz measures, while in a neighborhood of zero are of the form
\begin{equation}\label{SchwartzGamodpm1} |\xi|^{-\frac{1}{2}} \sum_{\omega} C_\omega(\xi) \omega(\xi),\end{equation}
where $\omega$ runs over all quadratic characters of $F^\times$, and $C_\omega$ is a smooth measure. The analog of this statement for zeta integrals is the following: if we identify both $A$ and $A_\ad$ with $\Gm$, through, respectively, the cocharacters $\check\alpha$ and $\frac{\check\alpha}{2}$, the $L$-function $L(s^*\chi^{-1}, 1) = L(\chi,-\check\alpha, 1)$ has a pole at $\chi\in \widehat{A_\ad}_\CC$ if and only if the $L$-function 
$L(\omega\chi^{-1}, \frac{1}{2}) = L(\omega\chi, -\frac{\check\alpha}{2}, \frac{1}{2})$, has a pole for some quadratic character $\omega$. 

From \eqref{SchwartzGamodpm1} it follows that Mellin transform on $A_\ad=\Gm\subset \Ga\sslash \{\pm 1\}\simeq\Ga$ defines an isomorphism: 
$$\mathcal S([\Ga/\pm 1]) \xrightarrow\sim \sum_\omega \mathbb H^\PW_{L(\bullet^{-1} \cdot \omega, \frac{1}{2})} (\widehat{A_\ad}_\CC),$$ 
where the right hand side is understood as a subspace of the space of meromorphic functions on $\widehat{A_\ad}_\CC$, endowed with the quotient topology from the direct sum of the summands. 
Again, we do not introduce any normalization to the action of $A_\ad$ on $\Ga\sslash \{\pm 1\}$. Since the poles of the functions $L(\bullet^{-1} \cdot \omega, \frac{1}{2})$ are distinct for different $\omega$'s, the right hand side can also be written\footnote{When $F=\RR$, this requires an explanation: Suppose that $\Phi$ is a Paley--Wiener holomorphic multiple of $\prod_\omega L(\bullet^{-1} \cdot \omega, \frac{1}{2})$. There are two $\omega$'s here, the trivial and the sign character. We need to show that $\Phi = \Phi_1 + \Phi_{\sgn}$, where each summand is a Paley--Wiener holomorphic multiple of the corresponding $L(\bullet^{-1} \cdot \omega, \frac{1}{2})$. We will construct $\Phi_1$ as the Mellin transform of an element $f\in \mathcal S(\RR)$; notice that, in that case, the residues of $\Phi_1$ at the poles of $L(\bullet^{-1} , \frac{1}{2})$ are determined by the derivatives of arbitrary order of $f$ at $0$. We can prescribe those derivatives, so that the residues equal the residues of $\Phi$ at those points. Then, $\Phi_1 = \check f $ and $\Phi_{\sgn}= \Phi -\Phi_1$ satisfy our requirements.} 
$$\mathcal S([\Ga/\pm 1]) \xrightarrow\sim \mathbb H^\PW_{\prod_\omega L(\bullet^{-1} \cdot \omega, \frac{1}{2})} (\widehat{A_\ad}_\CC).$$ 

For $\tilde V_\ad$, this means, by \eqref{blowupasproduct-ad}: 
\begin{equation}\mathcal S(\tilde V_\ad, \mathscr L_{\delta^\frac{1}{2}}) \xrightarrow\sim   \mathbb H^\PW_{\prod_\omega L(\bullet^{-1} \cdot \omega, \frac{1}{2})} \left(\widehat{A_\ad}_\CC,\mathscr D_{V_\ad}\right).
\end{equation}
Moreover, as in the case of $\SL(V)$ (see the discussion before Proposition \ref{MellinVimage}), the poles of the sections on the right hand side coincide with the poles of $L(s^*\chi^{-1}, 1)$ which, as we saw above, correspond to the points where the space $\mathcal S(A_\ad\backslash V_\ad^*, \mathcal L_{\chi})\simeq \mathcal S(A\backslash V^*, \mathcal L_{s^*\chi})$ contains a finite-dimensional representation of $\PGL(V)$. 

Thus, we have analogous bundles over $\widehat{A_\ad}_\CC$:
$$\mathscr D_{V_\ad}^\fin = s^{**} \mathscr D_V^\fin$$ 
(characterized by the fact that sections are valued in the finite-dimensional subrepresentation at poles of $L(s^*\chi, 1)$, i.e., poles of some $L(\omega\chi, \frac{1}{2})$, with $\omega$ quadratic), and 
$$\mathscr E_{V_\ad}:=s^{**} \mathscr E_V = \mathscr D_{V_\ad}^\fin(-[\prod_\omega  L(\omega\chi, \frac{1}{2})]).$$

As a corollary of Proposition \ref{MellinVimage} we have:
\begin{corollary}\label{corMellinVad} 
Mellin transform defines an isomorphism 
\begin{equation}\label{MellinVad}\mathcal S(V_\ad) \xrightarrow\sim \mathbb H^\PW (\widehat{A_\ad}_\CC, \mathscr E_{V_\ad}).
\end{equation}
The sheaf $\mathscr E_{V_\ad}$ is generated by its global Paley--Wiener sections.
\end{corollary}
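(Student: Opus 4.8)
\emph{Proof plan.} The strategy is to reduce to Proposition \ref{MellinVimage}, running the same ``descent across the blow-up'' argument and organizing everything fiberwise over the set of quadratic characters $\omega$ of $F^\times$.

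As input I take the isomorphism
$$\mathcal S(\tilde V_\ad, \mathscr L_{\delta^\frac{1}{2}}) \xrightarrow\sim \mathbb H^\PW_{\sum_\omega L(\bullet^{-1} \cdot \omega, \frac{1}{2})} \left(\widehat{A_\ad}_\CC,\mathscr D_{V_\ad}\right),$$
established just above from \eqref{blowupasproduct-ad}, \eqref{SchwartzGamodpm1}, and Proposition \ref{Tateimage}. By the pullback embedding $\mathcal S(V_\ad)\hookrightarrow \mathcal S(\tilde V_\ad, \mathscr L_{\delta^\frac{1}{2}})$, the first assertion amounts to identifying which sections on the right lie in the image of $\mathcal S(V_\ad)=\bigoplus_\alpha \mathcal S(V^\alpha(F))_{\{\pm 1\}}$, i.e.\ descend from the blow-up $\tilde V_\ad$ to $V_\ad$ itself. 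Exactly as in the proof of Proposition \ref{MellinVimage}, the obstruction to such descent is, at each pole of $\chi\mapsto L(s^*\chi^{-1},1)$, the requirement that the residue of the Mellin transform (viewed as a meromorphic section of $\mathscr D_{V_\ad}$) be valued in the finite-dimensional subrepresentation $\mathscr D^\fin_{V_\ad}$. By the discussion preceding the corollary, the poles of $L(s^*\chi^{-1},1)$ are exactly the union over $\omega$ of the poles of $L(\omega\chi^{-1},\frac{1}{2})$, and these loci are disjoint for distinct $\omega$, so the condition can be checked one $\omega$ at a time, where it is precisely the descent condition of Proposition \ref{MellinVimage} after the pullback $s^{**}$ and the quadratic twist by $\omega$. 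This identifies the image of Mellin transform with $\mathbb H^\PW(\widehat{A_\ad}_\CC, \mathscr E_{V_\ad})$, where $\mathscr E_{V_\ad}=\mathscr D^\fin_{V_\ad}(-[\sum_\omega L(\omega\chi,\frac{1}{2})]) = s^{**}\mathscr E_V$, which is \eqref{MellinVad}.

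For generation by global Paley--Wiener sections I again follow Proposition \ref{MellinVimage}. Away from the poles of $L(s^*\chi^{-1},1)$ the statement already holds with $\mathcal S(V_\ad^*)\subset\mathcal S(V_\ad)$, since there Mellin transform surjects onto the sections of $\mathscr D_{V_\ad}$ over any pole-free open set. Near a pole one checks that the residue map $\mathcal S(V_\ad)\to \mathscr D^\fin_{V_\ad,\chi}$ is onto the finite-dimensional subrepresentation; this residue is computed by derivatives of the delta function supported at the origin of the relevant component $V^\alpha(F)$, which span that finite-dimensional constituent, so Paley--Wiener sections generate $\mathscr E_{V_\ad}$ locally around the pole as well. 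In the Archimedean case, the passage from generation of fibers to generation of the sheaf uses the topological input of \cite[Proposition B.4.1]{SaBE1}, exactly as in the proof of Lemma \ref{coinvariantA2}.

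The only point requiring genuine care is the bookkeeping of the square map: one must verify that $\mathscr D_{V_\ad}=s^{**}\mathscr D_V$ is compatible with the $\PGL(V)$-module structures and with the realization (non-canonical, but well defined up to polynomial equivalence, as noted after Lemma \ref{MellinVblowup}) of these bundles inside a constant Fr\'echet bundle, and that under $s$ a pole of $L(\omega\chi^{-1},\frac{1}{2})$ on $\widehat{A_\ad}_\CC$ pulls back to a pole of $L(s^*\chi^{-1},1)$ carrying the correct finite-dimensional $\PGL(V)$-constituent. Once these identifications are fixed there is no new analysis beyond Proposition \ref{MellinVimage}: the corollary follows formally, fiberwise over the quadratic characters $\omega$, from that proposition together with the blow-up computation \eqref{blowupasproduct-ad} and the description \eqref{SchwartzGamodpm1} of $\mathcal S([\Ga/\pm 1])$.
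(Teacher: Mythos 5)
Your proposal is correct and follows essentially the route the paper intends: the paper states this result as an immediate corollary of Proposition \ref{MellinVimage}, with the surrounding discussion of \eqref{blowupasproduct-ad}, \eqref{SchwartzGamodpm1}, and the identification of the poles of $L(s^*\chi^{-1},1)$ with those of $\sum_\omega L(\omega\chi^{-1},\frac{1}{2})$ supplying exactly the reduction you carry out. Your filling-in of the descent condition at the poles (residues valued in the finite-dimensional constituent, spanned by derivatives of delta functions at the origins of the components $V^\alpha(F)$) and the generation statement matches the argument of Proposition \ref{MellinVimage} after pullback under $s^{**}$.
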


Let now $\mathscr E^\circ_{V_\ad}$ denote the bundle 
\begin{equation}\mathscr E^\circ_{V_\ad} :=   \mathscr E_{V_\ad} ([L(\bullet^{-1}, \frac{1}{2})]) =\mathscr D_{V_\ad}^\fin(-[\prod_{\omega\ne 1} L(\bullet^{-1} \cdot \omega, \frac{1}{2})]).
\end{equation}
That is, the sections of $\mathscr E^\circ_{V_\ad}$ are meromorphic sections of $\mathscr D_{V_\ad}^\fin$, as for $\mathscr E_{V_\ad}$, \emph{but without poles at the poles of $L(\bullet^{-1}, \frac{1}{2})$}.

We define a subspace $\mathcal S(V_\ad)^\circ$, consisting of those elements of $\mathcal S(V_\ad)$ whose Mellin transform \eqref{MellinVad} is valued in the subbundle $\mathscr E^\circ_{V_\ad}$: 
\begin{equation}\label{defVadcirc}
 \mathcal S(V_\ad)^\circ \simeq  \mathbb H^\PW (\widehat{A_\ad}_\CC, \mathscr E_{V_\ad}^\circ).
\end{equation}

Embed $A_\ad\hookrightarrow \Ga$ via the character $e^{\alpha}$, which pulls back to the square of the defining character of $A=\Gm\subset \GL(V)$. Then there is a well-defined push-forward map:
\begin{equation}\label{Vproduct-measures} \mathcal S(V_\ad)^\circ \hat\otimes_{\mathcal S(A_\ad)} \delta^{\frac{1}{2}}\mathcal S(\Ga) \to \mathcal S(V_\ad),\end{equation}
as can be seen by considering Mellin transforms. It is easily seen in the non-Archimedean case to be an isomorphism, and I conjecture that this is the case in the Archimedean case as well (but we will not need this).  The following will be enough for our purposes:

\begin{lemma}\label{globalsections-V}
 The sheaf $\mathscr E^\circ_{V_\ad}$ is generated by its global Paley--Wiener sections. In the non-Archimedean case, the action of the element $h\in \widehat{S(A_\ad)}$ whose Mellin transform is $L(\chi, \frac{\check\alpha}{2}, \frac{1}{2})^{-1}$ gives rise to an isomorphism:
 $$ \mathcal S(V_\ad)\xrightarrow{h\cdot} \mathcal S(V_\ad)^\circ.$$ 
\end{lemma}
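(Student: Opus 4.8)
The plan is to reduce everything to the Mellin-transform description in \eqref{defVadcirc} together with Corollary \ref{corMellinVad}, and then run the same local argument that appeared in the proof of Corollary \ref{MellinVcirc} and Proposition \ref{MellinVimage}, now in the presence of the Fr\'echet bundle $\mathscr D_{V_\ad}^\fin$. First I would observe that the two assertions of the lemma are essentially equivalent in the non-Archimedean case: if the element $h\in\widehat{S(A_\ad)}$ with $\check h(\chi) = L(\chi,\frac{\check\alpha}{2},\frac{1}{2})^{-1}$ acts isomorphically $\mathcal S(V_\ad)\xrightarrow{h\cdot}\mathcal S(V_\ad)^\circ$, then, since $\mathscr E_{V_\ad}$ is generated by its global Paley--Wiener sections (Corollary \ref{corMellinVad}), multiplying those generators by $L(\bullet^{-1},\frac{1}{2})$ — which is exactly what the normalized action of $h$ does to Mellin transforms, by the computation at the end of the proof of Corollary \ref{MellinVcirc} — produces global Paley--Wiener sections that generate $\mathscr E^\circ_{V_\ad} = \mathscr E_{V_\ad}([L(\bullet^{-1},\frac{1}{2})])$.

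So the core is to prove the $h$-action statement. Here I would argue purely on Mellin transforms, using \eqref{MellinVad} and \eqref{defVadcirc}. Since $L(\chi,\frac{\check\alpha}{2},\frac{1}{2})^{-1}$ is a polynomial in $\chi\in\widehat{A_\ad}_\CC$ in the non-Archimedean case (this is the standard fact about reciprocals of local $L$-factors, already invoked in the proof of Corollary \ref{MellinVcirc}), it is the Fourier transform of an element $h$ of the completed Hecke algebra $\widehat{S(A_\ad)}$, and for the normalized action one has $\widecheck{(h\cdot f)}(\chi) = \check h(\chi^{-1})\check f(\chi) = L(\chi^{-1},\frac{\check\alpha}{2},\frac{1}{2})^{-1}\check f(\chi)$, exactly as in the last display of that earlier proof. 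Thus $h\cdot$ sends $\mathbb H^\PW(\mathscr E_{V_\ad})$ into $\mathbb H^\PW(\mathscr E^\circ_{V_\ad})$: multiplication by $L(\chi^{-1},\frac{1}{2})^{-1}$ cancels precisely the possible pole of a section of $\mathscr E_{V_\ad}$ along the zero locus of $L(\bullet^{-1},\frac{1}{2})$ (note $L(\bullet^{-1},\frac{1}{2})$ has simple poles, and they are disjoint from the poles coming from the nontrivial quadratic twists $L(\bullet^{-1}\omega,\frac{1}{2})$, $\omega\ne1$, which are the ones $\mathscr E^\circ_{V_\ad}$ still allows). Conversely, multiplication by $L(\chi^{-1},\frac{1}{2})$ — the Mellin multiplier of the ``$\Sym^2$''-type basic Schwartz measure on $\Ga$, or more precisely the element realizing the push-forward \eqref{Vproduct-measures} — gives a two-sided inverse, so $h\cdot$ is an isomorphism onto $\mathcal S(V_\ad)^\circ$; that the target is all of $\mathcal S(V_\ad)^\circ$ rather than a proper subspace is just the statement that every section of $\mathscr E^\circ_{V_\ad}$, when multiplied by $L(\bullet^{-1},\frac{1}{2})$, lands in $\mathscr E_{V_\ad}=\mathbb H^\PW$-image of $\mathcal S(V_\ad)$, which one checks pole-by-pole.

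The one point needing a little care, and the step I expect to be the main obstacle, is the residue/finite-dimensionality condition: the bundle $\mathscr E^\circ_{V_\ad}$ is defined as a subbundle of $\mathscr D^\fin_{V_\ad}(-[\sum_{\omega\ne1}L(\bullet^{-1}\omega,\frac12)])$ inside $\mathscr D_{V_\ad}$, and one must verify that multiplying a section of $\mathscr E_{V_\ad}$ by $L(\bullet^{-1},\frac{1}{2})^{-1}$ keeps the residues at the remaining poles valued in the finite-dimensional subrepresentation of the fiber of $\mathscr D_{V_\ad}$ — i.e. that the operation respects the ``$\mathscr D^\fin$'' constraint. This is clear because $L(\bullet^{-1},\frac{1}{2})^{-1}$ is a holomorphic (indeed polynomial) multiplier that does not create new poles, so the residue at a surviving pole $\chi_0$ is just $L(\chi_0^{-1},\frac{1}{2})^{-1}$ times the old residue, hence still in the finite-dimensional submodule $\mathscr D^\fin_{V_\ad,\chi_0}$; and for generation by global sections one simply transports a generating set for $\mathscr E_{V_\ad}$ (from Corollary \ref{corMellinVad}) through the isomorphism, noting that holomorphic/polynomial combinations are preserved. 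I would close by remarking, as the lemma's statement implicitly does, that in the Archimedean case this multiplier argument fails because $L(\chi,\frac{\check\alpha}{2},\frac{1}{2})^{-1}$ is not of moderate growth in vertical strips, so there one proves generation by global Paley--Wiener sections directly from the explicit germ expansion \eqref{SchwartzGamodpm1} via \eqref{blowupasproduct-ad}, exhibiting enough elements of $\mathcal S(V_\ad)^\circ$ whose Mellin transforms, together with multiplication by holomorphic functions on $\widehat{A_\ad}_\CC$, span each fiber of $\mathscr E^\circ_{V_\ad}$ — the finite-dimensional fibers of $\mathscr D^\fin_{V_\ad}$ being reached, as in the proof of Proposition \ref{MellinVimage}, by residues of derivatives of the delta measure at the origin of $V_\ad$.
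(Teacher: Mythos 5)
Your non-Archimedean argument is essentially the paper's: the multiplier $h$ with $\check h(\chi)=L(\chi,\frac{\check\alpha}{2},\frac{1}{2})^{-1}$ acts on Mellin transforms by $\check h(\chi^{-1})$, gives a bijection between Paley--Wiener sections of $\mathscr E_{V_\ad}$ and of $\mathscr E^\circ_{V_\ad}$, and generation is then transported from Corollary \ref{corMellinVad}. Your care about the $\mathscr D^\fin$-constraint on residues is correct and worth making explicit.

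The gap is in the Archimedean case, which is part of the first assertion of the lemma (generation of $\mathscr E^\circ_{V_\ad}$ by global Paley--Wiener sections is claimed, and later used, in both cases). You correctly note that $L(\chi,\frac{1}{2})^{-1}$ is not a multiplier there, but your proposed substitute --- ``exhibit enough elements of $\mathcal S(V_\ad)^\circ$ directly from the germ expansion \eqref{SchwartzGamodpm1} via \eqref{blowupasproduct-ad}'' --- does not get off the ground, because the defining condition of $\mathcal S(V_\ad)^\circ$ (Mellin transform valued in $\mathscr E^\circ_{V_\ad}$, i.e.\ no pole at the poles of $L(\bullet^{-1},\frac{1}{2})$) is a \emph{global} condition on integrals over $A_\ad$-orbits, not a local germ condition near the origin; the paper's remark immediately after the lemma stresses exactly this. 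So producing sections with prescribed behavior at a pole of $L(\bullet^{-1}\omega,\frac{1}{2})$, $\omega\ne 1$, while simultaneously enforcing the vanishing at the (different) poles of $L(\bullet^{-1},\frac{1}{2})$, cannot be done pointwise; you need a genuine element of $\widehat{\mathcal S(A_\ad)}$ doing the cancellation globally. The paper's fix is to construct, for each target point $\chi_0$, an honest Schwartz multiplier $h(x)=H(x)|x|^{\frac{1}{2}}d^\times x$ with $H$ a Schwartz function on $F^\times$ whose additive Fourier transform is again Schwartz on $F^\times$ (all derivatives vanish at $0$): the functional equation \eqref{gammaZeta} then forces $\check h$ to vanish at \emph{all} poles of $L(\chi,\frac{1}{2})$, and one arranges simple vanishing at $\chi_0^{-1}$. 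Then $h\cdot\mathcal S(V_\ad)\subset\mathcal S(V_\ad)^\circ$, and at $\chi_0$ the multiplication identifies the fibers of $\mathscr E_{V_\ad}$ and $\mathscr E^\circ_{V_\ad}$, so generation follows from Corollary \ref{corMellinVad} point by point. Without this (or an equivalent) construction, your Archimedean paragraph asserts the conclusion rather than proving it.
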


\begin{proof}
The normalized action of an element $h$ of the completed Schwartz algebra $\widehat{S(A_\ad)}$ of $A_\ad$ translates on Mellin transforms to multiplication by $\check h(\chi^{-1})$.

In the non-Archimedean case, we can take $h$ to be the element of the completed Hecke algebra whose Mellin transform is $L(\chi, \frac{\check\alpha}{2}, \frac{1}{2})^{-1}$, and this defines a bijection between ``Paley--Wiener'' (polynomial, supported on finitely many components) sections of the sheaves $\mathscr E_{V_\ad}$ and $\mathscr E_{V_\ad}^\circ$. The result now follows from the analogous statement for $\mathscr E_{V_\ad}$ (Corollary \ref{corMellinVad}). 

In the Archimedean case, to apply the analogous statement for $\mathscr E_{V_\ad}$, we need to find a multiplier $h\in \widehat{S(A_\ad)}$, whose Mellin transform $\check h$ has zeroes at the poles of the $L$-function $L(\chi,\frac{\check\alpha}{2},\frac{1}{2})$, simple ones at any chosen point. Identifying $A_\ad$ with $\Gm$ as before, one can take a measure $h(x) = H(x) |x|^\frac{1}{2} d^\times x \in \mathcal S(F^\times)$, where $H$ is a Schwartz function on $F^\times$ whose Fourier transform (considered as a function on $F$) is also a Schwartz function on $F^\times$ (i.e., all its derivatives at zero vanish). Then, the functional equation \eqref{gammaZeta} of Tate zeta integrals implies that the Mellin transform of $h$ vanishes at all poles of $L(\chi,\frac{1}{2})$, and it can easily be arranged that the vanishing is simple at any chosen point. In that case, the normalized action of $h$ on $\mathcal S(V_\ad)$,  sends Paley--Wiener sections of the sheaf $\mathscr E_{V_\ad}$ to Paley--Wiener sections of the sheaf $\mathscr E_{V_\ad}^\circ$; and, at any pole of $L(\chi^{-1},\frac{1}{2})$ where $\check h(\chi^{-1})$ has a simple zero, it will identify the fibers. 
\end{proof}

\begin{remark}
 The space $\mathcal S(V_\ad)^\circ$ is \emph{not} the space of rapidly decaying sections of a sheaf over the affine quotient $V\sslash\{\pm 1\}$, since the condition that Mellin transforms lie in the subbundle $\mathscr E^\circ_{V_\ad}$ depends on integrals of the functions over $A_\ad$-orbits, i.e., it is not local. As we will see, however, its image in $\mathcal S([V/T])$ will be the space $\mathcal S([V/T])^\circ$ that we defined before, which is local over $V\sslash T$. 
\end{remark}

We define the space $\mathcal D(V_\ad)$ precisely as we defined the space $\mathcal S(V_\ad)$, by using half-densities instead of measures on the ``pure inner forms'' $V^\alpha(F)$. The isomorphism \eqref{MellinVad} carries over to half-densities, and we have an analogous subspace $\mathcal D(V_\ad)^\circ$. The analog of \eqref{Vproduct-measures} is now a convolution map
\begin{equation}\label{Vproduct-densities} P_V: \mathcal D(V_\ad)^\circ \hat\otimes_{\mathcal S(A_\ad)} \mathcal D(\Ga) \to \mathcal D(V_\ad),\end{equation}
depending on the choice of a Haar measure on $A_\ad$.

The symplectic Fourier transform $\mathfrak F$ on $\mathcal D(V)$ descends canonically to an involution of $\mathcal D(V_\ad)$, as was explained in \S \ref{scattorus}; this also follows from the isomorphisms \eqref{Fchi} between the fibers of the bundle $\mathscr E_V$, and the fact that $\mathscr E_{V_\ad}$ is the pullback of $\mathscr E_V$ to $\widehat{A_\ad}_\CC$.
It admits the following factorization:

\begin{proposition}\label{factorFourier}
 There is an $A_\ad$-anti-equivariant endomorphism $\mathfrak F^\circ$ of $\mathcal D(V_\ad)^\circ$, such that, if $\mathfrak F_1: \mathcal D(\Ga)\xrightarrow\sim \mathcal D(\Ga)$ denotes Fourier transform on the line defined by the character $\psi$, we have, in terms of the map of \eqref{Vproduct-densities}:
\begin{equation}
 \mathfrak F\circ P_V(\varphi_V^\circ\otimes \varphi_1) = P_V(\mathfrak F^\circ \varphi_V^\circ \otimes \mathfrak F_1 \varphi_1).
\end{equation}
In terms of Mellin transforms, under the isomorphism 
$$\mathcal D(V_\ad)^\circ \xrightarrow\sim \mathbb H^\PW (\widehat{A_\ad}_\CC, \mathscr E^\circ_{V_\ad})$$
the transform $\mathfrak F^\circ$ is induced by the isomorphism
$$\gamma(\chi, \frac{1}{2},\psi)^{-1}\mathfrak F_\chi: \mathscr E^\circ_{V_\ad,\chi^{-1}} \xrightarrow\sim \mathscr E^\circ_{V_\ad,\chi}$$
(see \eqref{Fchi}).
\end{proposition}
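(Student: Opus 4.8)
The plan is to reduce the statement to the Mellin-transform picture established in Corollary~\ref{corMellinVad} and the description of $\mathscr E^\circ_{V_\ad}$, and then read off the factorization from the behaviour of the symplectic Fourier transform on the fibres of $\mathscr D_{V_\ad}$. First I would recall that, by the isomorphisms \eqref{Fchi} and the fact that $\mathscr E_{V_\ad} = s^{**}\mathscr E_V$, the transform $\mathfrak F$ on $\mathcal D(V_\ad)$ is, on Mellin transforms, given fibrewise by the family $\mathfrak F_\chi: \mathscr E_{V_\ad,\chi^{-1}}\xrightarrow\sim \mathscr E_{V_\ad,\chi}$. The key point is to compare the pole divisors: $\mathscr E_{V_\ad} = \mathscr D_{V_\ad}^\fin(-[\sum_\omega L(\omega\bullet^{-1},\frac12)])$ while $\mathscr E^\circ_{V_\ad} = \mathscr D_{V_\ad}^\fin(-[\sum_{\omega\ne 1} L(\omega\bullet^{-1},\frac12)])$, so the two differ exactly by the divisor of $L(\bullet^{-1},\frac12)$. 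Since the functional equation of Tate zeta integrals \eqref{gammaZeta} gives $\gamma(\chi,\frac12,\psi) = L(\chi,\frac12)/L(\chi^{-1},\frac12)$ (up to the $\varepsilon$-factor, which is holomorphic and non-vanishing), multiplying $\mathfrak F_\chi$ by $\gamma(\chi,\frac12,\psi)^{-1}$ precisely exchanges the pole at $L(\chi,\frac12)$ for one at $L(\chi^{-1},\frac12)$; I would check that this makes $\gamma(\chi,\frac12,\psi)^{-1}\mathfrak F_\chi$ carry $\mathscr E^\circ_{V_\ad,\chi^{-1}}$ isomorphically onto $\mathscr E^\circ_{V_\ad,\chi}$, and that the resulting family of fibre isomorphisms is meromorphic with no spurious poles, hence defines an endomorphism $\mathfrak F^\circ$ of $\mathbb H^\PW(\widehat{A_\ad}_\CC,\mathscr E^\circ_{V_\ad}) \simeq \mathcal D(V_\ad)^\circ$. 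The $A_\ad$-anti-equivariance is automatic because each $\mathfrak F_\chi$ intertwines $\chi^{-1}$ and $\chi$ and the scalar $\gamma(\chi,\frac12,\psi)^{-1}$ respects this.

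Next I would establish the compatibility with the convolution map $P_V$ of \eqref{Vproduct-densities}. On Mellin transforms, $P_V$ corresponds to multiplication: a section of $\mathscr E^\circ_{V_\ad}$ over $\widehat{A_\ad}_\CC$ is multiplied by the Mellin transform of an element of $\mathcal D(\Ga)$, which by Proposition~\ref{Tateimage} is a holomorphic (Paley--Wiener) multiple of $L(\bullet^{-1},\frac12)$ in the variable $\chi\in \widehat{A_\ad}_\CC$ pulled back via $A_\ad\hookrightarrow\Ga$; the product lands in $\mathscr E_{V_\ad}$ because the extra factor $L(\bullet^{-1},\frac12)$ cancels the missing divisor. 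Fourier transform $\mathfrak F_1$ on $\mathcal D(\Ga)$ acts on these Mellin transforms by multiplication by $\gamma(\chi,\frac12,\psi)$ (again \eqref{gammaZeta}). Therefore, on the product section, the full transform $\mathfrak F$ acts fibrewise by $\mathfrak F_\chi$, which I can write as $\bigl(\gamma(\chi,\frac12,\psi)^{-1}\mathfrak F_\chi\bigr)\cdot \gamma(\chi,\frac12,\psi)$; the first factor is exactly $\mathfrak F^\circ$ on the $\mathscr E^\circ_{V_\ad}$-part and the second is exactly the action of $\mathfrak F_1$ on the $\mathcal D(\Ga)$-part, giving $\mathfrak F\circ P_V(\varphi_V^\circ\otimes\varphi_1) = P_V(\mathfrak F^\circ\varphi_V^\circ\otimes\mathfrak F_1\varphi_1)$. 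One has to be a little careful that this identity, a priori only on the image of $P_V$ in the Mellin picture, pins down $\mathfrak F^\circ$ uniquely; this follows from the surjectivity of the relevant multiplication map onto a subspace that is ``large enough'' to separate sections of $\mathscr E^\circ_{V_\ad}$ — concretely, because $\mathcal D(\Ga)$ contains elements whose Mellin transform is a non-vanishing holomorphic multiple of $L(\bullet^{-1},\frac12)$ on any given compact set avoiding fixed points, and because $\mathscr E^\circ_{V_\ad}$ is generated by its global Paley--Wiener sections (Lemma~\ref{globalsections-V}).

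The main obstacle I anticipate is the Archimedean case: there $L(\chi,\frac12)^{-1}$ is not a multiplier, so one cannot simply "divide by $L$" at the level of Schwartz spaces, and one must argue entirely within the Paley--Wiener/Fréchet-bundle description, verifying that $\gamma(\chi,\frac12,\psi)^{-1}\mathfrak F_\chi$ genuinely preserves the Paley--Wiener growth conditions (rapid decay in vertical strips, valued in the correct completed tensor product $\mathscr C(\widehat{T_0})\hat\otimes\mathbb H^\PW$) and that the residues at the poles of $L(\bullet^{-1},\frac12)$ land in the finite-dimensional subrepresentation cut out by $\mathscr D_{V_\ad}^\fin$. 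The $\varepsilon$-factor in $\gamma(\chi,\frac12,\psi)$ is entire and nowhere zero and of controlled growth, so it does not disturb these conditions; the real work is checking that $\mathfrak F_\chi$, which a priori is only defined as the descent of the symplectic Fourier transform, extends meromorphically with exactly the pole divisor of $L$ and no worse — but this is precisely what is encoded in \eqref{Fchi} together with Proposition~\ref{MellinVimage}, so the argument is a bookkeeping of divisors rather than new analysis. Once the divisor bookkeeping is done, uniqueness of $\mathfrak F^\circ$ and the stated Mellin-transform formula are immediate.
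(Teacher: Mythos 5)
Your proposal follows essentially the same route as the paper: define $\mathfrak F^\circ$ directly on Mellin transforms by the family $\gamma(\chi,\frac{1}{2},\psi)^{-1}\mathfrak F_\chi$, check by divisor bookkeeping (plus, in the Archimedean case, polynomial growth of $\gamma$ in vertical strips, via Stirling) that this preserves the Paley--Wiener sections of $\mathscr E^\circ_{V_\ad}$, and then deduce the $P_V$-compatibility from the fact that the Tate functional equation makes $\mathfrak F_1$ act on Mellin transforms by multiplication by $\gamma(\chi,\frac{1}{2},\psi)$, so that the two $\gamma$'s cancel against $\mathfrak F_\chi$. Your closing remark about uniqueness is unnecessary (you have already defined $\mathfrak F^\circ$; the $P_V$-identity is then a verification, not a characterization), but harmless.

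There is, however, one concrete error: you have written the functional equation upside down, and as stated your divisor bookkeeping does not close. You assert $\gamma(\chi,\frac{1}{2},\psi)=L(\chi,\frac{1}{2})/L(\chi^{-1},\frac{1}{2})$ up to the $\epsilon$-factor; the correct relation is $\gamma(\chi,\frac{1}{2},\psi)=\epsilon(\chi,\frac{1}{2},\psi)\,L(\chi^{-1},\frac{1}{2})/L(\chi,\frac{1}{2})$, so that $\gamma$ has a simple \emph{pole} where $L(\chi^{-1},\frac{1}{2})$ does and a simple \emph{zero} where $L(\chi,\frac{1}{2})$ has a pole. This orientation is exactly what the argument needs: for $f\in\mathcal D(V_\ad)^\circ$, the section $\chi\mapsto\mathfrak F_\chi\check f(\chi^{-1})$ is a priori only a Paley--Wiener section of $\mathscr E_{V_\ad}$, which is permitted simple poles along $[L(\bullet^{-1},\frac{1}{2})]$ that $\mathscr E^\circ_{V_\ad}$ forbids; it is the simple zero of $\gamma^{-1}$ along precisely that divisor which removes them, while the simple pole of $\gamma^{-1}$ along $[L(\bullet,\frac{1}{2})]$ is absorbed because at such $\chi$ the argument $\chi^{-1}$ is a pole of $L(\bullet^{-1},\frac{1}{2})$, where $\check f$ already satisfies the more restrictive $\mathscr E^\circ$-condition. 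With your formula the zeros and poles of $\gamma^{-1}$ are interchanged, and you would be \emph{creating} a (double) pole exactly where $\mathscr E^\circ_{V_\ad}$ requires holomorphy. Once the orientation is corrected, the rest of your argument is the proof given in the paper.
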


\begin{proof}
 Indeed, $\gamma(\chi, \frac{1}{2},\psi)^{-1}\mathfrak F_\chi$ is an isomorphism between the fibers of $\mathscr E^\circ_{V_\ad}$, as stated, because $\gamma(\chi, \frac{1}{2},\psi)$ has a simple pole where $L(\chi^{-1},\frac{1}{2})$ does, and a simple zero where $L(\chi, \frac{1}{2})$ has a pole. It is holomorphic and non-zero everywhere else, and, in the Archimedean case, of polynomial growth in vertical strips (by Stirling's formula), thus preserves the spaces of Paley--Wiener sections. Hence, by the definition \eqref{defVadcirc} of $\mathcal S(V_\ad)^\circ$, the family of isomorphisms $\gamma(\chi, \frac{1}{2},\psi)^{-1}\mathfrak F_\chi$ induces an endomorphism $\mathfrak F^\circ$ of this space, which is $A_\ad$-anti-equivariant.
 
 On the other hand, for $\varphi_1 \in \mathcal D(\Ga)$ we have
 $$ \widecheck{\mathfrak F_1\varphi_1}(\chi) = \gamma(\chi,\frac{1}{2},\psi) \widecheck{\varphi_1}(\chi^{-1}),$$
 by the functional equation \eqref{gammaZeta} of Tate's thesis,
 so if $\varphi = P_V(\varphi_V^\circ\otimes \varphi_1)$ and $\varphi' = P_V(\mathfrak F^\circ \varphi_V^\circ\otimes \mathfrak F_1\varphi_1)$, we have
 \[\check\varphi'(\chi) =   \gamma(\chi,\frac{1}{2},\psi)^{-1}\mathfrak F_\chi \widecheck{\varphi_V^\circ}(\chi^{-1}) \cdot \gamma(\chi,\frac{1}{2},\psi) \widecheck{\varphi_1}(\chi^{-1}) = \mathfrak F_\chi \left(\widecheck{\varphi_V^\circ}(\chi^{-1}) \cdot  \widecheck{\varphi_1}(\chi^{-1})\right)\]
 (notice that $\widecheck{\varphi_1}(\chi^{-1})$ is a scalar),
 in other words $\check\varphi'(\chi) = \mathfrak F_\chi'\check\varphi(\chi^{-1})$, thus $\varphi' = \mathfrak F\varphi$.
\end{proof}

Finally, we descend to coinvariants: 

\begin{proposition}\label{VHankelascoinv}
The image of $\mathcal S(V_\ad)^\circ$ under the push-forward map to measures on $V\sslash T=V_\ad\sslash T_\ad$ is dense in the space $\mathcal S([V/T])^\circ$, and we have a commutative diagram
\begin{equation}
 \xymatrix{
 \mathcal D(V_\ad)^\circ \ar[r]^{\mathfrak F^\circ} \ar[d] & \mathcal D(V_\ad)^\circ \ar[d] \\
 \mathcal D([V/T])^\circ \ar[r]^{\mathcal H^\circ} & \mathcal D([V/T])^\circ,
 }
\end{equation}
where $\mathcal H^\circ$ is the Hankel transform of Corollary \ref{Hcirc}.

\end{proposition}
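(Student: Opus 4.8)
The plan is to reduce both assertions to the Mellin-transform descriptions of the spaces involved (\S\ref{ssA2}, \S\ref{ssVcircMellin}), exploiting the factorization of Fourier transform from Proposition~\ref{factorFourier}. Since $A_\ad$ acts faithfully on $V\sslash T$, the norm coordinate identifies $\widehat{A_\ad}_\CC$ with $\widehat{F^\times}_\CC$; under this identification Mellin transform identifies $\mathcal S(V_\ad)^\circ$ with $\mathbb H^\PW(\widehat{A_\ad}_\CC,\mathscr E^\circ_{V_\ad})$ by \eqref{defVadcirc} and $\mathcal S([V/T])^\circ$ with $\mathbb H^\PW_{L(\bullet^{-1}\eta,\frac12)}(\widehat{F^\times}_\CC)$ by Corollary~\ref{MellinVcirc}. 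The push-forward $p\colon\mathcal S(V_\ad)\to\mathcal S([V/T])$ is the $T_\ad$-coinvariants map (surjective in the non-Archimedean case, with dense image in general, by Lemma~\ref{coinvariantA2} applied to the norm and non-norm parts); it is $A_\ad$-equivariant, and on Mellin transforms it acts fibrewise over $\chi$ by a $T_\ad$-invariant torus-period functional $\ell_\chi$ on $\mathscr D_{V_\ad,\chi}$, whose poles on the full bundle are governed by $L(\bullet^{-1},\frac12)L(\bullet^{-1}\eta,\frac12)$, consistent with Proposition~\ref{VmodTimage}. Restricting $\ell_\chi$ to $\mathscr E^\circ_{V_\ad}$, whose sections by construction avoid the pole coming from $L(\bullet^{-1},\frac12)$, should land it in $\mathbb H^\PW_{L(\bullet^{-1}\eta,\frac12)}$, i.e.\ in $\mathcal S([V/T])^\circ$. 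Density then follows, cleanly in the non-Archimedean case, from the observation that $\mathcal S(V_\ad)^\circ=h\cdot\mathcal S(V_\ad)$ and $\mathcal S([V/T])^\circ=h\cdot\mathcal S([V/T])$ for the \emph{same} element $h$ of the completed Hecke algebra of $A_\ad=\Gm$, namely the one with Mellin transform $L(\chi,\tfrac12)^{-1}$ (Lemma~\ref{globalsections-V} and Corollary~\ref{MellinVcirc}), together with $A_\ad$-equivariance: $p(\mathcal S(V_\ad)^\circ)=h\cdot p(\mathcal S(V_\ad))$ is dense in $h\cdot\mathcal S([V/T])=\mathcal S([V/T])^\circ$. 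In the Archimedean case I would run the same argument with the simple-zero construction of multipliers from the proof of Lemma~\ref{globalsections-V}, using that $\mathscr E^\circ_{V_\ad}$ is generated by its global Paley--Wiener sections.

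For the commutativity of the square, I would first note that the full symplectic Fourier transform $\mathfrak F$ on $\mathcal D(V_\ad)$ descends under $p$ to the Hankel transform $\mathcal H$ of Proposition~\ref{symplecticHankel}: this is \eqref{FourierHankelV} on the components attached to the pure inner forms of $T$, and the remaining components are handled in the same way (the relevant discriminant changes only by a square, which is invisible in the $4d$-dilation). Next, by Proposition~\ref{factorFourier}, in terms of the convolution map \eqref{Vproduct-densities} one has $\mathfrak F\circ P_V(\varphi^\circ\otimes\varphi_1)=P_V(\mathfrak F^\circ\varphi^\circ\otimes\mathfrak F_1\varphi_1)$; the push-forward carries $P_V$ to the analogous convolution $\mathcal D([V/T])^\circ\,\hat\otimes_{\mathcal S(A_\ad)}\,\mathcal D(\Ga)\to\mathcal D([V/T])$ coming from Corollary~\ref{MellinVcirc}, so pushing the displayed identity forward factors $\mathcal H$ as the descent of $\mathfrak F^\circ$ followed by the descent of $\mathfrak F_1$. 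Comparing the explicit formulas of Proposition~\ref{symplecticHankel} and Corollary~\ref{Hcirc} then exhibits $\mathcal H$ as $\mathcal H^\circ$ post-composed with regularized convolution by $\psi(\tfrac1\bullet)|\bullet|^{-\frac12}d^\times\bullet$---which is precisely the descent of $\mathfrak F_1$---and cancelling this common factor identifies the descent of $\mathfrak F^\circ$ with $\mathcal H^\circ$. Equivalently, this can be verified directly on Mellin transforms: $\mathfrak F^\circ$ contributes the factor $\gamma(\chi,\tfrac12,\psi)^{-1}$ by Proposition~\ref{factorFourier}, while $\ell_\chi$ satisfies a functional equation whose scalar---computed from Tate's functional equation once over $F$ and once over $E$, reconciled through \eqref{lambda}---supplies the complementary $\gamma(\chi,\tfrac12,\psi)$ together with the factor $\lambda(\eta,\psi)^{-1}\gamma(\chi^{-1}\eta,\tfrac12,\psi)$ and the $4d$-dilation that define $\mathcal H^\circ$ via Proposition~\ref{H02D} and Corollary~\ref{Hcirc}.

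The main obstacle will be this fibrewise input: computing $\ell_\chi$ on $\mathscr E^\circ_{V_\ad}$ precisely enough to see that it contributes only the pole of $L(\bullet^{-1}\eta,\frac12)$, and pinning down the scalar in its functional equation $\ell_\chi\circ\mathfrak F_\chi=(\text{scalar})\cdot\ell_{\chi^{-1}}$. Keeping track of the competing normalizations---the $A$- versus $A_\ad$-actions, the shifts by $|\bullet|^{-\frac12}$ built into the Mellin transforms, and the $4d$-dilation---will be the chief source of friction, but each of these is already controlled by the results of \S\ref{ssA2} and \S\ref{ssVcircMellin}, so no genuinely new computation should be required.
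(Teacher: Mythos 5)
Your reduction to Mellin transforms, the containment of the push-forward image in $\mathcal S([V/T])^\circ$ via the map of sheaves $\mathscr E^\circ_{V_\ad}\to \CC_\bullet(-[L(\bullet^{-1}\eta,\frac{1}{2})])$, and the derivation of the commutativity of the square from Proposition \ref{factorFourier} together with the descent of $\mathfrak F$ to $\mathcal H$ all match the paper's argument, and the non-Archimedean density argument via the multiplier $h$ with $\check h(\chi)=L(\chi,\frac{1}{2})^{-1}$ is exactly the one given there.

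The genuine gap is in the Archimedean density statement. You propose to ``run the same argument with the simple-zero construction of multipliers from the proof of Lemma \ref{globalsections-V}.'' This does not work: any admissible $h\in\widehat{\mathcal S(A_\ad)}$ there is a genuine Schwartz measure on $F^\times$, so $\check h$ is entire and of rapid decay in vertical strips; besides the prescribed simple zeros at the poles of $L(\chi,\frac{1}{2})$ it necessarily has further zeros and imposes extra decay. Consequently $h\cdot\mathcal S([V/T])$ is contained in the kernel of the (continuous, nonzero) evaluation functional at any extraneous zero $\chi_0$ of $\check h$, hence lies in a proper \emph{closed} subspace of $\mathcal S([V/T])^\circ$ and is not dense. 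Letting $h$ vary only yields that the relevant sheaf is generated by these sections \emph{locally} on $\widehat{A_\ad}_\CC$, which is weaker than density in the Fr\'echet space $\mathbb H^\PW_{L(\bullet^{-1}\eta,\frac{1}{2})}(\widehat{F^\times}_\CC)$ of global Paley--Wiener sections; bridging local generation and global density is precisely the obstruction the paper identifies (``I do not know such an argument'' of complex analysis). The paper instead splits into cases: for $T$ non-split it uses compactness of $T$ to identify coinvariants with invariants and lift any $f^\circ\in\mathcal S([V/T])^\circ$ directly to $\mathcal S(V_\ad)^\circ$; for $T$ split it runs a representation-theoretic argument — a $K$-type finiteness lemma comparing $\mathcal S(V_\ad^*)^{(K,\tau)}$ with $\mathcal S(V_\ad)^{\circ,(K,\tau)}$, followed by an analysis showing that the quotient $J'=J/J^\circ$ is an inverse limit of $A_\ad$-finite quotients supported on poles of $L(\bullet^{-1},\frac{1}{2})$, and that no nontrivial $A_\ad$-finite functional (a combination of derivatives of evaluations) can vanish on $J^\circ$ because $\mathscr E^\circ_{V_\ad}$ is generated by its global Paley--Wiener sections. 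Some argument of this kind is needed; your proposal as written omits it.
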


\begin{proof}
First, let us compare Mellin transforms of elements of $\mathcal S(V_\ad)$ and of $\mathcal S([V/T])$: the former are, by Corollary \ref{corMellinVad}, Paley--Wiener sections of the sheaf $\mathscr E_{V_\ad}$; the latter were defined in \eqref{MellinVTdef} as scalar valued functions on $\widehat{F^\times_\CC}=\widehat{A_\ad}_\CC$, after fixing an identification $V\sslash T\simeq \mathbbm A^1$, but we can also think of them as valued in a bundle $\CC_\bullet: \chi\mapsto \CC_\chi$ over $\widehat{A_\ad}_\CC$, where $\CC_\chi$ denotes $(A_\ad,\chi)$-equivariant functions on $V\sslash T\smallsetminus\{0\}$ (for the normalized action descending from \eqref{normVmodT}). 

The rationality/meromorphicity of Mellin transforms of elements of $\mathcal S([V/T])$, together with the generation of $\mathscr E_{V_\ad}$ by its global sections (Corollary \ref{corMellinVad}) imply that 
there is a rational/meromorphic map of sheaves over $\widehat{A_\ad}_\CC$:
$$\mathscr E_{V_\ad} \to \CC_\bullet,$$
defined by a convergent integral when $\chi$ vanishes sufficiently fast at infinity. This relates the Mellin transforms of elements of $\mathcal S(V_\ad)$ with those of their push-forwards to $\mathcal S([V/T])$. The description of the image of Mellin transform on $\mathcal S([V/T])$ in Proposition \ref{VmodTimage} implies that the poles of this map are precisely the poles of $L(\bullet^{-1}, \frac{1}{2}) L(\bullet^{-1} \eta, \frac{1}{2})$, so we get a surjective, holomorphic map of sheaves:
\begin{equation}\label{betweenMellin} \mathscr E_{V_\ad} \to \CC_\bullet(-[L(\bullet^{-1}, \frac{1}{2}) L(\bullet^{-1} \eta, \frac{1}{2})]).\end{equation}

In particular, we get a map of subsheaves 
\begin{equation}\label{betweenMellin-circ}\mathscr E^\circ_{V_\ad} :=   \mathscr E_{V_\ad} ([L(\bullet^{-1}, \frac{1}{2})]) \to \CC_\bullet(-[L(\bullet^{-1} \eta, \frac{1}{2})]),\end{equation}
hence the Mellin transforms of push-forwards of elements of $\mathcal S(V_\ad)^\circ$ are Paley--Wiener sections of the subsheaf $\CC_\bullet(-[L(\bullet^{-1} \eta, \frac{1}{2})])$, hence, by Corollary \ref{MellinVcirc}, the image of $\mathcal S(V_\ad)^\circ$ belongs to $\mathcal S([V/T])^\circ$.

In the non-Archimedean case, the element $h$ of the completed Hecke algebra $\widehat{\mathcal S(A_\ad)}$ whose Mellin transform is $\check h(\chi) = L(\chi,\frac{1}{2})$ acts on $\mathcal S(V_\ad)$, resp.\ $\mathcal S([V/T])$, and maps it onto $\mathcal S(V_\ad)^\circ$, resp.\ $\mathcal S([V/T])^\circ$ (or, the corresponding spaces of half-densities); the action of $A_\ad$ commutes with all arrows in the diagram, and the result follows easily from the commutative diagram \eqref{FourierHankelV}. 

For the Archimedean case, where no such multiplier exists, we will need a different argument. The difficulty lies in showing that the map $\mathcal S(V_\ad)^\circ\to \mathcal S([V/T])^\circ$ has dense image; the commutativity of the diagram then follows as in the non-Archimedean case. The argument is quite technical, and the reader might choose to skip it at first reading.

The case of non-split $T$ is easiest: 
Let us identify again $V$ with the additive group of a quadratic extension $E$, and $T$ with the kernel of the norm map. (The precise coordinates do not matter here.)
In the non-split case, $T$ is a compact subgroup of $E^\times$; thus, we can identify invariants and coinvariants, i.e., the map 
 $$ \mathcal S(E)^T\oplus \mathcal S(E^\alpha)^T \to \mathcal S([V/T])$$
 is an isomorphism, where $E^\alpha$ is the ``pure inner form'' attached to the quadratic extension splitting $T$. We can thus lift any element $f^\circ\in \mathcal S([V/T])^\circ$ to $\mathcal S(E)^T\oplus \mathcal S(E^\alpha)^T \subset \bigoplus_\beta \mathcal S(E^\beta)$, where $\beta$ runs over all quadratic extensions. The image of the lift under $\mathcal S(E)\oplus \mathcal S(E^\alpha)\to \mathcal S(V_\ad)$ will have Mellin transform with no poles at the poles of $L(\chi^{-1},\frac{1}{2})$, and with values in $\mathscr E^\circ_{V_\ad}$, since the trivial $T$-type belongs to the finite-dimensional subrepresentation of the principal series $I(\chi^{-1})$.

 For the split case, we will show that the push-forward map $\mathcal S(V_\ad)^\circ \to \mathcal S([V/T])^\circ$, interpreted in terms of Mellin transforms as a map of Paley--Wiener sections: 
 $$\mathbb H^\PW \left(\widehat{A_\ad}_\CC, \mathscr E_{V_\ad}^\circ \right) \to \mathbb H^\PW_{L(\bullet^{-1}\eta, \frac{1}{2})} \left(\widehat{A_\ad}_\CC, \CC_\bullet\right)$$
 has dense image. (Here, $\eta=1$, but I will maintain it in the notation in order to make the comparison with previous statements easier.) Knowing already that the corresponding map \eqref{betweenMellin} is surjective on Paley--Wiener sections (by Proposition \ref{VmodTimage}), the difficulty lies in showing that by imposing the extra condition of vanishing at the poles of $L(\bullet^{-1}, \frac{1}{2})$ that defines $\mathscr E_{V_\ad}^\circ$, we still have enough Paley--Wiener sections to generate a dense subspace of Paley--Wiener sections of the image sheaf $\CC_\bullet(-[L(\bullet^{-1} \eta, \frac{1}{2})])$.
 
 It would be desirable to have a simple argument of  complex analysis for this; unfortunately, I do not know such an argument. The difficulty lies in the fact that $L(\bullet^{-1}, \frac{1}{2})^{-1}$, a reciprocal Gamma function, is not of polynomial growth in vertical strips, hence cannot be used as a multiplier.
 
 Therefore, I will use some representation theory. Notice that the map $V^*\to V\sslash T$ is smooth and surjective on $F$-points (when $T$ is split), therefore the push-forward map $\mathcal S(V^*)\to \mathcal S(V/T)$ has image equal to the space $\mathcal S(V\sslash T)$ of Schwartz measures on the affine line, whose Mellin transforms are precisely the Paley--Wiener sections $\mathbb H^\PW_{L(\bullet^{-1}\eta, \frac{1}{2})} \left(\widehat{A_\ad}_\CC, \CC_\bullet\right)$, by Proposition \ref{Tateimage}. (Recall that we are here parametrizing Mellin transforms on $V\sslash T$ according to the normalized action of $A_\ad$, which explains why the poles appear at poles of $L(\bullet^{-1}\eta, \frac{1}{2})$.) The same will, obviously, hold if we replace $V^*$ by $V_\ad^*$. Thus, we have two maps into the same space:
 
 \begin{equation}\label{xyPW} \xymatrix{
 \mathcal S(V_\ad^*) \simeq H^\PW \left(\widehat{A_\ad}_\CC, \mathscr D_{V_\ad} \right) \ar[drr] \\
 && \mathbb H^\PW_{L(\bullet^{-1}\eta, \frac{1}{2})} \left(\widehat{A_\ad}_\CC, \CC_\bullet\right).\\
 \mathcal S(V_\ad)^\circ \simeq \mathbb H^\PW \left(\widehat{A_\ad}_\CC, \mathscr E_{V_\ad}^\circ \right) \ar[urr]
 }\end{equation}

 The upper arrow is surjective, and we want to show that the lower arrow has dense image. The spaces $\mathcal S(V_\ad^*)$ and $\mathcal S(V_\ad)^\circ$ are not directly comparable, as the study of their Mellin transforms shows: the latter have Mellin transforms in $\mathscr E_{V_\ad}^\circ$, which are \emph{meromorphic} sections of $\mathscr D_{V_\ad}$ with poles at the poles of $\prod_{\omega\ne 1} L(\bullet^{-1} \cdot \omega, \frac{1}{2})$; but also with the condition that at any pole $\chi$ of $L(\bullet^{-1}\omega , \frac{1}{2})$, \emph{including $\omega=1$}, they are valued in the finite-dimensional subrepresentation of the fiber of $\mathscr E_{V_\ad}^\circ$; this condition at poles of $L(\bullet^{-1}, \frac{1}{2})$ shows that $\mathscr D_{V_\ad}$ is not a subsheaf of $\mathscr E_{V_\ad}^\circ$. Thus, $\mathcal S(V_\ad^*)$ is not a subspace of $\mathcal S(V_\ad)^\circ$.
 
 But both spaces are smooth Fr\'echet representations of $\PGL(V)$, and, fixing a good maximal compact subgroup $K$, their spaces of $K$-finite vectors are dense. For every $K$-type (=irreducible representation of $K$) $\tau$, we will denote by an exponent $(K,\tau)$ the subspaces of vectors belonging to this $K$-type.
 
 We have the following observation:
 
 \begin{lemma}\label{lemma-finite}
  For every $K$-type $\tau$, the quotient 
  $$\mathcal S(V_\ad^*)^{(K,\tau)} / \mathcal S(V_\ad^*)^{(K,\tau)}  \cap \mathcal S(V_\ad)^{\circ, (K,\tau)}$$
  is finite-dimensional, and supported on a finite number of poles of $L(\bullet^{-1}, \frac{1}{2})$, as an $A_\ad$-module.
 \end{lemma}
 
 \begin{proof}[Proof of the lemma]
  The essential observation, here, is that at poles of $L(\bullet^{-1}, \frac{1}{2})$ the given $K$-type belongs to the finite-dimensional subrepresentation of $\mathscr D_{V_\ad, \chi}$, for all but a finite number of $\chi$'s. The polynomial $\check D_\tau$ on $\widehat{A_\ad}$ which has simple zeroes at precisely those $\chi$'s (it can be thought of as the Mellin transform of a multiplier $D_\tau\in \widehat{\mathcal S(A_\ad)}$) gives rise to a map:
  $$D_\tau: \mathcal S(V_\ad^*)^{(K,\tau)} \to \mathcal S(V_\ad^*)^{(K,\tau)}  \cap \mathcal S(V_\ad)^{\circ, (K,\tau)}$$
  whose cokernel, as an $A_\ad$-module, is supported only on this finite set of $\chi$'s.
 \end{proof}
 
 Let $J$ denote the space $\mathbb H^\PW_{L(\bullet^{-1}\eta, \frac{1}{2})} \left(\widehat{A_\ad}_\CC, \CC_\bullet\right)$,  $J^\circ$ the closure of the image of $\mathcal S(V_\ad)^\circ$ under \eqref{xyPW}, and $J'=J/J^\circ$ --- it is a smooth Fr\'echet representation of $A_\ad$ of moderate growth, or \emph{$SF$-representation}, in the language of \cite{BeKr}. Let $I_\tau^\circ\subset I_\tau$ denote, respectively, the images of $\mathcal S(V_\ad^*)^{(K,\tau)}  \cap \mathcal S(V_\ad)^{\circ, (K,\tau)}$ and $\mathcal S(V_\ad^*)^{(K,\tau)}$ in $J$. By the fact that $I_\tau^\circ \subset J^\circ$, and Lemma \ref{lemma-finite}, we deduce that $J'$ is has a (countable) dense subspace of $A_\ad$-finite vectors, with eigencharacters among the poles of $L(\bullet^{-1}, \frac{1}{2})$. If we enumerate these poles $\chi_1, \chi_2, \dots$, and let $J'_n$ be the $\chi_n$-eigenspace, we claim that any continuous seminorm on $J'$ is zero on all but a finite number of $J'_n$'s. Indeed, any continuous seminorm is bounded by some seminorm that defines (after completion) a Banach representation of $A_\ad$, in particular has the property that the action of any $a\in A_\ad$ is bounded; but if we take $a\in A_\ad$ with $\delta(a)>1$, its eigenvalues on the spaces $J'_n$ (i.e., the values $\chi_n(a)$ for $\chi_n$  ranging in the poles of $L(\bullet^{-1}, \frac{1}{2})$) are unbounded. Hence, the seminorm should be zero for $n\gg 0$.
 
 Hence, $J'$ is the inverse limit of its $A_\ad$-finite quotients, i.e., is defined by a system of $A_\ad$-finite seminorms. But the only $A_\ad$-finite seminorms on $J=\mathbb H^\PW_{L(\bullet^{-1}\eta, \frac{1}{2})} \left(\widehat{A_\ad}_\CC, \CC_\bullet\right)$ are bounded by derivatives of the sections at various points of $\widehat{A_\ad}_\CC$ (let us call them ``delta functions''). 
 On the other hand, by Lemma \ref{globalsections-V}, Paley--Wiener sections generate the sheaf $\mathscr E_\ad^\circ$, hence their images under the surjective map \eqref{betweenMellin-circ} generate global sections of $\CC_\bullet(-[L(\bullet^{-1} \eta, \frac{1}{2})])$. In particular, no non-trivial linear combination of derivatives of delta functions is zero on $J^\circ$. We deduce that $J'=0$, concluding the proof of Proposition \ref{VHankelascoinv}. 
\end{proof}

\begin{remark}
 Besides issues about the existence of sufficiently many Paley--Wiener sections, we can explain representation-theoretically why the sheaves $ \mathscr D_{V_\ad}$ (representing $\mathcal S(V_\ad^*)$) and $\mathscr E_{V_\ad}^\circ$ (representing $\mathcal S(V_\ad)^\circ $) both surject onto $\CC_\bullet(-[L(\bullet^{-1} \eta, \frac{1}{2})])$. On one hand, sections of $\mathscr E_{V_\ad}^\circ$, considered as rational sections of $\mathscr D_{V_\ad}$, have poles at the poles of $\prod_{\omega\ne 1} L(\bullet^{-1} \cdot \omega, \frac{1}{2})$, with residues in finite-dimensional representations. But the torus $T_\ad$ acts by non-trivial characters on these finite-dimensional spaces of residues (when $\eta=1$), thus they do not appear when we mod out by $T_\ad$. On the other hand, at any pole $\chi$ of $L(\bullet^{-1} \eta, \frac{1}{2})$, the sections of $\mathscr E_{V_\ad}^\circ$ land in the finite-dimensional subrepresentation $W_\chi$ of the fiber $\mathscr D_{V_\ad,\chi} \simeq I(\chi^{-1})$ (a principal series representation for $\PGL_2$). In this case, it is not true that the map of coinvariants $(W_\chi)_{T_\ad}\to (I(\chi^{-1}))_{T_\ad}$ is an isomorphism. However, push-forward to $V\sslash T$ does not see the difference. The reason is that, in the split case, the quotient stack $[V_\ad^*/T_\ad] = [V^*/T]$ is isomorphic to a non-separated scheme: the affine line with the origin doubled. The push-forward map $\mathcal S(V^*)\to \mathcal S([V/T])$ has image in the usual Schwartz space $\mathcal S(V\sslash T)$ of the affine line, and does not see the ``doubled'' origin, i.e., the map $\mathcal S(V^*_\ad)_{T_\ad} = \mathcal S(V^*)_T \to \mathcal S([V/T])$ has a non-trivial kernel, modulo which the map   $(W_\chi)_{T_\ad}\to (I(\chi^{-1}))_{T_\ad}$ becomes an isomorphism.
\end{remark}

\section{The Rankin--Selberg variety} \label{sec:RS}

\subsection{The space and its orbital integrals} \label{ssRSthespace}

Let $(V,\omega)$ be a two-dimensional symplectic space, and let $\bar X$ be the variety of pairs $(v, g)$ with $v\in V, g\in \SL(V)$, with an action of the group $\tilde G = (\Gm\times \SL(V)^2)/\{\pm 1\}^\diag$ by
$$ (v, g) \cdot (a, g_1, g_2) = (a v g_1, g_1^{-1} g g_2).$$
In particular, $\bar X$ has a $\tilde G$-equivariant map to $V\times V$ by $(v,g)\mapsto (v, vg)$, and is a two-dimensional symplectic vector bundle over $Y:= \SL(V)= \SL(V)^\diag\backslash \SL(V)^2$. If we identify $V$ with the fiber over the identity, we can also write 
$$\bar X= \Ind_{\SL(V)^\diag}^{\SL(V)^2}V = V\times^{\SL(V)^\diag} \SL(V)^2.$$ The space $\bar X$ is a spherical variety under the action of $\tilde G$, whose open $\tilde G$-orbit, the complement of the zero section of the bundle, we will be denoting by $X$. 

If we fix a standard symplectic basis $(e_1,e_2)$ of $V$ (i.e., $\omega(e_1,e_2)=1$) to identify $\SL(V)$ with $\SL_2$ and $V^*=V\smallsetminus\{0\}$ with $N\backslash \SL_2$, where $N =$ the stabilizer of $e_2=$ the upper triangular unipotent subgroup, then $X = N^\diag\backslash (\SL_2 \times \SL_2)$, with the coset of $1$ corresponding to the element $(e_2\in V, 1\in \SL(V))$, and $\bar X$ is its affine closure. 

Throughout this paper, we denote by $A$ the universal Cartan of $\SL(V)$, and by $A_\ad=A/\{\pm 1\}$ the universal Cartan of $\PGL(V)$. We let $G$ be the group $(A\times \SL(V))/\{\pm 1\}^\diag$, and identify $\tilde G$ with the group $(A\times \SL(V)^2)/\{\pm 1\}^\diag$ via the character $\frac{\alpha}{2}$, the positive half-root on $A$. By this identification, we maintain our convention of considering the universal Cartan $A$ as a subgroup of the automorphism group of such a variety $X$ by using a Borel \emph{opposite} to the stabilizer $N^\diag$ of a point. In other words, if $B$ is the normalizer of $N$ in $\SL_2^\diag$, we let the quotient $A=B/N$ act by $G$-automorphisms on $X$ as:
$ a\cdot Nx = N({^{w_0}a}) x,$
where $w_0$ is the longest element of the Weyl group.
By this convention, the points of $Y$ are limits of the form $\lim_{t\to 0} (\lambda(t) x)$, where $x\in X$ and $\lambda$ is a \emph{dominant} cocharacter of $A$. The action of $A$ on functions or measures on $\bar X$ will be normalized, again, as
\begin{equation}\label{actionnormagain-measures}a\cdot \mu(x) = \delta(a)^{-\frac{1}{2}}\mu(a\cdot x)
\end{equation}
and on functions by 
\begin{equation}\label{actionnormagain-functions}a\cdot \Phi(x) = \delta(a)^{\frac{1}{2}}\Phi(a\cdot x),
\end{equation}
so that it is an $L^2$-isometry with respect to the $(\SL_2)^2$-invariant measure. (We will call an $(\SL_2)^2$-invariant measure on $X$ a ``Haar measure'' --- notice that it is a smooth measure on $\bar X$.) For half-densities, no such normalization is needed. Under these normalizations, the maps
$$ \mathcal F(\bar X) \to \mathcal D(\bar X) \to \mathcal S(\bar X)$$
from Schwartz functions to Schwartz half-densities and measures, that are given at every step by multiplication by a Haar half-density, are equivariant.

The symplectic structure gives rise to a Fourier transform:
\begin{equation}\label{FourierX} \mathfrak F: \mathcal D(\bar X)\xrightarrow\sim \mathcal D(\bar X),\end{equation}
defined fiberwise as in \S \ref{scatWhittaker}. Multiplication or division by a fixed Haar half-density turns \eqref{FourierX} into a morphism between spaces of measures or functions.
 
The transform is \emph{anti-equivariant} with respect to the $\tilde G$-action, in the sense that it twists the $\tilde G$-action by the automorphism of $\tilde G$ that is induced from the inversion map on $A$.

We now consider the quotients $X/\SL(V)$ and $\bar X/\SL(V)$, where $\SL(V)$ acts diagonally. If we use a symplectic basis to identify $X=N^\diag\backslash \SL_2^2$, as above, we have an isomorphism $X/\SL(V) = \frac{\SL_2}{N}$, \emph{which we fix to be the following:}
\begin{equation}\label{isommodN}
 N^\diag(g_1,g_2)\mapsto \frac{g_1^{-1} g_2}{N}.
\end{equation}
This is compatible with the isomorphism $\SL_2\simeq \SL_2^\diag\backslash \SL_2^2$ by the right orbit map on the identity element.

In terms of invariant theory, we have
$$ \mathfrak C_X := X\sslash \SL(V) = \bar X \sslash \SL(V) \simeq  \mathbbm A^2,$$
with coordinates $(c,t)$ coming from the invariants of the projections $\bar X\to V\times V$ and $\bar X\to \SL_2$; more specifically,
\begin{eqnarray}
 \nonumber
 c(v,g) &=& -\omega(v, vg), \mbox{ and } \\
 t(v,g) &=& \tr(g).
\end{eqnarray}
We have put a negative sign in the first invariant so that, in terms of the isomorphism $X\sslash \SL_2^\diag = \Dfrac{\SL_2}{N}$ of \eqref{isommodN}, the coordinates $(c,t)$ are: 
$$ \begin{pmatrix} a & b \\ c & d \end{pmatrix} \mapsto (c, t:=\tr = a+d).$$

We will eventually define several spaces of measures on $\mathfrak C_X$:
$$ \mathcal S(X/\SL_2) \subset \mathcal S(\bar X/\SL_2)^\circ \subset \mathcal S(\bar X/\SL_2).$$

The first and last are self-explanatory: $\mathcal S(X/\SL_2)$ and $\mathcal S(\bar X/\SL_2)$ are, correspondingly, the push-forwards of the spaces of Schwartz measures on $X$ and on $\bar X$. The intermediate space $\mathcal S(\bar X/\SL_2)^\circ$ is the most important one. In a sense that will be discussed,  the space $\mathcal S(\bar X/\SL_2)$ corresponds to the ($L$-function of the) sum of the symmetric square with the trivial representation of the group $G$, while $\mathcal S(\bar X/\SL_2)^\circ$ corresponds to just the symmetric square representation (for which the space $\mathcal S(X/\SL_2)$ would be too small). If we think of spaces, such as (suitable) algebraic varieties and stacks, as incorporating $L$-functions --- for instance, the $\Gm$-space $\mathbb A^1$ incorporating the standard $L$-function of $\Gm$, with the subspace $\Gm\subset \mathbb A^1$ incorporating the trivial (degree $0$) $L$-function --- it might be appropriate to think of $\mathcal S(\bar X/\SL_2)^\circ$ as the Schwartz space of a ``motive'' between $X/\SL_2$ and $\bar X/\SL_2$.  Unfortunately, I do not know how to make sense of such a ``motive''.

\subsection{Hankel transform for the Rankin--Selberg variety}

Let $ \Phi \in \mathcal F(\bar X)$, and let $f$ be the push-forward of $\Phi dx$ to $\mathfrak C_X$, where $dx$ is an invariant measure on $\bar X$. It is easy to see that there is a Haar measure on $\SL_2$ such that the following integration formula holds:
\begin{equation}\label{integrationX} \int_X \Phi(x) dx = \int_{\mathfrak C} O_{(c,t)} (\Phi) dc dt,\end{equation}
where $O_{(c,t)}$ is the orbital integral of $\Phi$ for the diagonal $\SL_2$-action, and $dc, dt$ denotes our fixed Haar measure on $F$. (Notice that $\SL_2$ acts freely over the open subset with $c\ne 0$.) Thus, the push-forward of the measure $\Phi dx$ is 
$$f(c,t) = O_{(c,t)}(\Phi) dc dt = O_{(c,t)}(\Phi) |c| \cdot |t| \cdot d^\times c d^\times t .$$
We define the push-forward of the half-density $\Phi (dx)^\frac{1}{2}$ to be   
$$O_{(c,t)}(\Phi) (dc dt)^\frac{1}{2}.$$
This way, we obtain spaces $\mathcal F(\bar X/\SL_2)$ and $\mathcal D(\bar X/\SL_2)$ of densely defined ``push-forward'' functions and half-densities on $\C_X$.

The fiberwise symplectic Fourier tranform $\mathfrak F$ on $\mathcal D(\bar X)$ descends to a ``Hankel transform'' $\mathcal H_X$ on $\SL_2^\diag$-coinvariants, which can identified with the space $\mathcal D(\bar X/\SL_2)$.
Rather than proving this identification, we will directly prove that the Fourier transform descends to an endomorphism of $\mathcal D(\bar X/\SL_2)$, together with an explicit formula for it.  
 
\begin{theorem}\label{RSHankel}
 We have a commutative diagram
 $$ \xymatrix{
 \mathcal D(\bar X) \ar[r]^{\mathfrak F}\ar[d] & \mathcal D(\bar X) \ar[d] \\ 
 \mathcal D(\bar X/\SL_2) \ar[r]^{\mathcal H_X} & \mathcal D(\bar X/\SL_2),
 } $$
 where the bottom horizontal arrow is given by
\begin{multline}\label{HankelX-densities} \mathcal H_X \varphi(c,t) =  \lambda(\eta_{t^2-4},\psi)^{-1} \left((\psi(\frac{1}{\bullet}) |\bullet|^{-\frac{1}{2}} d^\times\bullet) \star_c \right.\\
\left. (\psi(\frac{1}{\bullet}) \eta_{t^2-4}(\bullet)|\bullet|^{-\frac{1}{2}} d^\times\bullet) \star_c  \varphi\right)((4-t^2) c^{-1}, t), 
\end{multline}
where:
\begin{itemize}
\item $E_D$ denotes the quadratic extension $F(\sqrt{D})$, and $\eta_{D}$ the associated quadratic character of $F^\times$;
\item $\star_c$ denotes multiplicative convolution in the variable $c$ (with fixed $t$), understood in the regularized sense of \S \ref{sssFourierconv}.
\end{itemize}

\end{theorem}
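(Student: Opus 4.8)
The plan is to establish the identity one fiber at a time over $Y=\SL(V)$ and to reduce it to the two-dimensional Hankel transform of Proposition \ref{symplecticHankel}. Since $\bar X\to Y$ is a symplectic vector bundle and the diagonal $\SL(V)$-action on $\bar X$ covers the conjugation action on $Y$, the fiberwise symplectic Fourier transform $\mathfrak F$ commutes with the $\SL_2^\diag$-action (the $\tilde G$-anti-equivariance only twists the abelian factor $A$), hence descends to the push-forward space $\mathcal D(\bar X/\SL_2)$ of generalized half-densities on $\C_X=\mathbbm A^2$. Because $\SL(V)$ acts freely on $\{c\ne 0\}$, every element of $\mathcal D(\bar X/\SL_2)$, and in particular the push-forward of $\mathfrak F\Phi$ for $\Phi\in\mathcal D(\bar X)$, is a genuine smooth half-density away from $\{c=0\}$ and is determined there; since $\{t\ne\pm 2\}$ is dense, it is enough to compute this restriction over the regular semisimple locus.

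First I would fix a regular semisimple $g_0\in Y$ with $\tr g_0=t\ne\pm2$ and set $T_t=Z_{\SL(V)}(g_0)$, a maximal torus. The restriction of $\bar X$ over the conjugacy class $C_t$ of $g_0$ is the induced variety $V\times^{T_t}\SL(V)$, whose stack quotient by $\SL(V)$ is $[V/T_t]$ with $V\sslash T_t=\mathbbm A^1$ the $\{t=\mathrm{const}\}$-slice of $\C_X$ in the coordinate $c$; by Lemma \ref{coinvariantA2} (in the ``$\tilde V=V\times^{T_t}\SL_2$'' model) the push-forward of $\mathcal D(\bar X)$ over this slice is $\mathcal D([V/T_t])$, and $\mathfrak F$ restricts on the fiber $V$ over $g_0$ to the symplectic Fourier transform of $(V,\omega)$. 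Next I would identify the relevant binary quadratic form: in a symplectic basis with $g_0=\bigl(\begin{smallmatrix}\alpha&\beta\\\gamma&\delta\end{smallmatrix}\bigr)$ one has $c(v,g_0)=-\omega(v,vg_0)=-\beta x^2-(\delta-\alpha)xy+\gamma y^2=:Q_{g_0}(v)$, and $h\in\SL(V)$ preserves $Q_{g_0}$ iff $hg_0h^{-1}=g_0$, so $\SO(Q_{g_0})=T_t$; using $\det g_0=1$ and $\tr g_0=t$ one computes $(\alpha-\delta)^2+4\beta\gamma=t^2-4$, whence in the normalization of \S\ref{ssA2} the discriminant of $Q_{g_0}$ is $d=\tfrac{4-t^2}{4}$ and its splitting field is $F(\sqrt{t^2-4})$, i.e.\ $4d=4-t^2$ and $\eta=\eta_{t^2-4}$. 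Feeding this into Proposition \ref{symplecticHankel} produces, for each fixed $t$, exactly the operator of \eqref{HankelX-densities} with $t$ as a spectator parameter, provided one also checks that the single Haar measure on $\SL(V)$ fixed in \eqref{integrationX} induces, for every $t$, the measure on $T_t\backslash\SL(V)$ compatible with Proposition \ref{symplecticHankel}; this works because the factor $\Vol(T_t)$ entering \eqref{lambdaconstant} absorbs the discrepancy, so the scalar is precisely $\lambda(\eta_{t^2-4},\psi)$.

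Putting this together, the stated identity holds on the dense open set $\{t\ne\pm2,\ c\ne0\}$. Since $\mathfrak F$ is an automorphism of $\mathcal D(\bar X)$, the push-forward of $\mathfrak F\Phi$ lies in $\mathcal D(\bar X/\SL_2)$ by definition; being smooth away from $\{c=0\}$ it is determined by its restriction to $\{c\ne0\}$, where it equals the right-hand side of \eqref{HankelX-densities} applied to the push-forward of $\Phi$. This simultaneously shows that \eqref{HankelX-densities} defines an endomorphism $\mathcal H_X$ of $\mathcal D(\bar X/\SL_2)$ (the one fitting into the square) and that the square commutes.

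The hard part will be the globalization across the exceptional fibers $t=\pm2$. Every factor in \eqref{HankelX-densities} makes sense for each fixed $t\ne\pm2$, but the character $\eta_{t^2-4}$ and the scalar $\lambda(\eta_{t^2-4},\psi)$ do not vary continuously as $t\to\pm2$ (the quadratic extension ramifies), so it is not transparent that applying the formula to a genuine push-forward $\varphi$ produces a half-density with the correct germ along $\{c=0\}$ near $t=\pm2$; this is forced by the identification with the (manifestly well-behaved) push-forward of $\mathfrak F\Phi$, but a clean argument needs a sufficiently precise description of $\mathcal D(\bar X/\SL_2)$ — equivalently of the orbital integrals $O_{(c,t)}(\Phi)$ — in a neighborhood of $t=\pm2$, together with a density and continuity argument in the relevant Fr\'echet topology. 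The measure-normalization bookkeeping of the previous paragraph is a second, more routine, complication.
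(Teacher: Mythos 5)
Your argument is correct and follows essentially the same route as the paper: restrict to the slices $t\ne\pm 2$, identify the fiber with $[V/T_t]$ where $T_t=\SO$ of the quadratic form $c(v)=-\omega(v,vg)$ of discriminant $-\tfrac{1}{4}(t^2-4)$, and invoke Proposition \ref{symplecticHankel}. The ``globalization across $t=\pm 2$'' that you flag as the hard part is not actually needed: $\mathcal D(\bar X/\SL_2)$ consists of densely defined push-forward half-densities, $\mathcal H_X$ is by construction the descent of the automorphism $\mathfrak F$ (which commutes with the $\SL_2^\diag$-action), and the theorem only asserts the formula on the dense locus --- exactly the ``forced by the identification with the push-forward of $\mathfrak F\Phi$'' observation you already make.
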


\begin{remark}
We also translate this formula to push-forwards of Schwartz measures and push-forwards (orbital integrals) of Schwartz functions on $\bar X$, using the integration formula \eqref{integrationX}. Here we will use the regular multiplicative convolution in the $c$-coordinate, without any normalization: 

Fourier transform on measures descends to the involution 
 \begin{multline}\label{HankelX-measures} \mathcal H_X  f(c,t) = \lambda(\eta_{t^2-4},\psi)^{-1}  \left|\frac{c^2}{4-t^2}\right|^{\frac{1}{2}}\cdot\\
\cdot \left( (\psi(\frac{1}{\bullet}) d^\times\bullet) \star_c (\psi(\frac{1}{\bullet}) \eta_{t^2-4}(\bullet) d^\times\bullet) \star_c  f\right)((4-t^2)c^{-1}, t)
\end{multline}
on $\mathcal S(\bar X/\SL_2)$.

Fourier transform on functions descends to the involution
\begin{multline}\label{HankelX-orbital} O_{(c,t)} (\mathfrak F \Phi) = \lambda(\eta_{t^2-4},\psi)^{-1} \left|\frac{4-t^2}{c^2}\right|^{\frac{1}{2}}\cdot
\\ \cdot \left((\psi(\frac{1}{\bullet}) |\bullet|^{-1} d^\times\bullet) \star_c (\psi(\frac{1}{\bullet}) \eta_{t^2-4}(\bullet)|\bullet|^{-1} d^\times\bullet) \star_c O_{(\bullet, t)}\Phi\right)((4-t^2)c^{-1}, t)
\end{multline}
on $\mathcal F(\bar X/\SL_2)$, for $t\ne \pm 2$ and $c\ne 0$.

\end{remark}

 \begin{proof}

The fiber of $\bar X$ over a fixed value $t\ne \pm 2$ is isomorphic to $\tilde V_t = V\times^{T_t} \SL(V)$, where $T_t$ is the centralizer of a $g\in \SL(V)$ with $\tr(g)=t$, and $V$ is identified with the fiber $V\times\{g\}$ of $\bar X$ over $g$. Equivalently, $T_t$ can be identified with the special orthogonal group of the quadratic form $c(v) = -\omega(v, vg)$. If $t^2-4\ne 0$, this quadratic form is non-degenerate, and $T_t$ is a torus. For any $\varphi\in \mathcal D(\bar X/\SL_2)$, the section
$$ \frac{\varphi}{(dt)^\frac{1}{2}} : t\mapsto \frac{\varphi(\bullet, t)}{(dt)^\frac{1}{2}}$$ 
is a section of elements of the spaces $\mathcal D([V/T_t])$ (when $t\ne \pm 2$), and since symplectic Fourier transform is defined fiberwise, the Hankel transform $\mathcal H_X$ exists, and satisfies:
$$ \frac{\mathcal H_X \varphi}{(dt)^\frac{1}{2}} (t) = \mathcal H_t \left(\frac{\varphi(\bullet, t)}{(dt)^\frac{1}{2}}\right),$$
where $\mathcal H_t$ is the symplectic Hankel transform of Proposition \ref{symplecticHankel}, now with an index $t$ to denote the specific fiber. 
 
One immediately computes that the discriminant of the quadratic form $c(v) = -\omega(v, vg)$ is $d = -\frac{1}{4} (\tr(g)^2-4) = -\frac{1}{4} (t^2-4)$. Applying now Proposition \ref{symplecticHankel}, the result follows.

\end{proof}

\begin{remark}
Let us also explicitly see how \eqref{HankelX-orbital} is obtained from a formal calculation (which is justified by Proposition \ref{symplecticHankel}):

We identify $X$ with $N^\diag\backslash (\SL_2)^2$, and represent its points by pairs $(g_1, g_2)$  of elements in $\SL_2$. The fiber of $\bar X$ over $1\in Y=\SL_2$ is thus identified with $\mathbbm A^2$, the affine closure of $N\backslash \SL_2$, and Fourier transform on this fiber is obtained by identifying the point $(x,y)\in \mathbbm A^2$ with the coset $\begin{pmatrix} * & * \\ x & y \end{pmatrix} \in N\backslash \SL_2$, and using the symplectic form $dx \wedge dy$ on $\mathbbm A^2$. Explicitly, if $\varphi$ is a function on $\mathbbm A^2$, its symplectic Fourier transform on the fiber is given by 
\begin{equation}\label{Fouriertransformeq} \hat\varphi(c,d) = \int_{F^2} \varphi(x,y) \psi(xd-yc),
\end{equation}
  and the Fourier transform $\mathfrak  F$ on $\mathcal S(\bar X)$ is induced from this fiberwise Fourier transform. 
  In particular, if $\varphi$ is the restriction of $\Phi$ over the fiber of $1\in Y$, the evaluation of $\mathfrak F\Phi$ at the point represented by $1$ in $N^\diag\backslash (\SL_2)^2$ is given by   \eqref{Fouriertransformeq} setting $c=0$ and $d=1$. Translating by $(g_1,g_2)$ on the right, we obtain the formula
  $$\mathfrak F\Phi(g_1, g_2) = \int_{F^2} \Phi \left( \begin{pmatrix} y^{-1} \\ x & y \end{pmatrix} g_1, \begin{pmatrix} y^{-1} \\ x & y \end{pmatrix} g_2 \right) \psi(x) dx dy.$$
  
  For $c\ne 0$, we can choose a section of the map $X \to X\sslash \SL_2$ by $(c, t=\tr)\mapsto (\begin{pmatrix} t & -c^{-1} \\ c\end{pmatrix},1)$ (modulo $N^\diag$ on the left). We compute the orbital integrals for the $\SL_2^\diag$-action:
  
  \[O_{(c,t)} (\mathfrak F\Phi)  = \int_{\SL_2} \int_{F^2} \Phi \left( \begin{pmatrix} y^{-1} \\ x & y \end{pmatrix} \begin{pmatrix} t & -c^{-1} \\ c\end{pmatrix} g, \begin{pmatrix} y^{-1} \\ x & y \end{pmatrix} g \right) \psi (x) dx dy dg.\]
  
  We are justified to change the order of integration (see Remark \ref{remarkchangeorder}) and write   
  \begin{multline*}  \int_{F^2}^* \int_{\SL_2}  \Phi \left( \begin{pmatrix} y^{-1} \\ x & y \end{pmatrix} \begin{pmatrix} t & -c^{-1} \\ c\end{pmatrix} \begin{pmatrix} y^{-1} \\ x & y \end{pmatrix}^{-1} g, g\right)  dg \psi (x) dx dy  = \\
   = \int_{F^2}^* O_{(c^{-1} x^2 + txy + cy^2, t)}(\Phi)  \psi^{-1} (x) dx dy .
   \end{multline*}
  
  We can write 
  \[c^{-1} x^2 + txy + cy^2 = c^{-1} (4-t^2) \frac{x^2 + (4-t^2)^{-1} v^2}{4} = c^{-1} (4-t^2) N(z),\] where $v = tx + 2cy$, 
  and $z = \frac{x+ (t^2-4)^{-\frac{1}{2}} c v}{2}$ is an element in the quadratic extension $E=E_{t^2-4}=F(\sqrt{t^2-4})$ (so that $x=\tr(z)$). The measure $dx dy = |2c|^{-1} dx dv$ is $|c^{-2}(4-t^2)|^{\frac{1}{2}}$ times the measure $dz$ on $E_{4-t^2}$ which is self-dual with respect to the character $\psi_E=\psi\circ\tr$ on $E$. Hence, we can write
  $$ O_{(c,t)} (\mathfrak F\Phi) = |c^{-2}(4-t^2)|^{\frac{1}{2}} \int_{E}^* O_{(c^{-1}(4-t^2) N(z), t)} (\Phi) \psi_E^{-1}(z) dz$$
  which, again by Remark \ref{remarkchangeorder}, is \eqref{HankelX-orbital}. 
\end{remark}

\subsection{The $\Sym^2$-subspace}\label{ssSym2}

The Rankin--Selberg method allows us to study the tensor product $L$-function of two automorphic representations. This is because, locally, there is an unfolding map (that we will study in the next section)
$$ \U: \mathcal F(\bar X) \to C^\infty((N,\psi)^2\backslash \tilde G),$$
such that the characteristic function of $1_{\bar X(\mathfrak o)}$, for $F$ non-Archimedean with ring of integers $\mathfrak o$, maps to a Whittaker function which is the generating series for the local unramified $L$-function $L(\pi_1\times\pi_2, s)$. We will study this unfolding operator in the next section.

In this paper, we study the descent of $\mathcal F(\bar X)$ modulo the diagonal action of $\SL_2$, which can be spectrally decomposed in terms of representations (of $\SL_2^2/\{\pm 1\}^\diag$) of the form $\pi_1\otimes\pi_2$ with $\pi_1=\widetilde{\pi_2}$. If we set $\pi:=\pi_1 = \widetilde{\pi_2}$, the $L$-function $L(\pi_1\times\pi_2, s)$ decomposes as $L(\pi,\Sym^2,s) \zeta(s)$. 

In this subsection, we want to extract a subspace $\mathcal S(\bar X/\SL_2)^\circ$ of $\mathcal S(\bar X/\SL_2)$ that is ``responsible'' for the factor $L(\pi,\Sym^2,s)$. Geometrically, it will correspond to the spaces $\mathcal S([V/T_t])^\circ$ on the various fibers, which we saw in \S \ref{ssA2}, in the sense that its elements, divided by the measure $dt$ and restricted to fibers with $t\ne \pm 2$ will indeed be elements of $\mathcal S([V/T_t])^\circ$. However, this geometric property does not fully describe the space here, because it gives us no control over the behavior as $t\to \pm 2$.

In the non-Archimedean case, the space $\mathcal S(\bar X/\SL_2)^\circ$ can be obtained from $\mathcal S(\bar X/\SL_2)$, as in the case of $\mathcal S([V/T])^\circ$, by applying the element of the completed Hecke algebra $h\in \widehat{\mathcal S(A_\ad)}$ with Mellin transform $\check h(\chi) = L(\chi,\check\alpha, \frac{1}{2})^{-1}$. Again, such a multiplier is not available in the Archimedean case, so we will work with Mellin transforms, imitating the description of $\mathcal S([V/T])^\circ$ in \S \ref{ssVcircMellin}.

\subsubsection{Mellin transforms}

Let $\mathscr D$ be the Fr\'echet bundle over the character group $\widehat{A}_\CC$ whose fiber over $\chi$ is the space $\mathcal S(A\backslash X, \mathcal L_\chi)$ of Schwartz measures on $A\backslash X$, valued in the line bundle $\mathcal L_\chi$ whose sections are $(A,\chi)$-equivariant functions on $X$, for the normalized action. The space $\mathcal S(A\backslash X, \mathcal L_\chi)$ is in non-degenerate duality with the space $C^\infty_\temp(A\backslash X,\mathcal L_{\chi^{-1}})$ of tempered $(A,\chi^{-1})$-equivariant functions, and integrating such functions against Schwartz measures on $X$ we get a Mellin transform:
$$ \mathcal S(X)\ni f\mapsto \check f(\chi) \in \mathcal S(A\backslash X, \mathcal L_\chi).$$

As in \S \ref{ssVcircMellin}, there is a natural notion of Paley--Wiener sections $\mathbb H^\PW(\widehat{A}_\CC,\mathscr D)$ of this
Fr\'echet bundle. Moreover,  since 
$$X\simeq V^*\times \SL(V) = V^*\times^{\SL(V)^\diag} \SL(V)^2,$$ 
where $V^*$ denotes the complement of zero in $V$, we have 
$$\mathscr D_\chi = \mathcal S(A\backslash X, \mathcal L_\chi) \simeq \mathcal S(A\backslash V^*, \mathcal L_\chi)\hat\otimes \mathcal S(\SL(V)) = \mathscr D_{V,\chi}\hat\otimes \mathcal S(\SL(V)),$$
in the notation of \S \ref{ssVcircMellin}, 
and the Paley--Wiener sections of $\mathscr D$ are 
$$\mathbb H^\PW(\widehat{A}_\CC,\mathscr D) = \mathbb H^\PW(\widehat{A}_\CC,\mathscr D_V)\hat\otimes \mathcal S(\SL(V)).$$

It immediately follows by induction from $V^*$ that the image of Mellin transform on $\mathcal S(V^*)$ is the space of Paley--Wiener sections of $\mathscr D$, giving rise to an isomorphism
\begin{equation}\label{SX}
\mathcal S(X) \xrightarrow\sim \mathbb H^\PW (\widehat A_\CC, \mathscr D).
\end{equation}
Proposition \ref{MellinVimage} allows us, by induction, to extend the transform and the description of its image to $\mathcal S(\bar X)$:

\begin{proposition}
Mellin transform converges on $\mathcal S(\bar X)$ for $|\chi|=\delta^s$ with $\Re(s)\ll 0$, and admits rational/meromorphic continuation to all $\widehat{A}_\CC$, giving rise to an isomorphism
\begin{equation}\label{SbarX}\mathcal S(\bar X) \xrightarrow\sim \mathbb H^\PW (\widehat A_\CC, \mathscr E),
\end{equation}
where $\mathscr E\subset \mathscr D(-[L(\bullet,-\check\alpha,1)])$
is the subsheaf of those sections whose residues at the poles of $L(\bullet,-\check\alpha,1)$, where the fiber 
$$\mathscr D_\chi = \Ind_{\SL(V)^\diag}^{\SL(V)^2} \mathscr D_{V,\chi}$$ 
contains the induction $\tilde W_\chi:=\Ind_{\SL(V)^\diag}^{\SL(V)^2} W_\chi$ of a finite-dimensional representation $W_\chi$ of $\SL(V)$, lie in $\tilde W_\chi$.
\end{proposition}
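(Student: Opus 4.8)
\emph{Strategy.} The space $\bar X$ is \emph{induced} from its fiber: $\bar X = V\times^{\SL(V)^\diag}\SL(V)^2 = \Ind_{\SL(V)^\diag}^{\SL(V)^2} V$, and the $A$-action (which is part of $\tilde G$, hence commutes with $\SL(V)^2$) is supported entirely on the $V$-factor. The plan is therefore to deduce the statement from the corresponding one for the fiber, namely Proposition \ref{MellinVimage}, by applying $\Ind_{\SL(V)^\diag}^{\SL(V)^2}$ and checking that this functor commutes with all the structures in sight: Mellin transform, twisting by the divisor $[L(\bullet^{-1},1)] = [L(\bullet,-\check\alpha,1)]$, passage to a subrepresentation at the poles, and the formation of Paley--Wiener sections.

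\emph{Steps.} First I would record that, as a variety, $\bar X\simeq V\times\SL(V)$, with $A$ acting through the first factor and $\SL(V)^2$ acting via induction; correspondingly $\mathcal S(\bar X)\simeq \Ind_{\SL(V)^\diag}^{\SL(V)^2}\mathcal S(V)$ as topological $(A\times\SL(V)^2)$-modules, which on the level of topological vector spaces is just $\mathcal S(V\times\SL(V)) = \mathcal S(V)\hat\otimes\mathcal S(\SL(V))$ (products of affine $F$-varieties; in the Archimedean case using nuclearity). Second, for $|\chi| = \delta^s$ with $\Re(s)\ll 0$ the Mellin integral $\check f(\chi) = \int_A a\cdot f\,\chi^{-1}(a)$ converges, uniformly in the $\SL(V)$-direction (there $f$ is compactly supported, resp.\ of rapid decay with moderate-growth derivatives), and equals $(\mathrm{Mellin}_V\hat\otimes\mathrm{id})(f) = \Ind_{\SL(V)^\diag}^{\SL(V)^2}(\mathrm{Mellin}_V)(f)$, with values in $\mathscr D_{V,\chi}\hat\otimes\mathcal S(\SL(V)) = \Ind_{\SL(V)^\diag}^{\SL(V)^2}\mathscr D_{V,\chi} = \mathscr D_\chi$; since $\mathrm{Mellin}_V$ extends meromorphically to $\widehat A_\CC$ (Proposition \ref{MellinVimage}), so does Mellin transform on $\mathcal S(\bar X)$. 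Third, Proposition \ref{MellinVimage} gives that $\mathrm{Mellin}_V$ is an isomorphism $\mathcal S(V)\xrightarrow\sim \mathbb H^\PW(\mathscr E_V)$ with $\mathscr E_V = \mathscr D_V^\fin(-[L(\bullet^{-1},1)])$; applying $-\hat\otimes\mathcal S(\SL(V))$ (i.e.\ $\Ind_{\SL(V)^\diag}^{\SL(V)^2}$) and using that this operation commutes with the formation of Paley--Wiener sections (the point discussed below), with twisting by $[L(\bullet^{-1},1)]$, and with the inclusion of the finite-dimensional subbundle $W_\bullet\hookrightarrow\mathscr D_{V,\bullet}$ at the poles, one gets an isomorphism $\mathcal S(\bar X)\xrightarrow\sim \mathbb H^\PW(\mathscr E)$ with $\mathscr E := \Ind_{\SL(V)^\diag}^{\SL(V)^2}\mathscr E_V$. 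Finally I would identify $\mathscr E$ with the subsheaf in the statement: it sits inside $\mathscr D(-[L(\bullet,-\check\alpha,1)])$, and at a pole $\chi$ of $L(\bullet,-\check\alpha,1)$ the residue of a section of $\mathscr E$ lands in $\Ind_{\SL(V)^\diag}^{\SL(V)^2}W_\chi = \tilde W_\chi\subset\mathscr D_\chi$ (exactness of $\Ind$ on the inclusion $W_\chi\hookrightarrow\mathscr D_{V,\chi}$), and conversely a section of $\mathscr D(-[L(\bullet,-\check\alpha,1)])$ with residues in $\tilde W_\chi$ pulls back, under $\mathscr D = \Ind_{\SL(V)^\diag}^{\SL(V)^2}\mathscr D_V$, to a Paley--Wiener section of $\mathscr D_V^\fin(-[L(\bullet^{-1},1)]) = \mathscr E_V$.

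\emph{Main obstacle.} In the non-Archimedean case the argument is essentially formal: $\mathcal S(\SL(V))$ is a fixed smooth representation, Paley--Wiener sections are simply rational/polynomial sections supported on finitely many components, and everything commutes with $-\otimes\mathcal S(\SL(V))$ on the nose. The real content is the Archimedean bookkeeping: one must verify that the completed projective tensor product with the nuclear Fréchet space $\mathcal S(\SL(V))$ commutes with the formation of $\mathbb H^\PW$-sections of the Fréchet bundle $\mathscr E_V$ --- that the prescribed poles, the residue-in-$\tilde W_\bullet$ condition, and the rapid decay in vertical strips are preserved and reflected when the coefficients are enlarged from $\CC$ to $\mathcal S(\SL(V))$ --- and that the Fréchet bundle $\mathscr D$ on $\bar X$ (fiber $\mathcal S(A\backslash X,\mathcal L_\chi)$), with its finite-dimensional-in-the-$V$-direction subbundle $\tilde W_\bullet$, really is $\Ind_{\SL(V)^\diag}^{\SL(V)^2}$ of the corresponding data on $V$. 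This is a routine consequence of nuclearity of $\mathcal S(\SL(V))$ together with the explicit description of $\mathbb H^\PW(\mathscr E_V)$ in Proposition \ref{MellinVimage}, and is exactly what is meant by ``by induction'' in the sentence preceding the proposition; I expect it to be the only step requiring genuine care.
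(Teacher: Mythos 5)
Your proposal is correct and is essentially the paper's own argument: the proposition is stated there as following ``by induction'' from Proposition \ref{MellinVimage}, using $\mathcal S(\bar X)\simeq \mathcal S(V)\hat\otimes\mathcal S(\SL(V))$ together with the identifications $\mathscr D_\chi = \mathscr D_{V,\chi}\hat\otimes\mathcal S(\SL(V))$ and $\mathbb H^\PW(\widehat A_\CC,\mathscr D)=\mathbb H^\PW(\widehat A_\CC,\mathscr D_V)\hat\otimes\mathcal S(\SL(V))$. The Archimedean compatibility you single out as the main obstacle is handled definitionally in the paper (the Paley--Wiener sections of $\mathscr D$ are \emph{defined} as that completed tensor product), so nothing further needs to be verified.
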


We will define the $\Sym^2$-subspace $\mathcal S(\bar X/\SL_2)^\circ$ by descending from a space ``upstairs'' but again, for that purpose, we need to work with the quotient stack $\bar X_\ad = [\bar X/\{\pm 1\}]$ which already carries an action of $A_\ad$. Notice the isomorphism of stacks:
\begin{equation}\label{stacksmodpm1} [\bar X/\SL_2] = [\bar X_\ad/\PGL_2],
\end{equation}
which implies that the push-forward space $\mathcal S(\bar X/\SL_2)$ is also the image of the Schwartz space $\mathcal S(\bar X_\ad)$.
In terms of (isomorphism classes of) $F$-points, $\bar X_\ad$ admits a similar description as the stack $V_\ad$ that we encountered in \S \ref{ssVcircMellin}, namely,  
$$\bar X_\ad(F)= \bigsqcup_\alpha \bar X^\alpha(F)/\{\pm 1\},$$
where $\alpha$ runs over all quadratic extensions of $F$, and $\bar X^\alpha$ is the $\bar X$-torsor over the same base $Y = \SL(V)$ that we obtain by twisting $\bar X$ by a $\Z/2$-torsor. Thus, the Schwartz space $\mathcal S(X_\ad)$ can be identified with the space of $\{\pm 1\}$-coinvariants (or invariants) of the sum 
$$ \bigoplus_\alpha \mathcal S(\bar X^\alpha(F)).$$

We have an isomorphism
$$ \bar X_\ad \simeq V_\ad \times^{\SL(V)^\diag} \SL(V)^2,$$
and this immediately implies from Corollary \ref{corMellinVad} that the Schwartz space $\mathcal S(X_\ad)$ admits a Mellin transform with respect to the action of $A_\ad$ (I leave the definitions to the reader, since they are completely analogous to the previous ones), whose image is described by the following:

\begin{proposition}\label{MellinXadimage}
 Mellin transform gives rise to isomorphisms:
 \begin{equation}\label{SXad}
\mathcal S(X_\ad) \xrightarrow\sim \mathbb H^\PW (\widehat{A_\ad}_\CC, \mathscr D_\ad), 
\end{equation}
and
\begin{equation}\label{SbarXad}\mathcal S(\bar X_\ad) \xrightarrow\sim \mathbb H^\PW (\widehat{A_\ad}_\CC, \mathscr E_\ad),
\end{equation}
where $\mathscr D_\ad, \mathscr E_\ad$ are the pullbacks of $\mathscr D, \mathscr E$ under the natural map $s^*: \widehat{A_\ad}_\CC \to \widehat{A}_\CC$.
\end{proposition}

The space $\mathcal S(\bar X/\SL_2)^\circ$ that we are after will arise, as in the case of $\mathcal S([V/T])^\circ$, from the subsheaf $\mathscr E_\ad^\circ = \mathscr E_\ad([L(\bullet, -\frac{\check\alpha}{2}, \frac{1}{2})])$, i.e.\ those sections of $\mathscr E_\ad$ which vanish at poles of the local $L$-function $L(\chi, -\frac{\check\alpha}{2}, \frac{1}{2})$. Notice that, as meromorphic sections of the bundle $\mathscr D_\ad = s^{**}\mathscr D$, those are holomorphic at the poles of this $L$-function, and valued in the induction of the finite-dimensional subrepresentation of the principal series $I(\chi^{-1})$ of $\SL(V)^\diag$; they may have simple poles at the poles of $L(\chi\omega, -\frac{\check\alpha}{2}, \frac{1}{2})$, for $\omega$ a non-trivial quadratic character, but their residues also lie in the induction of the finite-dimensional subrepresentation.
We define $\mathcal S(\bar X_\ad)^\circ$ as  the subspace of $\mathcal S(\bar X_\ad)$ which, under the Mellin transform of Proposition \eqref{MellinXadimage} coincides with the subspace $\mathbb H^\PW (\widehat{A_\ad}_\CC, \mathscr E_\ad^\circ)$. 
Pushing forward to $\mathfrak C_X = \bar X\sslash \SL_2$, we define $\mathcal S(\bar X/\SL_2)^\circ$ as the closure of the image of $\mathcal S(\bar X_\ad)^\circ$. We have a corresponding space $\mathcal D(\bar X_\ad)^\circ$ of half-densities, dividing by the half-density $(dc)^\frac{1}{2} (dt)^\frac{1}{2}$.

\begin{lemma}\label{globalsections} 
 The sheaves $\mathscr E_\ad$ and $\mathscr E_\ad^\circ$ are generated by their global Paley--Wiener sections. In the non-Archimedean case, the action of the element $h\in \widehat{S(A_\ad)}$ whose Mellin transform is $L(\chi, \frac{\check\alpha}{2}, \frac{1}{2})^{-1}$ gives rise to an isomorphism:
 $$ \mathcal S(\bar X_\ad)\xrightarrow{h\cdot} \mathcal S(\bar X_\ad)^\circ.$$
\end{lemma}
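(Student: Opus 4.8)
The plan is to deduce this from the two-dimensional statements already established, Corollary \ref{corMellinVad} and Lemma \ref{globalsections-V}, by ``base change'' along the identification $\bar X_\ad\simeq V_\ad\times^{\SL(V)^\diag}\SL(V)^2$. The first step is to record, exactly as in the passage from $\mathscr D_{V_\ad}$ to $\mathscr D_\ad$ recalled above, that inducing from the diagonal $\SL(V)$ to $\SL(V)^2$ is, at the level of the (Fr\'echet, resp.\ direct-limit-of-algebraic) bundles over $\widehat{A_\ad}_\CC$, simply the completed tensor product with the fixed nuclear space $\mathcal S(\SL(V))$, on which $A_\ad$ acts trivially. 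Since this operation is exact and commutes with the formation of residues and of spaces of Paley--Wiener sections, it identifies
$$ \mathscr E_\ad = \mathscr E_{V_\ad}\,\hat\otimes\,\mathcal S(\SL(V)),\qquad \mathscr E_\ad^\circ = \mathscr E_{V_\ad}^\circ\,\hat\otimes\,\mathcal S(\SL(V)),$$
$A_\ad$-equivariantly, and likewise $\mathcal S(\bar X_\ad)=\mathcal S(V_\ad)\,\hat\otimes\,\mathcal S(\SL(V))$ and $\mathcal S(\bar X_\ad)^\circ=\mathcal S(V_\ad)^\circ\,\hat\otimes\,\mathcal S(\SL(V))$, compatibly with Mellin transform.

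Granting this, both assertions follow formally. For the generation statement: a family of global Paley--Wiener sections generating $\mathscr E_{V_\ad}$ (resp.\ $\mathscr E_{V_\ad}^\circ$) over the structure sheaf --- which exists by Corollary \ref{corMellinVad} (resp.\ Lemma \ref{globalsections-V}) --- yields, after $-\,\hat\otimes\,\mathcal S(\SL(V))$, a generating family of global Paley--Wiener sections of $\mathscr E_\ad$ (resp.\ $\mathscr E_\ad^\circ$), because $\mathbb H^\PW(\widehat{A_\ad}_\CC,\mathscr E_\ad)=\mathbb H^\PW(\widehat{A_\ad}_\CC,\mathscr E_{V_\ad})\,\hat\otimes\,\mathcal S(\SL(V))$ (the same identity already used for $\mathscr D$) and tensoring a surjection of the relevant locally convex sheaves with the nuclear $\mathcal S(\SL(V))$ stays surjective. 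For the non-Archimedean multiplier statement: the element $h\in\widehat{S(A_\ad)}$ with $\check h(\chi)=L(\chi,\frac{\check\alpha}{2},\frac{1}{2})^{-1}$ acts only on the $\mathcal S(V_\ad)$-tensor factor, so applying $-\,\hat\otimes\,\mathcal S(\SL(V))$ to the isomorphism $\mathcal S(V_\ad)\xrightarrow{h\cdot}\mathcal S(V_\ad)^\circ$ of Lemma \ref{globalsections-V} gives the desired $\mathcal S(\bar X_\ad)\xrightarrow{h\cdot}\mathcal S(\bar X_\ad)^\circ$.

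The only genuine work is the functional-analysis bookkeeping in the first step --- that $-\,\hat\otimes\,\mathcal S(\SL(V))$ sends $\mathscr E_{V_\ad}^{(\circ)}$ to $\mathscr E_\ad^{(\circ)}$, commutes with $\mathbb H^\PW$, and preserves surjectivity of the generation map --- and I expect this (which is of the same kind already carried out for $\mathscr D$ and $\mathscr D_V$, and is elementary in the non-Archimedean case, where everything in sight is a direct limit of finite-dimensional algebraic bundles over $\widehat{A_\ad}_\CC$) to be the only obstacle, the substantive content having been supplied by Lemma \ref{globalsections-V}. Should one prefer not to route the Archimedean case through Lemma \ref{globalsections-V}, one can instead repeat its proof verbatim: construct a multiplier $h=H(x)|x|^{\frac{1}{2}}\,d^\times x$ with $H$ and its Fourier transform both Schwartz on $F^\times$, so that the functional equation of Tate's thesis makes $\check h$ vanish simply at any prescribed pole of $L(\chi,\frac{\check\alpha}{2},\frac{1}{2})$, and observe that its normalized action on $\mathcal S(\bar X_\ad)$ carries global Paley--Wiener sections of $\mathscr E_\ad$ onto those of $\mathscr E_\ad^\circ$, identifying the relevant fibers; this reduces the generation of $\mathscr E_\ad^\circ$ to that of $\mathscr E_\ad$, which is the content of the proof of Proposition \ref{MellinXadimage}.
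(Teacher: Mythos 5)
Your proposal is correct and matches the paper's approach: the paper's entire proof of this lemma is the one-line remark that it is identical to the proofs of the analogous statements for $V$ (Corollary \ref{corMellinVad} and Lemma \ref{globalsections-V}), and your tensor factorization $\mathcal S(\bar X_\ad)\simeq \mathcal S(V_\ad)\hat\otimes\mathcal S(\SL(V))$, with the induced identifications $\mathscr E_\ad^{(\circ)}\simeq \mathscr E_{V_\ad}^{(\circ)}\hat\otimes\mathcal S(\SL(V))$, is exactly the mechanism the paper already uses to build these sheaves from their $V$-counterparts. Your fallback of repeating the multiplier construction verbatim on $\bar X_\ad$ is literally what the paper's proof intends, so no gap remains.
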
 

\begin{proof}
The proof is identical to the one of the analogous statements for $V$ (Corollary \ref{corMellinVad} and Lemma \ref{globalsections-V}).
\end{proof}

Recall that $\bar X\simeq V\times \SL(V)$ and $\bar X_\ad\simeq V_\ad\times \SL(V)$. If we fix a Haar measure $dg$ on $\SL(V)$, the quotient $\frac{\mathcal D(\bar X_\ad)^\circ }{(dg)^\frac{1}{2}}$ consists of functions: 
$$\SL(V)\to \mathcal D(V_\ad)^\circ.$$ 

On the spaces $\mathcal D(V_\ad)^\circ$ we have, by Proposition \ref{factorFourier}, an endomorphism $\mathfrak F^\circ$. In this subsection (and later), we will denote by $\mathfrak F^\circ$ the induced endomorphism of $\mathcal D(\bar X_\ad)^\circ$, defined fiberwise. It is $A_\ad$-anti-equivariant under the normalized action. 

Here is the main result of this subsection:

\begin{theorem}\label{thmsubspaceX}
We have a commutative diagram
\begin{equation}
 \xymatrix{
 \mathcal D(\bar X_\ad)^\circ \ar[r]^{\mathfrak F^\circ} \ar[d] & \mathcal D(\bar X_\ad)^\circ \ar[d] \\
 \mathcal D(\bar X/\SL_2)^\circ \ar[r]^{\mathcal H_X^\circ} & \mathcal D(\bar X/\SL_2)^\circ,
 }
\end{equation}
where 
\begin{multline}\label{HXcirc} \mathcal H_X^\circ \varphi(c,t) =  \lambda(\eta_{t^2-4},\psi)^{-1}\left( (\psi(\frac{1}{\bullet}) \eta_{t^2-4}(\bullet)|\bullet|^{-\frac{1}{2}} d^\times\bullet) \star_c  \varphi\right)((4-t^2) c^{-1}, t). 
\end{multline}
\end{theorem}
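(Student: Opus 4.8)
The plan is to follow, almost verbatim, the proof of Theorem~\ref{RSHankel}, replacing the full spaces by their ``circle'' versions and replacing diagram~\eqref{FourierHankelV}/Proposition~\ref{symplecticHankel} by Proposition~\ref{VHankelascoinv}/Corollary~\ref{Hcirc}; the real content is that the fiberwise $\mathfrak F^\circ$ descends to a \emph{well-defined} endomorphism of $\mathcal D(\bar X/\SL_2)^\circ$. First I would set up the fiberwise picture. Using $\bar X_\ad\simeq V_\ad\times^{\SL(V)^\diag}\SL(V)^2$ and trivializing over $1\in Y=\SL(V)$, an element of $\mathcal D(\bar X_\ad)^\circ$ is a function $\SL(V)\to\mathcal D(V_\ad)^\circ$ and, as recalled just before the theorem, $\mathfrak F^\circ$ acts fiberwise, each fiberwise $\mathfrak F^\circ$ being the factor of the symplectic Fourier transform of Proposition~\ref{factorFourier}. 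Via \eqref{stacksmodpm1} the push-forward to $\mathfrak C_X$ is the $\PGL(V)$-quotient, which covers the conjugation action on $Y$; for $t\ne\pm2$ fix $g_0\in\SL(V)$ with $\tr g_0=t$, let $T_t=Z_{\SL(V)}(g_0)$, and recall from the proof of Theorem~\ref{RSHankel} that $T_t$ acts on the fiber $V$ of $\bar X$ over $g_0$ preserving the nondegenerate quadratic form $c(v)=-\omega(v,vg_0)$, with $T_t=\SO(c)$. Hence $[\bar X_\ad/\PGL(V)]$ over $\{t\}\times\mathbbm A^1$ (the affine $c$-line) is $[V_\ad/T_{\ad,t}]=[V/T_t]$, with coarse quotient $V\sslash T_t=\mathbbm A^1$, and the push-forward $\mathcal D(\bar X_\ad)^\circ\to\mathcal D(\bar X/\SL_2)$ restricted to this fiber is precisely the push-forward $\mathcal D(V_\ad)^\circ\to\mathcal D([V/T_t])$ of Proposition~\ref{VHankelascoinv}, in its ``group model'' $\tilde V_t=V\times^{T_t}\SL(V)$.

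Then I would descend on each such fiber. By Proposition~\ref{VHankelascoinv}, applied to the symplectic space $(V,\omega)$ equal to this fiber with quadratic form $c(\cdot)$ and $T=T_t$, the fiberwise $\mathfrak F^\circ$ descends to the Hankel transform $\mathcal H^\circ$ of Corollary~\ref{Hcirc} on $\mathcal D([V/T_t])^\circ$, and the image of $\mathcal D(V_\ad)^\circ$ is dense there. As computed in the proof of Theorem~\ref{RSHankel}, the discriminant of $c(v)=-\omega(v,vg_0)$ is $d=-\tfrac14(t^2-4)$; thus $E=F(\sqrt{-d})=F(\sqrt{t^2-4})$, the associated quadratic character is $\eta_{t^2-4}$, $4d=-(t^2-4)=4-t^2$, and $\lambda(\eta,\psi)=\lambda(\eta_{t^2-4},\psi)$. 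Substituting these into the formula of Corollary~\ref{Hcirc} and writing $c=\xi^{-1}$ (with the trace variable reinstated) yields exactly \eqref{HXcirc}.

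Finally I would assemble these fiberwise descents into a genuine endomorphism of $\mathcal D(\bar X/\SL_2)^\circ$. Since $\mathfrak F^\circ$ is an automorphism of $\mathcal D(\bar X_\ad)^\circ$ and $\mathcal D(\bar X/\SL_2)^\circ$ is by definition the closure of the image of $\mathcal D(\bar X_\ad)^\circ$ under the push-forward, it suffices to check that $\mathfrak F^\circ$ carries the kernel of the push-forward into itself: if $\varphi$ pushes forward to $0$, its restriction to each fiber $t\ne\pm2$ is $0$ in $\mathcal D([V/T_t])^\circ$, so by the previous paragraph $\mathfrak F^\circ\varphi$ has trivial push-forward on every such fiber; and since push-forwards of Schwartz half-densities on $\bar X$ are half-densities on $\mathfrak C_X$ which are smooth away from $c=0$ and whose germs along $c=0$ are governed by coefficients smooth in $t$, they are determined by their restrictions to the open dense locus $\{t\ne\pm2\}$. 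Hence $\mathfrak F^\circ\varphi$ has trivial push-forward, $\mathcal H_X^\circ$ is well defined, the square commutes, and (being determined by its restrictions to generic fibers) $\mathcal H_X^\circ$ satisfies \eqref{HXcirc}.

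\textbf{Main obstacle.} The delicate point, exactly as in \S\ref{ssVcircMellin}, is the compatibility of the three incarnations of the circle condition: the global one defining $\mathcal S(\bar X_\ad)^\circ$ (Mellin transform along the single torus $A_\ad$, landing in $\mathscr E_\ad^\circ$), the fiberwise one on the symplectic spaces $V_\ad$, and --- after descent --- the local-over-$\mathfrak C_X$ condition (oscillation by $\eta_{t^2-4}$ near $c=0$) cutting out $\mathcal D([V/T_t])^\circ$. The first restricts to the second directly, because $A_\ad$ moves only the fiber direction and $\mathscr E_\ad$ is induced from $\mathscr E_{V_\ad}$; the passage from the second to the third on each generic fiber is precisely Proposition~\ref{VHankelascoinv}, whose Archimedean part (forced by the fact that a reciprocal Gamma factor is not a multiplier) is the technical heart and is already in hand. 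What remains genuinely to be monitored is only that nothing is lost in the limit $t\to\pm2$; but since $\mathcal D(\bar X/\SL_2)^\circ$ is the closure of the image of $\mathcal D(\bar X_\ad)^\circ$ and $\mathfrak F^\circ$ preserves $\mathcal D(\bar X_\ad)^\circ$, both the target and the well-definedness of $\mathcal H_X^\circ$ follow from that definition without any further limiting argument.
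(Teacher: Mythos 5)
Your proposal is correct and follows essentially the same route as the paper: the paper's proof likewise identifies the fiber of $[\bar X/\SL_2]=[\bar X_\ad/\PGL_2]$ over $g$ with $\tr(g)=t\ne\pm2$ as $[V_t/T_t]=[V_{t,\ad}/T_{t,\ad}]$ and then cites Proposition \ref{VHankelascoinv}, with the discriminant computation $d=-\tfrac14(t^2-4)$ imported from the proof of Theorem \ref{RSHankel} to pin down \eqref{HXcirc} via Corollary \ref{Hcirc}. Your extra care about well-definedness (that $\mathfrak F^\circ$ preserves the kernel of the push-forward, and that push-forwards are determined on the locus $t\ne\pm2$) is a detail the paper leaves implicit, not a different argument.
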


\begin{remark}
 The operators $\mathcal H_X^\circ$, $\mathcal H_X$ are related by
 \begin{equation}\label{relationbetweenH}\mathcal H_X = \mathscr F_{\frac{\check\alpha}{2}, \frac{1}{2}} \circ \mathcal H_X^\circ = \mathcal H_X^\circ \circ \mathscr F_{-\frac{\check\alpha}{2}, \frac{1}{2}},
 \end{equation}
 where $\mathscr F_{\pm \frac{\check\alpha}{2}, \frac{1}{2}}$ are the Fourier convolutions of \S \ref{sssFourierconv} for the $A_\ad$-action on half-densities on $\bar X\sslash \SL_2$.
\end{remark}

\begin{proof}
Notice that for every $g\in \SL(V)$ with $t:=\tr(g)\ne \pm 2$, the centralizer of $g$ is a torus $T_t$, and the fiber of the stack $[\bar X/\SL_2]=[\bar X_\ad/\PGL_2]$ over $g$ is isomorphic to $[V_t/T_t]=[V_{t,\ad}/T_{t,\ad}]$, where $V_t\simeq V$ is the fiber of $\bar X$ over $g$, and the notation is otherwise analogous to the one of Proposition \ref{VHankelascoinv}; the result now follows from that proposition. 
\end{proof}

\begin{corollary}
 On Mellin transforms, we have
 \begin{equation}\label{relationbetweenH-Mellin}
  \widecheck{\mathcal H_X\varphi} (\chi) = \gamma(\chi,\frac{\check\alpha}{2}, \frac{1}{2},\psi)\cdot \widecheck{\mathcal H_X^\circ \varphi}(\chi) 
 \end{equation}
 for every $\varphi\in \mathcal D(\bar X/\SL_2)^\circ$. 
\end{corollary}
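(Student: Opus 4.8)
The plan is to read the Corollary off the factorisation $\mathcal H_X = \mathscr F_{\frac{\check\alpha}{2},\frac12}\circ\mathcal H_X^\circ$ recorded in the Remark preceding it, by passing to Mellin transforms. First I would note that the $A_\ad$-action on $\mathfrak C_X=\mathbbm A^2$ scales the coordinate $c$ and fixes $t$ — because $c(v,g)=-\omega(v,vg)$ is quadratic in the fibre variable $v$, on which $A$ acts through $\frac{\alpha}{2}$, while $t=\tr g$ is invariant — so the Mellin transform $\widecheck{(\ )}$ of the statement is Mellin transform in the $c$-variable, with $t$ a parameter. By Theorem \ref{thmsubspaceX}, $\mathcal H_X^\circ$ preserves $\mathcal D(\bar X/\SL_2)^\circ$, and $\mathscr F_{\frac{\check\alpha}{2},\frac12}$ is defined on $\mathcal D(\bar X/\SL_2)$; hence $\mathcal H_X^\circ\varphi$ and $\mathcal H_X\varphi=\mathscr F_{\frac{\check\alpha}{2},\frac12}(\mathcal H_X^\circ\varphi)$ both have well-defined Mellin transforms. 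Applying $\widecheck{(\ )}$ to that identity and invoking the defining property of the Fourier convolution $\mathscr F_{\frac{\check\alpha}{2},\frac12}$ of \S\ref{sssFourierconv} — that, for the $A_\ad$-action on half-densities in $c$, it multiplies Mellin transforms by $\gamma(\chi,\frac{\check\alpha}{2},\frac12,\psi)$ — gives $\widecheck{\mathcal H_X\varphi}(\chi)=\gamma(\chi,\frac{\check\alpha}{2},\frac12,\psi)\cdot\widecheck{\mathcal H_X^\circ\varphi}(\chi)$, which is the claim.

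To keep the argument self-contained I would justify that factorisation on the spot by comparing the explicit formulas \eqref{HankelX-densities} and \eqref{HXcirc}. With $g_1=\psi(\tfrac1\bullet)|\bullet|^{-\frac12}d^\times\bullet$ and $g_\eta=\psi(\tfrac1\bullet)\eta_{t^2-4}(\bullet)|\bullet|^{-\frac12}d^\times\bullet$, \eqref{HXcirc} reads $\mathcal H_X^\circ\varphi(c,t)=\lambda(\eta_{t^2-4},\psi)^{-1}(g_\eta\star_c\varphi)((4-t^2)c^{-1},t)$ and \eqref{HankelX-densities} reads $\mathcal H_X\varphi(c,t)=\lambda(\eta_{t^2-4},\psi)^{-1}(g_1\star_c g_\eta\star_c\varphi)((4-t^2)c^{-1},t)$, so the two operators differ precisely by inserting the left-convolution $\varphi\mapsto g_1\star_c\varphi$ before the reflection--dilation $R_t:c\mapsto(4-t^2)c^{-1}$. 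Conjugating $\varphi\mapsto g_1\star_c\varphi$ by the involution $R_t$ (which carries multiplicative convolution to multiplicative convolution, only reflecting and scaling the $c$-variable, while the whole $t$-dependence of the problem stays inside $g_\eta$ and $\lambda$) produces multiplicative convolution in $c$ by the reflected kernel $\psi(\bullet)|\bullet|^{\frac12}d^\times\bullet$, which on Mellin transforms — by Tate's functional equation, in the normalisation \eqref{FE} used in the proof of Proposition \ref{H02D} — is multiplication by $\gamma(\chi,\frac12,\psi)=\gamma(\chi,\frac{\check\alpha}{2},\frac12,\psi)$. This identifies the inserted factor with $\mathscr F_{\frac{\check\alpha}{2},\frac12}$ and proves the Remark, hence the Corollary.

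The hard part is not conceptual but bookkeeping: one must carry the half-density conventions, the volume factor $|4-t^2|^{\frac12}$, and the $|\bullet|^{\pm\frac12}$ shifts in the convolution kernels through against the shift built into the definition of the Mellin transform and against the reflection $c\mapsto c^{-1}$, so as to be sure the resulting multiplier is exactly $\gamma(\chi,\frac{\check\alpha}{2},\frac12,\psi)$ and not its reciprocal, a reflected variant $\gamma(\chi^{-1},\dots)$, or an $\eta$-twist. All of this is parallel to the two-dimensional computations already carried out in \S\ref{ssVcircMellin} (Propositions \ref{H02D} and \ref{factorFourier}), so once those normalisations are pinned down the verification is immediate; I would simply transcribe it.
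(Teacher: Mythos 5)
Your proposal is correct and follows the same route as the paper: the Corollary is read off from the factorization $\mathcal H_X = \mathscr F_{\frac{\check\alpha}{2},\frac{1}{2}}\circ\mathcal H_X^\circ$ of the preceding Remark together with the fact (\eqref{FE}) that the Fourier convolution $\mathscr F_{\frac{\check\alpha}{2},\frac{1}{2}}$ acts on Mellin transforms by the gamma factor $\gamma(\chi,\frac{\check\alpha}{2},\frac{1}{2},\psi)$. Your additional verification of the Remark itself, by conjugating the inserted convolution $g_1\star_c$ past the reflection--dilation $c\mapsto(4-t^2)c^{-1}$ and comparing \eqref{HankelX-densities} with \eqref{HXcirc}, is a sound way to make explicit what the paper leaves implicit.
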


\begin{proof}
 This follows from the above and \eqref{FE}.
\end{proof}

\begin{remark}
On the analogous space of measures $\mathcal S(\bar X/\SL_2)^\circ$, \emph{again without normalization of the convolution action in the $c$-variable}, the Hankel transform $\mathcal H_X^\circ$ will read:
  \begin{equation}\label{HankelXcirc-measures} \mathcal H_X^\circ  f(c,t) = \lambda(\eta_{t^2-4},\psi)^{-1}  \left|\frac{c^2}{4-t^2}\right|^{\frac{1}{2}} \left( (\psi(\frac{1}{\bullet}) \eta_{t^2-4}(\bullet) d^\times\bullet) \star_c  f\right)((4-t^2)c^{-1}, t),
\end{equation}
cf.\ \eqref{HankelX-measures}.
\end{remark}

\subsection{Basic vectors}\label{ssbasicvectorsX} 

We will be talking about ``unramified data'' for the space $\bar X$ when $F$ is non-Archimedean, unramified over the base field $\mathbb Q_p$ or $\mathbb F_p((t))$, and, if we denote by $\mathfrak o$ the ring of integers of $F$, the symplectic space $V$ used to define $\bar X$ is defined over $\mathfrak o$, with an integral and residually non-vanishing symplectic form $\omega$. Assume this to be the case for this subsection.

Let $\Phi^0 \in \mathcal F(\bar X)$ be the characteristic function of $\bar X(\mathfrak o)$. We define the \emph{basic vector} $f_{\bar X}$ of $\mathcal S(\bar X/\SL_2)$ to be the image of $\Phi^0 dx$ in $\mathcal S(\bar X/\SL_2)$, where $dx (\bar X(\mathfrak o))=1$. We have a normalized action of $A_\ad$ on $\bar X$; if we act on $f_{\bar X}$ by the element $h=h_{L(\frac{\check \alpha}{2},\frac{1}{2})^{-1}}$ of the completed Hecke algebra $\widehat{\mathcal S(A_\ad)}$ whose Mellin transform is $\chi\mapsto L(\chi, \frac{\check \alpha}{2},\frac{1}{2})^{-1}$, we obtain an element $f_{\bar X}^\circ \in \mathcal S(\bar X/\SL_2)^\circ$ which will be our \emph{basic vector} for this space: 
\begin{equation}\label{deff00}
 f_{\bar X}^\circ:= h_{L(\frac{\check \alpha}{2},\frac{1}{2})^{-1}}\cdot f_{\bar X}.
\end{equation}
Notice that $f_{\bar X}$ is $A_\ad(\mathfrak o)$-invariant, so we can replace $h$ by its image $h^{\rm unr}$ in the unramified Hecke algebra of $A_\ad$; our conventions on Mellin transforms are inverse to usual conventions on Satake transforms, hence the Satake transform of $h^{\rm unr}$, as a polynomial on the dual torus $\check A_\ad$, will be is $(1-q^{-\frac{1}{2}}e^{-\frac{\check\alpha}{2}})$, where $e^{-\frac{\check\alpha}{2}}$ is understood as a character of $\check A_\ad$. By construction, the Mellin transforms of these two basic functions satisfy:
\begin{equation}\label{Mellinbasic}
\check f_{\bar X}(\chi) = L(\chi, -\frac{\check \alpha}{2},\frac{1}{2}) \check f^\circ_{\bar X}(\chi).
\end{equation}

For later use, I note that we have one more distinguished unramified vector in $\mathcal S(\bar X)$, namely, the invariant probability measure on $X(\mathfrak o)$; its image in $\mathcal S(\bar X/\SL_2)$ will be denoted by $f_X$. The element in the unramified Hecke algebra of $A$ whose Satake transform is $(1-q^{-1}e^{-\check\alpha})$ maps (under the normalized action) the characteristic function of $1_{\bar X(\mathfrak o)}$ to the characteristic function of $1_{X(\mathfrak o)}$; multiplying by an invariant measure, and taking into account that the ratio between the measures of $X(\mathfrak o)$ and $\bar X(\mathfrak o)$ is $(1-q^{-2})$, we obtain that 
\begin{equation}\label{barXtoX}
 h_{L(\check\alpha, 1)^{-1}}\cdot f_{\bar X} = (1-q^{-2}) f_X,
\end{equation}
where $h_{L(\check\alpha, 1)^{-1}}\in \widehat{\mathcal S(A_\ad)}$ has Mellin transform $\chi\mapsto L(\chi,\check\alpha, 1)^{-1}$. 

On the spaces of half-densities $\mathcal D(\bar X/\SL_2)$, $\mathcal D(\bar X/\SL_2)^\circ$, we define the basic vectors, denoted by the same symbols $f_{\bar X}$ and $f_{\bar X}^\circ$, as the quotients of the corresponding basic measures by the half-density $(dc\,  dt)^\frac{1}{2}$. 

As usual, it makes sense to act on the basic vectors by an element of the unramified Hecke algebra of $\tilde G$, by acting on the defining measures on $\bar X$. 

\begin{theorem}\label{thmbasicvectorX}
 The Hankel transforms $\mathcal H_X$, resp $\mathcal H_X^\circ$, map
 $$ \mathcal H_X: h\cdot f_{\bar X} \mapsto (\iota^* h) \cdot f_{\bar X},$$
 resp.
 $$ \mathcal H_X^\circ: h\cdot f^\circ_X \mapsto (\iota^* h) \cdot f^\circ_{\bar X},$$
 for any element of the unramified Hecke algebra of $\tilde G$, where $\iota:\tilde G\to \tilde G$ is the involution induced from inversion on the ``factor'' $A$ of $\tilde G = (A\times \SL(V)^2)/\{\pm 1\}^\diag$.
\end{theorem}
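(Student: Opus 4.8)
The plan is to isolate two facts and combine them. \emph{Fact A:} $\mathcal H_X$ fixes the basic vector $f_{\bar X}$, and $\mathcal H_X^\circ$ fixes the basic vector $f_{\bar X}^\circ$. \emph{Fact B:} both $\mathcal H_X$ and $\mathcal H_X^\circ$ intertwine the unramified Hecke action of $\tilde G$ through $\iota^*$, i.e.\ $\mathcal H_X(h\cdot\varphi) = (\iota^* h)\cdot\mathcal H_X\varphi$ and likewise for $\mathcal H_X^\circ$. Granting these, the theorem is immediate: $\mathcal H_X(h\cdot f_{\bar X}) = (\iota^* h)\cdot\mathcal H_X f_{\bar X} = (\iota^* h)\cdot f_{\bar X}$, and the same with $\circ$ throughout.

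For Fact B I would start from the $\iota$-twisted $\tilde G$-anti-equivariance of the fiberwise symplectic Fourier transform $\mathfrak F$ recalled in \S\ref{ssRSthespace} (it twists the $\tilde G$-action by the automorphism inverting the factor $A$): writing $\mathfrak F(g\cdot\mu) = \iota(g)\cdot\mathfrak F(\mu)$ and integrating against an element $h$ of the unramified Hecke algebra of $\tilde G$, using that $\iota$ preserves Haar measure, gives $\mathfrak F(h\cdot\mu) = (\iota^* h)\cdot\mathfrak F(\mu)$ with $\iota^* h = h\circ\iota$; since $\iota$ fixes $\SL(V)^2\supset\SL_2^\diag$, this descends to $\SL_2^\diag$-coinvariants and yields the statement for $\mathcal H_X$ (Theorem \ref{RSHankel}). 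For $\mathcal H_X^\circ$ I would invoke \eqref{relationbetweenH-Mellin}, according to which $\mathcal H_X$ equals, on Mellin transforms, multiplication by the scalar $\gamma(\chi,\frac{\check\alpha}{2},\frac{1}{2},\psi)$ followed by $\mathcal H_X^\circ$; since $A_\ad$-Hecke operators act on Mellin transforms by scalars and $\SL(V)^2$-Hecke operators fibrewise, both commute with that scalar, so the intertwining property for $\mathcal H_X$ transfers to $\mathcal H_X^\circ$. (On Satake parameters, $\iota^*$ is $\chi\mapsto\chi^{-1}$ on the $A_\ad$-factor and the identity on $\SL(V)^2$.)

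For Fact A, the identity $\mathcal H_X f_{\bar X} = f_{\bar X}$ reduces --- since $f_{\bar X}$ is the image of $\Phi^0\,dx$ with $\Phi^0 = 1_{\bar X(\mathfrak{o})}$ and $dx(\bar X(\mathfrak{o})) = 1$ --- to $\mathfrak F(\Phi^0\,dx) = \Phi^0\,dx$, which fibrewise over $\SL(V)(\mathfrak{o})$ asserts that the standard $\mathfrak{o}$-lattice in $V$ is its own annihilator under $(v,v')\mapsto\psi(\omega(v,v'))$ and has volume $1$ for the self-dual measure; this is exactly the ``unramified data'' hypothesis ($\psi$ of conductor $\mathfrak{o}$, $\omega$ integral and residually non-degenerate, so that $dx = |\omega|$ is self-dual and assigns $\bar X(\mathfrak{o})$ mass $1$), and there is no root-of-unity constant. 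The measure, function and half-density versions are interchanged by the equivariant maps $\mathcal F(\bar X)\to\mathcal D(\bar X)\to\mathcal S(\bar X)$, so $\mathcal H_X f_{\bar X} = f_{\bar X}$ in all three. For $\mathcal H_X^\circ f_{\bar X}^\circ = f_{\bar X}^\circ$ I would pass to Mellin transforms: by \eqref{deff00}, $f_{\bar X}^\circ = h_0\cdot f_{\bar X}$ with $\widecheck{h_0}(\chi) = L(\chi,\frac{\check\alpha}{2},\frac{1}{2})^{-1}$, so Fact B and $\mathcal H_X f_{\bar X} = f_{\bar X}$ give $\mathcal H_X f_{\bar X}^\circ = (\iota^* h_0)\cdot f_{\bar X}$; the inversion in $\iota^*$ compensates the $\chi\mapsto\chi^{-1}$ in the normalized-action rule, so this has Mellin transform $L(\chi,\frac{\check\alpha}{2},\frac{1}{2})^{-1}\,\widecheck{f_{\bar X}}(\chi) = L(\chi,\frac{\check\alpha}{2},\frac{1}{2})^{-1}L(\chi,-\frac{\check\alpha}{2},\frac{1}{2})\,\widecheck{f_{\bar X}^\circ}(\chi)$ by \eqref{Mellinbasic}, which the unramified Tate functional equation identifies with $\gamma(\chi,\frac{\check\alpha}{2},\frac{1}{2},\psi)\,\widecheck{f_{\bar X}^\circ}(\chi)$. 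On the other hand \eqref{relationbetweenH-Mellin} gives $\widecheck{\mathcal H_X f_{\bar X}^\circ}(\chi) = \gamma(\chi,\frac{\check\alpha}{2},\frac{1}{2},\psi)\,\widecheck{\mathcal H_X^\circ f_{\bar X}^\circ}(\chi)$; cancelling the $\gamma$-factor yields $\mathcal H_X^\circ f_{\bar X}^\circ = f_{\bar X}^\circ$.

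Combining Facts A and B completes the proof. The hard part will be purely a matter of bookkeeping rather than ideas: carrying the three normalized $A$-actions on functions, measures and half-densities (differing by powers of $\delta^{1/2}$, cf.\ \eqref{actionnormagain-measures}--\eqref{actionnormagain-functions}) consistently through the descent to $\SL_2^\diag$-coinvariants; pinning down the self-dual Haar measure and the absence of a root-of-unity constant in the second step; and keeping straight the Mellin-versus-Satake inversion convention, while verifying that the $\varepsilon$-factor of the Tate functional equation is genuinely $1$ in the relevant unramified range, so that $\gamma(\chi,\frac{\check\alpha}{2},\frac{1}{2},\psi)$ cancels exactly against $L(\chi,-\frac{\check\alpha}{2},\frac{1}{2})/L(\chi,\frac{\check\alpha}{2},\frac{1}{2})$.
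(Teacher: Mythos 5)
Your proposal is correct and follows essentially the same route as the paper's (much terser) proof: the $\mathcal H_X$ case rests on the self-duality of $1_{\bar X(\mathfrak o)}$ under fiberwise symplectic Fourier transform together with the $(\tilde G,\iota)$-equivariance of $\mathfrak F$, and the $\mathcal H_X^\circ$ case is deduced on Mellin transforms from \eqref{relationbetweenH} and the unramified Tate functional equation \eqref{FE}. Your write-up simply makes explicit the bookkeeping (the $\chi\mapsto\chi^{-1}$ conventions and \eqref{Mellinbasic}) that the paper leaves to the reader.
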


\begin{proof}
 Symplectic Fourier transform preserves the characteristic function of $\bar X(\mathfrak o)$,  and is $(\tilde G,\iota)$-equivariant, hence the statement about $\mathcal H_X$. 
 
 For $\mathcal H_X^\circ$ it follows by considering Mellin transforms, and invoking \eqref{relationbetweenH} and \eqref{FE}.
\end{proof}

\section{Hankel transform for the symmetric square $L$-function of $\GL_2$.} \label{sec:sym2}

The representation $\Sym^2$ of $\GL_2$ factors through $\GL_2/\{\pm 1\}\simeq \Gm\times \PGL_2 $, the dual group of $G:=\Gm\times \SL_2$, and coincides with the tensor product of the identity representation of $\Gm$ with the adjoint representation $\Ad$ of $\PGL_2$. We wish to study the functional equation for its $L$-function, at the level of the Kuznetsov formula of $G$. In what follows, the group $G$ will be identified with the group that, in the notation of the previous section, was $A_\ad\times\GL(V)$, with $A_\ad$ identified with $\Gm$ via the positive root character.

We denote the three weights of $\Sym^2$ by $\check\lambda_+, \check\lambda_0$ and $\check\lambda_-$, considered as coweights of the universal Cartan of $G$, so that $\check\lambda_-$ is anti-dominant and $\check\lambda_+$ is dominant.

The main goal of this section is to describe a local transformation (``Hankel transform'') between certain spaces of non-standard half-densities for the Kuznetsov formula, which is responsible for the functional equation of the symmetric-square $L$-function: 
$$\mathcal H_{\Sym^2}:\mathcal D^-_{L(\Sym^2, \frac{1}{2})} (N,\psi\backslash G /N,\psi )\xrightarrow\sim \mathcal D^-_{L((\Sym^2)^\vee, \frac{1}{2})} (N,\psi\backslash G /N,\psi ).$$

To describe it, we denote by $\eta_{D}$ the quadratic character associated to the quadratic extension $F(\sqrt{D})$, considered as a character of the $\Gm$-factor, and identified with the operator of multiplication by this character. We denote by $\delta_a$ the operator of multiplicative translation by $a$, under the $\Gm$-action on $N\backslash G\sslash N \simeq \Gm \times (N\backslash \SL_2\sslash N)$. The letter $\zeta$ denotes the coordinate on $N\backslash \SL_2\sslash N$ (hence also on $N\backslash G\sslash N$, by projection) that was fixed in \S \ref{ssnonstandard}. 

Notice that, since $\Gm$ is a direct factor of $G$ (hence also of $N\backslash G\sslash N$), for any space of half-densities on $N\backslash G\sslash N$ and any character $\chi$ of $\Gm(F)$, one has a $(\Gm,\chi)$-equivariant projection map to half-densities on $N\backslash \SL_2\sslash N$, given by an integral over the $\Gm$-factor --- assuming that this integral converges. This integral will depend on the choice of a Haar measure on $\Gm$, which we fix to be the standard one used throughout this paper ($=\frac{dx}{x}$, where $dx$ is self-dual with respect to the fixed additive character $\psi$). We will call this integral \emph{the $(\Gm,\chi)$-equivariant integral} of the given space of half-densities. Explicitly, for a half-density $\varphi$ on $N\backslash G\sslash N$, its ``$(\Gm,|\bullet|^s)$-equivariant integral'' is the following half-density on $N\backslash \SL_2\sslash N$:
$$ \zeta\mapsto \int_{\Gm} \varphi(a,\zeta) |a|^{-s} (d^\times a)^\frac{1}{2}.$$

Throughout this section, including in the main theorem that follows, we make use of the non-standard spaces $\mathcal S^-_{L(\Ad, \chi)} (N,\psi\backslash \SL_2/N,\psi)$ of test measures for the Kuznetsov formula of $\SL_2$, introduced in \S \ref{ssnonstandard} --- where, we recall, when $\chi = |\bullet|^s$, we replace it simply by $s$ in the notation --- and the related spaces of half-densities $\mathcal D^-_{L(\Ad, s)} (N,\psi\backslash \SL_2/N,\psi)$ introduced at the end of that section. These spaces were defined explicitly in terms of their germs; in contrast, we will not provide such descriptions for non-standard spaces of test densities for the Kuznetsov formula of $G$, which live over a 2-dimensional space of orbits --- rather, they will be defined abstractly in the beginning of \S \ref{ssHankel} as images of certain spaces associated to the Rankin--Selberg variety.

\begin{theorem} \label{thmSym2}
There is a space $\mathcal D^-_{L(\Sym^2, \frac{1}{2})} (N,\psi\backslash G /N,\psi)$ of (densely defined) half-densities on $N\backslash G\sslash N$, containing $\mathcal D (N,\psi\backslash G /N,\psi)$, whose $(\Gm,|\bullet|^s)$-equivariant integrals converge for $\Re(s)\gg 0$, admit meromorphic continuation to the entire complex plane, and have image equal to $\mathcal D^-_{L(\Ad, \frac{1}{2}+ s)} (N,\psi\backslash \SL_2/N,\psi)$, for every $s$ away from the poles. 

Moreover the transform:
  \begin{equation}\label{HSymformula} 
  \mathcal H_{\Sym^2} = \lambda(\eta_{\zeta^2-4},\psi)^{-1} \mathscr F_{-\check\lambda_+,\frac{1}{2}} \circ \delta_{1-4\zeta^{-2}} \circ \eta_{\zeta^2-4} \circ \mathscr F_{-\check\lambda_0,\frac{1}{2}} \circ \eta_{\zeta^2-4} \circ \mathscr F_{-\check\lambda_-,\frac{1}{2}}\end{equation}
is a $\Gm$-equivariant isomorphism 
$$\mathcal D^-_{L(\Sym^2, \frac{1}{2})} (N,\psi\backslash G /N,\psi )\xrightarrow\sim \mathcal D^-_{L((\Sym^2)^\vee, \frac{1}{2})} (N,\psi\backslash G /N,\psi ),$$
where the space on the right is the analogous space with the $\Gm$-coordinate inverted, hence descends for every $s\in \CC$ away from the poles to an isomorphism
$$\mathcal H_{\Ad,s} : \mathcal D^-_{L(\Ad, \frac{1}{2}+ s)} (N,\psi\backslash \SL_2/N,\psi) \xrightarrow\sim \mathcal D^-_{L(\Ad, \frac{1}{2}- s)} (N,\psi\backslash \SL_2/N,\psi).$$

As $\pi$ varies in a family of representations twisted by the character $|\bullet|^s$ of $\Gm$, the Hankel transform satisfies:
\begin{equation}\mathcal H_{\Sym^2}^* J_\pi = \gamma(\pi, \Sym^2, \frac{1}{2}, \psi) \cdot J_\pi,\end{equation}
as meromorphic families of functionals on $\mathcal D^-_{L(\Sym^2,\frac{1}{2})}(N,\psi\backslash G/N,\psi)$, where $J_\pi$ are the relative characters for the Kuznetsov formula (see \S \ref{ssrelchars-Kuz}), and $\gamma(\pi, \Sym^2, \frac{1}{2}, \psi)$ is the local gamma factor for the functional equation of the symmetric-square $L$-function (see \S \ref{ssHankelrelchars}).

Finally, the space $\mathcal D^-_{L(\Sym^2, \frac{1}{2})} (N,\psi\backslash G /N,\psi)$ contains the ``basic vector'' $f_{L(\Sym^2, \frac{1}{2})}$ attached to $L(\Sym^2,\frac{1}{2})$ (defined in \S \ref{ssnonstandard}). The Hankel transform $\mathcal H_{\Sym^2}$ maps $h\cdot f_{L(\Sym^2, \frac{1}{2})}$ to $h\cdot f_{L((\Sym^2)^\vee, \frac{1}{2})}$, for any element $h$ of the unramified Hecke algebra.
\end{theorem}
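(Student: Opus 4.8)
\emph{Overall strategy.} The plan is to transport the already-established structure of the $\Sym^2$-subspace $\mathcal D(\bar X/\SL_2)^\circ$ --- its endomorphism $\mathcal H_X^\circ$ (Theorem~\ref{thmsubspaceX}) and the behaviour of its basic vector (Theorem~\ref{thmbasicvectorX}) --- across an \emph{unfolding isomorphism} to the Kuznetsov side, and then read everything off. So the first task is to construct this isomorphism. I would first build the local Rankin--Selberg unfolding ``upstairs'', $\U\colon\mathcal S(\bar X)\xrightarrow{\sim}\mathcal S^-(\tilde N,\tilde\psi\backslash\tilde G)$, onto a suitable enlargement of the Kuznetsov test-measure space of $\tilde G$, compatibly with the fiberwise symplectic Fourier transform via the scattering of the standard intertwining operator; then descend it modulo $\SL_2^\diag$ (using $[\tilde N,\tilde\psi\backslash\tilde G/\SL_2^\diag]=[N,\psi\backslash G/N,\psi]$) and restrict to the $\Sym^2$-subspace, to get $\bar\U\colon\mathcal S(\bar X/\SL_2)^\circ\xrightarrow{\sim}\mathcal S^-_{L(\Sym^2,1)}(N,\psi\backslash G/N,\psi)$; then pass to half-densities (the volume factor accounts for the change from $L(\Sym^2,1)$ to $L(\Sym^2,\tfrac12)$) and \emph{define} $\mathcal D^-_{L(\Sym^2,\frac12)}(N,\psi\backslash G/N,\psi):=\bar\U(\mathcal D(\bar X/\SL_2)^\circ)$. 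The containment of $\mathcal D(N,\psi\backslash G/N,\psi)$ then follows from $\mathcal D(X/\SL_2)\subseteq\mathcal D(\bar X/\SL_2)^\circ$ (\S\ref{ssRSthespace}) together with the fact, built into the construction of $\bar\U$, that it carries $\mathcal D(X/\SL_2)$ onto the standard Kuznetsov space of $G$. For the $(\Gm,|\bullet|^s)$-equivariant integral I would pull back through $\bar\U$: it becomes the $A_\ad$-Mellin transform of an element of $\mathcal D(\bar X_\ad)^\circ$ evaluated at $|\bullet|^s$ (up to the normalizing shift), which by the Mellin-transform description of $\mathcal D(\bar X_\ad)^\circ$ (Corollary~\ref{corMellinVad} and \S\ref{ssSym2}) lies in the fiber of $\mathscr E_\ad^\circ$ over that character; after modding out by $\SL_2^\diag$ this fiber is exactly the one defining $\mathcal D^-_{L(\Ad,\frac12+s)}(N,\psi\backslash\SL_2/N,\psi)$ in \S\ref{ssnonstandard}, and convergence, meromorphic continuation, and the pole locations (bounded by the poles of $\sum_{\omega\ne1}L(\bullet^{-1}\omega,\tfrac12)$) are immediate from that description.

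\emph{The Hankel transform and its formula.} Since $\mathcal H_X^\circ$ is $A_\ad$-anti-equivariant and $\bar\U$ is $\Gm$-equivariant, I will set $\mathcal H_{\Sym^2}:=\bar\U\circ\mathcal H_X^\circ\circ\bar\U^{-1}$; it is automatically a $\Gm$-equivariant isomorphism onto the space with inverted $\Gm$-coordinate, namely $\mathcal D^-_{L((\Sym^2)^\vee,\frac12)}(N,\psi\backslash G/N,\psi)$, and taking $(\Gm,|\bullet|^s)$-equivariant integrals yields $\mathcal H_{\Ad,s}$ with $s\mapsto-s$ coming from the inversion. To obtain the explicit formula~\eqref{HSymformula} I would substitute the closed form~\eqref{HXcirc} for $\mathcal H_X^\circ$ and the explicit formula for $\bar\U$, and carry out the change of coordinates from $(c,t)$ on $\bar X\sslash\SL_2=\mathbbm A^2$ to $(a,\zeta)$ on $N\backslash G\sslash N=\Gm\times(N\backslash\SL_2\sslash N)$. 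The shape of the answer is essentially forced: using $\eta\,(f\star g)=(\eta f)\star(\eta g)$ and the identification $\eta_{t^2-4}=\eta_{\zeta^2-4}$, the convolution $\star_c$ in~\eqref{HXcirc}, once $c$ is matched with a $\Gm$-coordinate of weight $\check\lambda_0$, becomes the middle block $\eta_{\zeta^2-4}\circ\mathscr F_{-\check\lambda_0,\frac12}\circ\eta_{\zeta^2-4}$; the Fourier-transform parts of $\bar\U^{-1}$ and $\bar\U$ supply the flanking convolutions $\mathscr F_{-\check\lambda_-,\frac12}$ and $\mathscr F_{-\check\lambda_+,\frac12}$ at the two extremal weights of $\Sym^2$; the substitution $c\mapsto(4-t^2)c^{-1}$, once its inversion part is absorbed into the target space, leaves the multiplicative translation $\delta_{1-4\zeta^{-2}}$ (which also absorbs the $\zeta$-dependent normalization constants of $\bar\U$ and the precise relation between $t$ and $\zeta$); and the leftover scalar is $\lambda(\eta_{\zeta^2-4},\psi)^{-1}$. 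I expect this coordinate-and-normalization bookkeeping to be the main obstacle --- in particular, fixing the normalization of $\bar\U$ so that all the $\zeta$-dependent factors collapse precisely to $\delta_{1-4\zeta^{-2}}$, and tracking the half-density/volume constants --- while everything else in this step is formal given~\eqref{HXcirc}.

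\emph{Action on relative characters.} For $\mathcal H_{\Sym^2}^*J_\pi=\gamma(\pi,\Sym^2,\tfrac12,\psi)\cdot J_\pi$ I would combine two facts. First, $\mathcal H_X^\circ$ acts on the relative characters of $\mathcal D(\bar X/\SL_2)^\circ$ attached to the representations $\pi_1\otimes\pi_2$ with $\pi_1=\widetilde{\pi_2}=:\pi$ (the ones occurring in the spectral decomposition of the $\SL_2^\diag$-quotient) by the scalar $\gamma(\chi\times\Sym^2(\pi),1,\psi)$: this comes from the Mellin-transform description of $\mathcal H_X^\circ$ --- fiberwise it is $\gamma(\chi,\tfrac{\check\alpha}{2},\tfrac12,\psi)^{-1}$ times the symplectic Fourier isomorphism, cf.~\eqref{relationbetweenH-Mellin} and the Tate functional equation~\eqref{FE} --- combined with the spectral decomposition. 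Second, $\bar\U$ intertwines those relative characters for the Rankin--Selberg period with the Kuznetsov relative characters $J_\pi$ of \S\ref{ssrelchars-Kuz}; this is exactly the unfolding identity ``Rankin--Selberg $=$ Whittaker'' underlying the construction of $\bar\U$, and the passage from $1$ to $\tfrac12$ is again the half-density volume factor (cf.~\S\ref{ssHankelrelchars}).

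\emph{Basic vector.} Finally, in the unramified non-Archimedean case I would check that $\bar\U$ carries the basic vector $f^\circ_{\bar X}$ of $\mathcal D(\bar X/\SL_2)^\circ$ (the normalized image of $1_{\bar X(\mathfrak o)}$, \S\ref{ssbasicvectorsX}) to the basic vector $f_{L(\Sym^2,\frac12)}$ --- the classical unramified Rankin--Selberg computation that the unfolded $1_{\bar X(\mathfrak o)}$ is the Dirichlet series of the local $L$-value. By Theorem~\ref{thmbasicvectorX}, $\mathcal H_X^\circ(h\cdot f^\circ_{\bar X})=(\iota^*h)\cdot f^\circ_{\bar X}$ for $h$ in the unramified Hecke algebra of $\tilde G$; transporting through $\bar\U$, and noting that for $h$ in the unramified Hecke algebra of $G$ the operation $\iota^*$ is precisely the $\Gm$-inversion interchanging $L(\Sym^2,\tfrac12)$ with $L((\Sym^2)^\vee,\tfrac12)$, we get $\mathcal H_{\Sym^2}(h\cdot f_{L(\Sym^2,\frac12)})=h\cdot f_{L((\Sym^2)^\vee,\frac12)}$, completing the proof.
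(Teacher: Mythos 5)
Your proposal follows the paper's architecture exactly: define $\mathcal D^-_{L(\Sym^2, \frac{1}{2})}(N,\psi\backslash G/N,\psi)$ as the image of $\mathcal D(\bar X/\SL_2)^\circ$ under the descended unfolding map $\bar\U$, conjugate $\mathcal H_X^\circ$ by unfolding to get $\mathcal H_{\Sym^2}$, and import the explicit formula, the gamma-factor identity, and the basic-vector statement from Theorems \ref{thmsubspaceX} and \ref{thmbasicvectorX}. The one genuine gap is your claim that the first assertion of the theorem --- convergence, meromorphic continuation, and surjectivity of the $(\Gm,|\bullet|^s)$-equivariant integrals onto $\mathcal D^-_{L(\Ad,\frac{1}{2}+s)}(N,\psi\backslash \SL_2/N,\psi)$ --- is ``immediate'' from the Mellin description of $\mathcal D(\bar X_\ad)^\circ$. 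Equivariance of $\bar\U$ only reduces you to pushing forward the fiber value $\check\varphi(\delta^s)\in \mathscr E^\circ_{\ad,\delta^s}$, which is a principal-series-type space of measures upstairs; identifying its Kuznetsov push-forward with the non-standard space, and locating the poles of the family, is the content of Proposition \ref{propdescent} and is the technical heart of the section. It requires stratifying $\bar Z$ as $Z\cup \bar Z^{\rm disj}\cup \bar Z^\dagger$, checking that the $\bar Z^{\rm disj}$-stratum produces exactly the prescribed asymptotics $C(\zeta^{-1})\,|\zeta|^{\frac{1}{2}}\chi(e^{\frac{\check\alpha}{2}}(\zeta))\,d^\times\zeta$ at infinity, and proving that the stalk at the bad stratum $\bar Z^\dagger$ has vanishing $\SL(V)$-coinvariants except at the poles of $L(\chi\eta,-\frac{\check\alpha}{2},\frac{1}{2})$ --- this last coinvariant computation is what gives the phrase ``away from the poles'' its content and bounds the poles of the meromorphic family. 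Nothing in your sketch supplies this step.

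Three smaller points. (i) As written, $\bar\U\circ\mathcal H_X^\circ\circ\bar\U^{-1}$ is an $A_\ad$-\emph{anti}-equivariant endomorphism of $\mathcal D^-_{L(\Sym^2,\frac{1}{2})}$ itself; to land equivariantly in the dual space you must use $\bar\U^\vee=j^*\circ\bar\U$ on the left, $j$ being inversion in the $\Gm$-coordinate. (ii) For the relative characters, the Mellin description of $\mathcal H_X^\circ$ only supplies the abelian correction $\gamma(\chi,\frac{\check\alpha}{2},\frac{1}{2},\psi)^{-1}$ relating it to $\mathcal H_X$; the statement that the fiberwise symplectic Fourier transform scales the Rankin--Selberg/Whittaker pairing by $\gamma(\tau,\otimes,\frac{1}{2},\psi)$ is the local functional equation of the Rankin--Selberg integral, resting on a multiplicity-one argument, and must be invoked explicitly rather than absorbed into ``the spectral decomposition''. (iii) The unramified computation gives $\bar\U f^\circ_{\bar X}=(1-q^{-1})(1-q^{-2})\,f_{L(\Sym^2,1)}$ rather than an exact match, and the passage from measures to half-densities sends this basic measure to (a multiple of) $f_{L((\Sym^2)^\vee,\frac{1}{2})}$; the constants cancel in the final Hankel identity, but they need to be tracked to see that they do.
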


The proof of this theorem will occupy this section. The statement will be obtained by the ``unfolding'' method from the Hankel transform \eqref{RSHankel} for the Rankin--Selberg variety. In the process, we will also prove the outstanding statements of Theorem \ref{thmRudnick} for the comparison between the group and Kuznetsov trace formulas for $\SL_2$.

\subsection{Unfolding} \label{ssunfolding}

Let $V$ a two-dimensional symplectic vector space, and $X$, $\bar X$, $A$, $A_\ad$ as in the previous section. I remind that the group acting on the Rankin--Selberg variety $\bar X$ is $\tilde G = (A\times \SL(V)^2)/\{\pm 1\}$. The group $G = \Gm \times \SL_2$ will, more canonically, be identified with the group $A_\ad\times\SL(V)$, and as the quotient of the group $\tilde G$ (as varieties), with the quotient map induced by
\begin{equation}\label{tildetoG} (a,g_1,g_2) \mapsto (a_\ad , g_1 g_2^{-1}),
\end{equation}
where $a_\ad$ is the image of $a$ through $A\to A_\ad$.

The name of \emph{unfolding} is given to the method which, globally, proves that the Rankin--Selberg integral of two cusp forms is equal to an Euler product, by relating it with the Whittaker/Fourier coefficients of the cusp forms. Locally, it translates to an explicit $L^2$-isometry:
$$\U: L^2(X) \xrightarrow\sim L^2(\tilde N,\tilde\psi\backslash \tilde G),$$
where $\tilde N = N\times N$ is a maximal unipotent subgroup of $\tilde G$, endowed with the non-degenerate character $\tilde\psi = \psi \times \psi^{-1}$. The goal of this subsection is to compute the descent of the unfolding map to coinvariants, giving rise to a morphism 
\[\bar{\U}: \mathcal S(\bar X/\SL_2) \to \mathcal S^{--}(N,\psi\backslash G/N,\psi)\]
(the image being a non-standard space of test measures for the Kuznetsov formula), see Proposition \ref{prop:unfoldingdescent}.

The map $\U$ is described as follows:

Recall that $\bar X\simeq V\times \SL(V)$ comes with a map $\pi:(v,g)\mapsto (v, vg)$ to $V\times V$. This can be used to identify $\bar X$ with the space classifying triples $(v, w, g)$, where $v, w\in V$ and $g\in \SL(V)$ with $vg=w$. The open subset $X = V^*\times \SL(V)$ is a $\Ga$-torsor over its image $V^*\times V^*$, as follows: As we have seen in \S \ref{ssinvariant}, the symplectic structure on $V$ gives rise to a trivialization of the inertia group scheme over $V^*$:
\begin{equation}\label{trivS}
 S:= \{(g,v) \in \SL(V)\times V^* | vg = v\} \simeq \Ga \times V^*.
\end{equation}
We recall that we fixed this isomorphism so that $\Ga \simeq \SL(V)_v$ acts on the set of vectors $w$ with $\omega(v, w)=1$ by $x\cdot w = w - xv$. 
This group scheme acts on $X$ over $V^*\times V^*$, turning it into a $\Ga$-bundle; our convention will be that $S$ is identified with the inertia group scheme of the \emph{first} copy of $V^*$. 
 
We let $S^\vee$ be the dual vector bundle over $V^*$; of course, this is again isomorphic to $\Ga\times V^*$, but it will be good to distinguish one from its dual. We let $\bar Z = S^\vee \times V^*$ --- it is canonically isomorphic to $\Ga \times V^*\times V^*$. Finally, we set 
\begin{equation}\label{barZdef}  \overline{\tilde Z} = X\times_{V^*\times V^*} \bar Z \simeq  X \times_{V^*} S^\vee \simeq \Ga \times X
\end{equation}
for the base change of this vector bundle to $X$ (where the second fiber product is over the first copy of $V^*$). The complements of the zero sections of $\bar Z$, $\overline{\tilde Z}$ will be denoted by $Z$, resp.\ $\tilde Z$.

Thus, we have a Cartesian diagram, where the labels on the arrows denote the group or group scheme for which they are torsors:
$$ \xymatrix{ & \tilde Z \ar[dl]_{\Gm} \ar[dr]^{\Ga} \\ 
X\ar[dr]_{\Ga} && Z\ar[dl]^{\Gm} \\
&V^*\times V^*. }$$

To reformulate, $Z$ classifies $\SL(V)$-equivariant isomorphisms $S\xrightarrow\sim \Ga\times V^*$, together with points on $V^*\times V^*$, and it has a canonical section 
$$ V^*\times V^*\to Z$$
corresponding to the canonical trivialization \eqref{trivS}.

Group-theoretically, the above diagram corresponds to  the diagram 
\begin{equation}\label{unfoldingquotients} \xymatrix{ & \tilde Z = N^\diag\backslash \tilde G \ar[dl]\ar[dr]& \\ X = A_\ad^\diag N^\diag\backslash \tilde G \ar[dr] & & Z=\tilde N\backslash \tilde G \ar[dl]\\
 & V^*\times V^* = A_\ad^\diag \tilde N \backslash \tilde G.
 }\end{equation}
 Here, $A_\ad^\diag$ denotes the diagonal embedding of $A_\ad$ in all three factors of the product $\tilde N\backslash \tilde G \simeq A\times N\backslash \SL(V) \times N\backslash \SL(V) / \{\pm 1\}^\diag$.
  Recall our convention that the universal Cartan $A=B/N$ acts on $N\backslash \SL(V)$ via a twist of the natural action by the non-trivial Weyl group element, which is why the stabilizer of a point on $X$ over the diagonal of $V^*$ is, indeed, $A_\ad^\diag N^\diag$. The identification of $\tilde Z\to X$ as a $\Gm$-torsor corresponds to the identification $A_\ad\xrightarrow\sim \Gm$ via the positive root character. Similarly, we have an isomorphism
  \begin{equation}\label{Zasprod}Z \simeq A_\ad \times (V^*\times V^*),\end{equation}
 when we identify $A_\ad$ with $\Gm$ via the positive root character.

 \begin{remark}\label{AonZ} To avoid any confusion, I stress that the ``factor'' $A$ of $\tilde G$ acts \emph{diagonally} on this product (namely, via its canonical quotient on $A_\ad$, and via the positive half-root character on $(V^*\times V^*)$).
 \end{remark}

 For an element $(t, g_1, g_2)$ in $A\times N\backslash \SL_2 \times N\backslash \SL_2 $, we will be denoting by $[t, g_1, g_2]$ its image in $Z$.
  The group $A$ acts ``on the left'' on $N\backslash \SL_2$ by the same convention, hence $A^3/\{\pm 1\}^\diag$ acts on $Z$. We let $A_\ad^\adiag$ denote the \emph{anti-diagonal} embedding $a\mapsto (a, a^{-1}, a^{-1})$ of $A_\ad$ into $A^3/\{\pm 1\}$, where the first factor corresponds to the ``factor'' $A$ of $\tilde G$; \emph{we identify $A_\ad^\adiag$ with $A_\ad$ through this first factor}.
 The embedding $\bar Z$ is then the space 
 \begin{equation}\label{barZ} \bar Z= \Ga \times^{A_\ad^\adiag} Z,
 \end{equation}
 where $A_\ad$ acts on $\Ga$ via the \emph{positive} root character. Explicitly, by taking $N$ to be the upper triangular unipotent subgroup of $\SL_2$, this is the quotient of $\Ga\times N^2\times \SL_2^2$ by the equivalence relation
 $$ (x a, [t, Ng_1, Ng_2]) \sim (x, [ t\cdot \begin{pmatrix} a^{\frac{1}{2}} \\ & a^{-\frac{1}{2}}\end{pmatrix} , N \begin{pmatrix} a^{\frac{1}{2}} \\ & a^{-\frac{1}{2}}\end{pmatrix} g_1, N \begin{pmatrix} a^{\frac{1}{2}} \\ & a^{-\frac{1}{2}}\end{pmatrix} g_2 ]),$$
 where $\begin{pmatrix} a^{\frac{1}{2}} \\ & a^{-\frac{1}{2}}\end{pmatrix}$ is written symbolically for an element of $A_\ad = A/\{\pm 1\}$. Again, by our conventions for the action of $A$ on $N\backslash \SL(V)$, the quotient by the action of $A_\ad^\adiag$ in \eqref{barZ} translates to the embedding of $A_\ad$ in the stabilizer being the \emph{diagonal} one.  
  Notice also that the added orbit $A_\ad^\diag\backslash Z$ corresponding to $x=0$ lies at the ``funnel'' --- that is: at the infinity of the affine closure of $Z$. 

 Having fixed the additive character $\psi$, and its self-dual measure on $F$, there is a natural notion of Fourier transform of functions on $X$ along fibers of the $\Ga$-bundle $X\to V^*\times V^*$, with image on a certain space of functions on ${\tilde Z}$. The reader can consult \cite[\S 9.5]{SV} for a more general discussion. The issue here is that the fibers of $X$ over $V^*\times V^*$ do not have a canonical base point, in order to identify them with vector spaces. Here is where the space $\overline{\tilde Z}$ comes to play a role, because it fixes not only a linear functional on the structure group of $X$, but also a base point on it. Thus, a point of $\overline{\tilde Z}$ can be represented as a pair $(x,\ell)$, where $x\in X$ and $\ell$ is a linear functional on the fiber $S_x$ of the structure group over $x$. If, moreover, the pair belongs to $\tilde Z$ (i.e., $\ell\ne 0$), then $\ell$ is an isomorphism: 
 $$ \ell: S_x\xrightarrow\sim \Ga,$$
 so a point of $\tilde Z$ gives both a base point on $X$ and a trivialization of the structure group (not necessarily the same as the canonical one \eqref{trivS}). 
 
 We use these data to define Fourier transform: For a function $\Phi$ on $X$, we define a function on $\tilde Z$ by
 \begin{equation}\label{unfolding-formula} \mathcal U\Phi(x, \ell) = \int_{\Ga} \Phi(x+ \ell^{-1} z) \psi^{-1}(z) dz.
 \end{equation}
 
 Whenever the above integral is convergent, the function $\mathcal U\Phi$ has the following properties:
 \begin{itemize}
  \item It is $(\Ga,\psi)$-equivariant on the $\Ga$-torsor $\tilde Z\to Z$.
  \item Along the $\Gm$-torsor $\tilde Z\to X$, we have
  $$ \lim_{a\to 0} |a|^{-1} \mathcal U\Phi(x, a\cdot \ell) = \int_{\Ga} \Phi(\ell^{-1}(z)) dz,$$
  the zeroth Fourier coefficient of $\Phi$ along $\pi^{-1}(\pi(x))$ with the measure defined by $\ell^{-1}$, assuming that the Fourier transform of $\Phi$ along this $\Ga$-torsor is continuous at zero.
 \end{itemize}

 Thus, we can think of $\mathcal U\Phi$ as a section of a certain complex line bundle $\mathbb L_1$ over $\bar Z$. In general, we will define, for every complex number $s$, a line bundle $\mathbb L_s$ as the tensor product of two line bundles on $\bar Z$:
 $$ \mathbb L_s = \mathcal L_\psi \otimes \mathscr L_{\delta^s}.$$
 The former, $\mathcal L_\psi$, restricts to the Whittaker line bundle on $Z$, whose sections are $(\Ga,\psi)$-equivariant functions on $\tilde Z$. This can be extended to a line bundle over $\bar Z$, whose restriction to $\bar Z\smallsetminus Z$ is the trivial line bundle; its sections are functions on $\overline{\tilde Z}$ which satisfy $\Phi(x,\ell) = \psi(\left<z,\ell\right>) \Phi(x',\ell)$ for any quadruple $(x,x',z, \ell)\in X^2\times S \times \bar Z$ over the same point of $V^*\times V^*$ such that $x=z\cdot x'$. The other line bundle, $\mathscr L_{\delta^s}$, is the line bundle over $\bar Z$ whose sections are smooth functions on $Z$ which in any small neighborhood of the boundary are of the form $|\epsilon|^s\cdot \Phi$, where $\Phi$ is a smooth function on $\bar Z$ and $\epsilon$ is a local coordinate compatible with the $\Gm$-action. The notation $\delta^s$ is justified by the isomorphism \eqref{barZ}.
 
 Similarly, we can define Fourier transform for half-densities and measures. This will land in half-densities on $\bar Z$ valued in $\mathbb L_0$, and measures on $\bar Z$ valued in $\mathbb L_{-1}$, respectively. Restricting to Schwartz functions, densities or measures on $X$, Fourier theory on the line implies:
 
 \begin{proposition} \label{unfoldingprop}
 The unfolding map defines isomorphisms
 \begin{align*}  \mathcal U: \mathcal F(X) &\xrightarrow\sim \mathcal F(\bar Z, \mathbb L_1),\\
   \mathcal U: \mathcal D(X) &\xrightarrow\sim \mathcal D(\bar Z, \mathbb L_0),\\
   \mathcal U: \mathcal S(X) &\xrightarrow\sim \mathcal S(\bar Z, \mathbb L_{-1}).
  \end{align*}
 \end{proposition}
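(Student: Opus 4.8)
The plan is to recognise the unfolding map as fibrewise Fourier transform along the affine-line fibres of $\pi\colon X\to V^*\times V^*$, and to read off all three isomorphisms from one-variable Fourier analysis together with bookkeeping for the line bundles $\mathbb L_s=\mathcal L_\psi\otimes\mathscr L_{\delta^s}$. The first thing to record is that every space in the statement lives over $V^*\times V^*$: the map $\pi$ is a torsor under the pull-back of the group scheme $S$, which is the trivial line bundle by the trivialization \eqref{trivS}; $\bar Z\to V^*\times V^*$ is the dual line bundle, $Z$ its zero-section complement, and $\overline{\tilde Z}=X\times_{V^*\times V^*}\bar Z$. By its definition \eqref{unfolding-formula}, over a point $p=(v,w)$ the map $\mathcal U$ sends $\Phi|_{X_p}$, a Schwartz function on the affine line $X_p$, to its Fourier transform viewed as a function on $\overline{\tilde Z}_p=X_p\times S_v^\vee$ --- constant along the $X_p$-factor up to the $\psi$-twist recorded by $\mathcal L_\psi$ --- and for Schwartz $\Phi$ the integral plainly converges (a finite sum in the non-Archimedean case). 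Thus $\mathcal U\Phi$ is at once a section of $\mathcal L_\psi$ over $Z$.

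I would then reduce the three assertions to one. Since $\mathcal F(X)\to\mathcal D(X)\to\mathcal S(X)$ is multiplication by a Haar half-density on $X$, and since $\mathcal U$ carries a Haar half-density on $X$ to an $\mathbb L_{-1}$-valued half-density on $\bar Z$ --- the manifestation of the Jacobian of the rescaling of the Fourier variable built into \eqref{unfolding-formula}, which contributes a factor $|\epsilon|^{-1}$ near the zero section, $\epsilon$ a local coordinate compatible with the $\Gm$-action --- multiplication by it shifts the exponent in $\mathbb L_s$ down by one at each step. This is exactly the passage from $\mathbb L_1$ to $\mathbb L_0$ to $\mathbb L_{-1}$, so it is enough to prove, say, the half-density statement $\mathcal U\colon\mathcal D(X)\xrightarrow\sim\mathcal D(\bar Z,\mathbb L_0)$.

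The core is a chart computation. The torsor $X\to V^*\times V^*$ is Zariski-locally trivial --- a section of $\SL_2\to N\backslash\SL_2\simeq V^*$ over each of the two standard opens of $V^*$ induces, via $(v,w)\mapsto(v,\text{the unique }g\text{ with }vg=w)$, a trivialization of $X$ over the product of the corresponding opens of $V^*\times V^*$ --- so over a trivializing open $U$ the picture is the model one: $X|_U\simeq\mathbbm A^1\times U$, $\bar Z|_U\simeq(\mathbbm A^1)^\vee\times U$ with pairing $\psi(\langle t,\xi\rangle)$, and \eqref{unfolding-formula} becomes, fibre by fibre, $\Phi\mapsto|\xi|\,\widehat\Phi$, where $\widehat\Phi$ is the self-dual Fourier transform on $F$ and $|\xi|$ the Jacobian of the built-in rescaling. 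Ordinary Fourier theory on $F$ --- that $\Phi\mapsto\widehat\Phi$ is a topological automorphism of the Schwartz space of $F$, extended with estimates uniform in $u$ to the relative Schwartz space over $U$ --- identifies the image with those sections that are Schwartz along $U$ and, in the fibre variable, of the exact form $|\xi|\cdot(\text{Schwartz})$; in the chart this is precisely $\mathcal D(\bar Z|_U,\mathbb L_0)$ after the self-dual density normalization. Since \eqref{unfolding-formula} is globally defined, these local isomorphisms are restrictions of one global map, and injectivity, surjectivity and bicontinuity are local over $V^*\times V^*$, so they follow by a partition of unity --- once one observes that the transition cocycle of $X\to V^*\times V^*$ enters the charted transforms only as translations in $t$, i.e.\ as multiplication of $\widehat\Phi(\xi)$ by $\psi(\ast\cdot\xi)$, which is exactly the cocycle already absorbed into $\mathcal L_\psi$; its consistency is built into \eqref{trivS} and the identification of $\tilde Z\to X$ with $A_\ad$ through the positive root character, as in \eqref{unfoldingquotients}.

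I expect the genuinely delicate point to be the Archimedean case of this last step: that fibrewise Fourier transform is a topological isomorphism of the relevant \emph{Fr\'echet} Schwartz spaces, and --- the same obstruction from the other side --- that the image is captured \emph{exactly} by the order-one vanishing along the zero section, i.e.\ that Fourier inversion carries every Schwartz-type section of $\mathbb L_1$ back into $\mathcal F(X)$, with all seminorm estimates uniform over the base. The quasi-affineness of $V^*$ is harmless, $V^*\times V^*$ being common to source and target with $\mathcal U$ fibrewise over it; in the non-Archimedean case everything reduces to local constancy and compactness of supports.
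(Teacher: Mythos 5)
Your argument is exactly the route the paper takes: Proposition \ref{unfoldingprop} is stated there with no more justification than ``Fourier theory on the line implies,'' and your fibrewise reading of \eqref{unfolding-formula} over $V^*\times V^*$, the reduction of the three statements to one via multiplication by Haar half-densities as in \eqref{fnsdensmeas}, and the absorption of the torsor cocycle into $\mathcal L_\psi$ are precisely the intended (and correct) filling-in of that assertion. The Archimedean uniformity you flag is the standard fact that partial Fourier transform is a topological automorphism of relative Schwartz spaces, so nothing further is needed.
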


 For these to be equivariant with respect to the $\tilde G$-action, we need to be careful about normalizations, since the action of $A$ on $\mathcal F(X)$ and $\mathcal S(X)$ has been normalized in \eqref{actionnormagain-functions}, \eqref{actionnormagain-measures}. We therefore we need to let $A$ act on sections of $\mathbb L_s$ over $\bar Z$ by 
 \begin{equation}\label{actionnorm-unfolding-fns} a\cdot \Phi(x) = \delta(a)^{\frac{1}{2}}\Phi(a\cdot x),
\end{equation}
 and on measures valued in $\mathbb L_s$ by 
 \begin{equation}\label{actionnorm-unfolding-meas} a\cdot \mu(x) = \delta(a)^{-\frac{1}{2}}\mu(a\cdot x).
\end{equation}
\emph{In other words, we twist the usual action of $\tilde G$ on its Whittaker functions or measures by the characters $\delta^{ \frac{1}{2}}$.} As usual, no normalization is needed for half-densities. In terms of this normalized action, the group $A_\ad^\adiag$ acts on the fibers of the sheaf of sections of $\mathscr L_{\delta^s}$ over $\bar Z\smallsetminus Z$ by the character $\delta^{s+\frac{1}{2}}$, and on the fibers of the sheaf of measures valued in $\mathscr L_{\delta^s}$ by the same character. There are now equivariant isomorphisms, depending on the choice of a measure:
\begin{equation}\label{fnsdensmeas}\mathcal F(\bar Z, \mathbb L_1) \to \mathcal D(\bar Z, \mathbb L_0) \to \mathcal S(\bar Z, \mathbb L_{-1}),
\end{equation}
that are obtained as follows: Choose a $\tilde G$-invariant measure $d^\times z$ on $Z$. (The notation $d^\times z$ will be explained below.) Notice that a $\tilde G$-invariant measure $d^\times z$ on $Z $ has a \emph{triple} pole at $\bar Z\smallsetminus Z$, i.e., in a local coordinate $\epsilon$ is is a multiple of $|\epsilon|^{-3}$ by a smooth measure; indeed, under the unnormalized action $A_\ad^\adiag$ acts on $d^\times z$ by the character $\delta^{-2}$, and on the fibers over $\bar Z\smallsetminus Z$ of the bundle of smooth measures by $\delta$ --- thus, the invariant measure is of the form $|\epsilon|^{-3}$ times a smooth measure on $\bar Z$. The character $\delta$ can be understood as a function on $Z$ via \eqref{Zasprod}. Now define the maps \eqref{fnsdensmeas} by multiplying by $(\delta \cdot d^\times z)^\frac{1}{2}$. It is immediately seen that these maps are equivariant for the normalized action.

Let us be a bit more careful about choices of measures. Suppose an $\SL(V)^2$-invariant measure chosen on $V^*\times V^*$ --- for example, the one determined by the symplectic form. Any identification of the group scheme $S$ with $\Ga\times V^*\times V^*$ induces, by our fixed measure on $\Ga$,  measures $dx$ on $X$ and $dz$ on $Z$ which are dual with respect to Fourier transform. These measures are $\SL(V)^2$-equivariant but vary by the character $\delta$ of the quotient $\tilde G\to A_\ad$; this is the meaning of the notation $d^\times z$ above, since by $dz$ we will denote a $(\tilde G,\delta)$-equivariant measure on $Z$. In any case, a choice of such dual measures induces a commutative diagram
\begin{equation}\label{unfoldingfunctionsmeasures}\xymatrix{ \mathcal F(X)  \ar[d]_{\cdot dx} \ar[r]^{\mathcal U} & \mathcal F(\bar Z, \mathbb L_1) \ar[d]^{\cdot dz} \\
\mathcal S(X) \ar[r]^{\mathcal U} & \mathcal S(\bar Z, \mathbb L_{-1})}
\end{equation}
(and similarly for half-densities).

We extend the unfolding morphism, by the same formula, to the space of Schwartz functions on $\bar X$. 

\begin{lemma}\label{unfoldingextends}
 The unfolding map \eqref{unfolding-formula} converges absolutely on $\mathcal F(\bar X)$. 
\end{lemma}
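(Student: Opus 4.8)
The plan is to reduce the assertion, for each fixed $(x,\ell)\in\tilde Z$, to the elementary fact that the restriction of a Schwartz function on the smooth affine variety $\bar X=V\times\SL(V)$ to a closed affine line not meeting the boundary is a Schwartz function on that line, hence absolutely integrable.

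First I would unwind the geometry of the curve $z\mapsto x+\ell^{-1}z$ that appears in \eqref{unfolding-formula}. Write $x=(v,g)\in X\subset\bar X$ with $v\in V^*$. Under the trivialization \eqref{trivS} of the stabilizer group scheme $S$ over $V^*$ (which was fixed in \S\ref{ssinvariant}), $\ell^{-1}$ is an isomorphism $\Ga\xrightarrow{\sim}S_x=\SL(V)_v$, and the $\SL(V)_v$-action on the point $(v,g)\in X$ is by left translation on the $\SL(V)$-coordinate: indeed, $(v,g)\cdot(1,s^{-1},1)=(vs^{-1},sg)=(v,sg)$ whenever $vs=v$. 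Hence, denoting by $u\colon\Ga\hookrightarrow\SL(V)$ the one-parameter unipotent subgroup with image $\SL(V)_v$, there is a constant $c\in F^\times$ with $x+\ell^{-1}z=(v,\,u(cz)\,g)$ for all $z\in F$.

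Next I would observe that $z\mapsto(v,u(cz)g)$ is a \emph{closed} immersion $\mathbb A^1\hookrightarrow\bar X$ whose image is disjoint from $\bar X\smallsetminus X=\{0\}\times\SL(V)$: the $V$-coordinate stays equal to the fixed nonzero vector $v$, and the coset $\SL(V)_v\cdot g$ is closed in $\SL(V)$ because $\SL(V)_v$ is a closed subgroup. Concretely, in matrix coordinates the entries of $u(cz)g$ are affine-linear in $z$ with linear part determined by one (necessarily nonzero, since $g$ is invertible) row of $g$; so, fixing a norm $\|\cdot\|$ on an affine space containing $\bar X$, one has $\|(v,u(cz)g)\|\gg 1+|z|$ as $|z|\to\infty$. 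It then remains to invoke the standard property that the restriction of a Schwartz function on a smooth affine variety to a closed subvariety is again Schwartz: in the non-Archimedean case $\Phi\in\mathcal F(\bar X)$ is compactly supported and the curve is closed, so the integrand has compact support; in the Archimedean case one argues directly from the seminorm bounds $|\Phi(y)|\ll_N(1+\|y\|)^{-N}$, which give $|\Phi(x+\ell^{-1}z)|\ll_N(1+|z|)^{-N}$ for every $N$. Either way $z\mapsto\Phi(x+\ell^{-1}z)$ lies in $\mathcal S(F)$, in particular $\int_F|\Phi(x+\ell^{-1}z)|\,dz<\infty$, which is exactly the claim.

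The only point that requires genuine care — and the crux of the argument — is the identification of $z\mapsto x+\ell^{-1}z$ with a closed $\mathbb A^1\subset\bar X$ \emph{disjoint from the zero section}; this is precisely where the affine-bundle structure of $X\to V^*\times V^*$ and the symplectic trivialization of $S$ from \S\ref{ssinvariant} enter, and it is what separates the behavior on $\mathcal F(\bar X)$ from naive worries about functions ``blowing up at the boundary $v=0$''. Everything after that is bookkeeping with the definition of the Schwartz space, and the same reasoning will also show that $\mathcal U\Phi$ depends smoothly on $(x,\ell)$, which is needed to make sense of it as a section of $\mathbb L_1$ over $\bar Z$.
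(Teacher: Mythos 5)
Your argument is correct and is essentially the paper's own proof, just written out in full: the paper's proof simply notes that the integral is over a unipotent orbit, which is closed in the affine variety $\bar X$, where Schwartz functions decay rapidly. Your identification of $z\mapsto x+\ell^{-1}z$ with the closed coset $\{(v,u(cz)g)\}$ and the linear growth of its norm is exactly the detailed justification of that one-line observation.
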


\begin{proof}
 The map is given by an integral over a unipotent orbit; this orbit is closed in the affine space $\bar X$, where elements of $\mathcal F(\bar X)$ are of rapid decay. Thus, the integral is convergent.
\end{proof}

We extend it similarly to half-densities or measures to consider it as map from $\mathcal D(\bar X)$, resp.\ $\mathcal S(\bar X)$; their images are spaces $\mathcal D^{--}(Z,\mathcal L_{\psi})$, $\mathcal S^{--}(Z,\mathcal L_{\psi})$ of Whittaker half-densities and measures for $\tilde G$ which contain $\mathcal D(\bar Z, \mathbb L_0)$, resp.\ $\mathcal S(\bar Z, \mathbb L_{-1})$.

Now fix a point $v\in V^*$, a preimage $z = (v, v, \ell)$ of $(v,v)$ in $Z$, let $N$ be the stabilizer of $v$ in $\SL(V)$, and use $\ell$ to identify $N\xrightarrow\sim \Ga$. 
This identifies $Z$ with $N^2\backslash \tilde G$, and the map \eqref{tildetoG} identifies $Z/\SL(V)^\diag$ with the quotient $N\backslash G/N$. Twisted push-forward as in \S \ref{sstwistedpf} gives a map 
$$ \mathcal S(Z,\mathcal L_{\psi}) \to \mathcal S(N,\psi\backslash G/N,\psi).$$

Now we fix coordinates: We recall that on the two-dimensional affine space $\bar X\sslash \SL_2$ we have fixed coordinates $(c,t)$ in \S \ref{ssRSthespace}. On the other hand, we will identify $N\simeq \Ga$ with the upper triangular subgroup of $\SL_2$, and the space of $\mathcal S^{--}(N,\psi\backslash G/N,\psi)$ with a space of scalar-valued measures on $N\backslash G\sslash N$, by choosing the section described in \S \ref{sstwistedpf} (or, more invariantly, at the end of \S \ref{ssinvariant}). Here our group is $G=A_\ad \times \SL(V)$, so we have coordinates $(a,\zeta)$, where $\zeta$ is as in \S \ref{sstwistedpf} and $a\in \Gm$ is the positive root character applied to the factor $A_\ad$.

Consider a diagram
 $$\xymatrix{
 \mathcal S(\bar X) \ar[r]^{\U} \ar[d] & \mathcal S^{--}(Z,\mathcal L_{\psi})\ar[d]\\
 \mathcal S(\bar X/\SL_2) \ar@{-->}[r]^{\bar{\U}} & \mathcal S^{--}(N,\psi\backslash G/N,\psi),
 }$$
where the last space is defined as the (twisted) push-forward of measures in $\mathcal S^{--}(Z,\mathcal L_{\psi})$.

\begin{proposition}\label{prop:unfoldingdescent}
The dotted arrow in the bottom making the above diagram commute exists, and is given by the absolutely convergent integral
\begin{equation}\label{unfoldingdescent} \bar{\U}(f) (a,\zeta) = \int_{\Gm} f(\zeta a , \zeta w) \psi^{-1}(w) d^\times w.\end{equation}

For the analogous diagrams for functions and half-densities we have:
\begin{equation}\bar{\U}(\Phi) (a,\zeta) = \int_{\Gm} \Phi(\zeta a , \zeta w) \psi^{-1}(w) d w.\end{equation}
for functions and 
\begin{equation}  \bar{\U}(\varphi) (a,\zeta) = \int_{\Gm} \varphi(\zeta a , \zeta w) \psi^{-1}(w) |w|^\frac{1}{2} d^\times w.\end{equation}
for half-densities.
\end{proposition}

In other words, if we identify the coordinates on the two sides by 
\begin{equation}\label{coordinatechange} \begin{cases} 
    t = \zeta \\
    c = \zeta a
   \end{cases} 
   \Leftrightarrow
   \begin{cases}
    \zeta = t \\
    a =  c t^{-1},
   \end{cases}
\end{equation}
and set $\tilde f(a,\zeta) = f(\zeta a, \zeta)$, we have, for measures, 
\begin{equation}\label{Ubar-measures} \bar{\U}(f) (a,\zeta) =   \int_{\Gm} \tilde f( a w^{-1}, \zeta w) \psi^{-1}(w) d^\times w 
 =  \left( \psi^{-1}({\bullet}) d^\times \bullet \right)\star_{\check\lambda} \tilde f (a,\zeta),
\end{equation}
where $\star_{\check\lambda}$ denotes multiplicative convolution along the cocharacter $\check\lambda(x):= (a=x, \zeta=x^{-1})$ (without any normalization for the action of $\Gm$ on measures or functions). The inverse is given by 
$$\tilde f(a,\zeta) = \left( \psi({\bullet}) |\bullet| d^\times \bullet \right)\star_{-\check\lambda} \bar{\U} f(a,\zeta),$$
hence
\begin{equation} \label{barUinverse} \bar\U^{-1}h(c,t) = \int h\left(\frac{wc}{t}, \frac{t}{w}\right) \psi(w) dw.\end{equation}

For half-densities, with $\tilde \varphi(a,\zeta) = \varphi(\zeta a, \zeta)$,
we have 
\begin{equation}\label{Ubar-densities}
\bar{\U}(\varphi) (a,\zeta) = \left(\psi^{-1}(\bullet) |\bullet|^\frac{1}{2} d^\times\bullet\right) \star_{\check\lambda} \tilde\varphi (a,\zeta).
\end{equation}

\begin{proof}
 
 For what follows, for any element $y\in \Ga$ we will denote by $(y)_1$ the point on $X$ represented by the pair $(\begin{pmatrix} 1 & y \\ & 1\end{pmatrix}, 1) \in (\SL_2)^2$. We also denote by $\delta$ the modular character of the Borel, identified as a character of $G$ via projection to the $A_\ad$-quotient. 

 Let $\Phi dx \in \mathcal S(\bar X)$, so $\Phi \in \mathcal F(\bar X)$. Let $dz$ be the $(\tilde G,\delta)$-equivariant measure on $Z$ that is dual to $dx$, see the discussion before Proposition \ref{unfoldingfunctionsmeasures}. 
 
 Let $f$ be the push-forward of $\Phi dx$ to $\mathfrak C_X=\bar X\sslash \SL_2$. By the integration formula \eqref{integrationX}, if $O_{(c,t)}(\Phi)$ denotes the $\SL_2$-orbital integral of $\Phi$ at the point $(c,t)$ with $c\ne 0$, for a suitable choice of Haar measure on $\SL_2$, we have 
 $$f(c,t) = O_{(c,t)}(\Phi) dc dt = O_{(c,t)}(\Phi) |c| \cdot |t| \cdot d^\times c d^\times t .$$

 Similarly, the push-forward of $(\U\Phi) \cdot dz$ to $\C_Z = Z\sslash \SL_2 = A_\ad \times (N\backslash \SL_2\sslash N)$, is computed in terms of the orbital integrals of the function $\U\Phi$ as follows: 
 Recall that our trivialization of Kuznetsov orbital integrals used the subvariety of anti-diagonal elements, which here can be represented by the elements
 $$ (a , \begin{pmatrix} & -\zeta^{-1} \\ \zeta \end{pmatrix}_1)$$
 inside of $G$. The subscript $1$ indicates that the element belongs to the first copy of $\SL_2$. The push-forward of $\U\Phi dz$ will be, here,
 $$ \bar{\U} f(a,\zeta) := O_{(a,\zeta)}(\U\Phi)  |a| \cdot |\zeta|^2 d^\times a d^\times \zeta,$$
 where $d^\times a$ is a Haar measure on $A_\ad$; we identify the latter with $\Gm$ via the positive root character. 

 The unfolding formula \eqref{unfolding-formula}, in our present coordinates, reads
 \begin{equation}\label{unfolding-formula-explicit} \U\Phi ([t,g_1, g_2]) = \int_{\Ga} \Phi(N^\diag \cdot (t^{-1} \begin{pmatrix} 1 & y \\ &1\end{pmatrix} g_1, t^{-1} g_2)) \psi^{-1} (y) dy.\end{equation}
 
We compute the regular orbital integrals of $\U\Phi$, using this formula. Let $a'\in A$ with image $a\in A_\ad$, with $a$ identified as an element of $\Gm$ through the positive root character. We have
 \begin{multline*} O_{(a,\zeta)}(\U\Phi) = \int_{\SL_2}  \int_{\Ga} \Phi \left(a'^{-1} \cdot (w)_1 \cdot \begin{pmatrix} & -\zeta^{-1} \\ \zeta \end{pmatrix}_1 g\right) \psi^{-1}(w) dw dg 
 \\ =  \int_{\SL_2} \int_{\Ga} \Phi\left(\begin{pmatrix} 1 & a^{-1} w \\ & 1 \end{pmatrix}_1 \begin{pmatrix} & -\zeta^{-1} a^{-1}\\ \zeta a \end{pmatrix}_1 g \right) \psi^{-1}(w) dw dg.\end{multline*}

 This clearly remains true even if $a$ is not in the image of $A(F)\to A_\ad(F)$.
 
 I claim that the above integral is absolutely convergent as a double integral. Indeed, if we let the group $N\times \SL_2$ act on $X= N^\diag\backslash \SL_2^2$, with $N$ acting as $N_1$, i.e.\ the unipotent subgroup in the first $\SL_2$-factor, and $\SL_2$ acting diagonally, then this action does not extend to $\bar X$, but the orbits represented by $\begin{pmatrix} & -\zeta^{-1} \\ \zeta \end{pmatrix}$ with $\zeta\ne 0$ are closed in $\bar X$: indeed, in the quotient $\bar X\sslash \SL_2\simeq\mathbbm A^2_{(c,t)}$ they live over fixed, non-zero values for $c$, while $\bar X\smallsetminus X$ lives over $c=0$. Thus, the double integral is absolutely convergent.
 
 Thus, we can interchange the order of integration, and then, with the identification $A_\ad \ni a \overset{\sim}{\mapsto} \delta(a) = \delta(a') \in \Gm$, this reads
 $$ O_{(a,\zeta)}(\U\Phi) = \int_{\Ga} O_{(\zeta a, \zeta w)} (\Phi) \psi^{-1}(w) dw.$$
 
 Replacing functions by measures, we see that the push-forward measures satisfy:
 \begin{align*}\bar{\U} f(a,\zeta)&= O_{(a,\zeta)}(\U\Phi) |a| |\zeta|^2 d^\times a d^\times \zeta \\
 &=  \left(\int_{\Ga} O_{(\zeta a, \zeta w)} (\Phi) \psi^{-1}(w) dw\right) |a| \cdot |\zeta|^2 d^\times a d^\times \zeta \\
  &= \left(\int_{\Ga} |\zeta a| \cdot |\zeta w| O_{(\zeta a, \zeta w)} (\Phi) |w|^{-1} \psi^{-1}(w) dw\right)  d^\times a d^\times \zeta \\
  &= \int_{\Gm} f(\zeta a , \zeta w) \psi^{-1}(w) d^\times w.
\end{align*}
 \end{proof}

\subsection{Hankel transform} \label{ssHankel}

 Now recall the subspace $\mathcal S(\bar X/\SL_2)^\circ$ of \S \ref{ssSym2}, and the corresponding subspace of half-densities, $\mathcal D(\bar X/\SL_2)^\circ$. We \emph{define} the space $\mathcal D^-_{L(\Sym^2, \frac{1}{2})}(N,\psi\backslash G/N,\psi)$ (resp.\ $\mathcal S^-_{L(\Sym^2, 1)}(N,\psi\backslash G/N,\psi)$) to be the image of the subspace $\mathcal D(\bar X/\SL_2)^\circ$ (resp.\ $\mathcal S(\bar X/\SL_2)^\circ$) under $\bar{\U}$: 
 $$ \bar{\U}: \mathcal D(\bar X/\SL_2)^\circ \xrightarrow\sim \mathcal D^-_{L(\Sym^2, \frac{1}{2})}(N,\psi\backslash G/N,\psi)$$
 (and similarly for $\mathcal S^-_{L(\Sym^2, 1)}(N,\psi\backslash G/N,\psi)$).

 \begin{remark}\label{denstomeas}
 The passage from half-densities to measures on $Z$ involves multiplication by the half-density $(\delta(z)\cdot d^\times z)^\frac{1}{2}$, see the discussion after Proposition \ref{unfoldingprop} on $Z$. On the quotient $Z/\SL_2 = N\backslash G/N$, this corresponds to multiplication by $(\delta(a)\cdot d^\times a)^\frac{1}{2} \cdot (\delta(\zeta)\cdot d^\times \zeta)^\frac{1}{2} = (|a| d^\times a)^\frac{1}{2} \cdot (|\zeta|^2\cdot d^\times \zeta)^\frac{1}{2} $, where $(a,\zeta)$ are coordinates as above, identified also with elements of the universal cartan $A_G=A_\ad\times A$. The first factor is responsible for the fact that we use the notation $\mathcal D^-_{L(\Sym^2, \frac{1}{2})}$ for half-densities, but $\mathcal S^-_{L(\Sym^2, 1)}$ for measures --- it has to do with the $L$-function that one will obtain after pairing with relative characters for the Kuznetsov formula. Of course, if we descend the normalized action \eqref{actionnorm-unfolding-meas} to the space $Z/\SL_2$, this map from half-densities to measures is equivariant.
 \end{remark}
 
 We define 
 $$\mathcal D^-_{L((\Sym^2)^\vee, \frac{1}{2})}(N,\psi\backslash G/N,\psi) = j^* \mathcal D^-_{L(\Sym^2, \frac{1}{2})}(N,\psi\backslash G/N,\psi),$$ where $j$ is the involution on $N\backslash G\sslash N$ induced by the inversion map on the $A_\ad$-factor of $G$.  
 We also define an unfolding map 
 $$ \bar{\U}^\vee: \mathcal D(\bar X/\SL_2)^\circ \xrightarrow\sim \mathcal D^-_{L((\Sym^2)^\vee, \frac{1}{2})}(N,\psi\backslash G/N,\psi)$$
 by 
 $$ \bar{\U}^\vee = j^* \circ \bar\U.$$

 Now recall the Hankel transform $\mathcal H_X^\circ$ of Theorem \ref{thmsubspaceX}, which is an endomorphism of $\mathcal D(\bar X/\SL_2)^\circ$. The composition 
 $$ \bar{\U}^\vee \circ \mathcal H_X^\circ \circ \bar{\U}^{-1}$$
 is an isomorphism:
 $$ \mathcal H_{\Sym^2}: \mathcal D^-_{L(\Sym^2, \frac{1}{2})}(N,\psi\backslash G/N,\psi) \xrightarrow\sim \mathcal D^-_{L((\Sym^2)^\vee, \frac{1}{2})}(N,\psi\backslash G/N,\psi).$$ 

 Putting together the formulas already proved, we have
 \begin{proposition}
The operator $\mathcal H_{\Sym^2}$ is given by the formula
$$  \mathcal H_{\Sym^2} = \lambda(\eta_{\zeta^2-4},\psi)^{-1} \mathscr F_{-\check\lambda_+,\frac{1}{2}} \circ \delta_{1-4\zeta^{-2}} \circ \eta_{\zeta^2-4} \circ \mathscr F_{-\check\lambda_0,\frac{1}{2}} \circ \eta_{\zeta^2-4} \circ \mathscr F_{-\check\lambda_-,\frac{1}{2}},$$
where the Fourier convolutions are understood in the regularized sense of \S \ref{sssFourierconv}.  
 \end{proposition}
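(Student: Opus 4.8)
The plan is to prove the formula by direct substitution, exactly as the phrase ``putting together the formulas already proved'' suggests: by definition $\mathcal H_{\Sym^2}=\bar\U^\vee\circ\mathcal H_X^\circ\circ\bar\U^{-1}=j^*\circ\bar\U\circ\mathcal H_X^\circ\circ\bar\U^{-1}$, and each of the three constituent maps has an explicit integral description --- \eqref{barUinverse} (together with the half-density form inverse to \eqref{Ubar-densities}) for $\bar\U^{-1}$, \eqref{HXcirc} for $\mathcal H_X^\circ$, and \eqref{Ubar-densities} with the definition $\bar\U^\vee=j^*\circ\bar\U$ for the left factor. The strategy is to push each of these three maps through the coordinate dictionary \eqref{coordinatechange} relating $(c,t)$ on $\bar X\sslash\SL_2$ to $(a,\zeta)$ on $N\backslash G\sslash N$, so that each becomes a regularized multiplicative convolution supported on a one-parameter subgroup $\check\mu(\Gm)$ of the universal Cartan $A_G=A_\ad\times A$, possibly pre- or post-composed with a multiplicative translation along $\Gm$ and with multiplication by a quadratic character; one then identifies the three cocharacters $\check\mu$ with $-\check\lambda_-,-\check\lambda_0,-\check\lambda_+$ (the negatives of the three weights of $\Sym^2$) and the convolutions with the Fourier convolutions $\mathscr F_{-\check\lambda_*,\frac12}$ of \S\ref{sssFourierconv}.

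First I would treat the rightmost factor. Inverting \eqref{Ubar-densities} (equivalently, the half-density form of \eqref{barUinverse}), $\bar\U^{-1}$ acts on a half-density on $N\backslash G\sslash N$ by a regularized convolution whose arguments, read through \eqref{coordinatechange}, translate $(a,\zeta)\mapsto(wa,\zeta w^{-1})$, i.e.\ along the cocharacter $w\mapsto(a=w,\zeta=w^{-1})$ of $A_G$; this is precisely the anti-dominant weight $\check\lambda_-$ of $\Sym^2$ (its $A$-component is $-\check\alpha$, since $A$ is identified with $\Gm$ via $\tfrac\alpha2$). Reading the kernel $\psi(\bullet)$ and the $|\bullet|^{\frac12}$ half-density normalization against the definition of $\mathscr F_{\cdot,\frac12}$ (which carries the shift by $\tfrac12$) identifies $\bar\U^{-1}$, in the coordinates \eqref{coordinatechange}, with $\mathscr F_{-\check\lambda_-,\frac12}$. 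By the same computation $\bar\U$ is a Fourier convolution along $\check\lambda_-$; composing with $j^*$, which inverts the $A_\ad$-coordinate $a$ and therefore sends $\check\lambda_-$ to $-\check\lambda_+$ (inversion of $a$ interchanges the two extreme weights of $\Sym^2$ up to sign), turns $\bar\U^\vee=j^*\circ\bar\U$ into $\mathscr F_{-\check\lambda_+,\frac12}$, the leftmost factor of \eqref{HSymformula}. This step simultaneously re-establishes that $\bar\U$ is the asserted isomorphism, since the $\mathscr F_{\cdot,\frac12}$ are invertible.

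It remains to identify the middle block from \eqref{HXcirc}: $\mathcal H_X^\circ$ is the composite of the scalar $\lambda(\eta_{t^2-4},\psi)^{-1}$, the regularized convolution $\star_c$ at fixed $t$ by the $\eta_{t^2-4}$-twisted kernel $\psi(\tfrac1\bullet)\eta_{t^2-4}(\bullet)|\bullet|^{-\frac12}d^\times\bullet$, and the substitution $c\mapsto(4-t^2)c^{-1}$. Since $c=\zeta a$ and $t=\zeta$, convolving in $c$ at fixed $t$ is convolving in $a$ at fixed $\zeta$, i.e.\ along the weight $\check\lambda_0=(a=x,\zeta=1)$, and $\eta_{t^2-4}$ evaluated at the convolution ratio in $c$ is $\eta_{\zeta^2-4}$ evaluated at the convolution ratio in $a$. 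Because $\eta_{\zeta^2-4}$ is a \emph{quadratic} character, conjugating the untwisted $a$-direction Fourier convolution $\mathscr F_{-\check\lambda_0,\frac12}$ (kernel $\psi(\tfrac1\bullet)|\bullet|^{-\frac12}d^\times\bullet$, cf.\ \eqref{Hankel2D}) by multiplication by $\eta_{\zeta^2-4}$ in the variable $a$ reproduces exactly the $\eta$-twisted kernel; hence this block equals $\eta_{\zeta^2-4}\circ\mathscr F_{-\check\lambda_0,\frac12}\circ\eta_{\zeta^2-4}$. Finally, rewriting $c\mapsto(4-t^2)c^{-1}$ in the $a$-variable gives $a\mapsto -(1-4\zeta^{-2})a^{-1}$; the inversion $a\mapsto a^{-1}$ and the sign are absorbed into the inversions and $\psi$-conventions built into the adjacent Fourier convolutions $\mathscr F_{-\check\lambda_0,\frac12}$ and $\mathscr F_{-\check\lambda_+,\frac12}$ (the $j^*$ in $\bar\U^\vee$ cancels one of these inversions), leaving precisely the multiplicative translation $\delta_{1-4\zeta^{-2}}$. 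Assembling the three blocks in the order $(j^*\circ\bar\U)\circ\mathcal H_X^\circ\circ\bar\U^{-1}$ yields \eqref{HSymformula}, the scalar $\lambda(\eta_{\zeta^2-4},\psi)^{-1}$ being carried over from \eqref{HXcirc}.

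I expect the main obstacle to be the normalization bookkeeping rather than anything conceptual: one must track the $\delta^{1/2}$-twists of the $A$-action in \eqref{actionnorm-unfolding-fns}--\eqref{actionnorm-unfolding-meas}, the passage between half-densities and measures or functions, the absolute-value factors ($|\bullet|^{\frac12}$, and $\bigl|\tfrac{c^2}{4-t^2}\bigr|^{\frac12}$ from \eqref{HXcirc} and \eqref{HankelXcirc-measures}), and the $\tfrac12$-shift in $\mathscr F_{\cdot,\frac12}$, and verify that all of them combine into the precise kernels above and that the ``extra'' inversions and the sign inside $\delta_{1-4\zeta^{-2}}$ cancel against the conventions of \S\ref{sssFourierconv}. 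One should also check that the regularization of the three divergent convolutions (Fourier transforms of tempered, equivalently $L^2$, half-densities, as in Remark \ref{remarkchangeorder}) is the common one throughout; this is automatic, since each $\mathscr F_{\cdot,\frac12}$ is defined by precisely such a regularization and $\mathcal H_X^\circ$ was shown in Theorem \ref{thmsubspaceX} to preserve $\mathcal D(\bar X/\SL_2)^\circ$. As an independent consistency check, one can pass to Mellin transforms along the $A_\ad$-action: by \eqref{relationbetweenH-Mellin} and \eqref{FE} the constituent operators multiply Mellin transforms by explicit gamma factors attached to the cocharacters $\check\lambda_-,\check\lambda_0,\check\lambda_+$, whose product is $\gamma(\chi\times\Sym^2,\tfrac12,\psi)$ --- which is the relative-character identity proved later in this section.
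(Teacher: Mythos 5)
Your proposal is correct and follows essentially the same route as the paper's proof: write $\mathcal H_{\Sym^2}=\bar\U^\vee\circ\mathcal H_X^\circ\circ\bar\U^{-1}$, pass each factor through the coordinate change \eqref{coordinatechange}, identify the resulting convolution directions with $-\check\lambda_-$, $-\check\lambda_0$, $-\check\lambda_+$, use $\eta_{\zeta^2-4}^2=1$ to write the twisted middle kernel as $\eta_{\zeta^2-4}\circ\mathscr F_{-\check\lambda_0,\frac12}\circ\eta_{\zeta^2-4}$, and absorb the residual inversion and sign from $(4-t^2)c^{-1}$ into $\delta_{1-4\zeta^{-2}}$ and the adjacent convolutions. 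The bookkeeping you flag (the $-a^{-1}$ substitution and the $\psi\leftrightarrow\psi^{-1}$ swap in the leftmost factor) is exactly what the paper's computation tracks explicitly.
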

 
 This is the formula of Theorem \ref{thmSym2}.
 
 \begin{proof}
  
  By \eqref{Ubar-densities}, the map that sends $\bar\U\varphi \mapsto \tilde\varphi$ is given by the convolution 
  $$\left(\psi(\bullet) |\bullet|^\frac{1}{2} d^\times\bullet\right) \star_{-\check\lambda}.$$
  The cocharacter $-\check\lambda$ is equal to the cocharacter that we denoted by $-\check\lambda_-$. The half-density $\tilde\varphi$, here, is expressed in coordinates $a=ct^{-1}$ and $\zeta=t$ on a dense open subset of $\C_X = \bar X\sslash \SL_2$, so $\varphi(c,t) = \tilde\varphi(ct^{-1}, t)$. Applying the operator $\mathcal H_X^\circ$ to $\varphi$ we get, according to \eqref{HXcirc}, 
  \begin{align*}
    \mathcal H_X^\circ \varphi(c,t) &= \lambda(\eta_{t^2-4},\psi)^{-1}\left( (\psi(\frac{1}{\bullet}) \eta_{t^2-4}(\bullet)|\bullet|^{-\frac{1}{2}} d^\times\bullet) \star_c  \varphi\right)((4-t^2) c^{-1}, t)\\
 &= \lambda(\eta_{t^2-4},\psi)^{-1}\left( (\psi({\bullet}) \eta_{\zeta^2-4}(\bullet)|\bullet|^{\frac{1}{2}} d^\times\bullet) \star_{-\check\lambda_0}  \tilde\varphi\right)(\frac{4-t^2}{ct}, t),
  \end{align*}
  where $-\check\lambda_0$ is the cocharacter $x\mapsto (a = x^{-1}, \zeta = 1)$. 
  If we set $\widetilde{ \mathcal H_X^\circ\varphi} (a , \zeta) = \mathcal H_X^\circ\varphi(\zeta a, \zeta)$ then we have, again by \eqref{Ubar-densities},
  \begin{align*} \bar\U^\vee \mathcal H_X^\circ \varphi (a, \zeta) &= \bar\U^\vee \mathcal H_X^\circ \varphi (a^{-1}, \zeta) \\ & = \left(\psi^{-1}(\bullet) |\bullet|^\frac{1}{2} d^\times\bullet\right) \star_{\check\lambda} \widetilde{ \mathcal H_X^\circ\varphi} (a^{-1} , \zeta) \\
  &= \left(\psi(\bullet) |\bullet|^\frac{1}{2} d^\times\bullet\right) \star_{\check\lambda'} \tilde{\tilde\varphi}(a, \zeta),
  \end{align*}
  where we have set $\tilde{\tilde\varphi}(a,\zeta) = \widetilde{ \mathcal H_X^\circ\varphi} (-a^{-1},\zeta)$, and $\check\lambda'$ is the cocharacter $x\mapsto (a=x^{-1}, \zeta = x^{-1})$, which we can identify with $-\check\lambda_+$. By the above, we have
  \begin{align*} \tilde{\tilde\varphi}(a,\zeta) &= \widetilde{ \mathcal H_X^\circ\varphi} (-a^{-1},\zeta) = \mathcal H_X^\circ\varphi(-\zeta a^{-1}, \zeta)\\
  &=  \lambda(\eta_{\zeta^2-4},\psi)^{-1}\left( (\psi({\bullet}) \eta_{\zeta^2-4}(\bullet)|\bullet|^{\frac{1}{2}} d^\times\bullet) \star_{-\check\lambda_0}  \tilde\varphi\right)(\frac{\zeta^2-4}{\zeta^2} a , \zeta)\\
  &= \lambda(\eta_{\zeta^2-4},\psi)^{-1}\cdot \delta_{1-4\zeta^{-2}} \circ \eta_{\zeta^2-4} \circ \mathscr F_{-\check\lambda_0,\frac{1}{2}} \circ \eta_{\zeta^2-4}  \tilde\varphi(a,\zeta).
  \end{align*}

  Moreover,
  $$ \tilde\varphi (a, \zeta) = \left(\psi(\bullet) |\bullet|^\frac{1}{2} d^\times\bullet\right) \star_{-\check\lambda_-} (\bar\U\varphi),$$
  and the result follows.
\end{proof}

\subsection{Descent to $A_\ad$-coinvariants} \label{ssdescent}

Any character $\chi: A_\ad\to \CC^\times$ can be understood as a densely-defined function on $Z\sslash \SL(V) = N\backslash G\sslash N$ via the product \eqref{Zasprod}. We have twisted push-forwards
$$ p_\chi:\mathcal S^-_{L(\Sym^2, 1)} (N,\psi\backslash G/N,\psi) \to \Meas(N\backslash \SL(V)\sslash N),$$
defined by $$ f \mapsto \pi_!(\delta^{-\frac{1}{2}}\chi^{-1} f),$$
whenever this push-forward converges, where 
\[\pi: Z\sslash \SL(V)\to N\backslash \SL(V)\sslash N\] is the canonical quotient map. When $\chi=\delta^s$, we will denote $p_\chi$ by $p_s$. The factor $\delta^{-\frac{1}{2}}$ is used in order to make the twisted push-forward $(A_\ad,\chi)$-equivariant  under the normalized action descending from \eqref{actionnorm-unfolding-meas}. If we also let $A_\ad$ act on measures on $\bar X\sslash \SL_2$ by the normalization descending from \eqref{actionnormagain-measures}, the unfolding map $\bar{\U}: \mathcal S(\bar X/\SL_2)^\circ \to \mathcal S^-_{L(\Sym^2, 1)}(N,\psi\backslash G/N,\psi) $ is $A_\ad$-equivariant for the normalized actions on both sides.

Dividing by the appropriate half-density (see Remark \ref{denstomeas}), these are the $(\Gm,\chi)$-equivariant integrals of Theorem \ref{thmSym2}.

\begin{proposition}\label{propdescent}
The twisted push-forward $p_\chi$ converges when $|\chi|=\delta^\sigma$ with $\sigma \ll 0$, and extends to a rational  (in the non-Archimedean case) or meromorphic (in the Archimedean case) family, in the variable $\chi$, of maps
$$ p_\chi: \mathcal S^-_{L(\Sym^2, 1)} (N,\psi\backslash G/N,\psi) \to \mathcal S^-_{L(\Ad,\chi^{-1}\delta^{\frac{1}{2}}\circ e^\frac{\check\alpha}{2})} (N,\psi\backslash \SL_2/N,\psi).$$
These maps have at most simple poles at the poles of the local $L$-functions $L(\chi\eta,-\frac{\check\alpha}{2} , \frac{1}{2})$, where $\eta$ ranges over all quadratic characters, and are surjective away from these poles. 
\end{proposition}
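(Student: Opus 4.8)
The plan is to reduce the statement to the corresponding fiberwise statement on the Rankin--Selberg variety, using the unfolding isomorphism $\bar\U$ together with the explicit description of $\mathcal S(\bar X/\SL_2)^\circ$ in terms of the spaces $\mathcal S([V/T_t])^\circ$ of \S\ref{ssA2}, and then quote the residue formulas of Corollary \ref{MellinVcirc} and Corollary \ref{corresidues}. First I would transport the problem through $\bar\U^{-1}$: by definition $\mathcal S^-_{L(\Sym^2,1)}(N,\psi\backslash G/N,\psi) = \bar\U(\mathcal S(\bar X/\SL_2)^\circ)$, and, chasing the coordinate change \eqref{coordinatechange}, the twisted push-forward $p_\chi$ corresponds to a Mellin transform along the $A_\ad$-action on $\mathcal S(\bar X/\SL_2)^\circ$, evaluated at the character $\chi$. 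Concretely, $p_\chi\circ\bar\U$ is, up to the bookkeeping of $\delta^{-\frac12}$ twists spelled out in Remark \ref{denstomeas}, the map $f\mapsto \check f(\chi)$ of \S\ref{ssSym2}, valued in $\mathcal S(A_\ad\backslash X,\mathcal L_\chi)$, followed by $\SL_2$-averaging down to $N\backslash\SL_2/N$. So convergence for $\sigma\ll 0$ and rational/meromorphic continuation are immediate from the corresponding properties of Mellin transform on $\mathcal S(\bar X_\ad)^\circ$ (Proposition \ref{MellinXadimage} and the definition of $\mathscr E_\ad^\circ$), and the only real content is (i) identifying the target space and (ii) locating the poles.

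For (i): restrict attention to a fiber over $t=\zeta\ne\pm 2$. There the fiber of $[\bar X/\SL_2]$ is $[V_t/T_t]$ with $T_t=\SO(c(v))$ of discriminant $-\frac14(t^2-4)$, so $\eta_{t^2-4}$ is the quadratic character attached to $T_t$, and the restriction of an element of $\mathcal S(\bar X/\SL_2)^\circ$ lies in $\mathcal S([V/T_t])^\circ$ by the very definition of the $\circ$-subspace. By Corollary \ref{MellinVcirc} the fiberwise Mellin transform in the $c$-variable is a holomorphic multiple of $L(\chi^{-1}\eta_{t^2-4},\frac12)$; combining this over the $\zeta$-variable (where we have an honest Schwartz behavior away from $\zeta=\pm2$) with the standard analysis at $\zeta=\pm2$ of the push-forward of $\mathcal S(\bar X)$, one reads off that $p_\chi$ lands in the non-standard Kuznetsov space for $\SL_2$ whose basic vector is the Dirichlet series of $L(\Ad,\chi^{-1}\delta^{1/2}\circ e^{\check\alpha/2},\cdot)$ --- i.e.\ $\mathcal S^-_{L(\Ad,\chi^{-1}\delta^{1/2}\circ e^{\check\alpha/2})}(N,\psi\backslash\SL_2/N,\psi)$ --- with the identification of parameters coming from the factorization $L(\pi_1\times\pi_2,s)=L(\chi\times\Sym^2(\pi),s)L(\chi,s)$ of \eqref{RSfactor} and the removal of the $L(\chi,s)$ factor built into the $\circ$-subspace. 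The pole location in (ii) is then exactly the pole set recorded in Corollary \ref{MellinVcirc}: Mellin transforms of elements of $\mathcal S([V/T_t])$ have simple poles only at poles of $L(\bullet^{-1},\frac12)L(\bullet^{-1}\eta_{t^2-4},\frac12)$, the $L(\bullet^{-1},\frac12)$ poles are killed by passing to the $\circ$-subspace, and as $\zeta$ varies $\eta_{t^2-4}$ runs over all quadratic characters of $F^\times$; hence $p_\chi$ has at most simple poles at the poles of $L(\chi\eta,-\frac{\check\alpha}{2},\frac12)$ with $\eta$ quadratic, as claimed.

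Surjectivity is the step I expect to be the main obstacle, for the familiar reason that appeared already in Proposition \ref{VHankelascoinv}: in the Archimedean case there is no multiplier realizing $L(\bullet,\frac12)$ or its inverse, so one cannot simply reduce to surjectivity of the ``full'' push-forward from $\mathcal S(\bar X)$ and then divide. The plan is to argue as in the proof of Proposition \ref{VHankelascoinv}: use that $\mathscr E_\ad^\circ$ is generated by its global Paley--Wiener sections (Lemma \ref{globalsections}), hence the image sheaf $\CC_\bullet(-[L(\bullet^{-1}\eta,\frac12)])$ on the $N\backslash\SL_2/N$ side --- whose Paley--Wiener sections are exactly the target space by the $\SL_2$-analogue of Proposition \ref{VmodTimage} --- is generated by the images of those global sections, so no nonzero $A_\ad$-finite functional kills the image; then the $SF$-representation / seminorm argument (any continuous seminorm on the cokernel is dominated by a Banach one, whose eigenvalues along $A_\ad$ with $\delta(a)>1$ are unbounded on the relevant eigenspaces) forces the cokernel to vanish. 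The non-Archimedean case is easier: the completed Hecke algebra element $h$ with $\check h(\chi)=L(\chi,\frac{\check\alpha}{2},\frac12)$ intertwines $p_\chi$ on $\mathcal S^-_{L(\Sym^2,1)}$ with the ordinary push-forward on $\mathcal S(\bar X/\SL_2)$ (Lemma \ref{globalsections}), reducing surjectivity to the surjectivity of the latter, which holds because $X\to V^*\times V^*$ and the relevant quotient maps are smooth and surjective on $F$-points away from $\zeta=\pm2$, together with the standard description of the germs at $\zeta=\pm2$.
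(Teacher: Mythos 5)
Your proposal has the right outer shell (reduce to the Rankin--Selberg side via $\bar\U$, use the Mellin description of the $\circ$-space, kill the $L(\chi,\frac{1}{2})$ factor, and run a global-generation/$SF$-representation argument for surjectivity in the Archimedean case), but it has a genuine gap at the central step: the identification of the image with $\mathcal S^-_{L(\Ad,\chi^{-1}\delta^{\frac{1}{2}}\circ e^{\check\alpha/2})}(N,\psi\backslash \SL_2/N,\psi)$. That space is defined by a prescribed asymptotic expansion as $\zeta\to\infty$, i.e., at the boundary of the compactification of $N\backslash\SL_2\sslash N$, and this cannot be ``read off'' from the fiberwise-in-$t$ analysis you propose. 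Your fiberwise use of Corollary \ref{MellinVcirc} controls the Mellin transform in the $c$-direction for each fixed $t\ne\pm 2$, but the $\zeta\to\infty$ behavior of $p_\chi(\bar\U f)$ is a two-variable corner phenomenon (it is produced by the interaction of the $c$-boundary with $t\to\infty$ through the Fourier convolution hidden in $\bar\U$), and the fiberwise picture also degenerates at $t=\pm 2$ --- the paper explicitly warns in \S\ref{ssSym2} that the fiberwise description ``gives us no control over the behavior as $t\to\pm 2$.'' The actual proof works upstairs on the compactified Whittaker space $\bar Z=\Ga\times^{A_\ad^{\adiag}}Z$: the stratum $\bar Z^{\rm disj}$ over $\zeta\ne 0$ has quotient $A\backslash\bar Z^{\rm disj}/\SL(V)\simeq\mathbb P^1\smallsetminus\{0\}$, and it is the computation of the line bundle $\mathscr L_{\delta^{-3/2}\chi^{-1}}$ there that produces exactly the $C(\zeta^{-1})\,|\zeta|^{\frac{1}{2}}\chi(e^{\check\alpha/2}(\zeta))\,d^\times\zeta$ asymptotics defining the target (and simultaneously gives surjectivity, holomorphically in $\chi$, without any $SF$-representation argument). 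None of this is present in your write-up; ``one reads off'' is precisely the missing proof.

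A second, related discrepancy concerns the source of the poles. You attribute all poles of the family to the fiberwise Mellin poles of Corollary \ref{MellinVcirc}. In the paper, only the poles at $L(\chi\eta,-\frac{\check\alpha}{2},\frac{1}{2})$ with $\eta\ne 1$ arise that way (from the difference between $\mathscr E_\ad^\circ$ and $\mathscr D_\ad$, handled by the a-fortiori reduction to the image of $\mathcal S(X/\SL_2)$); the poles at $\eta=1$ come from an entirely different mechanism, namely the $\SL(V)$-coinvariants of the stalk of $\mathcal F(\bar Z,\mathbb L_1)$ along the residual boundary stratum $\bar Z^\dagger$ over $\zeta=0$, whose graded pieces are $A_\ad$-eigenspaces with eigencharacters $\delta^{\frac{1}{2}}e^{n\alpha}$. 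Even granting that your fiberwise poles land in the right set, you would still need to show that the $t$-integration/convolution defining $p_\chi$ preserves ``at most simple poles'' uniformly in $t$, including through the degenerate fibers $t=\pm 2$ and at $t=\infty$; this uniformity is exactly what the global geometric argument supplies and what a fiber-by-fiber argument does not.
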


Recall that the space $\mathcal S^-_{L(\Ad,\chi')}$ associated to the character $\chi' = \chi^{-1}\delta^{\frac{1}{2}}\circ e^\frac{\check\alpha}{2}$ was defined in \S \ref{ssnonstandard}. When $\chi = \delta^s$, this space is denoted, more simply, by $\mathcal S^-_{L(\Ad,\frac{1}{2}-s)} (N,\psi\backslash \SL_2/N,\psi)$. That space was defined explicitly in terms of its germs, and the difficulty in this proposition is to identify it with the twisted push-forward of the space $\mathcal S^-_{L(\Sym^2, 1)} (N,\psi\backslash G/N,\psi)$ which was defined indirectly as the image of the ``$\Sym^2$ space'' $\mathcal S(\bar X/\SL_2)^\circ$ under the unfolding map $\bar{\U}$. The proof is quite long, and we break it down to several intermediate statements.

\begin{proof}[Beginning of the proof: reduction to $\mathcal S(\bar Z,\mathbb L_{-1})$] 

This is an exercise in identifying the images of various subspaces of $\mathcal S(\bar X_\ad)$ under pushforward and unfolding maps. 

First, we recall the definition of the space $\mathcal S(\bar X/\SL_2)^\circ$ (\S \ref{ssSym2}) as the push-forward of a space $\mathcal S(\bar X_\ad)^\circ$, which in turn was defined in terms of its Mellin transform, as Paley--Wiener sections of a sheaf $\mathscr E_\ad^\circ$.

Another space of importance is the Schwartz space of the complement of the zero section in $\bar X_\ad$, $\mathcal S(X_\ad)$, which corresponds to Paley--Wiener sections of a bundle $\mathscr D_\ad$, see \eqref{SXad}. Under the push-forward map $\mathcal S(\bar X_\ad)\to \Meas(X\sslash \SL_2)$, the image of $\mathcal S(X_\ad)$ is equal to $\mathcal S(X/\SL_2)$ (because of the isomorphism $[X_\ad/\PGL_2]\simeq [X/\SL_2]$).
 
If we compare the sheaves $\mathscr D_\ad$ and $\mathscr E_\ad^\circ$ describing the Mellin transforms of those subspaces, we will see that they are equal away from the poles of $\prod_\eta L(\chi\eta,-\frac{\check\alpha}{2} , \frac{1}{2})$, with $\eta$ ranging over all quadratic characters. More precisely, sections of $\mathscr E_\ad^\circ$ are meromorphic sections of $\mathscr D_\ad$ with at most simple poles at the poles of $\prod_{\eta\ne 1} L(\chi\eta,-\frac{\check\alpha}{2} , \frac{1}{2})$, whose residues (for $\eta\ne 1$) and evaluations (for $\eta = 1$) at the poles of $\prod_\eta L(\chi\eta,-\frac{\check\alpha}{2} , \frac{1}{2})$ (including $\eta=1$) satisfy a certain condition of lying in the induction of a finite-dimensional representation. Both bundles are generated by their global Paley--Wiener sections; this is trivial for $\mathscr D_\ad$, and for $\mathscr E_\ad^\circ$ see Lemma \ref{globalsections}.

The $A_\ad$-equivariance of the unfolding map means that the proposition will follow \emph{a fortiori} if we replace the space $\mathcal S^-_{L(\Sym^2, 1)} (N,\psi\backslash G/N,\psi)$ by the image of $\mathcal S(X/\SL_2)$ under $\bar{\U}$, without, in the result, allowing poles at the poles of the local $L$-functions $L(\chi\eta,-\frac{\check\alpha}{2} , \frac{1}{2})$, with $\eta\ne 1$. That is, it suffices to prove (besides the convergence statement) that $p_\chi$ defines a meromorphic family of morphisms 
\[ \bar{\U}\left(\mathcal S(X/\SL_2) \right) \to  \mathcal S^-_{L(\Ad,\chi^{-1}\delta^{\frac{1}{2}}\circ e^\frac{\check\alpha}{2})} (N,\psi\backslash \SL_2/N,\psi),\]
with at most simple poles at the poles of $L(\chi,-\frac{\check\alpha}{2} , \frac{1}{2})$.

As we have seen, the image of $\mathcal S(X)$ under the unfolding map $\U$ is the space of measures $\mathcal S(\bar Z,\mathbb L_{-1})$, hence we are reduced to studying the image of $\mathcal S(\bar Z,\mathbb L_{-1})$ in the bottom-right entry of the commutative diagram
\begin{equation}\label{ZtoKuz} \xymatrix{ \mathcal S(\bar Z,\mathbb L_{-1}) \ar[r]^{\tilde p_\chi} \ar[d] & \Meas(A\backslash \bar Z, \mathcal L_\psi) \ar[d] \\
   \mathcal S^-_{L(\Sym^2, 1)} (N,\psi\backslash G/N,\psi) \ar[r]^{p_\chi} & \Meas(N,\psi\backslash \SL_2/N,\psi),}
\end{equation}
where 
$$ \tilde p_\chi(f) = \tilde\pi_!(\delta^{-\frac{1}{2}}\chi^{-1} f)$$
for the quotient map $\tilde\pi: \bar Z\to A\backslash \bar Z$. The vertical arrows are our standard twisted push-forwards for the Kuznetsov quotient, \S \ref{sstwistedpf}; thus, the symbol $\Meas(N,\psi\backslash \SL_2/N,\psi)$ really stands for a space of measures on $N\backslash \SL_2\sslash N$, after applying our trivialization of the Kuznetsov push-forwards as explained in \S \ref{sstwistedpf}. 

Notice that $\tilde p_\chi$ is absolutely convergent for every $\chi$, since the $A$-orbits on $\bar Z$ are closed. (The reader should not confuse this with the factor $A_\ad$ in the isomorphism $Z = A_\ad\times V^*\times V^*$, see Remark \ref{AonZ}.) Therefore, our goal is to show that the right vertical arrow is convergent when $|\chi|=\delta^\sigma$ with $\sigma \ll 0$, and can be continued to a meromorphic family of morphisms, with at most simple poles at the poles of $L(\chi,-\frac{\check\alpha}{2} , \frac{1}{2})$, and image (away from these poles) equal to the space 
$\mathcal S^-_{L(\Ad,\chi^{-1}\delta^{\frac{1}{2}}\circ e^\frac{\check\alpha}{2})} (N,\psi\backslash \SL_2/N,\psi).$
\end{proof}

\begin{proof}[Second step: the image of $\tilde p_\chi$]

Our next step is to identify the image of $\mathcal S(\bar Z,\mathbb L_{-1}) $ under $\tilde p_\chi$. 

By \eqref{barZ}, we have $\bar Z = \Ga \times^{A_\ad^\adiag} Z$. The action of $A^\adiag$ on $Z$ descends to $A\backslash Z$, and we have an isomorphism 
\begin{equation}\label{ZmodA} A\backslash \bar Z =  \Ga \times^{A_\ad^\adiag} A\backslash Z.
\end{equation}

Recall that the line bundle $\mathbb L_{-1}$ is defined as a tensor product of $\mathcal L_\psi\otimes \mathscr L_{\delta^{-1}}$, where $\mathcal L_\psi$ is the Whittaker line bundle (which extends to the trivial line bundle over $\bar Z\smallsetminus Z$), and $\mathscr L_{\delta^{-1}}$ denotes the line bundle whose sections are smooth functions on $Z$ of the form $|\epsilon(z)|^{-1} \Phi(z)$ close to $\bar Z\smallsetminus Z$, for a local coordinate $\epsilon$ of this divisor, where $\Phi$ is a smooth function on $\bar Z$. Clearly, both line bundles are pullbacks of line bundles on $A\backslash \bar Z$, which will be denoted by the same symbols.

The isomorphism $\bar Z = \Ga\times^{A_\ad^\adiag} Z$ implies that 
for the \emph{unnormalized} action of $A_\ad^\adiag$ on spaces of measures we have
$$ \mathcal S(\bar Z, \mathscr L_{\delta^{-1}}) = (|\bullet|^{-1}\mathcal S(\Ga)) \hat\otimes_{\mathcal S(A_\ad^\adiag)} \mathcal S(Z).$$
For the twisted push-forward $\tilde p_\chi$, we first multiply these measures by $\delta^{-\frac{1}{2}}\chi^{-1}$, before pushing forward to $A\backslash Z$; this is equivalent to replacing the factor $(|\bullet|^{-1}\mathcal S(\Ga))$ by $(\delta^{-\frac{3}{2}}\chi^{-1}\mathcal S(\Ga))$, where we again identify $A_\ad\simeq\Gm$ by the positive root character. It follows that, if $\tilde p_\chi$ 
was applied to the space $\mathcal S(\bar Z, \mathscr L_{\delta^{-1}})$, its image would be the space $\mathcal S(A\backslash \bar Z, \mathscr L_{\delta^{-\frac{3}{2}}\chi^{-1}})$, where the bundle $\mathscr L_{\chi}$ generalizes $\mathscr L_s$ in the obvious way, recovering $\mathscr L_s$ for $\chi=\delta^s$. 

Generalizing, similarly, the notation for $\mathbb L_s$ so that $\mathbb L_\chi = \mathcal L_\psi \otimes \mathscr L_\chi$, it follows that the image of $\mathcal S(\bar Z, \mathbb L_{-1}) $ under $\tilde p_\chi$ is the space
$$M_\chi:=\mathcal S(A\backslash \bar Z, \mathbb L_{\delta^{-\frac{3}{2}}\chi^{-s}}).$$
We are left with computing the image of $M_\chi$ under push-forward to $A\backslash \bar Z\sslash \SL(V)^\diag = N\backslash \SL(V)\sslash N$. 

\end{proof}

Recall that $N\backslash \SL(V)\sslash N$ is a one-dimensional affine space, and we have fixed a coordinate $\zeta$. We compactify it to $\mathbb P^1$, and then we have a rational map $\bar Z \to \mathbb P^1$. This map is defined away from the intersection of the divisor $\zeta =0$ with the divisor $\bar Z\smallsetminus Z$. The complement of this intersection is the union:
$$ Z \cup \bar Z^{\rm disj},$$
where the exponent ``disj'' denotes the locus over the set $\zeta\ne 0$, that is, over the set of pairs $(v,w)\in V^*\times V^*$ which are not colinear. We have a subspace
\begin{equation}\label{sequenceWs} \mathcal S(A \backslash Z,\mathcal L_\psi) + \mathcal S(A\backslash \bar Z^{\rm disj}, \mathbb L_{\delta^{-\frac{3}{2}}\chi^{-1}}) \hookrightarrow M_\chi,
\end{equation}
which we will denote by $M_\chi^0$.

The third step in the proof of Proposition \ref{propdescent} will be to study push-forwards of the elements in $M_\chi^0$.

\begin{proof}[Third step: the push-forward of $M_\chi^0$]

We will see that the right vertical arrow of \eqref{ZtoKuz} is absolutely convergent for every $\chi$ on the subspace $M_\chi^0$, and that its image is the space $\mathcal S^-_{L(\Ad,\chi^{-1}\delta^{\frac{1}{2}}\circ e^\frac{\check\alpha}{2})} (N,\psi\backslash \SL_2/N,\psi),$ 

The elements of $\mathcal S(A \backslash Z,\mathcal L_\psi)$ are usual Schwartz Whittaker measures, so their push-forward will be the space $\mathcal S(N,\psi\backslash \SL(V)/N,\psi)$. 

Over $A\backslash \bar Z^{\rm disj}$, we have an $\SL(V)=\SL(V)^\diag$-equivariant isomorphism: 
$$A\backslash \bar Z^{\rm disj} \simeq (\mathbb P^1\smallsetminus\{0\}) \times \SL(V)$$
(compatible with the map to $\mathbb P^1\supset N\backslash \SL(V)\sslash N$), and from this it is easy to see that 
the Whittaker line bundle $\mathcal L_\psi$ admits an $\SL(V)^\diag$-equivariant trivialization. We also claim: 
\begin{quote}
 The line bundle $\mathscr L_{\delta^{-\frac{3}{2}}\chi^{-1}}$, restricted to $\bar Z^{\rm disj}$, is the pullback of the line bundle $\mathfrak L_{\delta^{-\frac{3}{2}}\chi^{-1}}$ over $\mathbb P^1\smallsetminus\{0\}$, whose sections are smooth functions of the variable $\zeta^{-1}$, multiplied by the character $|\zeta|^{\frac{3}{2}} \chi(e^\frac{\check\alpha}{2}(\zeta))$. 
\end{quote}

To see this, use the isomorphism \eqref{ZmodA} to write $A\backslash \bar Z^{\rm disj}$ as $\Ga \times^{A_\ad^\adiag} (V^*\times V^*)^{\rm disj}/\{\pm 1\}$, where $(V^*\times V^*)^{\rm disj}$ refers to pairs of vectors that are not colinear. The group $A_\ad^\adiag\simeq A_\ad$ acts, here, by the positive root character on $\Ga$, and by the \emph{inverse} of the action of $A_\ad$, as we have defined it, on $(V^*\times V^*)^{\rm disj}$, that is, by the action which descends from the diagonal action of $A$ on the vector space $V\times V$ \emph{through the negative half-root character}. The group $\SL(V)^\diag$ acts freely on $(V^*\times V^*)^{\rm disj}$, and the quotient is $\Gm\subset \mathbb P^1$, with $A_\ad\simeq A_\ad^\adiag$ acting on it through the \emph{negative root character}. Thus, 
$$ A\backslash \bar Z^{\rm disj} /\SL(V) \simeq \Ga \times^{A_\ad^\adiag}\Gm \simeq  \mathbb P^1\smallsetminus\{0\},$$
with $A_\ad\simeq A_\ad^\adiag$ acting via the positive root character on $\Ga$ and via the negative root character on  $\mathbb P^1\smallsetminus\{0\}$. It is clear, now, from the definition of $\mathscr L_{\delta^{-\frac{3}{2}}\chi^{-1}}$ that it is pulled back from the line bundle on $\mathbb P^1\smallsetminus\{0\}$ whose sections, in the local coordinate $\zeta^{-1}$ at $\infty$, are of the form described in the claim above.

The map $\bar Z^{\rm disj}\to \mathbb P^1\smallsetminus\{0\}$ is smooth, therefore the image of $\mathcal S(A\backslash \bar Z^{\rm disj}, \mathbb L_{\delta^{-\frac{3}{2}}\chi^{-1}})$ is equal to $\mathcal S(\mathbb P^1\smallsetminus\{0\}, \mathcal L_{\delta^{-\frac{3}{2}}\chi^{-1}})$. 
Explicitly, these are smooth measures on $F^\times$ which are of rapid decay towards zero, and of the form 
$$C(\zeta^{-1}) |\zeta|^{\frac{3}{2}}\chi(e^\frac{\check\alpha}{2}(\zeta)) d(\zeta^{-1}) = C(\zeta^{-1}) |\zeta|^{\frac{1}{2}}\chi(e^\frac{\check\alpha}{2}(\zeta)) d^\times\zeta$$ 
close to infinity --- the asymptotic behavior of elements of the space that was denoted by $\mathcal S^-_{L(\Ad,\chi^{-1}\delta^{\frac{1}{2}}\circ e^\frac{\check\alpha}{2})} (N,\psi\backslash \SL_2/N,\psi)$ in \S \ref{ssnonstandard}.

We conclude that the image of the subspace $M_\chi^0$ of \eqref{sequenceWs} under push-forward to $N\backslash \SL_2\sslash N$ coincides with the space \[\mathcal S^-_{L(\Ad,\chi^{-1}\delta^{\frac{1}{2}}\circ e^\frac{\check\alpha}{2})} (N,\psi\backslash \SL_2/N,\psi),\] and the push-forward map is defined on this subspace for every $\chi$. It is easy to see from our proof that, as $\chi$ varies, the maps
\begin{equation}\label{fromMchizero}M_\chi^0\to \mathcal S^-_{L(\Ad,\chi^{-1}\delta^{\frac{1}{2}}\circ e^\frac{\check\alpha}{2})} (N,\psi\backslash \SL_2/N,\psi)\end{equation}
vary polynomially (in the obvious sense), in the non-Archimedean case, and holomorphically, in the Archimedean case. (One can even see that, in the Archimedean case, the entire sections that one obtains are of Paley--Wiener type, i.e., of moderate growth in bounded vertical strips.)
\end{proof}

Finally, let $\overline{M_\chi}$ denote the quotient $M_\chi/M_\chi^0$. We claim: 

\begin{equation}\label{barMchi}
\parbox{\dimexpr\linewidth-8em}{ For every $\chi$ that is not a pole of $L(\chi,-\frac{\check\alpha}{2} \frac{1}{2})$, the coinvariant space $(\overline{M_\chi})_{\SL(V)}$ is zero. For $\chi_0$ a pole of $L(\chi,-\frac{\check\alpha}{2} \frac{1}{2})$, for any $h\in \widehat{\mathcal S(A_\ad)}$ with Mellin transform $\check h(\chi)$ vanishing at $\chi_0^{-1}$, the coinvariant space $(\overline{M_{\chi_0}})_{\SL(V)}$ is annihilated by $h$.}
\end{equation}

This will imply that the entire family of maps \eqref{fromMchizero} extends meromorphically to the spaces $M_\chi$, with at most simple poles at the poles of $L(\chi,-\frac{\check\alpha}{2} , \frac{1}{2})$, completing the proof of Proposition \ref{propdescent}.

\begin{proof}[Last step: proof of \eqref{barMchi}]

Let $\bar Z^{\dagger}$ be the complement of $Z\cup \bar Z^{\rm disj}$. As we have seen in \eqref{fnsdensmeas}, dividing by the appropriate measure $dz$ on $Z$, elements of $\mathcal S(\bar Z,\mathbb L_{-1})$ become elements of $\mathcal F(\bar Z, \mathbb L_1)$, i.e., Schwartz sections of the line bundle $\mathbb L_1$, and this operation is equivariant for the normalized action of $A$. For a while, we will work with the space $\mathcal F(\bar Z, \mathbb L_1)$, in order to describe restrictions to those sections on $\bar Z^{\dagger}$ (or an infinitesimal neighborhood of it, in the Archimedean case). More precisely, we wish to describe the \emph{stalk} of $\mathcal F(\bar Z, \mathbb L_1)$ at $\bar Z^\dagger$ which, by definition, is the the quotient $\mathcal F(\bar Z,\mathbb L_1)/\mathcal F(\bar Z\smallsetminus \bar Z^\dagger, \mathbb L_1)$. In the non-Archimedean case, this coincides with the space of restrictions of elements of $\mathcal F(\bar Z,\mathbb L_1)$ to $\bar Z^\dagger$, while in the Archimedean case it is determined by the restrictions of all derivatives of the elements of $\mathcal F(\bar Z,\mathbb L_1)$ to this subset.

Remembering that 
$$\bar Z \simeq \Ga \times (V^*\times V^*),$$
the space $\bar Z^\dagger$ is the $G':=(A\times\SL(V)^\diag)/\{\pm 1\}$-invariant subset 
$$\{0\}\times \{(v,w)\in V^*\times V^*| v\mbox{ and } w \mbox{ are colinear}\}\simeq  \Gm\times V^*,$$ 
where both $A$ and $\SL(V)^\diag$ act trivially on the $\Gm$-factor, and by our usual conventions on $V^*$, so that the stabilizer of a point is the subgroup $B_\ad^\diag\subset G'$, where $B^\diag$ denotes the embedding $b\mapsto (a(b),b)$ of a Borel subgroup of $\SL(V)$ (with $a(b)$ the image of $b$ under the defining quotient $B\twoheadrightarrow A$), and $B_\ad=B/\{\pm 1\}$.

The stabilizer subgroup $B_\ad^\diag$ of a point acts (unnormalized action) by the character $\delta$ on the fiber of the complex line bundle $\mathbb L_1$ over that point, and by the positive root character on the fiber of the (two-dimensional over $F$) normal bundle of $Z^\dagger$ over that point. Let $\CC_1$ be the complex, $G'$-equivariant line bundle over $V^*$, where the stabilizer $B_\ad^\diag$ of some point acts by the character $\delta$, and $\CC_{n\alpha}$ the line bundle where it acts by the character $e^{n\alpha}$ (in the Archimedean case).

Thus, the stalk of $\mathcal F(\bar Z, \mathbb L_1)$ at $\bar Z^\dagger$  be identified, in the non-Archimedean case, with the space of Schwartz sections
$$\mathcal F (\Gm) \hat\otimes \mathcal F\left(V^*, \CC_1 \right),$$
and recall that the traslation action of $G'=(A\times \SL(V))/\{\pm 1\}$ on this space has been twisted, by our normalization \eqref{actionnorm-unfolding-fns}, by the character $\delta^\frac{1}{2}$. 

In the Archimedean case, the stalk has a separable decreasing filtration, indexed by $n\in \mathbb N$, by sections whose $(n-1)$-st derivatives vanish over $Z^\dagger$. The $n$-th graded quotient can be identified with Schwartz sections of $\mathbb L_1$ tensored by the $n$-th symmetric power of the conormal bundle (considered as an $\RR$-vector space), i.e., with
$$\mathcal F (\Gm)\hat\otimes \mathcal F\left(V^*, \CC_1\otimes_\RR \Sym_\RR^n(F_\alpha^2)\right),$$
where $F_\alpha^2$ stands for a two-dimensional $F$-vector space space with a \emph{scalar} action of the stabilizer $B_\ad^\diag$ (really, of $B_\ad^\diag$) by the positive root character. This space is isomorphic to 
\begin{equation}\label{gradedpiece}   \mathcal F (\Gm)\hat\otimes \mathcal F\left(V^*, \CC_1 \otimes \CC_{n\alpha}\right) \otimes_\RR \Sym_\RR^n(F^2),\end{equation}
now with trivial $G'$-action on all but the middle factor. Again, our definition of the action of $G'$ on sections over $V^*$ includes the twist by $\delta^\frac{1}{2}$, by the normalization \eqref{actionnorm-unfolding-fns}).

The map $\mathcal S(\bar Z, \mathbb L_{-1}) \simeq \mathcal F(\bar Z, \mathbb L_1) \to M_\chi$ descends to a map from the stalk of $\mathcal F(\bar Z, \mathbb L_1)$ over $\bar Z^\dagger$ to $\overline{M_\chi}$ and, passing to $\SL(V)^\diag$-coinvariants, we get a map:
$$ \left(\mathcal F(\bar Z,\mathbb L_1)/\mathcal F(\bar Z\smallsetminus \bar Z^\dagger, \mathbb L_1) \right)_{\SL(V)} \to \left(\overline{M_\chi}\right)_{\SL(V)}$$
which is $(A_\ad,\chi)$-equivariant with respect to the normalized action on the left. 

We analyze the corresponding coinvariant spaces of the graded pieces \eqref{gradedpiece} (including $n=0$, which includes the non-Archimedean case). The group $G'=(A\times \SL(V))/\{\pm 1\}$ only acts on the factor in the middle, which can be identified with 
$\mathcal F(V^*,\CC_1)\otimes \CC_{n\alpha}$, where now the whole group $G'$ acts on $\CC_{n\alpha}$ (in the non-Archimedean case) via the character $e^{n\alpha}$ of its quotient $A_\ad$. Under the \emph{unnormalized} actions of $G'$, the space $\mathcal F(V^*,\CC_1)$ is isomorphic to the space of Schwartz measures $\mathcal S(V^*)$, hence its $\SL(V)$-coinvariants are simply a complex line with trivial $A_\ad$-action. Under the \emph{normalized} action \eqref{actionnorm-unfolding-fns}, this means that 
$$ \mathcal F(V^*,\CC_1)_{\SL(V)}\simeq \CC_{\delta^\frac{1}{2}}$$
as an $A_\ad$-module. Hence, the $\SL(V)$-coinvariant space of \eqref{gradedpiece} is an $A_\ad$-eigenspace with eigencharacter $\chi=\delta^\frac{1}{2}\cdot e^{n\alpha}$. As $n$-varies, these are \emph{precisely} the poles of $L(\chi,-\frac{\check\alpha}{2}, \frac{1}{2})$. 

This prove \eqref{barMchi}, concluding the proof of Proposition \ref{propdescent}.
\end{proof}

As a special case, we can now prove Part \eqref{three} (hence also Part \eqref{one}) of Theorem \ref{thmRudnick}, which I recall here:

\begin{theorem}\label{Rudnickpt3}
  The equivariant Fourier transform $\mathcal T_{\SL_2}:=\mathscr F_{\Id,1}$ is an isomorphism 
 \begin{equation}
  \mathcal T_{\SL_2}: \mathcal S_{L(\Ad,1)}^-(N,\psi\backslash \SL_2/N,\psi) \xrightarrow\sim \mathcal S(\frac{\SL_2}{\SL_2}),
 \end{equation}
 when both sides are understood as measures on the affine line, with our usual coordinate $\zeta$ on the left hand side, and the trace coordinate $t$ on the right.
\end{theorem}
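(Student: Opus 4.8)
\emph{Strategy.} The statement is the $\Gm$-coinvariant shadow of the unfolding isomorphism $\bar{\U}$ of \S\ref{ssunfolding} together with Proposition~\ref{propdescent}. Recall $G=A_\ad\times\SL(V)$, so that $N\backslash G\sslash N=\Gm\times(N\backslash\SL_2\sslash N)$ with coordinates $(a,\zeta)$, the $\Gm=A_\ad$-factor being the one that is removed in passing from $G$ to $\SL_2$; on the geometric side $A_\ad$ acts on $\bar X\sslash\SL_2=\mathbbm A^2_{(c,t)}$ by scaling the coordinate $c=\zeta a$ and fixing $t=\zeta$. Thus both spaces in the theorem arise as a $(\Gm,\chi_0)$-equivariant push-forward (i.e.\ a $\Gm$-coinvariant space, for the value $\chi_0$ of the character dictated by the normalizations \eqref{actionnorm-unfolding-meas}, \eqref{actionnormagain-measures}) of, respectively, $\mathcal S(\bar X/\SL_2)^\circ$ and $\mathcal S^-_{L(\Sym^2,1)}(N,\psi\backslash G/N,\psi)\cong\bar{\U}(\mathcal S(\bar X/\SL_2)^\circ)$. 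The plan is to compute that push-forward on each side and read off that the descended map is $\mathscr F_{\Id,1}$.

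\emph{Step 1: the geometric side.} First I would establish the identity announced in the introduction: the push-forward $\pi_!\colon\mathcal S(\bar X/\SL_2)^\circ\to\Meas(\mathbbm A^1_t)$ obtained by integrating out $c$ has image exactly $\mathcal S(\tfrac{\SL_2}{\SL_2})$, the space of stable orbital integrals, i.e.\ the push-forward of $\mathcal S(\SL_2)$ to the adjoint quotient via the trace. The ingredients: (i) $X/\SL_2=\tfrac{\SL_2}{N}$ with invariant-theoretic quotient $\mathbbm A^2_{(c,t)}$, and $\mathcal S(X/\SL_2)\subset\mathcal S(\bar X/\SL_2)^\circ$; (ii) over $c\ne 0$ the stack $[\bar X/\SL_2]$ coincides with the conjugation stack $[\SL_2/N]$, so only the $c\to 0$ behaviour is at issue; (iii) over each $t\ne\pm 2$ the fibre of $\mathcal S(\bar X/\SL_2)^\circ$ is the space $\mathcal S([V/T_t])^\circ$ of Theorem~\ref{thmsubspaceX}, and its $T_t$-coinvariants (Lemma~\ref{coinvariantA2}, Corollary~\ref{corresidues}) — which is exactly the integration over $c$ — produce smooth measures in $t$ there, matched with stable orbital integrals by the embedding $\tfrac{\SL_2}{N}\hookrightarrow[\bar X/\SL_2]$; together with a check at the singular values $t=\pm 2$, where the requisite local pieces are already present in $\mathcal S(X/\SL_2)\subset\mathcal S(\bar X/\SL_2)^\circ$, this gives the claimed equality.

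\emph{Step 2: descending $\bar{\U}$.} By the explicit formula \eqref{Ubar-measures}, $\bar{\U}$ is regularized multiplicative convolution by $\psi^{-1}(\bullet)d^\times\bullet$ along the cocharacter $\check\lambda(x)=(a=x,\zeta=x^{-1})$; in coordinates $(c=\zeta a,\ t=\zeta)$ this convolves in the $t$-direction while keeping $c$ fixed. Taking the $(\Gm,\chi_0)$-equivariant push-forward means integrating over $a$, equivalently over $c$ with $t$ fixed; interchanging the two integrations (justified as in Remark~\ref{remarkchangeorder}), the inner integral is precisely $\pi_!$, and what survives is multiplicative convolution of $\pi_! f$, a measure in $t$, by $\psi^{-1}(\bullet)d^\times\bullet$ along the \emph{identity} cocharacter of the universal Cartan of $\SL_2$ — that is, the Fourier convolution $\mathscr F_{\Id,1}$ of \S\ref{sssFourierconv}. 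Hence the $\Gm$-coinvariant of $\bar{\U}$ equals $\mathscr F_{\Id,1}\circ\pi_!$. On the other hand, Proposition~\ref{propdescent} specialised to $\chi=\chi_0$ identifies the same $(\Gm,\chi_0)$-push-forward of $\mathcal S^-_{L(\Sym^2,1)}(N,\psi\backslash G/N,\psi)$ with $\mathcal S^-_{L(\Ad,\chi_0^{-1}\delta^{1/2}\circ e^{\check\alpha/2})}(N,\psi\backslash\SL_2/N,\psi)$, which after tracking the $\delta^{\frac12}$- and $e^{\check\alpha/2}$-normalizations is $\mathcal S^-_{L(\Ad,1)}(N,\psi\backslash\SL_2/N,\psi)$. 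Comparing, $\mathscr F_{\Id,1}\circ\pi_!$ maps $\mathcal S(\bar X/\SL_2)^\circ$ \emph{onto} $\mathcal S^-_{L(\Ad,1)}(N,\psi\backslash\SL_2/N,\psi)$; since $\pi_!$ is onto $\mathcal S(\tfrac{\SL_2}{\SL_2})$ by Step~1 and $\mathscr F_{\Id,1}$ is invertible on densely defined measures on $\mathbbm A^1$ (on Mellin transforms it is multiplication by the everywhere-nonzero, polynomially bounded abelian gamma factor $\gamma(\chi,\Id,1,\psi)$), the operator $\mathscr F_{\Id,1}$ restricts to an isomorphism between $\mathcal S(\tfrac{\SL_2}{\SL_2})$ and $\mathcal S^-_{L(\Ad,1)}(N,\psi\backslash\SL_2/N,\psi)$; reading it in the other direction (equivalently, starting from $\bar{\U}^{-1}$ via \eqref{barUinverse}) gives the arrow $\mathcal T_{\SL_2}=\mathscr F_{\Id,1}$ in the form stated.

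\emph{Main obstacle.} The delicate point is bookkeeping of normalizations and poles: one must check that the ``trivial'' value $\chi_0$ of the $\Gm$-character in Proposition~\ref{propdescent} really produces the $L(\Ad,1)$-space and not a twist of it, and that at this $\chi_0$ the meromorphic family of push-forwards of Proposition~\ref{propdescent} is still defined and surjective — the simple pole of $L(\chi_0\eta,-\tfrac{\check\alpha}{2},\tfrac12)$ (for $\eta=1$) permitted there is exactly what produces the edge asymptotics near $\zeta\to\infty$ built into the definition of the non-standard target space. A secondary point is the claim of Step~1 at $t=\pm 2$, where the fibre is a degenerate quotient rather than $[V/T_t]$ with $T_t$ a torus; this is absorbed into the inclusion $\mathcal S(X/\SL_2)\subset\mathcal S(\bar X/\SL_2)^\circ$.
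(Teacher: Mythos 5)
Your overall strategy --- descend the unfolding isomorphism $\bar\U$ to $\Gm$-coinvariants, identify the two coinvariant spaces with $\mathcal S(\frac{\SL_2}{\SL_2})$ and $\mathcal S^-_{L(\Ad,1)}(N,\psi\backslash \SL_2/N,\psi)$ respectively, and read off the descended operator as a Fourier convolution --- is the one the paper follows. But your Step 1, which is the crux of the theorem, is not proved by the argument you give. Knowing that for each fixed $t\ne \pm 2$ the restriction of an element of $\mathcal S(\bar X/\SL_2)^\circ$ lies in $\mathcal S([V/T_t])^\circ$ puts no constraint at all on the push-forward $\pi_!f$: for fixed $t$ the integral over $c$ can be any complex number, so the entire content of the claim $\pi_!(\mathcal S(\bar X/\SL_2)^\circ)=\mathcal S(\frac{\SL_2}{\SL_2})$ lies in the variation with $t$, above all near $t=\pm 2$ --- and \S\ref{ssSym2} warns explicitly that the fiberwise description gives no control there. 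The inclusion $\mathcal S(X/\SL_2)\subset\mathcal S(\bar X/\SL_2)^\circ$ only yields the containment $\supseteq$; the hard containment $\subseteq$ is what your ``check at the singular values'' elides. The actual mechanism is the Mellin/sheaf description: the push-forward to the $t$-line factors through evaluation of Mellin transforms at $\chi=\delta^{-1/2}$, the sheaf $\mathscr E_\ad^\circ$ is generated by its global Paley--Wiener sections (Lemma \ref{globalsections}), and --- the key point --- $\delta^{-1/2}$ is \emph{not} a pole of any $L(\chi\omega,-\frac{\check\alpha}{2},\frac{1}{2})$, so the fiber of $\mathscr E_\ad^\circ$ there coincides with that of $\mathscr D_\ad$, whence the image coincides with that of $\mathcal S(X)$, i.e.\ with $\mathcal S(\frac{\SL_2}{\SL_2})$. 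Your ``main obstacle'' paragraph has this backwards: those $L$-functions have their poles at $\chi=\eta\delta^{1/2}$, there is no pole at $\chi_0=\delta^{-1/2}$, and it is precisely this absence that makes the specialization clean; the $L(\Ad,1)$-asymptotics of the target come from the $\bar Z^{\rm disj}$ stratum in the proof of Proposition \ref{propdescent}, not from a pole of the family at $\chi_0$.

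A secondary point: in Step 2 you identify the descended $\bar\U$ with $\mathscr F_{\Id,1}$ and then claim that ``reading it in the other direction'' yields $\mathcal T_{\SL_2}=\mathscr F_{\Id,1}$; these two statements are inconsistent. Convolution by $\psi^{-1}(\bullet)\,d^\times\bullet$ along $\bar{\check\lambda}$ is the \emph{inverse} of $\mathscr F_{-\bar{\check\lambda},1}=\left(\psi(\bullet)|\bullet|\,d^\times\bullet\right)\star_{-\bar{\check\lambda}}$, not an instance of it; it is the descent of $\bar\U^{-1}$ (the direction from the Kuznetsov side to the group side) that equals $\mathscr F_{\Id,1}$, since $-\bar{\check\lambda}$ is the identity cocharacter in the $\zeta$-coordinate. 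As written, your two sign errors cancel to give the stated formula, but the identification itself needs to be corrected.
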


\label{Pfpart3}\label{proofRudnick}
\begin{proof}

Consider the diagram
\begin{equation}\label{quotdiagram} \xymatrix{ \mathcal S(\bar X/\SL(V))^\circ \ar[r]^{\bar\U} \ar[d] & \mathcal S^-_{L(\Sym^2, 1)} (N,\psi\backslash G/N,\psi)\ar[d]^{p_s} \\ 
\mathcal S(\bar X/\SL(V))^\circ_{(A_\ad,\delta^s)} \ar@{-->}[r] & 
\mathcal S^-_{L(\Ad,\frac{1}{2}-s)} (N,\psi\backslash \SL_2/N,\psi).
}\end{equation}

At the point $s= -\frac{1}{2}$, where $p_s$ is simply the push-forward from $N\backslash G\sslash N$ to $N\backslash \SL_2\sslash N$, we also have a push-forward map
$$ \mathcal S(\bar X/\SL(V))^\circ \to \mathcal S(\bar X/\SL(V))^\circ_{(A_\ad,\delta^s)} \to \Meas(A\bbslash \bar X\sslash \SL(V)) = \Meas\left(\Dfrac{\SL(V)}{\SL(V)}\right).$$

I claim that its image is the same as the image of $\mathcal S(X/\SL(V))$. Indeed, thinking of $\mathcal S(\bar X/\SL(V))^\circ$ as a quotient of the space $\mathcal S(\bar X_\ad)^\circ$ which, under Mellin transform, is identified with Paley--Wiener sections of the sheaf $\mathscr E_\ad^\circ$ over $\widehat{A_\ad}_\CC$ (see \S \ref{ssSym2}), the meromorphic family of maps to the spaces $\mathcal S^-_{L(\Ad,\frac{1}{2}-s)} (N,\psi\backslash \SL_2/N,\psi)$ corresponds to a meromorphic family of maps 
$$ \mathscr E_{\ad,s}^\circ \to \mathcal S^-_{L(\Ad,\frac{1}{2}-s)} (N,\psi\backslash \SL_2/N,\psi),$$
where $\mathscr E_{\ad,s}^\circ$ denotes the fiber of $\mathscr E_\ad^\circ$ over $\delta^s \in \widehat{A_\ad}_\CC$.

The image of $\mathcal S(\bar X/\SL(V))^\circ$ in $\Meas\left(\Dfrac{\SL(V)}{\SL(V)}\right)$ will be the image of $ \mathscr E_{\ad,-\frac{1}{2}}^\circ$ (since the sheaf is generated by its Paley--Wiener sections, Lemma \ref{globalsections}).
But $\delta^{-\frac{1}{2}}$ is not a pole of the $L$-function $L(\chi\omega, -\frac{\check\alpha}{2}, \frac{1}{2})$, for any quadratic character $\omega$, so, by definition, the bundle $\mathscr E_\ad^\circ$ coincides, around this character, with the bundle $\mathscr D_\ad$ describing the Mellin transform of elements of $\mathcal S(X_\ad)$. Thus, the image of $\mathcal S(\bar X/\SL(V))^\circ$ under push-forward to $A\bbslash \bar X\sslash \SL(V) = \Dfrac{\SL(V)}{\SL(V)}$ is the same as the image of $\mathcal S(X_\ad)$, which is also the same as the image of $\mathcal S(X)$ (by the isomorphism of stacks $[X/\SL_2] = [X_\ad/\PGL_2]$); that is, the image is the space $\mathcal S\left(\frac{\SL(V)}{\SL(V)}\right)$ of test measures for the stable trace formula of $\SL(V)$:
\begin{equation}\label{circtogroup}
 \mathcal S(\bar X/\SL(V))^\circ \twoheadrightarrow \mathcal S\left(\frac{\SL(V)}{\SL(V)}\right).
\end{equation}

The map $\bar\U$ is given, according to \eqref{Ubar-measures}, by the convolution operator $\left( \psi^{-1}({\bullet}) d^\times \bullet \right)\star_{\check\lambda}$; hence, its inverse will be the Fourier convolution $\mathscr F_{-\check\lambda, 1} = \left( \psi({\bullet}) |\bullet| d^\times \bullet \right)\star_{-\check\lambda}$. For $f\in \mathcal S^-_{L(\Sym^2, 1)} (N,\psi\backslash G/N,\psi)$, we can now compute the push-forward of $\bar\U^{-1} f$ under the surjection \eqref{circtogroup}, and it is immediately seen to factor through a map
$$ \mathcal S^-_{L(\Ad,1)} (N,\psi\backslash \SL_2/N,\psi) \twoheadrightarrow \mathcal S\left(\frac{\SL(V)}{\SL(V)}\right)$$
given by the same Fourier convolution $\mathscr F_{-\bar{\check\lambda}, 1}$, where $-\bar{\check\lambda}$ is the image of $-\check\lambda$ into the torus $A \subset \Aut(N\backslash \SL_2\sslash N)$, which coincides with the cocharacter $e^{\check\alpha}:a\mapsto \zeta=a$. Recall from \eqref{coordinatechange} that the output of this convolution operator, ``evaluated'' at a point $\zeta$, corresponds to the ``evaluation'' of a measure of $\mathcal S\left(\frac{\SL(V)}{\SL(V)}\right)$ at the point corresponding to trace $t=\zeta$.
\end{proof} 

The same argument gives us a meaningful statement about a transfer operator $\mathcal T_\chi$ for every character $\chi$ of $A_\ad$: Define twisted push-forward maps
$$ p'_\chi: \mathcal S(\frac{\SL_2}{N}) \to \Meas(\mathbbm A^1),$$
where $\mathbbm A^1 = \Dfrac{\SL_2}{B} = \Dfrac{\SL_2}{\SL_2}$ has coordinate $t=$the trace, by 
$$ p'_\chi (\varphi) (t) = t_!\left(\varphi(c,t) |c|^{-\frac{1}{2}}\chi^{-1}(c) \right),$$
where $c$ is the same coordinate on $\Dfrac{\SL_2}{N}$ as before, and $\chi$ is identified with a character of $\Gm$ through the positive half-coroot cocharacter of $A_\ad$.

The map $p'_\chi$ factors through $(B_\ad,\chi\delta^\frac{1}{2})$-coinvariants for the \emph{unnormalized} conjugation action of $B_\ad$ on $\mathcal S(\frac{\SL_2}{N})$, but of course the choice of base points with $c=1$ is important in realizing this coinvariant space as scalar-valued measures in the trace variable $t$. Denote the image of $p'_\chi$ by $\mathcal S\left(\frac{\SL_2}{B_\ad,\chi\delta^{\frac{1}{2}}}\right)$.

\begin{proposition}\label{descent-general}
 The operator
 $$ |\zeta|^{-\frac{1}{2}} \chi(e^{-\frac{\check\alpha}{2}}(\zeta)) \mathscr F_{\Id, \chi\circ e^{\frac{\check\alpha}{2}},\frac{3}{2}} $$
 defines an isomorphism 
 \begin{equation}\label{chicoinv} \mathcal S^-_{L(\Ad,\chi^{-1}\delta^{\frac{1}{2}}\circ e^\frac{\check\alpha}{2})} (N,\psi\backslash \SL_2/N,\psi) \xrightarrow\sim \mathcal S\left(\frac{\SL_2}{B_\ad,\chi\delta^{\frac{1}{2}}}\right)
 \end{equation}
 away from the poles of the local $L$-functions $L(\chi\eta, -\frac{\check\alpha}{2}, \frac{1}{2})$, where $\eta$ ranges over all quadratic characters. Here, $\Id$ denotes the identity cocharacter of the multiplicative group, acting on the one-dimensional space with coordinate $\zeta = t$.
\end{proposition}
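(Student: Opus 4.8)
The plan is to run, for a general character $\chi$, exactly the argument that proved Theorem \ref{Rudnickpt3} (which is the special case $\chi=\delta^{-\frac{1}{2}}$), using the generalization of the diagram \eqref{quotdiagram} in which the right column is the twisted push-forward $p_\chi$ of Proposition \ref{propdescent}. Thus one considers $\mathcal S(\bar X/\SL_2)^\circ$ at the top, with two legs: the first goes through $\bar\U$ and then $p_\chi$ into $\mathcal S^-_{L(\Ad,\chi^{-1}\delta^{\frac{1}{2}}\circ e^{\frac{\check\alpha}{2}})}(N,\psi\backslash\SL_2/N,\psi)$, the second pushes forward to $\bar X\sslash\SL_2=\frac{\SL_2}{N}$ and then applies $p'_\chi$ into $\mathcal S\left(\frac{\SL_2}{B_\ad,\chi\delta^{\frac{1}{2}}}\right)$. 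By Proposition \ref{propdescent} the first leg is surjective and $(A_\ad,\chi)$-equivariant, hence factors through the twisted coinvariant space $\mathcal S(\bar X/\SL_2)^\circ_{(A_\ad,\chi)}$; the second leg is likewise $(A_\ad,\chi)$-equivariant and factors through the same space.

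The second leg is analyzed exactly as in the proof of Theorem \ref{Rudnickpt3}: realizing $\mathcal S(\bar X/\SL_2)^\circ$ as the push-forward of $\mathcal S(\bar X_\ad)^\circ$, whose Mellin transform is identified with Paley--Wiener sections of $\mathscr E_\ad^\circ$ (Lemma \ref{globalsections}, \S \ref{ssSym2}), the composite ``push-forward to $\frac{\SL_2}{N}$ then $p'_\chi$'' corresponds to evaluating the Mellin transform at $\chi$ and then pushing to the trace line; for $\chi$ away from the poles of $\sum_\eta L(\chi\eta,-\frac{\check\alpha}{2},\frac{1}{2})$ the sheaf $\mathscr E_\ad^\circ$ coincides there with $\mathscr D_\ad$, the Mellin sheaf of $\mathcal S(X_\ad)$, so the fiber is the same as for $\mathcal S(X_\ad)$ and hence for $\mathcal S(X)$. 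Since $\mathcal S(X)$ has image $\mathcal S(\frac{\SL_2}{N})$ in measures on $\frac{\SL_2}{N}$, whose image under $p'_\chi$ is by definition $\mathcal S\left(\frac{\SL_2}{B_\ad,\chi\delta^{\frac{1}{2}}}\right)$, the second leg is surjective onto this space for all $\chi$ off those poles.

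It remains to compute the operator $\Theta$ through which the second leg factors as $\Theta$ composed with the first. Inverting the unfolding map --- by \eqref{Ubar-measures}, $\bar\U^{-1}$ is the Fourier convolution $\mathscr F_{-\check\lambda,1}$ along $\check\lambda(x)=(a=x,\zeta=x^{-1})$ --- and then running the push-forward to $\frac{\SL_2}{N}$ followed by $p'_\chi$, one computes, exactly as at the end of the proof of Theorem \ref{Rudnickpt3}, that after the coordinate change \eqref{coordinatechange} and descent modulo the $(A_\ad,\chi)$-action the operator $\Theta$ is a multiplicative Fourier convolution along the identity cocharacter of the $\zeta=t$ line, twisted by the character $\chi\circ e^{\frac{\check\alpha}{2}}$ and shifted to $\frac{3}{2}$, preceded by multiplication by $|\zeta|^{-\frac{1}{2}}\chi(e^{-\frac{\check\alpha}{2}}(\zeta))$; the prefactor absorbs the $\delta^{\pm\frac{1}{2}}$-twists in the unfolding normalizations \eqref{actionnorm-unfolding-fns}, \eqref{actionnorm-unfolding-meas}, the half-density-to-measure passage on $Z$ (Remark \ref{denstomeas}), and the factor $|c|^{-\frac{1}{2}}\chi^{-1}(c)$ built into $p'_\chi$. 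Since the first leg is surjective (Proposition \ref{propdescent}) and $\Theta$, being a multiplicative Fourier convolution up to an invertible multiplication operator, is injective, $\Theta$ is the asserted isomorphism; specializing $\chi=\delta^{-\frac{1}{2}}$ recovers Theorem \ref{Rudnickpt3}.

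The step I expect to be the main obstacle is the normalization bookkeeping in this last computation: verifying, with all the $\delta^{\pm\frac{1}{2}}$-twists, the two coordinate systems $(c,t)$ and $(a,\zeta)$, and the measure/half-density conventions, that the descended operator is \emph{exactly} $|\zeta|^{-\frac{1}{2}}\chi(e^{-\frac{\check\alpha}{2}}(\zeta))\,\mathscr F_{\Id,\chi\circ e^{\frac{\check\alpha}{2}},\frac{3}{2}}$ --- in particular getting the shift $\frac{3}{2}$ and the precise prefactor right --- together with pinning down that the pole locus of the whole family is precisely the set of poles of $L(\chi\eta,-\frac{\check\alpha}{2},\frac{1}{2})$ for all quadratic $\eta$ (the $\eta\neq1$ poles coming from the definition of $\mathscr E_\ad^\circ$, the $\eta=1$ pole from the gap between $\mathscr E_\ad^\circ$ and $\mathscr D_\ad$). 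Everything else is a formal consequence of Proposition \ref{propdescent} and the structure of the diagram \eqref{quotdiagram}.
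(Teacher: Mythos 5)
Your proposal is correct and follows essentially the same route as the paper: the paper's proof likewise reduces, away from the poles of $L(\chi\eta,-\frac{\check\alpha}{2},\frac{1}{2})$, to the image of $\mathcal S(X/\SL_2)$ under $\bar\U$, and then carries out exactly the convolution computation you sketch — writing $\bar\U^{-1}f=\mathscr F_{-\check\lambda,1}f$, changing coordinates via \eqref{coordinatechange}, and factoring $|c|^{-\frac{1}{2}-s}=|\zeta|^{-\frac{1}{2}-s}|a|^{-\frac{1}{2}-s}$ inside the push-forward to produce the prefactor and the shift $\frac{3}{2}$. The bookkeeping you flag as the main obstacle is indeed the whole content of the paper's short chain of equalities (done for $\chi=\delta^s$ at the level of measures), and it comes out as stated.
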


\begin{proof}
By the same argument as before,  away from the poles of the local $L$-functions $L(\chi\eta, -\frac{\check\alpha}{2}, \frac{1}{2})$ we may replace $\mathcal S^-_{L(\Ad,\chi^{-1}\delta^{\frac{1}{2}}\circ e^\frac{\check\alpha}{2})} (N,\psi\backslash \SL_2/N,\psi)$ by the image of $\mathcal S(X/\SL_2)$ under the unfolding map $\bar{\U}$.

For notational simplicity, let us work with $\chi = \delta^s$, denoting $p'_\chi$ by $p'_s$ --- the general case is identical. The map \eqref{chicoinv}, in this case, is
 $$ \mathcal S^-_{L(\Ad,\frac{1}{2}-s)} (N,\psi\backslash \SL_2/N,\psi) \xrightarrow\sim \mathcal S\left(\frac{\SL_2}{B_\ad,\delta^{\frac{1}{2}+s}}\right).$$

Let $f\in \mathcal S^-_{L(\Sym^2, 1)} (N,\psi\backslash G/N,\psi)$, and set $\varphi(c,t) = \bar{\U}^{-1} f (c,t)$ and $\tilde \varphi(a, \zeta) = \varphi(\zeta a , \zeta)$. We will apply again the inverse of \eqref{Ubar-measures}, which states that 
$$\tilde\varphi(a, \zeta) = \mathscr F_{-\check\lambda, 1} f(a,\zeta) = \left( \psi({\bullet}) |\bullet| d^\times \bullet \right)\star_{-\check\lambda} f(a,\zeta).$$

We compute the push-forward of $\varphi(c,t) |c|^{-\frac{1}{2}-s} $ to the variable $t=\zeta$; in the calculations that follow, $c$ and $a$ are dummy variables that are being integrated over when we push forward to $t=\zeta$:
\begin{align*} t_!\left(\varphi(c,t) |c|^{-\frac{1}{2}-s} \right) & = \zeta_!\left(\tilde \varphi(a, \zeta) |\zeta a|^{-\frac{1}{2}-s}\right) \\
& = |\zeta|^{-\frac{1}{2}-s} \zeta_! \left( |a|^{-\frac{1}{2}-s} \mathscr F_{-\check\lambda, 1} f(a, \zeta)\right) \\
& = |\zeta|^{-\frac{1}{2}-s} \zeta_! \left( |a|^{-\frac{1}{2}-s} \int_{F^\times} f(a z, \zeta z^{-1}) \psi(z) |z| d^\times z \right) \\
& = |\zeta|^{-\frac{1}{2}-s} \zeta_! \left( \int_{F^\times} |az|^{-\frac{1}{2}-s} f(a z, \zeta z^{-1}) \psi(z) |z|^{\frac{3}{2}+s} d^\times z \right) \\
& = |\zeta|^{-\frac{1}{2}-s} \int_{F^\times} p_s f (\zeta z^{-1}) \psi(z) |z|^{\frac{3}{2}+s} d^\times z \\
& = |\zeta|^{-\frac{1}{2}-s} \mathscr F_{\Id, \frac{3}{2}+s} \left(p_s f\right).
  \end{align*}

\end{proof}
 
\subsection{Basic vector} 

Finally, we verify the statement of Theorem \ref{thmSym2} on the basic vector. Here $F$ will be a non-Archimedean field of residual degree $q$, and the symplectic space $V$ is defined over its ring of integers $\mathfrak o$, with the symplectic form integral and residually non-vanishing. As a result, all spaces $X, Z$ etc.\ are defined over $\mathfrak o$. All choices made in the previous sections should now be \emph{integral} and \emph{residually non-vanishing}; for example: the point on $V$ whose stabilizer we denoted by $N$, and the isomorphism $N\simeq \Ga$. We also assume that $F$ is unramified over $\mathbb Q_p$ or $\mathbb F_p((t))$, and recall that in this case we take the additive character $\psi$ to have conductor equal to $\mathfrak o$; the corresponding self-dual measure gives mass $1$ to $\mathfrak o$. 

Fix an $\SL_2^2$-invariant measure $dx$ on $\bar X$ and let $dz$ be its dual measure on $\bar Z$, as in \eqref{unfoldingfunctionsmeasures}. 

\begin{lemma}\label{lemmameasures}
 If the measure of $X(\mathfrak o)$ under $dx$ is $1$, then the measure of $\bar Z(\mathfrak o)$ under $dz$ is $1$.
\end{lemma}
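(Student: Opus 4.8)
The plan is to reduce the statement to a purely local computation at the level of the fibers of the $\Ga$-bundle $X \to V^* \times V^*$ (and its completion), using the fact that Fourier transform with respect to a self-dual measure and a conductor-zero character preserves characteristic functions of the integral points of a line. First I would recall the setup: by the trivialization \eqref{trivS} of the stabilizer group scheme $S \simeq \Ga \times V^*$ (which we assumed to be integral and residually non-vanishing), we have $X \simeq V^* \times^A \SL(V)^2$ and, correspondingly, $\bar Z \simeq \Ga \times V^* \times V^*$ with $Z \simeq \Gm \times V^* \times V^*$. The measure $dx$ on $X$ dual to $dz$ on $\bar Z$ is, by the discussion preceding \eqref{unfoldingfunctionsmeasures}, induced from a fixed $\SL(V)^2$-invariant measure on $V^* \times V^*$ together with the fixed self-dual measure on $\Ga$ along the fibers of $X \to V^* \times V^*$; dually, $dz$ is induced from the \emph{same} measure on $V^* \times V^*$ and the dual (= same self-dual) measure on the $\Ga$-fibers of $\bar Z \to V^* \times V^*$.

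Next I would observe that both $X(\mathfrak o)$ and $\bar Z(\mathfrak o)$ fiber over the \emph{same} integral locus $(V^* \times V^*)(\mathfrak o)$ — here one should be slightly careful: $X(\mathfrak o)$, as a subset of the affine closure $\bar X(\mathfrak o)$, projects onto the pairs $(v,w) \in V \times V$ with $\omega(v,w)$ a unit (the integral points where $S$ is a torsor), but the key point is that the \emph{fibers} of both $X(\mathfrak o) \to (V^*\times V^*)(\mathfrak o)$ and $\bar Z(\mathfrak o) \to (V^* \times V^*)(\mathfrak o)$ over such a point are torsors for $\Ga(\mathfrak o) = \mathfrak o$ under the trivialized structure group, hence each has measure $1$ for the self-dual measure (which gives mass $1$ to $\mathfrak o$ since $\psi$ has conductor $\mathfrak o$). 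Therefore, by Fubini, $\vol_{dx}(X(\mathfrak o)) = \vol(V^*\times V^*)(\mathfrak o)$ and $\vol_{dz}(\bar Z(\mathfrak o)) = \vol(V^* \times V^*)(\mathfrak o)$ for the \emph{same} chosen base measure on $V^* \times V^*$. Since we are told $\vol_{dx}(X(\mathfrak o)) = 1$, this forces the common base-measure value to be $1$, hence $\vol_{dz}(\bar Z(\mathfrak o)) = 1$.

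The main obstacle — or rather the point that requires genuine care rather than difficulty — is matching up the normalization conventions: $dz$ was defined as a $(\tilde G, \delta)$-equivariant measure, not a genuinely invariant one, and the isomorphisms $X \simeq V^* \times^A \SL(V)^2$, $\bar Z \simeq \Ga \times V^* \times V^*$ carry $A$-actions that differ by the modular character, so one has to check that under the identifications the integral points $X(\mathfrak o)$ and $\bar Z(\mathfrak o)$ really do sit over the same integral locus and that the fiberwise measures really are mutually dual (this is exactly the content of \eqref{unfoldingfunctionsmeasures}: $dx$ and $dz$ are dual with respect to the fiberwise Fourier transform, and Fourier transform with respect to $\psi$ of conductor $\mathfrak o$ sends $\mathbbm 1_{\mathfrak o}$ to $\mathbbm 1_{\mathfrak o}$). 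Once these bookkeeping points are pinned down, the conclusion is immediate from the Fubini argument above; I would not expect any analytic subtlety since everything is an integral over compact open sets of locally constant functions.
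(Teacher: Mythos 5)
Your argument is correct and is essentially the paper's own proof: both $X$ and $\bar Z$ fiber over $V^*\times V^*$, the measures $dx$ and $dz$ are built from the same base measure together with mutually dual (here self-dual) measures on the $\Ga$-fibers, so the integral fibers each have mass $1$ and both total volumes equal the measure of the common base locus. One small slip in your aside: $X(\mathfrak o)$ surjects onto all of $(V^*\times V^*)(\mathfrak o)$ (pairs of primitive integral vectors), not merely onto the pairs with $\omega(v,w)$ a unit — but since your Fubini computation already takes the full primitive locus as the common base for both bundles, this does not affect the argument.
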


\begin{proof}
 Indeed, $X$ and $\bar Z$ are fibered over $\SL(V)$, with the former being an affine bundle and the latter being the vector bundle dual to the structure group of the former. Dual measures on the fibers (with respect to the character $\psi$) assign the same mass to the fibers of $X(\mathfrak o)$ and of $\bar Z(\mathfrak o)$, hence the claim.
\end{proof}

In \S \ref{ssbasicvectorsX} we defined basic vectors $f_{\bar X}$ and $f_{\bar X}^\circ$ for the spaces $\mathcal S(\bar X/\SL_2)$ and $\mathcal S(\bar X/\SL_2)^\circ$; the former was the image of the measure $1_{\bar X(\mathfrak o)}dx$ with $dx(\bar X(\mathfrak o))=1$. 
On the other hand, on the space $\mathcal S^-_{L(\Sym^2,1)} (N,\psi\backslash G/N,\psi)$ we defined in \S \ref{ssnonstandard} a ``basic vector'' $f_{L(\Sym^2,1)}$, as the product of the generating Whittaker function of the $L$-function $L(\Sym^2,1)$ by a Haar measure on $Z$ with $Z(\mathfrak o)=1$.

\begin{theorem}\label{thmbasicvector}
We have
\begin{equation}\label{unfoldingbasic} \bar{\U} f^\circ_{\bar X} = (1-q^{-1})(1-q^{-2}) f_{L(\Sym^2,1)}. 
\end{equation}
\end{theorem}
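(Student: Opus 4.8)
The plan is to prove the identity \eqref{unfoldingbasic} by a direct unramified computation, tracking the effect of the unfolding map $\bar\U$ on the two basic vectors through their Mellin transforms, and using the integration/convolution formulas already established.

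First I would recall the structure. By \eqref{deff00}, $f^\circ_{\bar X} = h_{L(\frac{\check\alpha}{2},\frac{1}{2})^{-1}}\cdot f_{\bar X}$, and by \eqref{barXtoX} the simpler element $h_{L(\check\alpha,1)^{-1}}\cdot f_{\bar X} = (1-q^{-2}) f_X$, where $f_X$ is the image of the invariant probability measure on $X(\mathfrak o)$. Since the $A_\ad$-action commutes with $\bar\U$ (the unfolding map is $\tilde G$-equivariant for the normalized actions, after the twist by $\delta^{\pm\frac12}$ described around \eqref{actionnorm-unfolding-fns}), it suffices to compute $\bar\U f_X$ and then apply the appropriate Hecke element. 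Concretely, I would first establish the analog of \eqref{unfoldingbasic} for $f_X$, i.e.\ that $\bar\U f_X$ equals an explicit unramified Whittaker measure, and then apply $h_{L(\frac{\check\alpha}{2},\frac12)^{-1}} \cdot h_{L(\check\alpha,1)^{-1}}^{-1}$ —being careful that this latter manipulation only makes sense at the level of Mellin transforms, which is exactly where one wants to work anyway. So the cleanest route: compute $\bar\U f_{\bar X}$ directly via the unfolding formula \eqref{unfolding-formula-explicit} applied to $\Phi^0 = 1_{\bar X(\mathfrak o)}$, obtaining a concrete Whittaker function on $Z(F)$, and compare it to the generating function of $L(\Sym^2,1)\zeta(1) = L(\pi_1\times\pi_2, 1)|_{\pi_1 = \widetilde{\pi_2}}$.

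The key computational input is that $\U\Phi^0$ is, essentially by the classical Rankin--Selberg unfolding / Casselman--Shalika-type calculation, the Whittaker function on $\tilde N\backslash\tilde G$ whose value along the torus is the generating series for $L(\pi_1\times\pi_2, s)$ at $s=1$ — this is precisely the content alluded to in \S\ref{ssSym2} (``the characteristic function $1_{\bar X(\mathfrak o)}$ maps to a Whittaker function which is the generating series for the local unramified $L$-function $L(\pi_1\times\pi_2,s)$''). Granting this, the descent $\bar\U$ modulo $\SL_2^\diag$ given by the convolution \eqref{Ubar-measures} turns this into a measure on $N\backslash G\sslash N$ whose Mellin transform (in the Satake parameter of $\pi = \pi_1 = \widetilde{\pi_2}$) is $L(\Sym^2(\pi), 1)\cdot \zeta(1)$ times the appropriate normalization constant. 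On the other hand, $f_{L(\Sym^2, 1)}$ is defined in \S\ref{ssnonstandard} as the generating Whittaker function of $L(\Sym^2, 1)$ alone, times the measure on $Z$ with $Z(\mathfrak o)=1$ (which by Lemma \ref{lemmameasures} is the dual measure $dz$ to $dx$ with $dx(X(\mathfrak o)) = 1$). The factor $\zeta(1) = (1-q^{-1})^{-1}$ discrepancy between the two generating series is cancelled by applying the Hecke multiplier $h_{L(\frac{\check\alpha}{2},\frac12)^{-1}}$ to pass from $f_{\bar X}$ to $f^\circ_{\bar X}$: by \eqref{Mellinbasic}, $\check f_{\bar X} = L(\chi, -\frac{\check\alpha}{2}, \frac12)\check f^\circ_{\bar X}$, and the $L$-factor $L(\chi, -\frac{\check\alpha}{2},\frac12)$ corresponds under unfolding exactly to the ``extra'' $\zeta$-factor in \eqref{RSfactor}. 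Tracking the volume normalizations — using $dx(\bar X(\mathfrak o)) = 1$ versus $dx(X(\mathfrak o)) = 1$, whose ratio is $(1-q^{-2})$ by the computation preceding \eqref{barXtoX}, and using Lemma \ref{lemmameasures} to match $dz(\bar Z(\mathfrak o)) = 1$ — produces the constant $(1-q^{-1})(1-q^{-2})$.

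The main obstacle, and the step I would spend the most care on, is the bookkeeping of normalizations: there are three independent sources of scalar factors — (i) the measure ratio $(1-q^{-2})$ between $\bar X(\mathfrak o)$ and $X(\mathfrak o)$; (ii) the factor $\zeta(1)^{-1} = 1-q^{-1}$ coming from the splitting $L(\pi_1\times\pi_2,s) = L(\chi\Sym^2(\pi),s)L(\chi,s)$ and the passage $f_{\bar X}\rightsquigarrow f^\circ_{\bar X}$ via the Hecke multiplier; and (iii) the twist of the Whittaker action by $\delta^{\pm\frac12}$ built into \eqref{actionnorm-unfolding-fns}--\eqref{actionnorm-unfolding-meas}, together with the half-density-to-measure conversion of Remark \ref{denstomeas} which introduces $(\delta(a) d^\times a)^{\frac12}(\delta(\zeta)d^\times\zeta)^{\frac12}$. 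I would organize the proof so that (iii) is handled once and for all by checking that $\bar\U$ as normalized sends $A_\ad(\mathfrak o)$-invariant, $\SL(V)(\mathfrak o)$-invariant data to unramified data with the ``right'' base point, reducing everything to a statement about Satake/Mellin transforms; then (i) and (ii) combine transparently via \eqref{barXtoX} and \eqref{Mellinbasic}. A sanity check I would run: specialize the Mellin transform identity at the unramified parameter and confirm both sides give $(1-q^{-1})(1-q^{-2})$ times the local $L$-value $L(\Sym^2, 1)$, which pins down the constant unambiguously.
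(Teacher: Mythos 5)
Your proposal is correct and follows essentially the same route as the paper's proof: reduce to $f_{\bar X}$ via \eqref{deff00} and $A_\ad$-equivariance, invoke the Rankin--Selberg unramified computation identifying $\U(1_{\bar X(\mathfrak o)})$ with the generating series of the tensor-product $L$-value, track the volume factors via Lemma \ref{lemmameasures} to obtain $(1-q^{-1})(1-q^{-2})$, and cancel the extra $\zeta$-factor of \eqref{RSfactor} with the Hecke multiplier. The one step you leave implicit --- that the descent modulo $\SL_2^\diag$ converts the tensor-product generating series into that of $\Sym^2\oplus\Id$ --- is exactly what the paper makes precise in Lemma \ref{convolution}, identifying the push-forward $m_!$ on unramified Hecke algebras with restriction to the diagonally embedded dual group.
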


\begin{proof}
The statement follows from \eqref{deff00} and the analogous statement about $f_{\bar X}$:
\begin{equation}\label{unfoldingbasicbarX} \bar{\U} f_{\bar X} = (1-q^{-1})(1-q^{-2}) f_{L(\Sym^2\oplus e^{\frac{\check\alpha}{2}},1)}. 
\end{equation}
Indeed, by the $A_\ad$-equivariance of the unfolding map $\bar\U$, and by \eqref{deff00}, it suffices to apply the operator $h_{L(\frac{\check \alpha}{2},\frac{1}{2})^{-1}}$ to the image of $f_{\bar X}$ in order to arrive at the image of $f^\circ_{\bar X}$; but this operator maps $f_{L(\Sym^2\oplus e^{\frac{\check\alpha}{2}},1)}$ to $f_{L(\Sym^2,1)}$. (Notice that the action of $h_{L(\frac{\check \alpha}{2},\frac{1}{2})^{-1}}$ in \eqref{deff00} is the \emph{normalized} one, which corresponds to the unnormalized action of the analogous element $h_{L(\frac{\check \alpha}{2},1)^{-1}}$.)

Consider the tensor product representation 
\begin{equation}\label{tensorprod} \otimes: \Gm\times \SL_2 \times \SL_2 \twoheadrightarrow \check{\tilde G} \hookrightarrow \GL_4,\end{equation}
where the first arrow is dual to the quotient $\tilde G \to A_\ad \times \PGL_2^2$, with $A_\ad$ identified as $\Gm$ via the positive root character.

By Rankin--Selberg theory, the image of the \emph{function }$1_{\bar X(\mathfrak o)}$ under unfolding is the Whittaker function which is a generating series for local tensor product $L$-value $L(\otimes, 0)$:
\begin{equation}\label{UPhi0} \U(1_{\bar X(\mathfrak o)})  =   \sum_{i\ge 0}  \tilde h_{\Sym^i(\otimes)} \star F_0,
\end{equation}
where $F_0 \in \mathcal F(Z,\mathcal L_\psi)$ is the Whittaker function which is supported on $Z(\mathfrak o) = N^2\cdot \tilde G(\mathfrak o)$ and is equal to $1$ on $\tilde G(\mathfrak o)$, and our notation for elements in the Hecke algebra (here denoted by a tilde, because of $\tilde G$) is as in \S \ref{ssnonstandard}. Here the convolution is as in \eqref{ssnonstandard}, unnormalized. Equivalently, 
when the measure $dz = \delta(z) d^\times z$ on $\bar Z$ is normalized to have total mass $1$ (this is not our standard normalization! see below) on $Z(\mathfrak o)$, we have
$$\int_{Z} \U(1_{\bar X(\mathfrak o)})(z) W_\pi(z) dz = L(\tilde\pi,\otimes,1),$$
where $W_{\tilde\pi}$ is the unramified Whittaker function of an unramified representation $\tilde\pi$, normalized to have value $1$ at $1$; indeed, this is the statement of \cite[Proposition 15.9]{Jacquet-Auto2}.\footnote{As a check for the normalization, the intersection of $\bar X(\mathfrak o)$ with the preimage of $V^*\times V^* (\mathfrak o)$ is equal to $X(\mathfrak o)$; hence, over $V^*\times V^* (\mathfrak o)$ the function $1_{\bar X(\mathfrak o)}$ coincides with the characteristic function of $X(\mathfrak o)$, and Fourier transform on the fibers takes it to the Whittaker function which, as a function on $\tilde Z$, is equal to $1$ on a point $(x, \ell)$ with $x \in X(\mathfrak o)$ having image $(v_1,v_2) \in V^*\times V^*(\mathfrak o)$, and $\ell: N\to \Ga$ an integral isomorphism, where $N$ is the stabilizer of $v_1$.}

We can multiply \eqref{UPhi0} by the dual measures $dx$ and $dz$ but notice that, if $dx(\bar X(\mathfrak o))=1$, then $dx(X(\mathfrak o)) = (1-q^{-2})$, hence by Lemma \ref{lemmameasures} $dz(\bar Z(\mathfrak o))=(1-q^{-2})$, and $dz(Z(\mathfrak o))=(1-q^{-1})(1-q^{-2})$, hence the factor in \eqref{unfoldingbasicbarX}. 

To arrive at \eqref{unfoldingbasic}, we compute the image (push-forward) of the measure $\U(1_{\bar X(\mathfrak o)}) dz$ in $\mathcal S(N,\psi\backslash G/N,\psi)$.
 By \eqref{UPhi0} and the volume calculation that we just did, it will coincide with 
\begin{equation}\label{itwillcoincide} (1-q^{-1})(1-q^{-2}) p_! \left(\sum_{i\ge 0} q^{-i} m_!(\tilde h_{\Sym^i(\otimes)})\right),
\end{equation}
where $p_!$ is the twisted push-forward of \S \ref{sstwistedpf}, and $m_!$ denotes push-forward (essentially, convolution) with respect to the map
$$m: \tilde G\to G$$
descending from the map 
$$ A\times \SL(V)^2 \ni (a,g_1, g_2)\mapsto (a,g_1 g_2^{-1}) \in A \times \SL(V).$$

\begin{lemma}\label{convolution}
The following diagram commutes:
$$\xymatrix{
\mathcal H(\tilde G, \tilde K) \ar[r]^{m_!} \ar[d] & \mathcal H(G,K) \ar[d] \\
\CC[\check{\tilde G}]^{\check{\tilde G}} \ar[r] & \CC[\check G]^{\check G},
}$$
where the vertical arrows denote the Satake isomorphism, and the bottom horizontal arrow is induced by the map of dual groups:
$$ m^*: \check G = \Gm\times \PGL_2 \ni (\chi, x)\hookrightarrow [\chi, \tilde x, \tilde x] \in \check{\tilde G}.$$
\end{lemma}

\begin{proof}[Proof of the lemma]
The statement easily reduces to the corresponding statement for the push-forward under $\tilde G \to G \to \SL_2$, simply by ``slicing'' a Hecke element along preimages of $A_\ad(\mathfrak o)$-cosets in $A_\ad$. Thus, consider the push-forward map 
$$ A\backslash \tilde G \simeq \SL_2^2/\{\pm 1\} \simeq \SO_4 \xrightarrow{m} \SL_2,$$
where we have denoted again by $m$ the action map on the identity element of $\SL_2$. We want to show that it induces the morphism dual to the morphism of dual groups 
$$ \PGL_2\overset{\diag}\hookrightarrow \SL_2^2/\{\pm 1\}$$
on the Hecke algebra. 

There is nothing special about $\SL_2$ here, besides the fact that the Chevalley involution is inner. The general statement is that, for a group $H$ with center $Z$, the action map
$$ \tilde H:= H\times^Z H \xrightarrow{m} H$$
induced from $(g_1, g_2)\mapsto g_1^{-1} g_2$  induces the map of Hecke algebras dual to 
$$ \check H \ni \check g\mapsto (\check g^c, \check g) \in \check{\tilde H},$$
where $c$ is a Chevalley involution (corresponding to the map $h\mapsto h^\vee(g)= h(g^{-1})$ on Hecke algebras).

The statement holds, because the Satake isomorphism is an algebra isomorphism, for the action maps
$$ H\times H\to H$$
and 
$$ H_\ad \times H_\ad \to H_\ad,$$
where $H_\ad = H/Z$. There is a canonical map of Hecke algebras, from the Hecke algebra of $H\times H$ to that of $\tilde H$ to that of $H_\ad\times H_\ad$ (push-forward followed, if necessary, by convolution by the probability measure of the hyperspecial subgroup of the target), hence we get a commutative diagram, using the Satake isomorphism (and assuming that $H$ is split, for notational simplicity):

$$\xymatrix{ \CC[\check{\tilde H}]^\inv \ar[r]^{m_!}\ar[d] & \CC[\check H]^\inv \ar[d] \\
\CC[\check H_{\rm sc}\times H_{\rm sc}]^\inv \ar[r] & \CC[\check H_{\rm sc}]^\inv,}$$
where $\check H_{\rm sc}$ denotes the simply connected cover of the dual group (= the dual group of $H_\ad$), and the exponent ``inv'' denotes invariants under conjugation. 

The right vertical arrow is injective, hence Lemma \ref{convolution} follows from the corresponding statement for $H_\ad$. 

\end{proof}

We continue with the proof of Theorem \ref{thmbasicvector}. 

The pullback of the tensor product representation of $\check{\tilde G}$ under $m^*$ is equal to the sum 
$$ \left(e^{\frac{\check\alpha}{2}} \otimes \Ad\right) \oplus e^{\frac{\check\alpha}{2}}$$
of representations of $\check G = \check A_\ad \times \PGL_2$. When we identify $\check A_\ad$ with $\Gm$ by the character $e^{\frac{\check\alpha}{2}}$, the second summand becomes the identity representation of $\Gm$, that we will denote by $\Id$, while the first factor becomes the representation $\Sym^2$ of $\GL_2$, factoring through the quotient $\Gm\times \PGL_2$ (with the map to $\Gm$ being the determinant). 
Thus, \eqref{itwillcoincide} is equal to 
\begin{multline*} (1-q^{-1})(1-q^{-2})p_!  \left(\sum_{i\ge 0} q^{-i} h_{\Sym^i(\Id\oplus \Sym^2)} \right) \\
 = (1-q^{-1})(1-q^{-2})p_!  \left( \left(\sum_{i\ge 0} q^{-i} h_{\Sym^i(\Id)}\right) \star \left(\sum_{i\ge 0} q^{-i} h_{\Sym^i(\Sym^2)}\right)\right) \\
 = (1-q^{-1})(1-q^{-2})   \left(\sum_{i\ge 0} q^{-i} h_{\Sym^i(\Id)}\right)\star p_! \left(\sum_{i\ge 0} q^{-i} h_{\Sym^i(\Sym^2)}\right),
\end{multline*}
the last step because the action of $A_\ad\simeq \Gm$ commutes with twisted push-forward.

This is the push-forward of $\U(1_{\bar X(\mathfrak o)})dz$, i.e., the element $\bar\U(f_{\bar X})$, and according to \eqref{deff00}, $\bar\U(f^\circ_X)$ will be obtained by applying to it, under the \emph{normalized} action, the element of $\mathcal S(\widehat{A_\ad})$ whose Mellin transform is $L(\chi, \frac{\check\alpha}{2}, \frac{1}{2})^{-1}$. This cancels the factor $ \left(\sum_{i\ge 0} q^{-i} h_{\Sym^i(\Id)}\right)$, and we arrive at the statement of the theorem.

\end{proof}

\begin{corollary}\label{corbasicvectorSym2}
 The statement of Theorem \ref{thmSym2} on basic vectors holds: the vector $f_{L(\Sym^2,\frac{1}{2})}$ is contained in $\mathcal D_{L(\Sym^2,\frac{1}{2})}(N,\psi\backslash G/N,\psi)$, and 
 $$\mathcal H_{\Sym^2} (h\cdot f_{L(\Sym^2,\frac{1}{2})}) = h\cdot f_{L((\Sym^2)^\vee,\frac{1}{2})},$$ 
 for any element $h$ of the unramified Hecke algebra of $G$.
\end{corollary}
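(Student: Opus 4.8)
The plan is to deduce Corollary \ref{corbasicvectorSym2} from Theorem \ref{thmbasicvector} together with the description of $\mathcal{H}_{\Sym^2}$ as $\bar\U^\vee\circ\mathcal H_X^\circ\circ\bar\U^{-1}$ and the compatibility of all these operators with the Hecke action. First I would record that, by Theorem \ref{thmbasicvector}, the basic vector $f_{L(\Sym^2,1)}$ of $\mathcal S^-_{L(\Sym^2,1)}(N,\psi\backslash G/N,\psi)$ is, up to the explicit nonzero scalar $(1-q^{-1})(1-q^{-2})$, the image $\bar\U f^\circ_{\bar X}$ of the basic vector $f^\circ_{\bar X}\in \mathcal S(\bar X/\SL_2)^\circ$; passing to half-densities (dividing by $(dc\,dt)^{1/2}$ as in \S\ref{ssbasicvectorsX} and Remark \ref{denstomeas}) gives the corresponding statement for $f_{L(\Sym^2,\frac12)}\in\mathcal D^-_{L(\Sym^2,\frac12)}(N,\psi\backslash G/N,\psi)$, which in particular shows membership. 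The sign/normalization bookkeeping here is routine: the half-density version of $\bar\U$ is \eqref{Ubar-densities}, and the half-density basic vectors are defined precisely as the quotients of the basic measures by $(dc\,dt)^{\frac12}$.

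Next I would compute $\mathcal H_{\Sym^2} f_{L(\Sym^2,\frac12)}$. Since $\mathcal H_{\Sym^2}=\bar\U^\vee\circ \mathcal H_X^\circ\circ\bar\U^{-1}$ and $f_{L(\Sym^2,\frac12)}$ is a scalar multiple of $\bar\U f^\circ_{\bar X}$, this reduces to computing $\bar\U^\vee(\mathcal H_X^\circ f^\circ_{\bar X})$. By Theorem \ref{thmbasicvectorX}, $\mathcal H_X^\circ$ sends $f^\circ_{\bar X}$ to $f^\circ_{\bar X}$ (with $h=1$): the symplectic Fourier transform $\mathfrak F$ fixes $1_{\bar X(\mathfrak o)}$ and is $(\tilde G,\iota)$-equivariant, and Mellin transforms with \eqref{relationbetweenH}, \eqref{FE} then give the statement for $\mathcal H_X^\circ$. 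Applying $\bar\U^\vee=j^*\circ\bar\U$ and using Theorem \ref{thmbasicvector} again (now together with the $j$-twist, which by definition produces the basic vector of the dual space $\mathcal D^-_{L((\Sym^2)^\vee,\frac12)}$) yields $\mathcal H_{\Sym^2} f_{L(\Sym^2,\frac12)}=f_{L((\Sym^2)^\vee,\frac12)}$ on the nose — the normalization constants $(1-q^{-1})(1-q^{-2})$ on the two sides cancel because $\bar\U^{-1}$ on the left undoes exactly the factor that $\bar\U$ on the right reintroduces.

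Finally, to get the statement for $h\cdot f_{L(\Sym^2,\frac12)}$ with $h$ in the unramified Hecke algebra of $G$, I would invoke equivariance. All of $\bar\U$, $\bar\U^\vee=j^*\bar\U$, and $\mathcal H_X^\circ$ intertwine the Hecke actions in the appropriate sense: $\bar\U$ is $\tilde G$-equivariant (\S\ref{ssunfolding}), the Hecke algebra of $G$ acts via the map $m:\tilde G\to G$ as in Lemma \ref{convolution}, and $\mathcal H_X^\circ$ on basic vectors satisfies $\mathcal H_X^\circ(h\cdot f^\circ_{\bar X})=(\iota^*h)\cdot f^\circ_{\bar X}$ by Theorem \ref{thmbasicvectorX}. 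Chasing $h\cdot f_{L(\Sym^2,\frac12)}$ through $\bar\U^{-1}$, $\mathcal H_X^\circ$, and $\bar\U^\vee$, and tracking the involutions (the $\iota$ on the $A$-factor upstairs corresponds to $j$ on the $A_\ad$-factor of $G$, which is exactly the identification built into the definition of $\mathcal D^-_{L((\Sym^2)^\vee,\frac12)}$), shows that the output is $h\cdot f_{L((\Sym^2)^\vee,\frac12)}$, as claimed. The one point requiring genuine care — and the main obstacle — is to verify that the various normalizations of the Hecke action (normalized vs.\ unnormalized action of $A_\ad$, the $\delta^{1/2}$-twists in \eqref{actionnorm-unfolding-meas}, and the Satake-vs-Mellin convention) are consistent across $\bar\U$, $j^*$ and $\mathcal H_X^\circ$, so that ``$(\iota^*h)$'' upstairs really matches ``$h$ acting on the dual space'' downstairs; once that dictionary is fixed, the corollary is a formal consequence of the three theorems cited.
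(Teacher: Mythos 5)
Your overall route is the same as the paper's: everything is reduced, through the factorization $\mathcal H_{\Sym^2}=\bar\U^\vee\circ\mathcal H_X^\circ\circ\bar\U^{-1}$, to Theorem \ref{thmbasicvector} (unfolding of the basic vector, with the constant $(1-q^{-1})(1-q^{-2})$ cancelling exactly as you say), Theorem \ref{thmbasicvectorX} (the Hankel transform on $\bar X$ fixes $f^\circ_{\bar X}$, Hecke-equivariantly up to $\iota$), and the descent of the Hecke action via Lemma \ref{convolution}. That skeleton is correct and is what the paper does.

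The one place where your write-up is not just terser but actually at odds with the paper is the step you dismiss as ``routine bookkeeping'': converting the measure identity $\bar\U f^\circ_{\bar X}=(1-q^{-1})(1-q^{-2})\,f_{L(\Sym^2,1)}$ of Theorem \ref{thmbasicvector} into a half-density identity, and deciding \emph{which} named basic half-density one obtains. The conversion on the Kuznetsov side is not division by the innocuous $(dc\,dt)^{\frac12}$ but by $(\delta(a)d^\times a)^{\frac12}(\delta(\zeta)d^\times\zeta)^{\frac12}$ (Remark \ref{denstomeas}); the $\delta^{-\frac12}=|a|^{-\frac12}$ component is a nontrivial twist in the $\Gm$-variable, which is precisely why the measure space carries the label $L(\Sym^2,1)$ while the half-density space carries $L(\Sym^2,\frac12)$, and it changes the $L$-function label attached to the basic vector. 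The paper's proof consists essentially of this observation, and it records the outcome differently from you: because of this $\delta^{-\frac12}$, the measure $f_{L(\Sym^2,1)}$ corresponds to the half-density denoted $f_{L((\Sym^2)^\vee,\frac12)}$, not to $f_{L(\Sym^2,\frac12)}$ as you assert. Whether the final display of the corollary comes out as $\mathcal H_{\Sym^2}f_{L(\Sym^2,\frac12)}=f_{L((\Sym^2)^\vee,\frac12)}$ (the statement of Theorem \ref{thmSym2}) or as the ``inverted'' identity $\mathcal H_{\Sym^2}f_{L((\Sym^2)^\vee,\frac12)}=f_{L(\Sym^2,\frac12)}$, together with membership of the wrong vector in the source space, hinges entirely on this identification and on the compatibility $j^*f_{L(\Sym^2,\frac12)}=f_{L((\Sym^2)^\vee,\frac12)}$ that you use implicitly. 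So the missing content of your proof is a verification, from the definitions of the basic vectors $f_{L(\Sym^2,\frac12)}$, $f_{L((\Sym^2)^\vee,\frac12)}$ in \S \ref{ssnonstandard} of the first paper, of how the $\delta^{-\frac12}$-twist interacts with those labels; without it, the claim you call routine is exactly the step where your bookkeeping and the paper's disagree, and it cannot be left unargued.
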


\begin{proof}
 Notice that when passing from measures to half-densities as per Remark \ref{denstomeas}, because of a factor of $\delta^{-\frac{1}{2}}$ the measure $f_{L(\Sym^2,1)}$ is mapped to the half-density that we denote by $f_{L((\Sym^2)^\vee,\frac{1}{2})}$. 
 It follows from Theorem \ref{thmbasicvector} that $f_{L(\Sym^2,\frac{1}{2})}$ is contained in $\mathcal D_{L(\Sym^2,\frac{1}{2})}(N,\psi\backslash G/N,\psi)$. Moreover, by Theorems \ref{thmbasicvector} and \ref{thmbasicvectorX}, it is mapped by $\mathcal H_{\Sym^2}$ to $f_{L((\Sym^2)^\vee,\frac{1}{2})}$. Since  $\mathcal H_{\Sym^2}$ descends from a $\tilde G$-equivariant transform, the same is true when we act by the unramified Hecke algebra of $\tilde G$, which by Lemma \ref{convolution} descends to the action of the unramified Hecke algebra of $G$.
\end{proof}

Finally, we can now prove the remaining assertion \eqref{four} (the fundamental lemma) of Theorem \ref{thmRudnick}, which we recall:

\label{Pfpart4}

\begin{theorem}\label{Rudnickpt4} At non-Archimedean places, unramified over the base field, the transfer operator $\mathcal T_{\SL_2}$ of Theorem \ref{Rudnickpt3} satisfies the fundamental lemma for the Hecke algebra up to a factor of $\zeta(2)=(1-q^{-2})^{-1}$, namely: for all $h\in \mathcal H(\SL_2,K)\subset S(\SL_2)$, it takes the element
 $$ h\cdot f_{L(\Ad, 1)} \in \mathcal S_{L(\Ad,1)}^-(N,\psi\backslash \SL_2/N,\psi)$$ to the image of $\zeta(2) h$ in $\mathcal S(\frac{\SL_2}{\SL_2})$.
\end{theorem}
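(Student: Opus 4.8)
The plan is to combine the basic-vector computation of Theorem \ref{thmbasicvector} (in the form \eqref{unfoldingbasicbarX}, for $f_{\bar X}$ rather than $f_{\bar X}^\circ$) with the explicit description of $\mathcal T_{\SL_2}$ obtained in the proof of Theorem \ref{Rudnickpt3}. Recall from that proof that $\mathcal T_{\SL_2}$ is, up to the change of coordinates \eqref{coordinatechange}, the push-forward to $\frac{\SL_2}{\SL_2}$ of the composite $\bar\U^{-1}$ followed by the surjection \eqref{circtogroup}, and that this composite is realized by the Fourier convolution $\mathscr F_{-\bar{\check\lambda},1}$ with $-\bar{\check\lambda}=e^{\check\alpha}$. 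So what I must do is: start from $h\cdot f_{L(\Ad,1)}$ on the $\SL_2$-side, lift it (via $\bar\U$, or rather via the $s=-\frac12$ specialization of $p_s$) to an element of $\mathcal S^-_{L(\Sym^2,1)}(N,\psi\backslash G/N,\psi)$, recognize that element in terms of the basic vector $f_{L(\Sym^2\oplus e^{\check\alpha/2},1)}$ and the Hecke action, and finally push forward to $\frac{\SL_2}{\SL_2}$ and read off the scalar.

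First I would reduce to the case $h=1$ (the characteristic measure of $K$) together with the Hecke-equivariance already recorded in the proof of Theorem \ref{Rudnickpt3}: by Theorem \ref{thmbasicvectorX} and Lemma \ref{convolution}, the whole picture is equivariant for the unramified Hecke algebra of $\tilde G$, which surjects onto that of $G$ and hence (via $A$-coinvariants, as in the proof of Theorem \ref{Rudnickpt3}) onto that of $\SL_2$; therefore it suffices to track the image of the basic vector itself. Next I would invoke \eqref{unfoldingbasicbarX}: $\bar\U f_{\bar X}=(1-q^{-1})(1-q^{-2})f_{L(\Sym^2\oplus e^{\check\alpha/2},1)}$. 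Taking $(\Gm,\delta^{s})$-coinvariants at $s=-\frac12$, i.e. applying $p_{-1/2}$ (the plain push-forward $N\backslash G\sslash N\to N\backslash\SL_2\sslash N$), and using the factorization $h_{\Sym^i(\Id\oplus\Sym^2)}=h_{\Sym^i(\Id)}\star h_{\Sym^i(\Sym^2)}$ exactly as in the proof of Theorem \ref{thmbasicvector}, the basic vector $f_{L(\Sym^2\oplus e^{\check\alpha/2},1)}$ maps to the generating series of $L(\Ad,1)$ times the local zeta value coming from $\sum_i q^{-i}h_{\Sym^i(\Id)}$ evaluated at $s=-\frac12$; this last factor is a single Tate factor $\zeta(\tfrac12-s)|_{s=-1/2}=\zeta(1)$-type quantity, which combined with the volume normalizations $(1-q^{-1})(1-q^{-2})$ must be matched against the normalization of $f_{L(\Ad,1)}$ and against the definition of the basic measure on $\frac{\SL_2}{\SL_2}$ (characteristic function of $\SL_2(\mathfrak o)$, with $X(\mathfrak o)$ vs. $\bar X(\mathfrak o)$ accounting for a $(1-q^{-2})$). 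I expect the bookkeeping to leave exactly the surviving discrepancy $\zeta(2)=(1-q^{-2})^{-1}$ asserted in the theorem.

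Finally I would push forward through \eqref{circtogroup} via $\mathscr F_{-\bar{\check\lambda},1}=\mathscr F_{e^{\check\alpha},1}$ and translate to the trace coordinate $t=\zeta$ using \eqref{coordinatechange}, which carries the Whittaker-side generating series of $L(\Ad,1)$ to the orbital-integral description of the basic test measure $1_{\SL_2(\mathfrak o)}$ on $\frac{\SL_2}{\SL_2}$ — this is precisely the content of the surjection \eqref{circtogroup} specialized at the basic vector, already implicit in the proof of Theorem \ref{Rudnickpt3} — and verify that the accumulated scalar is $\zeta(2)$.

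\medskip
\emph{The main obstacle.} The hard part will not be any single step but keeping \emph{all} the normalization conventions consistent across the three spaces: the half-density-to-measure passage of Remark \ref{denstomeas} (with its $\delta^{\pm 1/2}$ twists), the normalized versus unnormalized $A_\ad$-actions entering $p_s$ and the Hecke computation, the volume ratios $dx(X(\mathfrak o))/dx(\bar X(\mathfrak o))=1-q^{-2}$ and $dz(Z(\mathfrak o))/dz(\bar Z(\mathfrak o))=1-q^{-1}$ from Lemma \ref{lemmameasures}, and the normalization of $f_{L(\Ad,1)}$ in \S\ref{ssnonstandard}. In particular one must check that the local $L$-factor $L(e^{\check\alpha/2},1)$-type contribution that is \emph{cancelled} when passing from $f_{\bar X}$ to $f^\circ_{\bar X}$ in \eqref{deff00} is \emph{not} cancelled here — since Theorem \ref{Rudnickpt3} concerns the full space, not the $\circ$-space — and that its surviving value at the relevant character is exactly what produces the $\zeta(2)$. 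Isolating that factor cleanly, rather than having it hide inside a product of convolutions, is where I would concentrate the care; everything else is a (careful) unwinding of definitions already in place.
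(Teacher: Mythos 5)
Your proposal is correct and follows essentially the same route as the paper: reduce to the basic vector by Hecke equivariance, track it through the descent of $\bar\U$ at $s=-\frac{1}{2}$ using Theorem \ref{thmbasicvector}, and match normalizations. The only (immaterial) difference is that you route through $f_{\bar X}$ and \eqref{unfoldingbasicbarX}, isolating the $\zeta(1)$ contribution of the $\Id$-summand directly, whereas the paper routes through $f_{\bar X}^\circ$, $f_X$, \eqref{deff00} and \eqref{barXtoX}; the bookkeeping is identical and yields the same $(1-q^{-2})$.
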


\begin{proof}
By Statement \eqref{three} of Theorem \ref{thmRudnick}, proven at the end of \S \ref{Pfpart3}, the operator $\bar{\U}$ descends to the operator $\mathcal T^{-1}= \mathscr F_{\Id, 1}^{-1}$ in the coordinates of the theorem; this is the bottom arrow of diagram \eqref{quotdiagram}, for $s=-\frac{1}{2}$. 

Now we descend Theorem \ref{thmbasicvector}, \eqref{deff00}, and \eqref{barXtoX} to the bottom row of diagram \eqref{quotdiagram}, for $s=-\frac{1}{2}$; that corresponds to evaluating Satake transforms for $A_\ad$ at the character $\delta^{-\frac{1}{2}}$, and to push-forwards of measures to the spaces $A_\ad\backslash X\sslash\SL_2= \Dfrac{\SL_2}{\SL_2}$ and $(N,\psi)\backslash \SL_2/(N,\psi)$. Notice that 
the push-forward of $f_X$ to $\Dfrac{\SL_2}{\SL_2}$ is equal to the push-forward of the identity element of the  unramified Hecke algebra of $\SL_2$ under $\SL_2 \to \Dfrac{\SL_2}{\SL_2}$ --- let us denote it by $f_{\frac{\SL_2}{\SL_2}}$. By \eqref{barXtoX} (or, directly from the definitions), this will be the same as the push-forward of $f_{\bar X}$ which, by \eqref{deff00}, is $(1-q^{-1})^{-1}$ times the push-forward of $f_{\bar X}^\circ$. By Theorem \ref{thmbasicvector}, this will map to the element $(1-q^{-2}) f_{L(\Ad, 1)}$ in $\mathcal S^-_{L(\Ad,1)}(N,\psi\backslash \SL_2/N,\psi)$, hence
$$ \mathcal T^{-1} f_{\frac{\SL_2}{\SL_2}} = (1-q^{-2}) f_{L(\Ad, 1)}.$$

\end{proof}

\begin{remark}
 As a reality check, let us compute the limit (which stabilizes) 
 $$\lim_{|\zeta|\to\infty} \frac{\mathcal T^{-1}f_{\frac{\SL_2}{\SL_2}}}{d^\times \zeta}.$$
 
 On one hand, a straightforward calculation using the formula $\mathcal T^{-1}= \mathscr F_{\Id, 1}^{-1}$ shows that this limit is equal to the total mass of $f_{\frac{\SL_2}{\SL_2}}$, which is $1$. On the other hand, as we saw in \eqref{asymptoticsbasicadjoint}, this is also the limit for $(1-q^{-2}) f_{L(\Ad, 1)}$.
\end{remark}

\subsection{Relative characters} \label{ssHankelrelchars}

Let $\iota$ be the involution on $\tilde G= (A\times \SL_2)^2/\{\pm 1\}^\diag$ that is induced from the inversion map on $A$.  Recall that we have a canonical identification $Z\simeq A_\ad \times V^*\times V^*$; The inversion map on $A_\ad$ gives rise to an endomorphism $j$ of $Z$ which is $(\tilde G,\iota)$-equivariant, i.e., it twists the action of $\tilde G$ by $\iota$. 

There is also an endomorphism of the line bundle $\mathcal L_{\psi^{-1}}$ which is compatible with $j$. It is described as follows: Recall that $\mathcal L_{\psi^{-1}}$ was obtained by reduction (via the character $\psi^{-1}$) of the $\Ga$-bundle
$\tilde Z\to Z$, see \S \ref{ssunfolding}, whose points can be identified with triples $(v,g, z) \in V^*\times G \times \Ga$. Thus, it is enough to define a lift $\tilde j$ of $j$ to $\tilde Z$. The obvious map $(v, g, z)\mapsto (v,g, z^{-1})$ will not work, because it is not $(\tilde G,\iota)$-equivariant. Instead, we define 
$$\tilde j(v, g, z) = (z^{-1}v, g, z^{-1}).$$
This induces an identification $j^* \mathcal L_{\psi^{-1}} \simeq \mathcal L_{\psi^{-1}}$, but because of our normalization \eqref{actionnorm-unfolding-fns} of the action on functions or sections, this does not quite induce a $(\tilde G,\iota)$-equivariant map on sections. We will instead work with half-densities, denoting by $\mathcal D^\infty(Z,\mathcal L_{\psi^{-1}})$ the product of the space $C^\infty(Z,\mathcal L_{\psi^{-1}})$ of smooth sections by $(dz)^\frac{1}{2} = (\delta(z) d^\times z)^\frac{1}{2}$, see \eqref{fnsdensmeas}. (In particular, $\mathcal D^\infty$ contains the space of Schwartz half-densities, that we denote simply by $\mathcal D$.)

Thus, we get a $(\tilde G,\iota)$-equivariant pullback map of $\mathcal L_{\psi^{-1}}$-valued half-densities:
$$ j^*:\mathcal D^\infty(Z,\mathcal L_{\psi^{-1}}) \to \mathcal D^\infty(Z,\mathcal L_{\psi^{-1}}).$$

Explicitly, in terms of the isomorphisms \eqref{unfoldingquotients}, we have just identified $\mathcal D^\infty(Z,\mathcal L_{\psi^{-1}})$ with Whittaker functions $C^\infty(\tilde N,\tilde\psi^{-1}\backslash \tilde G)$ times the product of $\delta^\frac{1}{2}$ (considered as a function on $G$) by a Haar half-density, and applied the automorphism $\iota$ to $\tilde G$ (which does not change the character $\tilde \psi$ of $\tilde N$); here, $\tilde N = N\times N$ is the stabilizer of a point on the diagonal $V^*\hookrightarrow V^*\times V^*$, and $\tilde\psi^{-1}$ is the character by which it acts on the fiber of $\mathcal L_{\psi^{-1}}$ --- it is of the form $\psi^{-1}\boxtimes\psi$ for the identification $N\simeq\Ga$ induced by the symplectic structure.

Let $\tau$ be a generic irreducible representation of $\tilde G$. The composition $\tau\circ \iota$ is isomorphic to the contragredient $\tilde\tau$ of $\tau$. The (half-density-valued) Whittaker model of $\tau$ is the subspace of $\mathcal D^\infty(Z,\mathcal L_{\psi^{-1}})$ that is isomorphic to $\tau$; it will be denoted by $\mathcal W(\tau)$.
Thus, $j^* \mathcal W(\tau) = \mathcal W(\tilde \tau)$. 

There is a $\tilde G$-invariant pairing
$$ \mathcal D(\bar X) \otimes \mathcal W(\tau) \to \CC,$$
defined by the convergent integral 
$$ \varphi\otimes W_\tau\mapsto \left< \varphi, W_\tau\right>:= \int_{Z} \U(\varphi) W_\tau$$
when the central character of $\tau$ is such that elements of $\mathcal W(\tau)$ vanish sufficiently rapidly on $\bar Z\smallsetminus Z$, and by meromorphic continuation otherwise. 

By the $(\tilde G,\iota)$-equivariance of the symplectic Fourier transform $\mathfrak F$ on $\mathcal D(\bar X)$, the pairing
$$ \varphi\otimes W_\tau \mapsto \left<\mathfrak F\varphi, j^*W_\tau\right>$$
is also a $\tilde G$-invariant pairing between the same spaces, varying meromorphically in $\tau$. By a multiplicity-one property, it has to be a meromorphic multiple of the former, i.e., there is a meromorphic scalar
$$ \gamma(\tau, \otimes, \frac{1}{2},\psi)$$
such that 
$$ \left<\mathfrak F\varphi, j^*W_\tau\right> = \gamma(\tau, \otimes, \frac{1}{2},\psi) \left< \varphi, W_\tau\right>.$$
This is the \emph{Rankin--Selberg} gamma factor, denoted by $\epsilon'(\frac{1}{2},\tau,\psi)$ in \cite[(14.8.6)]{Jacquet-Auto2}.

\begin{remark}
 To compare to \cite{Jacquet-Auto2}, we need to identify the group $\tilde G$ with the group $\GL_2\times_{\Gm, \det} \GL_2$ via the map that descends from 
 $$ A\times \SL_2^2\ni (a,g_1, g_2)\mapsto (e^\frac{\alpha}{2}(a) g_1, e^\frac{\alpha}{2}(a) g_2).$$
 The integral $\Psi(\frac{1}{2}, W_1, W_2,\Phi)$ of Jacquet, then, is\footnote{The Schwartz function $\Phi$ of Jacquet does not live in the space $\mathcal F(\bar X)$, but only on the fiber of $\bar X\to \SL_2$ over the identity element; however, we can convolve Jacquet's function, viewed as a generalized function, by a Schwartz measure on $\tilde G$ to arrive at an element of $\mathcal F(\bar X)$.} our pairing $\left< \Phi, W_\tau\right>$, except that Jacquet defines the unfolding map by using the character $\psi$ instead of $\psi^{-1}$ in \eqref{unfolding-formula}, hence his function $W_1(g_1)W_2(\eta g_2)$ lives in $ C^\infty(\tilde N,\tilde\psi\backslash \tilde G),$ instead of our $ C^\infty(\tilde N,\tilde\psi\backslash \tilde G)$ --- this does not affect the functional equation.
The involution $\iota$ on $\tilde G$ reads $(g_1, g_2)\mapsto (\frac{g_1}{\det(g_1)} , \frac{g_2}{\det(g_2)})$ on $\GL_2\times_{\Gm,\det}\GL_2$. The integral $\tilde\Psi(\frac{1}{2}, W_1, W_2,\hat\Phi)$ of Jacquet, though, does not fully correspond to our $\left<\mathfrak F\Phi, j^*W_\tau\right>$, because it arises from applying the involution 
\[W_1(g_1)\cdot W_2(\eta g_2) \mapsto W_1(g_1) \omega^{-1}(\det g_1) \cdot W_2(\eta g_2) \omega_2^{-1}(\det (\eta g_2))\] to Whittaker functions, where $\omega = (\omega, \omega_2)$ are the central characters for the two factors of $\tau$, and $\eta = \begin{pmatrix} -1 \\ & 1 \end{pmatrix}$. In contrast, our involution would read $W(g) \mapsto W(g) \omega^{-1}(g)$, for $g=(g_1, g_2)$. The two differ by a factor of $\omega_2(-1)$, which is why this factor appears in Jacquet's functional equation, but not ours.
\end{remark}

When the restriction of $\tau$ to the subgroup $\SL_2^2\subset \tilde G = \GL_2\times_{\Gm, \det} \GL_2$ contains an irreducible representation of the form $\sigma \otimes \sigma$ (notice that $\tilde\sigma\simeq \sigma$ for $\SL_2$), then $\tau$ is the restriction of an irreducible representation $\tau_1\boxtimes \tau_2$ of $\GL_2^2$, with $\tau_1 \simeq \widetilde{\tau_2} \otimes (\chi\circ \det)$, for some character $\chi$ of $F^\times$. For such a $\tau$, consider the irreducible representation $\pi=\chi\boxtimes \sigma$ of $G=A_\ad\times \SL_2$. One defines the symmetric-square $\gamma$-factor as 
\begin{equation} \gamma(\chi\boxtimes \sigma, \Sym^2, \frac{1}{2},\psi) :=  \frac{\gamma(\tau, \otimes, \frac{1}{2},\psi)}{\gamma(\chi,\frac{\check\alpha}{2}, \frac{1}{2},\psi)}.\end{equation}
It clearly depends only on the $L$-packet of $\chi\boxtimes \sigma$.

For the following theorem, we consider half-density-valued relative characters for the Kuznetsov formula, that we will also denote by $J_\pi$, as we  denoted the corresponding generalized functions in \S \ref{ssrelchars-Kuz}. In terms of the coordinates $(a,\zeta)$ that we have been using (with $a$ the value of the positive root character on $A_\ad$, and $\zeta$ our usual coordinate for $N\backslash \SL_2\sslash N$), the (generalized) half-density $J_\pi$ is the product of the generalized function $J_\pi$ by the half-density $|\zeta|\cdot (d^\times a d^\times \zeta)^\frac{1}{2}$.

\begin{theorem}
 The Hankel transform 
 $$ \mathcal H_{\Sym^2}: \mathcal D^-_{L(\Sym^2, \frac{1}{2})}(N,\psi\backslash G/N,\psi) \xrightarrow\sim \mathcal D^-_{L((\Sym^2)^\vee, \frac{1}{2})}(N,\psi\backslash G/N,\psi)$$
 satisfies 
\begin{equation}\label{Hankel-sym2-characters}\mathcal H_{\Sym^2}^* J_\pi = \gamma(\pi, \Sym^2, \frac{1}{2}, \psi) \cdot J_\pi,\end{equation}
 for relative characters $J_\pi$ on $(N,\psi\backslash G/N,\psi)$, understood as an identity of meromorphically varying functionals on $\mathcal D^-_{L(\Sym^2, \frac{1}{2})}(N,\psi\backslash G/N,\psi)$, as $\pi$ varies in any family of irreducible representations of $G$. 
\end{theorem}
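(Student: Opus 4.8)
The plan is to deduce the identity from the Rankin--Selberg functional equation on $\mathcal D(\bar X)$, transported to the Kuznetsov side through the unfolding map. The first and main step is to recognize the relative character $J_\pi$, pulled back along $\bar\U$ to $\mathcal D(\bar X/\SL_2)^\circ$, as the restriction of the Rankin--Selberg functional $\varphi\mapsto\langle\varphi, W_\tau\rangle$ on $\mathcal D(\bar X)$. Take $\tau$ to be the generic representation of $\tilde G$ that is $\SL_2^\diag$-distinguished, i.e.\ (by the discussion preceding the theorem) the restriction of $\tau_1\boxtimes\tau_2$ with $\tau_1\simeq\widetilde{\tau_2}\otimes(\chi\circ\det)$, and put $\pi=\chi\boxtimes\sigma$. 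For such $\tau$ the functional $\langle\cdot, W_\tau\rangle$ factors through the $\SL_2^\diag$-coinvariants of $\mathcal D(\bar X)$, hence through the push-forward to $\mathcal D(\bar X/\SL_2)$; using $[\tilde N,\tilde\psi\backslash \tilde G/\SL_2^\diag]=[N,\psi\backslash G/N,\psi]$ and uniqueness of the $\SL_2^\diag$-invariant functional on $\tau$ (which forces it, up to a scalar, to be $W\mapsto\int_{\SL_2^\diag}W$), the induced functional on $\mathcal D^{--}(N,\psi\backslash G/N,\psi)$ is the relative character $J_\pi$ twisted by an $L(\chi,\tfrac{\check\alpha}{2},\cdot)$-factor, reflecting the factorization $L(\pi_1\times\pi_2,s)=L(\chi\,\Sym^2\pi,s)L(\chi,s)$. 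Restricting to $\mathcal D^-_{L(\Sym^2,\frac12)}(N,\psi\backslash G/N,\psi)=\bar\U(\mathcal D(\bar X/\SL_2)^\circ)$, which by definition amounts to applying $h_{L(\check\alpha/2,\frac12)^{-1}}$, kills exactly this factor, so that $\langle\cdot, W_\tau\rangle$ pulls back under $\bar\U^{-1}$ to $J_\pi$ (up to a nonzero constant); the constant and the fact that one gets $J_\pi$ itself and not an abelian twist of it can be pinned down by evaluating on the basic vector, using Theorems \ref{thmbasicvectorX} and \ref{thmbasicvector}, together with $\Gm$- and Hecke-equivariance.

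Once this identification is in place, I apply the functional equation $\langle\mathfrak F\varphi, j^*W_\tau\rangle=\gamma(\tau,\otimes,\tfrac12,\psi)\langle\varphi, W_\tau\rangle$. Since $\mathfrak F$ descends to $\mathcal H_X$, the endomorphism $j$ of $Z$ descends to the involution inverting the $A_\ad$-coordinate on $N\backslash G/N$ (so that $j^*W_\tau$ corresponds to $\pi^\vee=\chi^{-1}\boxtimes\sigma$ and, under $\mathcal D^-_{L((\Sym^2)^\vee,\frac12)}=j^*\mathcal D^-_{L(\Sym^2,\frac12)}$, one has $j^*J_\pi=J_{\pi^\vee}$), and $\bar\U^\vee=j^*\circ\bar\U$ was built for precisely this purpose, the previous step turns the Rankin--Selberg functional equation into a relation for $\mathcal H_X$. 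To pass from $\mathcal H_X$ to $\mathcal H_X^\circ$, and thence to $\mathcal H_{\Sym^2}=\bar\U^\vee\circ\mathcal H_X^\circ\circ\bar\U^{-1}$, I use the relation $\mathcal H_X=\mathscr F_{\frac{\check\alpha}{2},\frac12}\circ\mathcal H_X^\circ$ from the Remark after Theorem \ref{thmsubspaceX}, together with \eqref{FE}: the Fourier convolution $\mathscr F_{\frac{\check\alpha}{2},\frac12}$ along the $A_\ad$-direction multiplies the $\chi$-spectral component of a $(\Gm,\chi)$-equivariant object by the Tate gamma factor $\gamma(\chi,\tfrac{\check\alpha}{2},\tfrac12,\psi)$. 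Dividing out this abelian factor and invoking the very definition $\gamma(\chi\boxtimes\sigma,\Sym^2,\tfrac12,\psi)=\gamma(\tau,\otimes,\tfrac12,\psi)/\gamma(\chi,\tfrac{\check\alpha}{2},\tfrac12,\psi)$, I arrive at
\begin{equation*}
 \mathcal H_{\Sym^2}^* J_{\pi^\vee}=\frac{\gamma(\tau,\otimes,\tfrac12,\psi)}{\gamma(\chi,\tfrac{\check\alpha}{2},\tfrac12,\psi)}\,J_\pi=\gamma(\pi,\Sym^2,\tfrac12,\psi)\,J_\pi,
\end{equation*}
which is the asserted identity (with the convention on $J_\pi$ for the dual space as above).

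It remains to address the meromorphic nature of the statement. The relative character $J_\pi$, the Rankin--Selberg pairing $\langle\cdot, W_\tau\rangle$, the convolutions $\mathscr F_{-\check\lambda_*,\frac12}$ and the scalars $\lambda,\gamma,\delta$ are all rational (in the non-Archimedean case) or meromorphic (in the Archimedean case) in the spectral parameter of $\pi$ as it runs over a family $\chi|\bullet|^s\boxtimes\sigma$; so it suffices to prove the identity of functionals where all the defining integrals converge absolutely (central character of $\tau$, i.e.\ $\Re(s)$, sufficiently positive) and then continue, using the Paley--Wiener description of $\mathcal D^-_{L(\Sym^2,\frac12)}(N,\psi\backslash G/N,\psi)$ inherited from $\mathcal D(\bar X/\SL_2)^\circ$. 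I expect the main obstacle to lie entirely in the first step: making the identification of $J_\pi\circ\bar\U$ with the Rankin--Selberg functional on the $\circ$-subspace precise, with the normalization tracked, and verifying that passage to $\mathcal D(\bar X/\SL_2)^\circ$ strips off \emph{exactly} the $L(\chi,\cdot)$-factor --- neither too much nor too little --- so that one genuinely lands on the relative character itself. Everything downstream is bookkeeping with formulas already proved in the excerpt.
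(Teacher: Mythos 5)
Your proposal follows essentially the same route as the paper's proof: pull $J_\pi$ back through the unfolding to the Rankin--Selberg functional, apply the local functional equation for $\mathfrak F$ to conclude that $(\bar\U^\vee\circ\mathcal H_X\circ\bar\U^{-1})^*$ acts on $J_\pi$ by $\gamma(\tau,\otimes,\frac12,\psi)$, and then strip off the abelian factor via $\mathcal H_X=\mathscr F_{\frac{\check\alpha}{2},\frac12}\circ\mathcal H_X^\circ$ and \eqref{FE}. Two points in your first step need repair, though. First, for a fixed Whittaker vector $W_\tau$ the functional $\left<\cdot,W_\tau\right>$ does \emph{not} factor through $\SL_2^\diag$-coinvariants, and the invariant period $W\mapsto\int_{\SL_2^\diag}W$ is divergent as written; the paper instead pairs against the $\GL_2^\diag$-invariant generalized Whittaker half-density $\mathbb W_{\tau_1}\cdot(\chi\circ\det)=\sum_i W_i\boxtimes W_i^\vee\cdot(\chi\circ\det)$ (the Bessel distribution), whose restriction to $Z$ pushes down to $J_\pi$ up to a $\chi$-independent scalar --- this is what makes the identification rigorous and lets the functional equation be applied term by term. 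Second, your claim that the induced functional on $\mathcal D^{--}(N,\psi\backslash G/N,\psi)$ is ``$J_\pi$ twisted by an $L(\chi,\tfrac{\check\alpha}{2},\cdot)$-factor'' which is ``killed'' by restricting to the $\circ$-subspace is incorrect, and taken literally it would make your final constant off by an abelian factor: the functional is $J_\pi$ on the nose on all of $\mathcal D^{--}$ (restricting a functional to a subspace changes nothing), the factorization $L(\pi_1\times\pi_2)=L(\Sym^2)L(\chi)$ shows up only in the basic vectors, and the abelian gamma factor must be removed exactly once --- which you do correctly in your second paragraph via $\mathcal H_X=\mathscr F_{\frac{\check\alpha}{2},\frac12}\circ\mathcal H_X^\circ$, exactly as the paper does. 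With those corrections your argument coincides with the paper's.
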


\begin{proof}
 By definition, 
 $\mathcal H_{\Sym^2} =  \bar{\U}^\vee \circ \mathcal H_X^\circ \circ \bar{\U}^{-1},$
 where $\bar\U$ is the descent of the unfolding map $\U$ to push-forwards modulo the $\SL_2^\diag$-action, and $\bar\U^\vee = j^*\circ \bar\U$. 
 
 We first study the transform 
 $$\bar{\U}^\vee \circ \mathcal H_X \circ \bar{\U}^{-1},$$
 which is equal to the lower row of the following commutative diagram (using the notation introduced after Lemma \ref{unfoldingextends}), followed by the inversion map $j^*$:
 $$ \xymatrix{
 \mathcal D^{--}(Z,\mathcal L_\psi)  \ar[r]^{\U^{-1}}\ar[d] & \mathcal D(\bar X) \ar[r]^{\mathfrak F}\ar[d] & 
 \mathcal D(\bar X) \ar[r]^{\U}\ar[d] &  \mathcal D^{--}(Z,\mathcal L_\psi)\ar[d] \\ 
 \mathcal D^{--}(N,\psi\backslash G/N,\psi)  \ar[r]^{\bar\U^{-1}} & \mathcal D(\bar X/\SL_2) \ar[r]^{\mathcal H_X} & 
 \mathcal D(\bar X/\SL_2) \ar[r]^{\bar\U} &  \mathcal D^{--}(N,\psi\backslash G/N,\psi). 
 }$$
 
 Write $\pi$ as $\chi\boxtimes\sigma$, and let $\tau_1$ be an irreducible representation of $\GL_2$ whose restriction to $\SL_2$ contains $\sigma$ as the unique $(N,\psi^{-1})$-generic subrepresentation; we denote by $\omega$ its central character. Let $\mathcal W(\tau_1)\subset \mathcal D^\infty(N,\psi^{-1}\backslash \GL_2)$ denote the Whittaker model of $\tau_1$, and $\mathcal W'(\tau_1)$ the Whittaker model defined with the inverse character $\psi$. 
 Fix any dual pairing $\mathcal W'(\tau_1\otimes (\omega^{-1}\circ\det)) \simeq \widetilde{\mathcal W(\tau_1)}$; a Kuznetsov relative character for the representation $\tau_1$ of $\GL_2$ can be thought of as the $\GL_2^\diag$-invariant generalized Whittaker half-density
 $$ \mathbb W_{\tau_1}:= \sum_i W_i \boxtimes W_i^\vee \in \left(\mathcal D(N,\psi\backslash \GL_2 \times N,\psi^{-1}\backslash \GL_2)\right)^*,$$
 where $(W_i, W_i^\vee)$ runs over a dual basis of $\mathcal W(\tau_1)\boxtimes \mathcal W'(\tau_1\otimes (\omega^{-1}\circ\det))$. 
 
 We identify the space $Z$ with $(N\backslash \GL_2)\times_{\Gm,\det} (N\backslash \GL_2)$, and we freely restrict half-densities to subspaces, by choosing the necessary Haar half-densities; these choices will not matter for the functional equation. Consider the restriction of the half-density $\mathbb W_{\tau_1}\cdot (\chi\circ\det)$ to $Z$; it is the pullback of the Kuznetsov relative character $J_\pi$ for the representation $\pi = \chi\boxtimes \sigma$ of $G$, 
  up to a scalar that is independent of $\chi$.

 Let $\tau$ be the representation $\tau_1\boxtimes \widetilde{\tau_1} \otimes (\chi\circ\det)$ of $\GL_2^2$. 
  By the definition of the Rankin--Selberg $\gamma$-factors that we saw above, for every $\varphi\in \mathcal D(\bar X)$ we have
  $$\int_Z \U^\vee \mathfrak F\varphi \cdot  \mathbb W_{\tau_1} \cdot (\chi\circ\det)   =  \gamma(\tau, \otimes, \frac{1}{2},\psi) \int_Z \U \varphi \cdot  \mathbb W_{\tau_1} \cdot (\chi\circ\det).$$
  
  If $f$ is the image of $\U \varphi $ in $\mathcal D^-_{L(\Sym^2, \frac{1}{2})}(N,\psi\backslash G/N,\psi)$, this can also be written:
  $$\int_{N\backslash G\sslash N} \bar\U^\vee \mathcal H_X \bar{\U}^{-1} f \cdot  J_\pi =  \gamma(\tau, \otimes, \frac{1}{2},\psi) \int_{N\backslash G\sslash N}  f \cdot  J_\pi .$$
  
  Thus, the adjoint  of $ \bar{\U}^\vee \circ \mathcal H_X \circ \bar{\U}^{-1}$ acts on $J_\pi$ by the scalar $ \gamma(\tau, \otimes, \frac{1}{2},\psi)$. On the other hand, by \eqref{relationbetweenH}, 
  $\mathcal H_X = \mathscr F_{\frac{\check\alpha}{2}, \frac{1}{2}} \circ \mathcal H_X^\circ$, so by \eqref{FE} the adjoint of $ \mathcal H_{\Sym^2} = \bar{\U}^\vee \circ \mathcal H_X^\circ \circ \bar{\U}^{-1}$ acts on $J_\pi$ by the scalar 
  $$\frac{\gamma(\tau, \otimes, \frac{1}{2},\psi)}{\gamma(\chi,\frac{\check\alpha}{2}, \frac{1}{2},\psi)} =  \gamma(\pi, \Sym^2, \frac{1}{2}, \psi).$$
\end{proof}

\begin{remark}
 A careful choice of the duality between $\mathcal W'(\tau_1\otimes (\omega^{-1}\circ\det))$ and ${\mathcal W(\tau_1)}$ could lead to a direct proof of Statement \eqref{two} of Theorem \ref{thmRudnick}.
\end{remark}

\subsection{Comparison with the boundary degeneration and the standard $L$-function} \label{ssstandard}
 
 Here $F$ is a non-Archimedean field. The asymptotics morphism \eqref{asymptotics}
$$ e_\emptyset^*: \mathcal S(Z,\mathcal L_\psi) \to \mathcal S^+(Z)$$
extends to the image of the space $\mathcal S(\bar X)$ under the unfolding map $\U$, which we have denoted (after Lemma \ref{unfoldingextends}) by $\mathcal S^{--}(Z,\mathcal L_\psi)$.  
The image of $\mathcal S^{--}(Z,\mathcal L_\psi)$ under $e_\emptyset^*$ will be denoted by $\mathcal S^{\pm\pm} (Z)$. There is a commutative diagram
$$ \xymatrix{
\mathcal S^{--}(Z,\mathcal L_\psi) \ar[d]\ar[rr]^{e_\emptyset^*} && \mathcal S^{\pm\pm}(Z)\ar[d] \\
\mathcal S^{--}(N,\psi\backslash G/N,\psi) \ar[rr] &&\Meas(N\backslash G\sslash N),
}$$
where the vertical arrows are the natural push-forward maps, and the bottom horizontal arrow exists by an easy extension of Theorem \ref{density}; let us denote it, too, by $e_\emptyset^*$. We are interested in the subspace 
\[\mathcal S^-_{L(\Sym^2, 1)}(N,\psi\backslash G/N,\psi)\subset \mathcal S^{--}(N,\psi\backslash G/N,\psi).\] Denote by $\mathcal S^\pm_{L(\Sym^2, 1)}(N\backslash G/N)$ its image under $e_\emptyset^*$. 

Let $A_G \simeq A_\ad \times A$ denote the Cartan of $G$; by the conventions of \S \ref{sstwistedpf}, it is identified with an open subset of $N\backslash G\sslash N$. We also have corresponding spaces of half-densities, with a map
$$ \xymatrix{
\mathcal D^-_{L(\Sym^2, \frac{1}{2})}(N,\psi\backslash G/N,\psi)\ar[rr]^{e_\emptyset^*} && \mathcal D^\pm_{L(\Sym^2, \frac{1}{2})}(N\backslash G/N),
}$$
by dividing by the half-density $(|a|d^\times a \cdot |\zeta|^2 d^\times \zeta)^\frac{1}{2}$ in our usual coordinates $(a,\zeta)$ for $A_\ad\times A$ (see Remark \ref{denstomeas}).
Applying the inversion map $j$ on the $A_\ad$-coordinate of $G$, we get a similar space $\mathcal D^\pm_{L((\Sym^2)^\vee, \frac{1}{2})}(N\backslash G/N)$ of half-densities on $N\backslash G\sslash N$, which is the asymptotic image of $\mathcal D^-_{L((\Sym^2)^\vee, \frac{1}{2})}(N,\psi\backslash G/N,\psi)$.

\begin{theorem}\label{Sym2degen}
There is a unique $A_G$-equivariant operator $\mathcal H_{\Sym^2,\emptyset}$ making the following diagram commute:
$$ \xymatrix{
\mathcal D^-_{L(\Sym^2, \frac{1}{2})}(N,\psi\backslash G/N,\psi) \ar[rr]^{e_\emptyset^*}\ar[d]^{\mathcal H_{\Sym^2}} && \mathcal D^\pm_{L(\Sym^2, \frac{1}{2})}(N\backslash G/N) \ar[d]^{\mathcal H_{\Sym^2,\emptyset}} \\
\mathcal D^-_{L((\Sym^2)^\vee, \frac{1}{2})}(N,\psi\backslash G/N,\psi) \ar[rr]^{e_\emptyset^*} && \mathcal D^\pm_{L((\Sym^2)^\vee, \frac{1}{2})}(N\backslash G/N)}.
$$
It is given by $\mathcal H_{\Sym^2,\emptyset} = \mathscr F_{-\check\lambda_+,\frac{1}{2}} \circ \mathscr F_{-\check\lambda_0,\frac{1}{2}} \circ \mathscr F_{-\check\lambda_-,\frac{1}{2}}$, understood as regularized Fourier convolutions (see \S \ref{sssFourierconv}).
\end{theorem}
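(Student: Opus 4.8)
The plan is to read off the statement from the explicit formula \eqref{HSymformula} for $\mathcal H_{\Sym^2}$ proved in Theorem~\ref{thmSym2}, which I rewrite as
$$ \mathcal H_{\Sym^2}= L\circ \mathscr F_{-\check\lambda_+,\frac12}\circ M_1\circ \mathscr F_{-\check\lambda_0,\frac12}\circ M_2\circ \mathscr F_{-\check\lambda_-,\frac12},$$
where $L$ is multiplication by the scalar function $\lambda(\eta_{\zeta^2-4},\psi)^{-1}$, $M_1=\delta_{1-4\zeta^{-2}}\circ\eta_{\zeta^2-4}$ and $M_2=\eta_{\zeta^2-4}$. Two properties of the asymptotics map $e_\emptyset^*$ then suffice. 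The first is that $e_\emptyset^*$ intertwines each regularized Fourier convolution $\mathscr F_{-\check\lambda_*,\frac12}$ on the enlarged Kuznetsov space with the Fourier convolution denoted by the same symbol on the degenerate space $\mathcal D^\pm_{L(\Sym^2,\frac12)}(N\backslash G/N)$; this is the compatibility of asymptotics with the $A_G$-equivariant Fourier transforms from the first part of the paper (\cite{SaTransfer1}), holding because $e_\emptyset^*$ is continuous and $A_G$-equivariant, the Fourier convolutions are built from the $A_G$-action together with $\psi$ (so are genuine convolutions on a dense subspace, and defined by continuity otherwise), and on Mellin transforms both sides act by the same abelian gamma factor $\gamma(\chi\circ\check\lambda_*,\frac12,\psi)$. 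The second is that $e_\emptyset^*$ records only the leading asymptotics of its argument along the boundary divisor $\zeta=\infty$, hence kills any operator on the enlarged space that degenerates to the identity there; in particular $e_\emptyset^*\circ L=e_\emptyset^*$ and $e_\emptyset^*\circ M_i=e_\emptyset^*$ for $i=1,2$.

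The triviality of $L,M_1,M_2$ near $\zeta=\infty$ is elementary: for $|\zeta|$ large, $4\zeta^{-2}$ lies in the maximal ideal of the non-Archimedean field $F$, so by Hensel's lemma $1-4\zeta^{-2}$ is a square and $\zeta^2-4=\zeta^2(1-4\zeta^{-2})$ is a square in $F$; thus $F(\sqrt{\zeta^2-4})$ is split, $\eta_{\zeta^2-4}$ is trivial and $\lambda(\eta_{\zeta^2-4},\psi)=1$ there, while $\delta_{1-4\zeta^{-2}}\to\delta_1=\Id$. Pushing $e_\emptyset^*$ to the left through the formula one factor at a time then gives
\begin{align*}
 e_\emptyset^*\circ\mathcal H_{\Sym^2}
 &= e_\emptyset^*\circ\mathscr F_{-\check\lambda_+,\frac12}\circ M_1\circ\mathscr F_{-\check\lambda_0,\frac12}\circ M_2\circ\mathscr F_{-\check\lambda_-,\frac12}\\
 &= \mathscr F_{-\check\lambda_+,\frac12}\circ e_\emptyset^*\circ M_1\circ\mathscr F_{-\check\lambda_0,\frac12}\circ M_2\circ\mathscr F_{-\check\lambda_-,\frac12}\\
 &= \mathscr F_{-\check\lambda_+,\frac12}\circ\mathscr F_{-\check\lambda_0,\frac12}\circ e_\emptyset^*\circ M_2\circ\mathscr F_{-\check\lambda_-,\frac12}\\
 &= \mathscr F_{-\check\lambda_+,\frac12}\circ\mathscr F_{-\check\lambda_0,\frac12}\circ\mathscr F_{-\check\lambda_-,\frac12}\circ e_\emptyset^*,
\end{align*}
the first equality using $e_\emptyset^*\circ L=e_\emptyset^*$ and the remaining ones alternating the first property above with $e_\emptyset^*\circ M_i=e_\emptyset^*$. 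This is exactly the asserted commutative square, with $\mathcal H_{\Sym^2,\emptyset}=\mathscr F_{-\check\lambda_+,\frac12}\circ\mathscr F_{-\check\lambda_0,\frac12}\circ\mathscr F_{-\check\lambda_-,\frac12}$; being a composition of Fourier convolutions it is $A_G$-equivariant, and by the square it carries $\mathcal D^\pm_{L(\Sym^2,\frac12)}(N\backslash G/N)$ into $\mathcal D^\pm_{L((\Sym^2)^\vee,\frac12)}(N\backslash G/N)$. Uniqueness is immediate, since $e_\emptyset^*$ is by definition surjective onto the target space.

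I expect the main obstacle to be the first property — that $e_\emptyset^*$ commutes with the regularized Fourier convolutions $\mathscr F_{-\check\lambda_*,\frac12}$. These convolutions exist only after regularization (as Fourier transforms of generalized, equivalently $L^2$, half-densities), and $e_\emptyset^*$ is itself a leading-term operation, so the commutation is not formal; it must be quoted from \cite{SaTransfer1}, the essential input being that the asymptotics morphism is realized there as an $A_G$-equivariant continuous map compatible with Mellin decomposition. One should also check that $e_\emptyset^*$ is defined on, and the properties above apply to, every intermediate space through which the composition in \eqref{HSymformula} passes — so that each arrow in the chain of equalities is meaningful; this is routine given that $e_\emptyset^*$ has been extended (after Lemma~\ref{unfoldingextends}) to the relevant enlarged Whittaker spaces and the accompanying push-forwards to $N\backslash G\sslash N$, together with the Mellin-transform description of these spaces in \S\ref{ssSym2}.
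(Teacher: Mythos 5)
Your route is genuinely different from the paper's, and it has a gap. The paper proves this statement spectrally: the proof (modelled on Theorem \ref{groupdegen} of \cite{SaTransfer1}) uses only the relative-character identity \eqref{Hankel-sym2-characters}, the characterization of $e_\emptyset^*$ through the fact that the $J_\pi$ are pullbacks under it of combinations of characters of $A_G$ spanning a dense space of functionals, and the observation that $\mathscr F_{-\check\lambda_+,\frac{1}{2}}\circ\mathscr F_{-\check\lambda_0,\frac{1}{2}}\circ\mathscr F_{-\check\lambda_-,\frac{1}{2}}$ is the unique $A_G$-equivariant operator acting on those characters by the same product of abelian gamma factors. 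The author states explicitly that the formula \eqref{HSymformula} is never used; your argument is built entirely on it.

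The gap is that neither of the two properties of $e_\emptyset^*$ you invoke is established, and the justifications offered do not hold. For the commutation $e_\emptyset^*\circ\mathscr F_{-\check\lambda_\pm,\frac{1}{2}}=\mathscr F_{-\check\lambda_\pm,\frac{1}{2}}\circ e_\emptyset^*$: the source $\mathcal D^-_{L(\Sym^2,\frac{1}{2})}(N,\psi\backslash G/N,\psi)$ carries no $A_G$-action (only the central $\Gm$-factor acts; translating in the $\zeta$-direction conjugates $N$ and changes $\psi$), so ``$e_\emptyset^*$ is $A_G$-equivariant'' is not meaningful, and the convolutions along $\check\lambda_\pm$ do move $\zeta$. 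For a single weight this commutation is essentially of the same depth as the theorem itself, so assuming it is close to circular. For the annihilation $e_\emptyset^*\circ M_i=e_\emptyset^*$: the asymptotics map is characterized by a germ condition at the degeneration locus but is a globally defined transform, and it does not kill inputs supported away from infinity (compare the constant term in the group case, or note that it must be nontrivial on standard test measures, which decay rapidly at $\zeta=\infty$); hence the fact that $M_i-\Id$ is supported on a bounded set of $\zeta$'s does not yield $e_\emptyset^*\circ(M_i-\Id)=0$. If $e_\emptyset^*$ really factored through germs at $\zeta=\infty$ and commuted with the convolutions, the relation between $\mathcal H_{\Sym^2}$ and $\mathcal H_{\Sym^2,\emptyset}$ would be transparent, whereas the author explicitly remarks that the nature of this ``deformation'' escapes his understanding --- strong evidence that the factor-by-factor argument is not available. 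Finally, the intermediate-space issue you set aside as routine is not: $e_\emptyset^*$ would have to be defined on each partial composition in \eqref{HSymformula}, and these are not among the spaces on which it has been constructed.
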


The proof is very similar to that of Theorem \ref{groupdegen}, and will be omitted. It is, in fact, easier, since here we do not need to compare relative characters on different spaces, but only on $(N,\psi)\backslash G/(N,\psi)$ --- thus, the main input is simply \eqref{Hankel-sym2-characters} on the action of $\mathcal H_{\Sym^2}$ on relative characters, and the stated $\mathcal H_{\Sym^2,\emptyset}$ is simply the only $A_G$-equivariant transform that can act on relative characters by the same gamma factors. Again, at no point will we need to use the explicit formula \eqref{HSymformula} for $\mathcal H_{\Sym^2}$. Comparing this formula with the theorem above, we discover again a very gratifying reality: $\mathcal H_{\Sym^2}$ is simply a \emph{deformation} of the abelian transform $\mathcal H_{\Sym^2,\emptyset}$! The nature of this deformation, presently, escapes my understanding.

It is worth comparing with the calculation of the Hankel transform for the \emph{standard} $L$-function in a paper of Jacquet \cite{Jacquet}, as presented in \cite[\S 8]{SaHanoi}. The group here is $G'=\GL_2$ (or, more generally, $\GL_n$, but here we will restrict ourselves to $\GL_2$), and the analog of the operator $\mathcal H_{\Sym^2}$ is an operator 
$$\xymatrix{ \mathcal D^-_{L(\Std,\frac{1}{2})}(N,\psi\backslash G'/N,\psi) \ar[rr]^{\mathcal H_{\Std}} && \mathcal D^-_{L(\Std^\vee,\frac{1}{2})}(N,\psi\backslash G'/N,\psi)},$$
given by the formula 
\begin{equation}
  \mathcal H_{\Std} = \mathcal F_{-\check\epsilon_1,\frac{1}{2}} \circ \psi(-e^{-\alpha_1}) \circ \mathcal F_{-\check\epsilon_2,\frac{1}{2}},
\end{equation}
where $\check\epsilon_1, \check\epsilon_2$ are the weights of the standard representation of the dual group, and the factor $\psi(-e^{-\alpha_1}) $ in the middle denotes the operator of multiplication by this function. 

The space $\mathcal D^-_{L(\Std,\frac{1}{2})}(N,\psi\backslash G'/N,\psi)$, here, is simply the twisted push-forward (\S \ref{sstwistedpf}) of the space of Schwartz half-densities on $\Mat_2$. The transform $\mathcal H_{\Std}$ descends from Fourier transform on the space of $2\times 2$ matrices, and by Godement--Jacquet theory satisfies the following functional equation for half-density-valued relative characters:
\begin{equation}\label{Hankel-std-characters}\mathcal H_{\Std}^* J_\pi = \gamma(\pi, \Std, \frac{1}{2}, \psi) \cdot J_\pi.\end{equation}
(This is the analog of \eqref{Hankel-sym2-characters}.)

The analog of Theorem \ref{Sym2degen} is:
\begin{theorem}\label{Stddegen}
There is a unique $A_G$-equivariant operator $\mathcal H_{\Std,\emptyset}$ making the following diagram commute:
$$ \xymatrix{
\mathcal D^-_{L(\Std, \frac{1}{2})}(N,\psi\backslash G/N,\psi) \ar[rr]^{e_\emptyset^*}\ar[d]^{\mathcal H_{\Std}} && \mathcal D^\pm_{L(\Std, \frac{1}{2})}(N\backslash G/N) \ar[d]^{\mathcal H_{\Std,\emptyset}} \\
\mathcal D^-_{L((\Std)^\vee, \frac{1}{2})}(N,\psi\backslash G/N,\psi) \ar[rr]^{e_\emptyset^*} && \mathcal D^\pm_{L((\Std)^\vee, \frac{1}{2})}(N\backslash G/N)}.
$$
It is given by $\mathcal H_{\Std,\emptyset} = \mathcal F_{-\check\epsilon_1,\frac{1}{2}} \circ \mathcal F_{-\check\epsilon_2,\frac{1}{2}}$, understood as regularized Fourier convolutions.
\end{theorem}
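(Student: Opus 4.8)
The plan is to follow the pattern of the proof of Theorem \ref{groupdegen}, in the even simpler form already indicated for Theorem \ref{Sym2degen}: an $A_G$-equivariant operator between the boundary-degeneration spaces is determined by its action on relative characters, so the only real content is the bookkeeping of $\gamma$-factors.

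First I would dispose of uniqueness: the asymptotics morphism
$$e_\emptyset^*: \mathcal D^-_{L(\Std,\frac{1}{2})}(N,\psi\backslash G/N,\psi) \to \mathcal D^\pm_{L(\Std,\frac{1}{2})}(N\backslash G/N)$$
is $A_G$-equivariant and has dense image (an easy extension of Theorem \ref{density}, exactly as for $\mathcal S^-_{L(\Sym^2,1)}$ in this subsection), so at most one $\mathcal H_{\Std,\emptyset}$ can make the square commute. To produce it, I would take the candidate $\mathcal H_{\Std,\emptyset} := \mathcal F_{-\check\epsilon_1,\frac{1}{2}}\circ\mathcal F_{-\check\epsilon_2,\frac{1}{2}}$ and check, via the action of a Fourier convolution on Mellin transforms along $A_G$ (formula \eqref{FE}) --- namely multiplication of the Mellin transform at a character $\chi$ of $A_G$ by $\gamma(\chi\circ e^{\check\epsilon},\frac{1}{2},\psi)$, with the corresponding shift of the $L$-factor --- that this is an $A_G$-equivariant isomorphism $\mathcal D^\pm_{L(\Std,\frac{1}{2})}(N\backslash G/N)\xrightarrow\sim \mathcal D^\pm_{L((\Std)^\vee,\frac{1}{2})}(N\backslash G/N)$; the relevant product is $\gamma(\chi\circ e^{\check\epsilon_1},\frac{1}{2},\psi)\,\gamma(\chi\circ e^{\check\epsilon_2},\frac{1}{2},\psi) = \gamma(\chi,\Std\circ j,\frac{1}{2},\psi)$ for the canonical embedding $j$ of $L$-groups.

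Then I would prove commutativity by testing against relative characters. It suffices to pair both $\mathcal H_{\Std,\emptyset}\circ e_\emptyset^*$ and $e_\emptyset^*\circ\mathcal H_\Std$ with the relative characters $J^\emptyset_\chi$ of the boundary-degenerate Kuznetsov formula (the characters $\chi$ of $A_G$, in the language of this subsection), since these span a dense family of functionals. For the right-hand composition one uses that the adjoint of $e_\emptyset^*$ sends $J^\emptyset_\chi$ to the genuine Kuznetsov relative character $J_{\pi_\chi}$ of the principal series $\pi_\chi$ induced from $\chi$ --- this relative-character compatibility of $e_\emptyset^*$ is precisely what is established in the proof of Theorem \ref{groupdegen} --- so by \eqref{Hankel-std-characters} one gets $\langle e_\emptyset^* \mathcal H_\Std f, J^\emptyset_\chi\rangle = \gamma(\pi_\chi,\Std,\frac{1}{2},\psi)\,\langle e_\emptyset^* f, J^\emptyset_\chi\rangle$; for the left-hand composition, the previous step gives $\langle \mathcal H_{\Std,\emptyset}e_\emptyset^* f, J^\emptyset_\chi\rangle = \gamma(\chi,\Std\circ j,\frac{1}{2},\psi)\,\langle e_\emptyset^* f, J^\emptyset_\chi\rangle$. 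The two scalars agree since the Godement--Jacquet $\gamma$-factor is multiplicative along a principal series, $\gamma(\pi_\chi,\Std,\frac{1}{2},\psi)=\gamma(\chi,\Std\circ j,\frac{1}{2},\psi)$, and, everything varying in meromorphic families of functionals, it is enough to check this over the principal-series locus of the parameter space. This would establish the theorem with $\mathcal H_{\Std,\emptyset}=\mathcal F_{-\check\epsilon_1,\frac{1}{2}}\circ\mathcal F_{-\check\epsilon_2,\frac{1}{2}}$.

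The hard part will be the relative-character compatibility of the asymptotics morphism invoked in the last step --- that $e_\emptyset^*$ is adjoint to the reconstruction of $J_{\pi_\chi}$ from $J^\emptyset_\chi$ --- together with tracking the half-density and measure normalizations so that the abelian $\gamma$-factors come out with exactly the arguments $\chi\circ e^{\check\epsilon_i}$. This is content that is transported wholesale from the proof of Theorem \ref{groupdegen}; the present statement is genuinely easier only because both relative characters now live on the single space $(N,\psi)\backslash G/(N,\psi)$, so that no comparison between distinct trace formulas enters.
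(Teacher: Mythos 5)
Your proposal is correct and follows exactly the route the paper intends: the paper omits the proof of Theorem \ref{Stddegen} entirely, deferring to the (also omitted) proof of Theorem \ref{Sym2degen}, which in turn rests on the argument of Theorem \ref{groupdegen} — uniqueness from $A_G$-equivariance plus density of the image of $e_\emptyset^*$, and existence by matching the action on relative characters, using \eqref{Hankel-std-characters} together with the factorization $\gamma(\pi_\chi,\Std,\frac{1}{2},\psi)=\gamma(\chi,\Std\circ j,\frac{1}{2},\psi)$ along principal series. Your identification of the relative-character compatibility of $e_\emptyset^*$ as the one piece of content transported from Theorem \ref{groupdegen} is exactly right.
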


Thus, again, the Hankel transform for the standard $L$-function on the Kuznetsov formula is simply a deformation of its abelian analog!

\section{Lifting from tori to $\GL_2$} \label{sec:Venkatesh}

Let $T'$ be a one-dimensional torus, and $r:{^LT'}\to \GL_2$ the standard $2$-dimensional representation of its $L$-group. That is, $r$ identifies the dual torus $\check T'$ with $\SO_2\subset \GL_2$, and if $T'$ is split the action of the Galois factor is trivial, while if $T'$ is non-split, the image of ${^LT'}$ is the disconnected subgroup $O_2\subset \GL_2$. 

Let $\bar r : {^LT} \to \check G = \Gm \times \PGL_2$ be the composition of $r$ with the natural projection (the first factor being the image of the determinant map), where ${^LT}={^LT'}/\{\pm 1\}$ is the $L$-group of a torus $T$ mapping to $T'$ with kernel $\{\pm 1\}$. This map of $L$-groups should correspond to a functorial lift from $T$ to $G$. This functorial lift is realized in the project of endoscopy by studying the unstable summands of the trace formula for $\SL_2$, but here we would like to revisit the thesis of Akshay Venkatesh \cite{Venkatesh}, which reproduces this lift by ``beyond endoscopy'' techniques, using the Kuznetsov formula. We will deduce the local comparison independently, without direct reference to Venkatesh's thesis, but we will also comment, after the statement of the main theorem, on the relation with his results. 

Interestingly, it will turn out that the endoscopic and the ``beyond endoscopy'' approaches are closely related to each other; in fact, we will see that the local transfer  directly produces a ``geometric'' map from $\mathcal S(N,\psi\backslash G/N,\psi)$ to the space of ``$\kappa$-orbital integrals'' for $\SL_2$ hence (through endoscopy) to the space $\mathcal S(T)$. 

\subsection{Statement of the results, and relation to Venkatesh's thesis}

 Let $T$ be a one-dimensional torus, with associated quadratic extension $E$ and quadratic character $\eta$. The torus could be split, in which case $E = F\oplus F$. 
We can identify $T$ with the kernel of the norm map $\Res_{E/F}\Gm \to \Gm$, and thus with a subgroup of $\SL_2\simeq \SL_F(\Res_{E/F}\Ga)$. The Weyl group $W$ of $T$ in $\SL_2$ acts on $T$ by inversion, and its coinvariants on the Schwartz space $\mathcal S(T)$ are canonically identified with $\mathcal S(T/W):=$ the push-forward of $\mathcal S(T)$ to the space $T\sslash W = \Dfrac{\SL_2}{\SL_2}$. \emph{To avoid confusion, I stress that the symbol $t$ will denote, as in the rest of the paper, the ``trace'' coordinate on $T\sslash W$, not an element in $t$.}

The main result of this section is 
\begin{theorem}\label{thmVenkatesh}
 There is a linear map
 $$ \mathcal T_T: \mathcal S^-_{L(\Sym^2, 1)} (N,\psi\backslash G/N,\psi) \to \mathcal S(T)$$
 with the following properties:
 \begin{enumerate}
  \item It is $(\Gm,\eta)$-equivariant (for the unnormalized action).
  \item Its image is the subspace $\mathcal S(T)^{\Z/2}$, where $\Z/2$ acts by inversion on $T$.
  \item The pullback of any unitary character $\theta$ of $T$ is equal to the Kuznetsov relative character $J_\Pi$, where $\Pi$ is the endoscopic $L$-packet associated to $\theta$ (see \S \ref{sstransfer-tori}).
  \item It satisfies the fundamental lemma for the Hecke algebra: For $E/F$ unramified non-Archimedean, the basic vector $f_{L(\Sym^2, 1)} \in \mathcal S^-_{L(\Sym^2, 1)} (N,\psi\backslash G/N,\psi)$ is mapped to $\frac{\zeta(1)\zeta(2)}{L(\eta,1)}$ times the basic vector of $\mathcal S(T)$ (the unit of the Hecke algebra). Moreover, for every $h$ in the unramified Hecke algebra of $G$, it takes the image of $h$ to $\frac{\zeta(1)\zeta(2)}{L(\eta,1)}\cdot \bar r^*h$, where $\bar r^*$, under Satake isomorphism, corresponds to pullback under the $L$-embedding: 
  $$\bar r:{^LT} \to {^LG}.$$
  \item Finally, if $da$ is the a Haar measure on $T$ which is pulled back (in the sense that it is equal on subsets mapping injectively) from the measure $|t^2-4|^{-\frac{1}{2}} dt$ in the trace coordinate $t=t(a)$ on $T\sslash W \simeq \Dfrac{\SL_2}{\SL_2}$, the transfer operator $\mathcal T_T$ is given by the formula
 \begin{equation}\label{Venkatesh-final}
 \frac{\mathcal T_T(f)(a)}{da} = (dt)^{-1} \lambda(\eta,\psi) \int_r \int_x f\left( r, \frac{t}{x} \right)  \eta(xrt) \psi(x) dx.
\end{equation}
Here, $f\in \mathcal S^-_{L(\Sym^2, 1)} (N,\psi\backslash G/N,\psi)$ is expressed as a measure in the same two coordinates denoted by $(c,\zeta)$ in \S \ref{sec:sym2}.
\end{enumerate}
\end{theorem}

The proof of the theorem will be completed in \S \ref{sstransfer-tori}.

\begin{remark}\label{remarkfeta}
The fact that this map should be $(\Gm,\eta)$-equivariant follows, of course, from the composition of $\bar r$ with projection to $\Gm$, which is equal to the quadratic Galois character attached to $E/F$. Thus, we could have integrated $f$ against $\eta$ from the beginning, obtaining an element of the space $\mathcal S^-_{L(\Ad,\eta |\bullet|)} (N,\psi\backslash \SL_2/N,\psi)$, according to Proposition \ref{propdescent} (because the unnormalized integral against $\eta$ is the normalized pushforward $p_{\eta\delta^{-\frac{1}{2}}}$ of this proposition). Let $f_\eta$ denote that element. Then, \eqref{Venkatesh-final} can be read as a map 
\begin{equation}
 \mathcal S^-_{L(\Ad,\eta |\bullet|)} (N,\psi\backslash \SL_2/N,\psi) \to \mathcal S(T)^{\mathbb Z/2},
\end{equation}
given by 
 \begin{equation}\label{Venkatesh-final-eta}
 \frac{\mathcal T_T(f)(a)}{da} = (dt)^{-1}\lambda(\eta,\psi) \int_x f_\eta\left( \frac{t}{x} \right)  \eta(xt) \psi(x) dx,
\end{equation}
which, up to the scalar $\lambda(\eta,\psi)$, is the ``usual'' Fourier transform of the measure $u\mapsto f_\eta(u^{-1}) \eta(u)$.   
The significance of working with the more complicated space $\mathcal S^-_{L(\Sym^2, 1)} (N,\psi\backslash G/N,\psi)$ lies in the global application, where the Euler product of the $\Gm$-integrals over all places does not converge, but corresponds to a pole of the $\Sym^2$ $L$-function.
\end{remark}

Now I will give a rough idea, without details, of how the transfer operator of Theorem \ref{thmVenkatesh} is related to Venkatesh's thesis. I refer to the exposition of \cite[\S 3]{Venkatesh} for the special case of the field $\QQ$, unless otherwise stated, because it is based on the classical Kuznetsov formula and is simpler to follow, notationally. 

The basic, although very coarse, idea is that one should take an Euler product of elements of $\mathcal S^-_{L(\Ad, \eta|\bullet|^s)} (N,\psi\backslash \SL_2/N,\psi)$, over all places, where almost every factor is equal to the basic vector, and plug it into the Kuznetsov formula. Classically, the Kuznetsov formula involves the Fourier coefficients $a_n(f)$ of $\GL_2$-automorphic forms $f$ (with fixed central character, here $\eta$), and plugging in these non-standard test functions corresponds to writing down a ``series of Kuznetsov formulas'' associated to the Dirichlet series of $L(f, \Sym, s)$ (which, because of the central character, can also be written as $L(f|_{\SL_2}, \Ad, \eta|\bullet|^s)$).
 As observed in \cite{Venkatesh}, up to a factor that is analytic at $s=1$, this series is $\sum_n a_{n^2}(f)  n^{-s}$, and the corresponding sum of Kunzetsov formulas --- or rather, of the discrete spectrum of their spectral expansion --- would be a sum of the form 
\begin{equation}\label{Venk3} \sum_f h(t_f) \sum_{n=1}^\infty \frac{a_{n^2}(f)}{n^s} \overline{a_m(f)},
\end{equation}
in the notation of that paper. The parameter $m$ is an auxilliary parameter which should be thought of as the result of replacing a basic function by a Hecke operator acting on it.
To compute the residue of this at $s=1$, Venkatesh computes the related expression (3) of his paper, or rather the asymptotics as some variable $X\to \infty$ of an alternate expression (18) that involves a smooth cutoff function $g\in C_c^\infty(\mathbb R^\times_+)$, whose discrete terms read
\begin{equation}\label{Venk18} \sum_f h(t_f) \sum_{n=1}^\infty g(\frac{n}{X}) a_{n^2}(f)  \overline{a_m(f)}.
\end{equation}

\begin{remark}\label{remarkanalytic}
 The global analytic issues that are addressed by introducing the cutoff function $g$ are beyond the scope of the present article, which focuses on local transfer operators. For the discussion that follows I will just focus on the formal structure of Venkatesh's argument, trying to match it to the local transfer operator of Theorem \ref{thmVenkatesh}, and will treat \eqref{Venk18} as an avatar for the ``limit of \eqref{Venk3} as $s\to 1$'', ignoring the appearance of the function $g$. It is conceivable that one could avoid this cutoff function, following the techniques of \cite{SaBE2}, which are based on the idea of deforming spaces of orbital integrals. I will also freely identify test measures for the Kuznetsov quotient with test functions (that is, with orbital integrals); the precise factors needed for a complete comparison with Venkatesh's thesis will not be calculated here.
\end{remark}

The geometric side of the sum above has the form (see equation (19) in that article)
\begin{equation}\label{Venk19}\sum_{n=1}^\infty \sum_c g(\frac{n}{X}) \varphi\left(\frac{4\pi \sqrt{n^2m}}{c}\right) \frac{KS(n^2, m ; c)}{c},
\end{equation}
where $KS$ stands for a certain generalized Kloosterman sum. The functions $\varphi$ and $h$ of the past two formulas correspond to the orbital integrals of the Archimedean test function, and their spectral ``Lebedev--Kontorovitch'' transform in terms of Bessel functions, respectively.

The key in relating Venkatesh's argument to our transfer operator is to identify the variables that appear in the equation above to our local coordinates appearing in \eqref{Venkatesh-final}. For simplicity, we will take $m=1$, which is enough to convey the idea.  It is simplest to start from an adelic setting, produce a classical translation, and then relate it to the manipulations of Venkatesh.

Adelically, we would write the Kuznetsov formula of $\SL_2$, over the base field $\QQ$ as
\begin{equation}\label{KTF-global} \KTF(f_{\eta|\bullet|^s}) = \sum_{\zeta\in \QQ} f_{\eta|\bullet|^s}(\zeta) = \sum_{\zeta\in \QQ} \int_{\mathbb A^\times} f(r,\zeta) \eta(r)|r|^s d^\times r,
\end{equation}
where we are being imprecise about the contribution of the singular orbits corresponding to $\zeta=0$.  The test function $f_{\eta|\bullet|^s}$, here, is obtained from the pushforward of a factorizable $f = \otimes_{p\le \infty} f_p$, with factors in the space $\mathcal S^-_{L(\Sym^2, 1)} (N,\psi\backslash G/N,\psi)$ of the corresponding local field.
The above would only converge for $\Re s\gg 0$, and we would seek to compute the residue at $s=0$ --- see Remark \ref{remarkanalytic} above.  

Our quadratic character $\eta$ would correspond to the Legendre symbol $\left(\frac{D}{\bullet}\right)$ under the correspondence between idele class characters and Dirichlet characters, where $D$ is a square-free integer. 
Writing $\Z_D = \prod_{p|D} \Z_p$, $K_D$ for the product of $\prod_{p \nmid D} \Z_p^\times$ with the subgroup of elements congruent to $1$ in $\Z_D^\times$, our test function $f$ would be taken to be $K_D$-invariant in the variable $r$; in particular, unramified outside of $p |D$. Using the decomposition $\mathbb A^\times/K_D = \QQ^\times \times (\Z/D)^\times \times \RR_+^\times$ (with $(\Z/D)^\times$ understood as a quotient of $\Z_D^\times$), we would rewrite the integral over $\mathbb A^\times$ in \eqref{KTF-global} as follows:
\[ \KTF(f_{\eta|\bullet|^s}) = \sum_{(q,\zeta)\in (\QQ^\times \times \QQ)} \sum_{x\in (\Z/D)^\times}  f_f(q x ,\zeta) \eta_D(x) \cdot \int_{\RR^\times_+} f_\infty(qr, \zeta)  r^s d^\times r,\]
where we have written $f = f_f \otimes f_\infty$ for its restriction to the finite and Archimedean factors of the adeles, $\eta_D$ for the restriction of $\eta$ to $\Z_D^\times$, etc. Writing $f_f^\eta(r, \zeta) = \sum_{x\in (\Z/D)^\times} f_f(rx,\zeta) \eta_D(x)$ where $r$ and $\zeta$ are in the finite ideles (resp.\ adeles), a factorizable test function whose factors depend on $\eta$ only at the primes dividing $D$, and choosing it so that it is only supported on integers at every place, the above sum could be written
\begin{multline} \KTF(f_{\eta|\bullet|^s}) =  \sum_{(n,\zeta)\in (\Z_{>0} \times \QQ)}  f_f^\eta(n,\zeta)  \cdot  f^+_{\infty, \eta|\bullet|^s} (\zeta) \\ + \eta_{\infty}(-1) \sum_{(n,\zeta)\in (\Z_{<0} \times \QQ)}  f_f^\eta(n,\zeta)  \cdot  f^-_{\infty, \eta|\bullet|^s} (\zeta). \label{integraltosum}
\end{multline}
Here, we have written $f^\pm_{\infty, \eta|\bullet|^s}$ for the integral of $f_\infty$ over the positive, resp.\ negative reals. For simplicity, let us assume that $f_\infty$ is supported on positive reals, and drop the exponent.

As can be seen in \cite{KLP}, this expression corresponds to \eqref{Venk19}, for appropriate choices of test functions, up to replacing the cutoff function by the Dirichlet series (as discussed in Remark \ref{remarkanalytic}), and with the substitution $\zeta = \frac{c}{n}$. I point the reader, in particular, to Proposition 3.7 in that paper, which can be translated to $\SL_2$ by using the property that, for a test function $\Phi$ on $\GL_2(\mathbb A_f)$ with central character $\eta$, and for $c\in \Z_{>0} \subset \QQ^\times$ (so that $\eta(c)=1$), we have 
\begin{multline*} \int_{\mathbb A_f^2} \Phi\left( \begin{pmatrix} 1 & x \\ & 1 \end{pmatrix} \begin{pmatrix} & - c^{-2} \\ 1 \end{pmatrix} \begin{pmatrix} 1 & y \\ & 1 \end{pmatrix} \right) \psi^{-1} (n^2 x + y ) dx dy = \\
   \int_{\mathbb A_f^2} \Phi\left( \begin{pmatrix} n^{-1} \\ & n \end{pmatrix} \begin{pmatrix} 1 & x \\ & 1 \end{pmatrix} \begin{pmatrix} &  -n c^{-1} \\ n^{-1} c \end{pmatrix} \begin{pmatrix} 1 & y \\ & 1 \end{pmatrix} \right) \psi^{-1} (x + y ) dx dy.
\end{multline*}

Combining \cite[Proposition 3.7]{KLP} (with parameters $\mathfrak n = 1$, $m_1 = n^2$, $m_2=-1$) with  \eqref{integraltosum}, we see that \eqref{Venk19} can be obtained by taking 
 $f_p(n, \bullet)$, at primes $p$ not dividing $D$,  to be the one corresponding to $\Phi = 1_{\SL_2(\Z_p)}$ (in the equation above). Equivalently, $f_p(n, \bullet)$ corresponds to the orbital integrals of the $\SL_2^2$-Whittaker function of the form $ \Phi_1\otimes \Phi_n$, where $\Phi_a$ is supported on $N(F)\begin{pmatrix} a \\ & a^{-1} \end{pmatrix} \SL_2(\mathbb Z_p)$. (See \S \ref{sstwistedpf} for a discussion of the Kuznetsov formula in terms of a pair of Whittaker functions/measures.) Of course, this is very close to the basic function of $\mathcal S^-_{L(\Sym^2, 1)} (N,\psi\backslash G/N,\psi)$, in the same sense that the Dirichlet series $\sum_n a_{n^2}(f)  n^{-s-1}$ is close to the symmetric square $L$-function at $s+1$ (i.e., their differences do not affect the residue at $s=0$).  At primes dividing $D$, the corresponding test functions of \cite{KLP} depend on $\eta$, as do the functions $f_f^\eta$ in \eqref{integraltosum}. 
 
Having related \eqref{Venk19} to its adelic version \eqref{KTF-global}, let us now confirm that the local transfer operator $\mathcal T_T$ of Theorem \ref{thmVenkatesh} suggests the global manipulations of Venkatesh's paper. In order to do that, we rewrite \eqref{Venkatesh-final}, ignoring scalar factors and treating measures as functions, as an operator
\begin{equation}\label{transform-imprecise} f \mapsto \lim_{s\to 1} \int_r \eta(r) |r|^s\int_x f\left( \frac{x r}{t} , \frac{t}{x} \right)   \psi(x) dx  d^\times r.
\end{equation}

This is not a random change of variables. As we shall see, the inner and the outer integral play a distinct role in the proof\footnote{Hopefully, putting this discussion ahead of the proof will serve as a motivation for the reader!} of Theorem \ref{thmVenkatesh}, and more precisely of Theorem \ref{transfertokappa}, with the inner one corresponding to 
formula \eqref{barUinverse} for the inverse unfolding operator $\bar{\U}^{-1}$, and the other one, a Mellin transform, being (essentially) the composition 
$\mathcal P_\kappa^\circ \circ \mathcal H_X^\circ $ appearing in (\eqref{Tkappa}). It turns out that these two integrals correspond to the two basic steps of Venkatesh's proof; hence, it is not only our final formula, but also its derivation which is very closely mirrored in the manipulations of \cite{Venkatesh}. 

The inner integral of \eqref{transform-imprecise} is a Fourier transform, and suggests applying a Poisson summation formula, globally. Only for the purposes of this subsection, we will be writing $\hat f$ for what is denoted by $\bar{\U}^{-1}$ in the rest of this paper, that is, $\hat f(c,\nu) = \int_x f\left( \frac{x c}{\nu} , \frac{\nu}{x} \right)   \psi(x) dx$. Then, applying the Poisson summation formula formally (without any claim to rigor) would rewrite the Kuznetsov formula of \eqref{KTF-global}  as
\begin{align}\label{KTF-Poisson}  \nonumber \KTF(f_{\eta|\bullet|^s}) &=  \int_{\QQ^\times\backslash\mathbb A^\times} \eta(r)|r|^s \sum_{(n,c)\in (\QQ^\times \times \QQ)}  f(rn,\frac{c}{n})  d^\times r \\
&= \int_{\QQ^\times\backslash\mathbb A^\times} \eta(r)|r|^s \sum_{(\nu,c)\in \QQ^2}  \hat f(rc,\nu)  d^\times r,
\end{align}
where we have used the fact that the Fourier transform of $n\mapsto f(rn, \frac{c}{n})$, \emph{with dual variable denoted $\frac{\nu}{c}$} is 
\[ \int_n f(rn, \frac{c}{n}) \psi(n \frac{\nu}{c}) dn = |\frac{c}{\nu}| \int_x f(\frac{rc x }{\nu}, \frac{\nu}{x}) \psi(x) dx = |\frac{c}{\nu}| \hat f(rc, \nu).\]

The first main step of Venkatesh is, indeed, to apply the Poisson summation formula in the variable $n$, with $c$ fixed. 
This is done in two moves, first over classes of $n$ modulo $c$ in order to take the Fourier transform of the Archimedean component, and then as a summation over classes $x\in \Z/c$, arriving at equation (20) in his paper. It is easily checked that the two steps correspond, indeed, to an adelic Poisson summation formula in the variable $n$, with dual variable corresponding to $\frac{\nu}{c}$. Thus, the last expression of \eqref{KTF-Poisson} corresponds to  \cite[(20)]{Venkatesh}. 

The outer integral in \eqref{transform-imprecise} is already embedded in \eqref{KTF-Poisson}. Classically, it corresponds to a Dirichlet series in the parameter $c$, with the parameter $\nu$ fixed. Studying this Dirichlet series is indeed the second main step  of Venkatesh. This Dirichlet series is defined in \cite[\S 4.5.2]{Venkatesh}, and denoted $Z(s)$. (The reader can also consult the actual thesis, \cite[Theorem 3]{Venkatesh-thesis}, where it is written more simply for the case of $\mathbb Q$.) 

As Venkatesh observes, this Dirichlet series exhibits a lot of cancellation and does not have a pole at $s=0$, unless $\nu$ in the image of the map $T(\mathbb Q)\to T\sslash W (\mathbb Q)$. Locally, however, there is no pole, and the evaluation at $s=0$ corresponds to the integral of $\hat f$ against $\eta$ in the variable $c$. (This point is lost in the translation between measures and functions: this integral looks more like a Tate zeta integral at $s=1$.) Nonetheless, our derivation of \eqref{Venkatesh-final}, the outer integral of \eqref{transform-imprecise} is written as a composition of two more complicated operators,
$\mathcal P_\kappa^\circ \circ \mathcal H_X^\circ $, 
see Theorem \ref{transfertokappa}. One of the operators is essentially a Fourier transform of the function $c\mapsto \eta(c) \hat f(c,\nu)$, and the other is a residue of its Tate zeta integral at $s=0$ (corresponding to $s=-1$ in our current coordinates). The need to write a simple integral in this complicated way arises from the fact that the operator $\mathcal T_T$ is obtained as a ``boundary value'' of certain orbital integrals on the Rankin--Selberg variety --- the boundary value itself interpreted as a $\kappa$-orbital integral for $\SL_2$, and then related to the torus via the endoscopic comparison of Labesse and Langlands.

Having, hopefully, given enough details to the interested reader to relate this paper with the work of Venkatesh, we return to the rigorous discussion in the local setting, in order to prove Theorem \ref{thmVenkatesh}.

\subsection{Germs at zero}

We continue using the notation of the previous two sections. In addition, we will be identifying the torus $A_\ad$ with $\Gm$ via the positive root character, so for a character $\chi$ of $A_\ad$ we may interchangeably write $L(\chi,\frac{\check \alpha}{2},s)$ or $L(\chi,s)$ for its local Dirichlet $L$-function.

The group $\Gm=A_\ad$ acts on measures or functions on $\bar X\sslash \SL_2$ and $N\backslash G\sslash N$, and we can consider either the unnormalized translation action, or the normalized one, descending from \eqref{actionnormagain-measures}, \eqref{actionnormagain-functions}, \eqref{actionnorm-unfolding-meas}, and \eqref{actionnorm-unfolding-fns}; that is, for measures, the normalized action is the product of the unnormalized one with $\delta^{-\frac{1}{2}}$. In this section, we will be working, by default, via the normalized action (which is more convenient for keeping track of various constants), but will sometimes switch to the unnormalized action, explicitly stating so (as in the main theorem, Theorem \ref{thmVenkatesh}). Notice that, under the identification $A_\ad\simeq\Gm$ with the positive root character, $\delta^\frac{1}{2}$ becomes the character $|\bullet|^\frac{1}{2}$; therefore, when (below) we apply the Mellin transform of \eqref{MellinVTdef} to the variable $c$ of the quotient space $\bar X \sslash \SL_2$ (on which $A_\ad$ acts by $\delta$), its argument (a character of $\Gm\simeq A_\ad$) is compatible with the normalized action.

As we have seen in the proof of Theorem \ref{RSHankel}, for an element $f\in \mathcal S(\bar X/\SL_2)$, the quotient $\frac{f}{dt}$ is a section, over $t\ne \pm 2$, of the family of spaces $\mathcal S([V/T_t])$, where $T_t$ is the special orthogonal group of the quadratic form $c$ (the coordinate on $\bar X\sslash \SL_2$) of discriminant $d=-\frac{1}{4}(t^2-4)$ on the symplectic vector space $V$. Hence, as $c\to 0$, it has the asymptotic behavior of \eqref{germs-non-split} or \eqref{germs-split}, where $\xi$ there stands for the coordinate $c$:  
\begin{align*}\frac{f(c,t)}{dt} &= C_{1,t}(c) + C_{2,t}(c) \eta_{t^2-4}(c),\mbox{ if $t^2-4$ is not a square};\\
\frac{f(c,t)}{dt} &= C_{1,t}(c) + C_{2,t}(c) \log(|c|),\mbox{ if $t^2-4$ is a square.}
  \end{align*}
I remind that the $C_{i,t}$'s ($i=1,2$) are smooth measures in the variable $c$.

Let $\mathcal S(\bar X/\SL_2)_0$ denote the collection of functions $t\ne \pm 2\mapsto \frac{C_{2,t}}{dc}(0)$, as $f$ varies over all elements of $\mathcal S(\bar X/\SL_2)$. By definition, we have a surjective map
\begin{equation}\label{zerofiber} \mathcal S(\bar X/\SL_2) \twoheadrightarrow \mathcal S(\bar X/\SL_2)_0,
\end{equation}
that we will denote by $f\mapsto [f]_0$.

For elements of the subspace $\mathcal S(\bar X/\SL_2)^\circ$, the collection of measures $\frac{f}{dt}$ belongs to the subspaces $\mathcal S([V/T_t])^\circ$ of \eqref{germs-circ}; that is, their asymptotics are of the form
$$\frac{f(c,t)}{dt} = C_{t}(c) \eta_{t^2-4}(c).$$ 

Let $\mathcal S(\bar X/\SL_2)^\circ_0$ denote the collection of functions $t\ne \pm 2\mapsto \frac{C_{t}}{dc}(0)$, as $f$ varies over all elements of $\mathcal S(\bar X/\SL_2)^\circ$. By definition, we have a surjective map
\begin{equation}\label{zerofibercirc} \mathcal S(\bar X/\SL_2)^\circ \twoheadrightarrow \mathcal S(\bar X/\SL_2)^\circ_0,
\end{equation}
that we will denote by $f\mapsto [f]^\circ_0$.

The following is immediate:
\begin{lemma}
 The kernels of the maps \eqref{zerofiber}, \eqref{zerofibercirc} are $A_\ad$-stable; hence, $A_\ad$ acts on the spaces $\mathcal S(\bar X/\SL_2)_0$, $\mathcal S(\bar X/\SL_2)^\circ_0$. Under the normalized action on $\mathcal S(\bar X/\SL_2)$, these quotients decompose under the $A_\ad$-action into eigenspaces with eigencharacters $\eta \delta^\frac{1}{2}$, where $\eta$ ranges over all quadratic characters of $A_\ad$. For any such $\eta$, the $\eta \delta^\frac{1}{2}$-eigenvectors are supported on the set of $t\ne \pm 2$ with $\eta_{t^2-4}=\eta$.
\end{lemma}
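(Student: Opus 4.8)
The statement is a collection of elementary consequences of the explicit description of germs established in the preceding subsections, so the plan is to reduce everything to the two asymptotic formulas \eqref{germs-nonsplit}, \eqref{germs-split} (resp.\ \eqref{germs-circ}) fiber by fiber over the trace coordinate $t\ne\pm 2$, together with the normalized action \eqref{actionnormagain-measures}. First I would verify that the kernels of \eqref{zerofiber} and \eqref{zerofibercirc} are $A_\ad$-stable: an element $f$ lies in the kernel of \eqref{zerofiber} precisely when, for every $t\ne\pm 2$, the measure $C_{2,t}$ (the coefficient of $\eta_{t^2-4}(c)$, resp.\ of $\log|c|$, in $\frac{f}{dt}$) vanishes at $c=0$ after division by $dc$; since the $A_\ad$-action is (up to the scalar $\delta^{-1/2}$ coming from normalization) just multiplicative dilation in the $c$-variable, fixing $t$, it carries the germ expansion of $\frac{f(\bullet,t)}{dt}$ at $c=0$ into the germ expansion of $\frac{(a\cdot f)(\bullet,t)}{dt}$, permuting the two terms only up to the character $\eta_{t^2-4}$, which is fixed, and possibly adding a smooth correction in the split case (because $\log|ac|=\log|a|+\log|c|$, so the $\log$-term picks up a multiple of the non-log term). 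In all cases the condition $\frac{C_{2,t}}{dc}(0)=0$ is preserved, so the kernel is stable; the same computation, with only one germ term present, handles \eqref{zerofibercirc}, and is in fact cleaner there since $\eta(\bullet)$ is multiplicative and no $\log$ appears.

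Granting the stability, $A_\ad$ acts on the quotients $\mathcal S(\bar X/\SL_2)_0$ and $\mathcal S(\bar X/\SL_2)^\circ_0$. To compute the action I would trace through what $f\mapsto[f]_0$ does to dilations: writing $a\cdot f(c,t)=\delta(a)^{-1/2} f(a\cdot(c,t))$ and noting that $A_\ad$ acts on the $c$-coordinate through $\delta$ (this is exactly the compatibility remarked at the start of the section between the normalized action and the argument of the Mellin transform in the $c$-variable), one finds that the functional $f\mapsto \frac{C_{2,t}}{dc}(0)$ is $(A_\ad,\eta_{t^2-4}\delta^{1/2})$-equivariant: the $\delta^{-1/2}$ is the normalization, and the extra $\delta^{1/2}$ accounts for the transformation of the measure $\frac{C_{2,t}}{dc}(0)$ under dilation of $c$ (a measure of degree $-1$ evaluated at the origin picks up $|a|$, which in these coordinates is $\delta(a)$; combined with $\delta^{-1/2}$ from the normalization this leaves $\delta^{1/2}$). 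Hence the value $[f]_0(t)$ transforms by $\eta_{t^2-4}(a)\,\delta(a)^{1/2}$, i.e.\ the eigencharacter on the part of the quotient supported at a given $t$ is $\eta_{t^2-4}\,\delta^{1/2}$; as $t$ ranges over all of $F\smallsetminus\{\pm 2\}$, $\eta_{t^2-4}$ ranges over all quadratic characters of $F^\times=A_\ad(F)$ (every quadratic extension, split or not, arises as $F(\sqrt{t^2-4})$ for a suitable $t$), which gives the claimed decomposition into $\eta\delta^{1/2}$-eigenspaces. Finally, the support statement is immediate from the formula: the $\eta\delta^{1/2}$-eigenvector is, by construction, the component of $[f]_0$ supported where $\eta_{t^2-4}=\eta$, because the germ coefficient $C_{2,t}$ is attached to the character $\eta_{t^2-4}$ of that particular fiber.

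I do not anticipate a genuine obstacle here — this lemma is bookkeeping with the normalization conventions. The one place requiring care, and the step I would write out most carefully, is the split-fiber case of the kernel computation for \eqref{zerofiber}: there the $\log|c|$-germ is not an eigenvector under dilation, so one must check that the subspace cut out by $\frac{C_{2,t}}{dc}(0)=0$ is nonetheless preserved (it is, because dilation sends the $\log$-coefficient to itself and modifies only the smooth coefficient $C_{1,t}$). For the $\circ$-spaces this subtlety disappears entirely, since \eqref{germs-circ} has a single multiplicative germ $C_t(c)\eta_{t^2-4}(c)$, and the argument is a one-line dilation computation.
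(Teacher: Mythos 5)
Your proof is correct, and it is exactly the verification the paper has in mind when it declares the lemma ``immediate'': a fiberwise dilation computation on the germ expansions \eqref{germs-nonsplit}, \eqref{germs-split}, \eqref{germs-circ}, with the normalization $\delta^{-1/2}$ and the fact that $A_\ad$ acts on the $c$-coordinate through $\delta$ accounting for the eigencharacter $\eta_{t^2-4}\delta^{1/2}$ (consistent with the $(A_\ad,\chi)$-equivariance of the Mellin transform and the residue formula \eqref{germasMellin-circ}). Your attention to the split fiber, where the $\log|c|$-term is not a dilation eigenvector but only shifts the smooth coefficient $C_{1,t}$, is the one point genuinely worth writing out, and you handle it correctly.
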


Notice that under the unnormalized action of $A_\ad$, the eigencharacters become $\eta\delta$.

The quotients \eqref{zerofiber}, \eqref{zerofibercirc} can also be expressed in terms of Mellin transforms.  By Corollary \ref{MellinVcirc} we have:
\begin{equation}\label{germasMellin-circ}
[f]^\circ_0(t)= \frac{\eta_{t^2-4} \cdot f}{dc\, dt}(0,t) = - \AvgVol(F^\times)^{-1}\Res_{s=\frac{1}{2}} \widecheck{\frac{f}{dt}}(\eta_{t^2-4}|\bullet|^s,t)
\end{equation}
on $\mathcal S(\bar X/\SL_2)^\circ$. The Mellin transform, here, is, for any fixed $t\ne \pm 2$, the scalar-valued Mellin transform of \eqref{MellinVTdef} of the quotient $\frac{f}{dt}(\bullet,t)$, understood as a measure in the variable $c$. (In particular, $c=1$ is the base point that we use to identify the set of points $(c,t)$ with $c\ne 0$ and $t$ fixed with the torus $A_\ad$.) We will also be thinking of the product of the function $t\mapsto \widecheck{\frac{f}{dt}}(\chi, t)$ by $dt$ as a measure in the variable $t$, denoted $\check f(\chi)$, and varying meromorphically with $\chi$.

The analog of \eqref{germasMellin-circ} for $[f]_0$ is easiest expressed if we multiply the Mellin transform of an element $f\in \mathcal S(\bar X/\SL_2)$ by $L(\chi\delta^s,-\frac{\check\alpha}{2}, \frac{1}{2})^{-1}= L(\chi^{-1},\frac{1}{2})^{-1}$ (identifying $A_\ad$ with $\Gm$ via the positive root character), so that for $\eta_{t^2-4}=1$  the double pole of the Mellin transform at $\chi=\eta_{t^2-4}|\bullet|^\frac{1}{2}$ becomes simple. Then, by Proposition \ref{VmodTimage}, for $f\in \mathcal S(\bar X/\SL_2)$ we have 
\begin{equation}\label{germasMellin}
[f]_0(t) =  - L^*(\eta_{t^2-4}, -0) \cdot  \frac{\Res_{s=\frac{1}{2}} L(\eta_{t^2-4} , \frac{1}{2}-s)^{-1} \widecheck{\frac{f}{dt}}(\eta_{t^2-4}|\bullet|^s,t)}{\AvgVol(F^\times)},
\end{equation}
where $L^*(\eta_{t^2-4}, -0)$ denotes the leading term of the local Dirichlet $L$-function $L^*(\eta_{t^2-4}, -s)$ at $s=0$. The same notation will be used in what follows for the leading coefficients of other $L$- or $\gamma$-factors.

\begin{proposition} \label{fibercomp}
The map 
\begin{equation}
 [f]_0\mapsto \gamma^*(\eta_{t^2-4},1,\psi)^{-1} [f]_0
\end{equation}
is an $A_\ad$-equivariant isomorphism: 
$$\mathcal S(\bar X/\SL_2)_0 \xrightarrow\sim \mathcal S(\bar X/\SL_2)^\circ_0$$
which, in the non-Archimedean unramified case, maps the image of the basic vector $f_{\bar X}$ to $L(\eta_{t^2-4},1)$ times the image of the basic vector $f_{\bar X}^\circ$.
\end{proposition}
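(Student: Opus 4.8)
The plan is to compare the two ``germ at zero'' spaces fiber-by-fiber over the trace coordinate $t$, using the already-established description of the fibers $\mathcal S([V/T_t])$ and $\mathcal S([V/T_t])^\circ$ from \S\ref{ssA2}. Recall from the proof of Theorem \ref{RSHankel} that for $t\ne\pm 2$ the quotient $\frac{f}{dt}(\bullet,t)$ lies in $\mathcal S([V/T_t])$, where $T_t=\SO(V,c)$ for the quadratic form $c$ of discriminant $d=-\tfrac14(t^2-4)$. By Corollary \ref{MellinVcirc}, the normalized action of the element $h\in\widehat{\mathcal S(F^\times)}$ with $\check h(\chi)=L(\chi,\tfrac12)^{-1}$ gives an isomorphism $\mathcal S([V/T_t])\xrightarrow{h\cdot}\mathcal S([V/T_t])^\circ$ in the non-Archimedean case; in the Archimedean case the same identification of germs at zero holds at the level of the two-dimensional fibers $\mathcal S([V/T])_0$, $\mathcal S([V/T])^\circ_0$ (which are one-dimensional here once we restrict to the $\eta_{t^2-4}$-part). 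The point is that passing from $\mathcal S(\bar X/\SL_2)$ to $\mathcal S(\bar X/\SL_2)^\circ$ only changes the $C_1$-coefficient in \eqref{germs-nonsplit}/\eqref{germs-split}, not the coefficient $C_2$ (resp.\ the relevant $\eta_{t^2-4}$-coefficient $C$), which is precisely what the maps \eqref{zerofiber}, \eqref{zerofibercirc} record.

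First I would make the scalar explicit. Comparing \eqref{germasMellin} with \eqref{germasMellin-circ}, both $[f]_0$ and $[f]^\circ_0$ are residues at $s=\tfrac12$ of the (appropriately $L$-normalized) Mellin transform of $\frac{f}{dt}$ evaluated at $\eta_{t^2-4}|\bullet|^s$. The subtlety is bookkeeping: on $\mathcal S(\bar X/\SL_2)^\circ$ one takes the residue of $\widecheck{\frac{f}{dt}}(\eta_{t^2-4}|\bullet|^s)$ directly, whereas on $\mathcal S(\bar X/\SL_2)$ one first multiplies by $L(\eta_{t^2-4},\tfrac12-s)^{-1}$ (to turn a possible double pole into a simple one, in the split case $\eta_{t^2-4}=1$) and then extracts the residue times the leading term $L^*(\eta_{t^2-4},-0)$. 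For an element of the subspace $\mathcal S(\bar X/\SL_2)^\circ$, viewed inside $\mathcal S(\bar X/\SL_2)$, the Mellin transform $\widecheck{\frac{f}{dt}}(\chi)$ is holomorphic at $\chi=|\bullet|^{1/2}$ (no pole coming from $L(\chi^{-1},\tfrac12)$), so the two prescriptions differ exactly by the factor relating $L^*(\eta_{t^2-4},-0)\cdot L(\eta_{t^2-4},\tfrac12-s)^{-1}$-type normalization to the bare residue — and by the functional equation of Tate's thesis (the $\lambda$-constant bookkeeping of \eqref{lambda}, \eqref{lambdaconstant}, and \eqref{FE}) this ratio is precisely $\gamma^*(\eta_{t^2-4},1,\psi)$. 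Concretely I would show: for $f\in\mathcal S(\bar X/\SL_2)$ and $g=h\cdot f\in\mathcal S(\bar X/\SL_2)^\circ$ (non-Archimedean) or the analogous germ identification (Archimedean), one has $[g]^\circ_0(t)=\gamma^*(\eta_{t^2-4},1,\psi)^{-1}[f]_0(t)$; since $f\mapsto[f]_0$ and $g\mapsto[g]^\circ_0$ are both surjective, this forces the displayed map to be a well-defined $A_\ad$-equivariant isomorphism. Equivariance is automatic since multiplication by $\gamma^*(\eta_{t^2-4},1,\psi)^{-1}$ is a function of $t$ alone, hence commutes with the (normalized) $A_\ad$-action, which acts only in the $c$-variable.

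For the basic vector statement I would compute both sides in the unramified non-Archimedean setting from \S\ref{ssbasicvectorsX}. By \eqref{Mellinbasic}, $\check f_{\bar X}(\chi)=L(\chi,-\tfrac{\check\alpha}{2},\tfrac12)\,\check f^\circ_{\bar X}(\chi)$, so the fiberwise Mellin transforms of $f_{\bar X}$ and $f_{\bar X}^\circ$ (in the $c$-variable, at fixed $t$) differ by exactly the factor $L(\eta_{t^2-4}|\bullet|^s,\tfrac12)$ that is stripped off when forming $[\,\cdot\,]_0$ versus $[\,\cdot\,]^\circ_0$; taking residues and applying \eqref{germasMellin}, \eqref{germasMellin-circ} and the relation $\gamma^*(\eta_{t^2-4},1,\psi)^{-1}=L(\eta_{t^2-4},1)\cdot(\text{ratio of the two normalizations})$, one reads off that $\gamma^*(\eta_{t^2-4},1,\psi)^{-1}[f_{\bar X}]_0 = L(\eta_{t^2-4},1)\,[f^\circ_{\bar X}]^\circ_0$. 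I expect the main obstacle to be purely a matter of normalization: getting the $\lambda$-constant, the $\AvgVol$-factors, and the leading-coefficient conventions of \eqref{germasMellin} to assemble cleanly into $\gamma^*(\eta_{t^2-4},1,\psi)$ (rather than its inverse, or off by $\Vol(T_t)$), which requires carefully threading \eqref{lambdaconstant} and the residue formulas \eqref{residue-nonsplit}, \eqref{residue-split}, \eqref{residue-circ} of Proposition \ref{VmodTimage} and Corollary \ref{MellinVcirc}. The Archimedean case needs the extra remark that, although no multiplier $h$ with $\check h=L(\bullet,\tfrac12)^{-1}$ exists, the germ spaces $\mathcal S(\bar X/\SL_2)_0$ and $\mathcal S(\bar X/\SL_2)^\circ_0$ are nonetheless identified because the definitions \eqref{zerofiber}, \eqref{zerofibercirc} only see the $C_2$-coefficient, which is untouched by the difference between the two ambient spaces.
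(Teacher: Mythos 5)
Your non-Archimedean argument and your treatment of the basic vectors follow the paper's route: compare the residue formulas \eqref{germasMellin} and \eqref{germasMellin-circ} through the completed Hecke algebra element $h$ with $\check h(\chi)=L(\chi,-\frac{\check\alpha}{2},\frac{1}{2})^{-1}$, which gives $[h\star f]^\circ_0 = L^*(\eta_{t^2-4},-0)^{-1}[f]_0$, and then use \eqref{Mellinbasic} for the fundamental-lemma statement. (One small point of bookkeeping: the scalar $\gamma^*(\eta_{t^2-4},1,\psi)^{-1}$ is not forced on you by the Tate functional equation; the computation naturally produces $L^*(\eta_{t^2-4},-0)^{-1}$, and one then multiplies by the harmless nonvanishing factor $L(\eta_{t^2-4},1)/\epsilon(\eta_{t^2-4},1,\psi)$ to put it in the stated form.)

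The genuine gap is in the Archimedean case. Your closing claim --- that $\mathcal S(\bar X/\SL_2)_0$ and $\mathcal S(\bar X/\SL_2)^\circ_0$ are identified ``because the definitions only see the $C_2$-coefficient, which is untouched by the difference between the two ambient spaces'' --- is not an argument. These germ spaces are, by definition, the \emph{images} of two different spaces of measures under the maps $f\mapsto [f]_0$ and $f\mapsto[f]_0^\circ$; what must be proved is that for every $f\in\mathcal S(\bar X/\SL_2)$ there exists $g\in\mathcal S(\bar X/\SL_2)^\circ$ whose germ function $t\mapsto \frac{C_t}{dc}(0)$ equals $L^*(\eta_{t^2-4},-0)^{-1}[f]_0(t)$ as a function of $t$, and conversely. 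A fiberwise appeal to Corollary \ref{MellinVcirc} at each fixed $t\ne\pm 2$ does not give this, because neither space is defined fiberwise in $t$ --- as emphasized in \S\ref{ssSym2}, the restriction to fibers gives no control of the behavior as $t\to\pm 2$, and $\mathcal S(\bar X/\SL_2)^\circ$ is cut out by a non-local condition on Mellin transforms over $A_\ad$. The paper fills this gap as follows: since no multiplier with Mellin transform $L(\chi,-\frac{\check\alpha}{2},\frac{1}{2})^{-1}$ exists over $\RR$ or $\CC$, one instead takes $h\in\widehat{\mathcal S(A_\ad)}$ whose Mellin transform has the \emph{same order of vanishing and leading coefficient} as $L(\chi,-\frac{\check\alpha}{2},\frac{1}{2})^{-1}$ at every $\chi=\eta\delta^{\frac{1}{2}}$ with $\eta$ quadratic; convolution by $h$ surjects $\mathcal S(\bar X_\ad)$ onto the space of elements whose Mellin transforms lie in $\mathscr E_\ad^\circ$ near those points; and then Lemma \ref{globalsections} (generation of $\mathscr E_\ad^\circ$ by its global Paley--Wiener sections) produces an honest element of $\mathcal S(\bar X_\ad)^\circ$ whose Mellin transform agrees with that of $h\star f$ at $\chi=\eta\delta^{\frac{1}{2}}$ --- which by Corollary \ref{MellinVcirc} is all the germ depends on. Without some version of this global input your Archimedean step does not close.
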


\begin{proof}
 The idea is, roughly, to define a map $\mathcal S(\bar X/\SL_2)\to \mathcal S(\bar X/\SL_2)^\circ$ which acts on Mellin transforms as multiplication by $L(\chi,-\frac{\check\alpha}{2}, \frac{1}{2})^{-1}$. This can be done, literally, in the non-Archimedean case, where $L(\chi,-\frac{\check\alpha}{2}, \frac{1}{2})^{-1}$ is the Mellin transform of an element $h$ in the completed Hecke algebra $\widehat{\mathcal S(A_\ad)}$; the convolution action of $h$ (under the normalizated action of $A_\ad$ on $\mathcal S(\bar X/\SL_2)$) defines a surjection (in fact, bijection):
 $$\mathcal S(\bar X/\SL_2)\xrightarrow\sim \mathcal S(\bar X/\SL_2)^\circ,$$
 which, in particular, satisfies
 $$ [h\star f]^\circ_0 = L^*(\eta_{t^2-4},-0)^{-1} [f]_0 $$
 
 Indeed, the Mellin transforms satisfy
 $$ \widecheck{h\star f}(\chi) = \check h(\chi) \check f(\chi),$$
 and by \eqref{germasMellin-circ}, \eqref{germasMellin} we have
 \begin{align*} [h\star f]^\circ_0(t) &=  - \frac{\Res_{s=\frac{1}{2}} \check h(\eta_{t^2-4}|\bullet|^s) \widecheck{\frac{f}{dt}}(\eta_{t^2-4}|\bullet|^s,t)}{\AvgVol(F^\times)} \\
 & =  - \frac{\Res_{s=\frac{1}{2}}  L(\eta_{t^2-4} , \frac{1}{2}-s)^{-1} \widecheck{\frac{f}{dt}}(\eta_{t^2-4}|\bullet|^s,t)}{\AvgVol(F^\times)} \\
 & = L^*(\eta_{t^2-4}, -0)^{-1} [f]_0(t).
  \end{align*}

At Archimedean places, we need to argue more carefully. Recall from \S \ref{ssSym2} that $\mathcal S(\bar X/\SL_2)^\circ$ is obtained as the push-forward of a space of measures on $\bar X_\ad= [\bar X/\{\pm 1\}]$ whose Mellin transforms are Paley--Wiener sections of a sheaf $\mathscr E_\ad^\circ$ over $\widehat{A_\ad}_\CC$. The bundle $\mathscr E_\ad^\circ$ was obtained from the sheaf $\mathscr E_\ad$ describing the Mellin transforms of elements of $\mathcal S(\bar X_\ad)$ by imposing a condition of simple zeroes at the poles of the $L$-function $L(\chi,-\frac{\check\alpha}{2}, \frac{1}{2})$. In particular, if $h\in \widehat{\mathcal S(A_\ad)}$ is any element of the completed Schwartz algebra (see \S \ref{ssmultipliers}) whose Mellin transform has the \emph{same} order and leading coefficient ($=L^*(\eta,-0)^{-1}$) as $L(\chi,-\frac{\check\alpha}{2}, \frac{1}{2})^{-1}$ at $\chi=\eta \delta^\frac{1}{2}$ for any quadratic character $\eta$, then convolution by $h$ defines a surjective map:
$$ \mathcal S(\bar X_\ad) \twoheadrightarrow \mathcal S(\bar X_\ad)',$$
where the space on the right is the subspace of those elements of $\mathcal S(\bar X_\ad)$ whose Mellin transform in small neighborhoods of the points $\chi=\eta \delta^\frac{1}{2}$, for all $\eta$ quadratic, belongs to the subsheaf $\mathscr E_\ad^\circ$.

By Lemma \ref{globalsections} the sheaf $\mathscr E_\ad^\circ$ is generated by its global (Paley--Wiener) sections. 
That means, in particular, that for every element $\Phi'\in \mathcal S(\bar X_\ad)'$ we can find an element $\Phi^\circ$ of (its subspace) $\mathcal S(\bar X_\ad)^\circ$ so that the Mellin transforms of $\Phi'$ and $\Phi^\circ$, as elements of the fiber of $\mathscr E_\ad^\circ$, coincide at $\chi=\eta \delta^\frac{1}{2}$ with $\eta$ quadratic, and vice versa. Let $f', f^\circ$ be the corresponding push-forwards in $\mathcal S(\bar X/\SL_2)$. As Corollary \ref{MellinVcirc} shows, the class $[f^\circ]_0^\circ$ depends only on the values of $\check \Phi^\circ(\chi)$ at $\chi=\eta \delta^\frac{1}{2}$, with $\eta$ ranging over all quadratic quaracters. Thus, extending the definition of $[\,\,]_0^\circ$ to the space $\mathcal S(\bar X_\ad)'$, and taking $f'= h\star f$, for any $f\in \mathcal S(\bar X/\SL_2)$, we deduce that the map
$$ [f]_0\to [f']^\circ_0= [f^\circ]^\circ_0$$
is an isomorphism
$$\mathcal S(\bar X/\SL_2)_0 \xrightarrow\sim \mathcal S(\bar X/\SL_2)^\circ_0,$$
and given as in the non-Archimedean case by multiplication by $L^*(\eta_{t^2-4},-0)^{-1}$.

Multiplying by the non-zero factor $\frac{L(\eta_{t^2-4},1)}{\epsilon(\eta_{t^2-4},1,\psi)}$, we obtain the isomorphism of the proposition.
 
The statement on basic vectors follows from the relation \eqref{Mellinbasic} between their Mellin transforms.
 
\end{proof}

\subsection{$\kappa$-orbital integrals}

Now we will interpret $\mathcal S(\bar X/\SL_2)_0$ in terms of \emph{$\kappa$-orbital integrals} for $\SL_2$. These are the usual orbital integrals for $\SL_2$ over a stable conjugacy class, twisted by a character $\kappa$ of the Galois cohomology group which parametrizes conjugacy classes in the given stable class, but since here there is at most one non-trivial character $\kappa$, we will use this letter just as a symbol for the following:

Let $\eta$ be a quadratic character, $T\subset \SL_2$ a torus associated to this quadratic character (in the sense that its splitting field is the quadratic field associated to $\eta$, or $\eta=1$ and $T$ is split), and let $\C_\eta$ be the image of $T(F)$ in $\C:=\Dfrac{\SL_2}{\SL_2}$; that is, $\C_\eta$ consists of the closure of the set of $t\ne \pm 2$ such $\eta_{t^2-4}=\eta$. Notice that here we are not using the notation $\C_\eta$ for an algebraic variety, but for a subset of $\C(F)$.

We let $\mathcal S^\kappa_\eta(\frac{\SL_2}{\SL_2})\subset \Meas(\C_\eta)$ denote the space of ``$\kappa$-twisted push-forward measures'', that is, the image of a twisted push-forward map
$$ \tr_!^{\eta,\kappa}: \mathcal S(\SL_2) \to \Meas(\C_\eta)$$
defined as follows: \label{defkappaorbital}

\begin{itemize}
 \item If $\eta=1$, then this is just the restriction of the push-forward map $\tr_!: \mathcal S(\SL_2) \to \Meas(\Dfrac{\SL_2}{\SL_2})$ to $\C_\eta$; its image $\mathcal S^\kappa_\eta(\frac{\SL_2}{\SL_2})$ consists of the restrictions of elements of $\mathcal S(\frac{\SL_2}{\SL_2})$ to $\C_\eta$, the closure of the set of hyperbolic conjugacy classes. 
 \item If $\eta\ne 1$, then every $t\ne \pm 2$ in $\C_\eta$ has two conjugacy classes in its preimage, and every class is represented by an element of the form $\begin{pmatrix} t & -c^{-1} \\ c \end{pmatrix}$.  We let $F_\eta$ be the function which is $+1$ on the conjugacy class where $\eta(c)=1$ (that is, where $c$ is a norm for the associated quadratic field) and $-1$ on the conjugacy class where $\eta(c)=-1$. Then we let $\tr_!^{\eta,\kappa} \varphi$ be the push-forward to $\C_\eta$ of the (restriction to the preimage of $\C_\eta$ of the) measure $\varphi\cdot F_\eta$.  
\end{itemize}

Notice that, by using the coordinate $c$ of $\Dfrac{\SL_2}{N}$ we have trivialized the torsor of conjugacy classes inside of any regular stable conjugacy class. This is the usual trivialization by the rational canonical form. We let $A_\ad$ act on the space $\mathcal S^\kappa_\eta(\frac{\SL_2}{\SL_2})$ by the character $\eta$.

\begin{proposition} \label{tokappa}
The map 
\begin{align}\nonumber \mathcal P_\kappa: f\mapsto [f]_0(t)\cdot \frac{\lambda(\eta_{t^2-4},\psi) }{\gamma^*(\eta_{t^2-4}, 1,\psi)} |t^2-4|^{\frac{1}{2}}dt  \\ 
=L^*(\eta_{t^2-4}, -0) \frac{|t^2-4|^{\frac{1}{2}}}{\gamma^*_{E_t}(1,\psi\circ \tr)} \Res_{s=\frac{1}{2}} L(\eta_{t^2-4} , \frac{1}{2}-s)^{-1} \check f(\eta_{t^2-4}|\bullet|^s,t) \label{Pkappa}
\end{align}
defines an $A_\ad$-equivariant isomorphism
$$\mathcal S(\bar X/\SL_2)_0 \xrightarrow\sim \bigoplus_\eta \mathcal S^\kappa_\eta(\frac{\SL_2}{\SL_2}) \otimes \delta^{\frac{1}{2}}.$$

Here, $\gamma^*(\eta_{t^2-4}, 1,\psi)$ denotes the leading term in the Laurent expansion of the function $\gamma(\eta_{t^2-4}, 1+s,\psi)$ at $s=0$, and $\gamma^*_{E_t}(1,\psi\circ \tr)$ is, similarly, the leading term for the local Dedekind gamma factor of $E_t=F(\sqrt{t^2-4})$ at $1$.

For $F$ non-Archimedean and unramified over the base field, with the symplectic form integral and residually non-vanishing, it sends the basic vector $f_{\bar X}\in \mathcal S(\bar X/\SL_2)$ (the image of the basic vector $1_{\bar X(\mathfrak o)} dx$, with $dx(\bar X(\mathfrak o))=1$) to the image of the unit element of the Hecke algebra of $\SL_2$ in $\bigoplus_\eta \mathcal S^\kappa_\eta(\frac{\SL_2}{\SL_2}) \otimes \delta^{\frac{1}{2}}$, and, more generally, for any $h$ in the unramified Hecke algebra of $\tilde G$, sends $h\cdot f_{\bar X}$ to the image of $\diag^*h$, where $\diag^*$ is the map of Hecke algebras $\mathcal H(\tilde G,\tilde K)\to \mathcal H(\SL_2, \SL_2(\mathfrak o))$ which under Satake isomorphism corresponds to pullback under the diagonal embedding of dual groups
$$  \PGL_2 \overset{\diag}{\hookrightarrow} \check{\tilde G}.$$

\end{proposition}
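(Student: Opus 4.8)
The plan is to identify the target space $\bigoplus_\eta \mathcal S^\kappa_\eta(\frac{\SL_2}{\SL_2})$ with the "zero-fiber" space $\mathcal S(\bar X/\SL_2)_0$ by going through the fiberwise picture of $\bar X$ over $Y = \SL(V) = \SL_2$, and then reading off the various gamma-factor normalizations from the identities already established in \S\ref{ssA2}. First I would recall that, by the proof of Theorem \ref{RSHankel}, the fiber of $\bar X$ over a point $g\in\SL(V)$ with $\tr(g)=t\ne\pm 2$ is $\tilde V_t = V\times^{T_t}\SL(V)$, where $T_t = \SO((V,c))$ is the special orthogonal group of the quadratic form $c(v) = -\omega(v,vg)$, of discriminant $d = -\frac14(t^2-4)$. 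Since the diagonal $\SL_2$-action on $\bar X$ becomes, fiberwise, the $\SL(V)$-action on $\tilde V_t$, the push-forward $f = \pi_!(\Phi\, dx)$ restricted to the fiber $t$ (divided by $dt$) lands in $\mathcal S([V/T_t])$ — this is exactly the "other model" of $\mathcal S([V/T])$ discussed before Lemma \ref{coinvariantA2}, with $T\backslash\SL_2$ there being the stable regular semisimple class in $\SL_2$ with trace $t$. The key translation is then Corollary \ref{corresidues}: the residue $\Res_{s=\frac12}\check f(\eta_{t^2-4}|\bullet|^s, t)$ of the Mellin transform in the $c$-variable equals $-\AvgVol(E_t^\times)$ times the $\kappa$-orbital integral $\int_{(T_t\backslash\SL_2)(F)}\Phi([0,g])\,\eta(g)\,dg$ (in the split case, the $\log$-coefficient plays the role of the $\kappa=1$ orbital integral, by \eqref{residue-split-group}). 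This is precisely the $\kappa$-orbital integral $\tr_!^{\eta,\kappa}\Phi$ evaluated at the class, up to the Jacobian $|t^2-4|^{\pm\frac12}$ coming from the change of variables between the coordinate $c$ on $\Dfrac{\SL_2}{N}$ and the orbital-integral measure, and up to the volume of $T_t$.

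The second step is bookkeeping of constants. Starting from the expression \eqref{germasMellin} for $[f]_0(t)$ in terms of $\Res_{s=\frac12} L(\eta_{t^2-4},\frac12-s)^{-1}\widecheck{\frac{f}{dt}}(\eta_{t^2-4}|\bullet|^s,t)$, I would substitute the identification of this residue with the $\kappa$-orbital integral from the previous paragraph, and then trace through: the factor $-\AvgVol(E_t^\times)$ from Corollary \ref{corresidues}; the relation \eqref{lambdaconstant} expressing $\lambda(\eta,\psi) = \gamma(\eta,1,\psi)\AvgVol(F^\times)/\AvgVol(E^\times) = 2\gamma(\eta,1,\psi)/\Vol(T)$; and the definition of $L^*$, $\gamma^*$ as leading Laurent coefficients. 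The point is that the ratio $\lambda(\eta_{t^2-4},\psi)/\gamma^*(\eta_{t^2-4},1,\psi)$ appearing in \eqref{Pkappa} is designed exactly so that, after multiplying by $|t^2-4|^{\frac12}dt$, the surviving normalization is the "clean" orbital-integral normalization $|t^2-4|^{\frac12}/\gamma^*_{E_t}(1,\psi\circ\tr)$ displayed in the second line of \eqref{Pkappa}; this is where the identity $\gamma^*_{E}(1,\psi\circ\tr) = \AvgVol(E^\times)^{-1}$ (the $E$-analog of \eqref{AvgVolFtimes}, used in \eqref{lambdaconstant}) gets invoked. Injectivity and surjectivity of $\mathcal P_\kappa$ then follow from the surjectivity of \eqref{zerofiber} together with the fact that, for each fixed $t$, every element of $\mathcal S([V/T_t])$ (equivalently every $\kappa$-orbital integral, by Lemma \ref{coinvariantA2} and Proposition \ref{VmodTimage}) is realized, and conversely the residue functional is the only obstruction — so the map on the $\eta\delta^{\frac12}$-eigenspace is an isomorphism onto $\mathcal S^\kappa_\eta(\frac{\SL_2}{\SL_2})\otimes\delta^{\frac12}$. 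The $A_\ad$-equivariance is immediate since $A_\ad$ acts on $c$ by $\delta$, hence on the $\eta$-isotypic residue by $\eta\delta^{\frac12}$ under the normalized action (matching the stated $\delta^{\frac12}$-twist), and by $\eta$ on $\mathcal S^\kappa_\eta$ by definition.

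The third step is the basic-vector / fundamental-lemma assertion. For $F$ non-Archimedean unramified, $f_{\bar X}$ is the image of $1_{\bar X(\mathfrak o)}dx$. Here I would combine the unfolding computation of Theorem \ref{thmbasicvector} (and its unnormalized variant \eqref{unfoldingbasicbarX}), which expresses $\bar\U f_{\bar X}$ via the generating function of $L(\otimes,0)$ pulled back under $m^*$, with the already-proved compatibility (Lemma \ref{convolution}) between $m_!$ on Hecke algebras and the map of dual groups; but in fact the cleaner route is to work directly with the $\kappa$-orbital integrals of $1_{\SL_2(\mathfrak o)}$. The point is that the intersection $\bar X(\mathfrak o)\cap\pi^{-1}(V^*\times V^*(\mathfrak o)) = X(\mathfrak o)$, so on the generic locus $1_{\bar X(\mathfrak o)}$ is the characteristic function of $X(\mathfrak o)$, whose orbital integrals over stable classes give the unit of the Hecke algebra $\mathcal H(\SL_2,\SL_2(\mathfrak o))$ — and by the definitions of $[\,\,]_0$ and of $\tr_!^{\eta,\kappa}$, the extra normalizations in $\mathcal P_\kappa$ are precisely those making the residue at the unramified point match (this uses the unramified computations behind \eqref{residue-nonsplit}, \eqref{residue-split} and the fact that $\lambda(\eta_{t^2-4},\psi) = 1$ when $E_t/F$ is unramified and $\psi$ has conductor $\mathfrak o$, except on the ramified locus where one gets the predicted $L(\eta_{t^2-4},1)$-type factor, cf.\ the last sentence of Proposition \ref{fibercomp}). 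The statement for $h\cdot f_{\bar X}$, $h\in\mathcal H(\tilde G,\tilde K)$, then follows formally: the action of $\mathcal H(\tilde G,\tilde K)$ descends, via $m_!$ and Lemma \ref{convolution}, to the action of $\mathcal H(G,K)$ on $\bar\U f_{\bar X}$, and the further projection to $\SL_2$-conjugacy classes (i.e.\ to $\Dfrac{\SL_2}{\SL_2}$) is exactly the map $\diag^*$ induced by $\PGL_2\overset{\diag}\hookrightarrow\check{\tilde G}$ on Satake parameters, since $m^*$ followed by projection to $\check G = \Gm\times\PGL_2$ followed by projection to the $\PGL_2$-factor is the diagonal embedding.

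I expect the main obstacle to be the constant-chasing in the second step: keeping the four or five different normalizations (the $\AvgVol$'s for $F$ and for $E_t$, the $\lambda$-constant, the leading Laurent coefficients $L^*$ and $\gamma^*$, and the Jacobian $|t^2-4|^{\pm\frac12}$ of the change of coordinates between $\Dfrac{\SL_2}{N}$ and the measure on the stable class) consistent, and in particular making sure the $\delta^{\frac12}$-twists on the two sides are normalized compatibly between the "normalized action" convention \eqref{actionnormagain-measures} and the push-forward conventions of \S\ref{sstwistedpf}. The structural content — that $\mathcal S(\bar X/\SL_2)_0$ fiberwise \emph{is} the space of $\kappa$-orbital integrals — is essentially forced by Corollary \ref{corresidues} and requires no new idea.
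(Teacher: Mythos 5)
Your proposal follows essentially the same route as the paper's proof: the map is the restriction to the zero section $\SL_2\subset\bar X$, identified with $[f]_0$ through Corollary \ref{corresidues} together with a computation of $\AvgVol(E_t^\times)$ for the symplectic measure on the fiber, and the Hecke-algebra statement reduces to Lemma \ref{convolution}. The one slip is your intermediate identity $\gamma^*_{E}(1,\psi\circ \tr)=\AvgVol(E^\times)^{-1}$: the correct relation, which is what places $\gamma^*_{E_t}$ in the denominator of \eqref{Pkappa}, is $\AvgVol(E_t^\times)=|t^2-4|^{-\frac{1}{2}}\,\gamma^*_{E_t}(1,\psi\circ\tr)$ up to sign (the leading coefficient itself, not its inverse), with the factor $|t^2-4|^{-\frac{1}{2}}$ arising from comparing the symplectic measure $|\omega|/|c|$ with the self-dual measure for $\psi\circ\tr$ rather than from the change of coordinates on $\Dfrac{\SL_2}{N}$.
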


\begin{proof}
 Recall that $\bar X \simeq V\times \SL(V)$. Fix a Haar measure $dg$ on $\SL(V)$, and let $dx$ be the measure on $\bar X$ which is its product with the measure $|\omega|$ defined by the symplectic form on $V$. Write $f=\Phi dx$ for an element of $\mathcal S(\bar X)$, where $\Phi\in \mathcal F(\bar X)$. The product $\Phi |\omega|$, restricted to the fiber $\bar X_g$ of the map $\bar X\to \SL(V)$ over a point $g$, is a Schwartz measure on that fiber, while the product $\Phi|_Y dg$ is a Schwartz measure on the zero section $Y=\SL(V)$. The restriction map $\Phi dx \to \Phi|_Y dg$ is an equivariant map
 $$ \mathcal S(\bar X) \to \mathcal S(\SL(V)) \otimes \delta^\frac{1}{2}$$
 which does not depend on the choice of $dg$. The map $\mathcal P_\kappa$ of the proposition will descend from that; we just need to compute the (surjective) composition of the arrows 
 $$ \xymatrix{ \mathcal S(\bar X) \ar[r] & \mathcal S(\SL_2) \otimes \delta^\frac{1}{2} \ar[r] & \bigoplus_\eta \mathcal S^\kappa_\eta(\frac{\SL_2}{\SL_2}) \otimes \delta^{\frac{1}{2}},
 }$$
 showing that it factors through the injective map from $\mathcal S(\bar X/\SL_2)_0$ given by \eqref{Pkappa}.
 
 Assume that $t=\tr(g)\ne \pm 2$, so that the coordinate $c$ of $\bar X\sslash \SL(V)$ defines a non-degenerate quadratic form on $\bar X_g$. As we saw in the proof of Theorem \ref{RSHankel}, the discriminant of this quadratic form with respect to the symplectic form is $d=-\frac{1}{4}(t^2-4)$. If we identify $\bar X_g$ with $\Res_{E_t/F}\Ga$, where $E_t= F(\sqrt{t^2-4})$ is the corresponding quadratic extension (possibly split) in such a way that the quadratic form $c$ corresponds to its norm, then I claim that the average volume of $E_t^\times$ (see \eqref{avgvol}) with respect to the measure $d^\times e = \frac{|\omega|}{|c|}$ is 
 \begin{equation}\label{AvgVol}
  \AvgVol(E_t^\times) = \begin{cases} |t^2-4|^{-\frac{1}{2}} \Res_{s=0} \gamma_E(1-s,\psi\circ \tr),& \mbox{ in the non-split case}; \\
                         |t^2-4|^{-\frac{1}{2}} \left(\Res_{s=0} \gamma(1-s,\psi)\right)^2,&\mbox{ in the split case.}
                        \end{cases}
 \end{equation}

 Indeed, let us compare the given volume of $E_t$ with the volume obtained by identifying $E_t=F\oplus F$ by means of a dual basis for the trace pairing $(x,y)\mapsto \tr(xy)$ which, by \cite[(2.26)]{SaBE2} is equal to 
 $$\Res_{s=0} \gamma_{E_t}(1-s,\psi\circ \tr)$$ 
 in the non-split case, where $\gamma_{E_t}(1-s,\psi\circ \tr)$ is the gamma-factor of the functional equation of the Tate zeta integral for the trivial character on $E_t$, with respect to a self-dual measure. It decomposes as in \eqref{lambda}
 $$ \gamma_{E_t}(1-s,\psi\circ \tr) = \lambda(\eta_{t^2-4},\psi)^{-1} \gamma(1-s,\psi) \gamma(\eta_{t^2-4}, 1-s,\psi),$$
 with $\Res_{s=0}\gamma(1-s,\psi)$ being, by the same result, the average volume of $F^\times$.
 In the split case, the average volume with respect to the self-dual measure for $\psi\circ \tr$ will be, by the same result, $\left(\Res_{s=0} \gamma(1-s,\psi)\right)^2$. 
 
 Now, as in the proof of  Proposition \ref{H02D}, the trace pairing can be related to the symplectic form by $\omega(x,y) = \tr(Dx\bar y)$, where $\bar y$ is the Galois conjugate of $y$ and $D$ is a traceless element such that the discriminant of the quadratic form $c$ is $-\frac{1}{4D^2}$, and given this we can compute that the average volume of $E_t^\times$ with respect to the symplectic volume is $\sqrt{|D^2|_F} = |t^2-4|^{-\frac{1}{2}}$ times the average volume with respect to the self-dual measure for the character $\psi\circ \tr$. This proves \eqref{AvgVol}.

 Now we apply Corollary \ref{corresidues}. Disintegrating the measure $dg$ against $dt = d(\tr)$, we obtain measures $d\dot g_t$ on the stable conjugacy classes. Let $f_t$ be the push-forward of the measure $\Phi d \dot g_t$ (for $t\ne \pm 2$) with respect to the function $c$. Let $a_t$ denote the leading coefficient of the Laurent expansion of $\check f_t(\chi \cdot |\bullet|^s)$ at $\chi=\eta_{t^2-4}$. Applying Corollary \ref{corresidues}, we have  
 $$a_t = \pm \AvgVol(E_t^\times) \cdot O_t^\kappa(\Phi|_Y),$$ 
 where $\Phi|_Y$ is the restriction of $\Phi$ to the zero section $Y\simeq \SL_2\subset \bar X$, and $O_t^\kappa(\Phi|_Y)$ is its ``$\kappa$-orbital integral'' of \eqref{residue-non-split-group}, taken against the measure $d\dot g_t$. The sign is positive in the split case (where the pole of $\check f_t(\chi \cdot |\bullet|^s)$ is of order two), and negative in the non-split case (where it is of order one).
 
 The product $a_t dt$ is thus equal to $\AvgVol(E_t^\times)$ times the image of $\Phi|_Y dg$ in $\bigoplus_\eta \mathcal S^\kappa_\eta(\frac{\SL_2}{\SL_2})$, up to a sign that depends on $t$. By Proposition \ref{VmodTimage}, the image of $\Phi dx$ in $\mathcal S(\bar X/\SL_2)_0$ is equal to 
\[[\Phi dx]_0(t) = \frac{a_t}{-\AvgVol (F^\times)} = \mp \frac{\AvgVol(E_t^\times) \cdot O_t^\kappa(\Phi|_Y)}{\AvgVol (F^\times)},\] 
 hence the image of $\Phi|_Y dg$ in $\bigoplus_\eta \mathcal S^\kappa_\eta(\frac{\SL_2}{\SL_2})$ is equal to:
  \begin{align*} [\Phi dx]_0(t) \cdot \frac{\mp \AvgVol(F^\times)}{\AvgVol(E_t^\times)} dt &= [\Phi dx]_0(t) \cdot \frac{\mp \Res_{s=0}\gamma(1-s,\psi)}{ |t^2-4|^{-\frac{1}{2}}{\gamma_{E_t}}^*_{s=0}(1-s,\psi\circ \tr)} dt \\
   & = [\Phi dx]_0(t) \cdot \frac{\lambda(\eta_{t^2-4},\psi) |t^2-4|^{\frac{1}{2}}}{\gamma^*(\eta_{t^2-4}, 1,\psi)} dt,
  \end{align*}
  where the signs on the first line are negative in the split case and positive in the non-split case, ${\gamma_{E_t}}^*_{s=0}$ denotes the leading term of the Laurent expansion at $s=0$. The second equality of the formula \eqref{Pkappa} follows from \eqref{germasMellin}. 
 
The normalized action of $A_\ad$  multiplies the measure on $V_\ad$ by $\delta^\frac{1}{2}$ and shifts the base point in the definition of $\tr_!^{\eta,\kappa}$; thus, it acts on the projection to $\mathcal S^\kappa_\eta(\frac{\SL_2}{\SL_2})$ by $\eta \delta^{\frac{1}{2}}$.

For the statement on basic vectors, take $\Phi = 1_{\bar X(\mathfrak o)}$, and assume that $dg(\SL_2(\mathfrak o))=1$. For unramified data, i.e., the symplectic form on $V$ being integral and residually non-vanishing, this means that $dx(\bar X(\mathfrak o))=1$, so the product $\Phi dx$ maps to the basic vector $f_{\bar X}$. Then, by the above, the measure $$[\Phi dx]_0(t) \cdot \frac{\lambda(\eta_{t^2-4},\psi) |t^2-4|^{\frac{1}{2}}}{\gamma^*(\eta_{t^2-4}, 1,\psi)} dt$$
is equal to the image of the unit element of the Hecke algebra in $\bigoplus_\eta \mathcal S^\kappa_\eta(\frac{\SL_2}{\SL_2}) \otimes \delta^{\frac{1}{2}}$ and, more generally, for any element $h$ in the Hecke algebra of $\tilde G$, the image of $h\cdot 1_{\bar X(\mathfrak o)} dx$ under \eqref{Pkappa} is equal to the image of the element
$(h\cdot 1_{\bar X(\mathfrak o)})|_{\SL_2} dg$, where by restriction to $\SL_2$ we mean restriction to the zero section $\bar X\smallsetminus X$. This is the image of $h$ under the push-forward map under $m: \tilde G \to \SL_2$ induced by $A\times\SL_2^2\ni (a,g_1,g_2)\mapsto g_1^{-1}g_2$, and by Lemma \ref{convolution}, this coincides with the restriction of the Satake transform of $h$ to the diagonal $\PGL_2\hookrightarrow \check{\tilde G}$.
\end{proof}

Now we will define a similar map 
$$ \mathcal S(\bar X/\SL_2)_0^\circ \xrightarrow{\sim} \bigoplus_\eta \mathcal S^\kappa_\eta(\frac{\SL_2}{\SL_2}) \otimes \delta^\frac{1}{2}.$$
It will \emph{not} be the map that is induced from $\mathcal P_\kappa$ under the embedding $\mathcal S(\bar X/\SL_2)^\circ \hookrightarrow \mathcal S(\bar X/\SL_2)$; such a map would be zero on hyperbolic conjugacy classes. Rather, it will be compatible with the map of Proposition \ref{fibercomp} between germs at zero, and such that the following diagram commutes:
\begin{equation}\label{tokappa-all} \xymatrix{ \mathcal S(\bar X/\SL_2)^\circ \ar@{<-->}[r]\ar@{<->}[d]^{\mathcal H_X^\circ}\ar@/^2pc/[rr]^{\mathcal P_\kappa^\circ} & \mathcal S(\bar X/\SL_2) \ar@{<->}[d]^{\mathcal H_X}\ar[r]^{\mathcal P_\kappa} & \bigoplus_\eta \mathcal S^\kappa_\eta(\frac{\SL_2}{\SL_2}) \otimes \delta^{\frac{1}{2}}, \\
\mathcal S(\bar X/\SL_2)^\circ \ar@{^{(}->}[r] & \mathcal S(\bar X/\SL_2) 
}\end{equation}
although $\mathcal H_X^\circ$ and $\mathcal H_X$ do not commute with the embedding of $\mathcal S(\bar X/\SL_2)^\circ$ in $\mathcal S(\bar X/\SL_2)$. The dotted arrow in this diagram does not represent an actual map between the spaces, but the isomorphism of Proposition \ref{fibercomp} between their germs at zero. Recalling the relation \eqref{relationbetweenH-Mellin} between the operators, we have:

\begin{theorem}\label{transfertokappa}
 The map 
 \begin{align}\nonumber \mathcal P_\kappa^\circ: f\mapsto & \left[f\right]^\circ_0(t) \cdot \lambda(\eta_{t^2-4},\psi) |t^2-4|^{\frac{1}{2}} dt \\
 & = \frac{\lambda(\eta_{t^2-4},\psi) |t^2-4|^{\frac{1}{2}}}{\gamma^*(1,\psi)} \Res_{s=\frac{1}{2}}  \check f(\eta_{t^2-4}|\bullet|^s,t)\label{Pkappacirc}
 \end{align}
 is an $A_\ad$-equivariant (for the normalized action) isomorphism:
 \begin{equation}\label{circisom}\mathcal S(\bar X/\SL_2)_0^\circ \xrightarrow{\sim}\bigoplus_\eta \mathcal S^\kappa_\eta(\frac{\SL_2}{\SL_2}) \otimes \delta^\frac{1}{2}
 \end{equation}
 which makes the diagram \eqref{tokappa-all} commute. 
 
 Its restriction to the $\eta$-summand satisfies the fundamental lemma for the Hecke algebra, for $F$ non-Archimedean and unramified over the base field, mapping the basic vector $f_{\bar X}^\circ$ to $L(\eta,1)^{-1}$ times the image of the unit element in the Hecke algebra of $\SL_2$, and similarly for its Hecke translates, as in Proposition \ref{tokappa}.
 
 Moreover, the transfer operator 
 \begin{equation}\label{Tkappa}\mathcal T_\kappa:=  \mathcal P_\kappa^\circ \circ \mathcal H_X^\circ \circ \bar{\U}^{-1}
 \end{equation}
 is an $A_\ad$-\emph{anti-equivariant} surjection (for the normalized action):
 $$\mathcal S^-_{L(\Sym^2,1)} (N,\psi\backslash G/N,\psi) \twoheadrightarrow \bigoplus_\eta \mathcal S^\kappa_\eta(\frac{\SL_2}{\SL_2}) \otimes \delta^\frac{1}{2}.$$
 The projection to the $\eta$-summand is an $(A_\ad,\eta\delta^{-\frac{1}{2}})$-equivariant quotient, given by the formula: 
\begin{equation}\label{tokappa-simple}
 \mathcal T_\kappa f(t) =  \int_c \int_x f \left( c, \frac{t}{x} \right) \eta(xct) \psi(x) dx 
\end{equation}
(when $t$ is such that $\eta_{t^2-4} = \eta$).

For any element $h$ of the unramified Hecke algebra of $\tilde G$, the image of $h\cdot f_{L(\Sym^2,1)}$ under $\mathcal T_\kappa$ in $\mathcal S^\kappa_\eta(\frac{\SL_2}{\SL_2}) \otimes \delta^\frac{1}{2}$ is equal to the image of $\frac{\zeta(1)\zeta(2)}{L(\eta,1)}\diag^* h$, where $\diag^*$ denotes the map of Hecke algebras defined in Proposition \ref{tokappa}. 

\end{theorem}

\begin{remarks} \begin{enumerate}
                 \item  For the \emph{unnormalized} action of $A_\ad$ the projection to the $\eta$-summand is $\eta$-equivariant, exactly as it should be for the functorial lift corresponding to the representation $\bar r$ of $L$-groups (as in the beginning of this section).
                 \item The theorem will be complemented with a statement on relative characters, see Proposition \ref{transfertokappa-characters} below.
                \end{enumerate}
\end{remarks}

\begin{proof}
 This is just a combination of results already proven: 
 
 By Proposition \ref{fibercomp}, division by $\gamma^*(\eta_{t^2-4},1,\psi)$ induces an isomorphism of quotient spaces:
 $$ \mathcal S(\bar X/\SL_2)^\circ_0 \to \mathcal S(\bar X/\SL_2)_0,$$
 and Proposition \ref{tokappa} proves the isomorphism \eqref{circisom}. 
 
  The second expression in \eqref{Pkappacirc}, in terms of a residue, follows from \eqref{germasMellin-circ}, using the equality 
  \[-\AvgVol(F^\times) = - \Res_{s=0}\gamma(1-s,\psi) = \gamma^*(1,\psi).\]

 For the commutativity of \eqref{tokappa-all}, apply the relation \eqref{relationbetweenH-Mellin} between the operators $\mathcal H_X$ and $\mathcal H_X^\circ$ to the formulas \eqref{Pkappa}, \eqref{Pkappacirc}.

 By Theorem \ref{thmsubspaceX} and the definition of $\mathcal S^-_{L(\Sym^2,1)} (N,\psi\backslash G/N,\psi)$, the map $\mathcal H_X^\circ \circ \bar{\U}^{-1}$ is an $A_\ad$-anti-equivariant isomorphism (with respect to the normalized action of $A_\ad$): 
 $$ \mathcal S^-_{L(\Sym^2,1)} (N,\psi\backslash G/N,\psi) \xrightarrow\sim \mathcal S(\bar X/\SL_2)^\circ.$$

 For the statement on basic vectors, we see from Theorems \ref{thmbasicvector} and \ref{thmbasicvectorX} that $\mathcal H_X^\circ\circ \bar{\U}^{-1}$ takes the basic vector $f_{L(\Sym^2,1)}$ to $\zeta(1)\zeta(2) f_{\bar X}^\circ$; moreover, since it descends from a $(\tilde G,\iota)$-equivariant transform (where $\iota$ is the involution on $\tilde G$ induced by inversion on $A$), it will also send $h\cdot f_{L(\Sym^2,1)}$ to $\zeta(1)\zeta(2) (\iota^*h)\cdot f_{\bar X}^\circ$, for every $h\in \mathcal H(\tilde G, \tilde K)$. Under the correspondence of Proposition \ref{fibercomp} between stalks at zero,  $f_{\bar X}^\circ$ corresponds to $L(\eta, 1)^{-1}$ times the basic vector $f_{\bar X}$, and by Proposition \ref{tokappa} and the commutativity of \eqref{tokappa-all}, it is mapped under $\mathcal P_\kappa^\circ$ to $\frac{\zeta(1)\zeta(2)}{L(\eta, 1)}$ times the basic vector of $\mathcal S^\kappa_\eta(\frac{\SL_2}{\SL_2})$. More generally, the operator $\mathcal P_\kappa^\circ$ carries $h\cdot f_{L(\Sym^2,1)}$ to the image of $\frac{\zeta(1)\zeta(2)}{L(\eta,1)} \diag^* h$ in $\bigoplus_\eta \mathcal S^\kappa_\eta(\frac{\SL_2}{\SL_2}) \otimes \delta^\frac{1}{2}$.

We can write an explicit formula for $\mathcal T_\kappa$, using the formulas \eqref{barUinverse}, \eqref{HankelXcirc-measures} for $\bar{\U}^{-1}$ and $\mathcal H_X^\circ$. Fix a quadratic character $\eta$ (possibly trivial), and assume that $t$ is such that $\eta=\eta_{t^2-4}$; then
\begin{multline*} \mathcal T_\kappa f (t) =  \mathcal P_\kappa^\circ \circ \mathcal H_X^\circ \circ \bar{\U}^{-1} f (t)=  \frac{\lambda(\eta,\psi) |t^2-4|^{\frac{1}{2}}}{\gamma^*(1,\psi)} \cdot \\
 \cdot \Res_{s=\frac{1}{2}} \left( \lambda(\eta,\psi)^{-1}  \left|\frac{c^2}{4-t^2}\right|^{\frac{1}{2}} \left( (\psi(\frac{1}{\bullet}) \eta(\bullet) d^\times\bullet) \star_1  \bar\U^{-1}f \right)((4-t^2)c^{-1}, t)\right)^\vee (\eta|\bullet|^s) \\
=\frac{1}{\gamma^*(1,\psi)} \cdot \Res_{s=0} 
\left( \left( (\psi(\frac{1}{\bullet}) \eta(\bullet) d^\times\bullet) \star_1  \bar\U^{-1}f \right)((4-t^2)c^{-1}, t)\right)^\vee (\eta|\bullet|^{s-\frac{1}{2}}).
\end{multline*}
Here, $\star_1 $ denotes multiplicative convolution in the first argument of the function $\bar\U^{-1}f$, but this function is then evaluated at $((4-t^2)c^{-1}, t)$, so eventually we will are looking at the Mellin transform of the function 
\[ (\psi(\frac{1}{\bullet}) \eta(\bullet) d^\times\bullet) \star_1  \bar\U^{-1}f\] at the inverse character. The variable $c$ is a ``dummy variable'' that serves as the argument of the Mellin transform.
Explicitly, this reads (recalling from \eqref{MellinVTdef} the normalization of Mellin transform):
$$\frac{\Res_{s=0} \int_c 
(\psi(\frac{1}{\bullet}) \eta(\bullet) d^\times\bullet) \star_1  \bar\U^{-1}f ((4-t^2)c^{-1}, t) \eta(c) |c|^{-s}}{\gamma^*(1,1,\psi)}=$$ 
$$ = \frac{\Res_{s=0} \int_c 
(\psi(\frac{1}{\bullet}) \eta(\bullet) d^\times\bullet) \star_1  \bar\U^{-1}f (c, t) \eta(c) |c|^s}{\gamma^*(1,1,\psi)}.$$

We can use the definiton of $\bar{\U}^{-1}$ to expand this completely to
\begin{align}\mathcal T_\kappa f(t) &= \frac{1}{\gamma^*(1,1,\psi)} \cdot \Res_{s=0}  \nonumber \\&
\int_c \int_z \int_x f\left(\frac{zx(4-t^2)}{ct}, \frac{t}{x}\right) \eta(c) |c|^{-s} \eta(zc) \psi(x+z) dx d^\times z  \nonumber \\
\label{explicit-tokappa}  & = \frac{\Res_{s=0} \int_c \int_z \int_x f\left(zc, \frac{t}{x}\right) |c|^s \eta(xzct) \psi(x+z) dx d^\times z}{\gamma^*(1,1,\psi)}
\end{align}
(given that $\eta(4-t^2)=1$),
or use the functional equation of Tate zeta integrals \eqref{FE}, to get
\begin{multline*} \frac{1}{\gamma^*(1,\psi)} \cdot \Res_{s=0} 
\gamma(1+s,\psi) \int_c \bar\U^{-1}f (c, t) \eta(c) |c|^s =   \int_c \bar\U^{-1}f (c, t) \eta(c) ,
 \end{multline*}
which, using the formula for $\bar{\U}^{-1}$, becomes
$$\int_c \int_x f \left( \frac{xc}{t}, \frac{t}{x} \right) \psi(x) dx \eta(c).$$
This proves \eqref{tokappa-simple}.

\end{proof}

\begin{remark}
 The simplicity of \eqref{tokappa-simple} is best understood through the commutativity of \eqref{tokappa-all}, and the fact that the map from the bottom-right $\mathcal S(\bar X/\SL_2)$ to $\mathcal S_\eta^\kappa(\frac{\SL_2}{\SL_2})\otimes\delta^\frac{1}{2}$ is essentially a twisted push-forward; see the proof of Proposition \ref{transfertokappa-characters}.
\end{remark}

\subsection{Transfer to tori} \label{sstransfer-tori}

We now combine this with the Labesse--Langlands theory of endoscopy \cite{Langlands-basechange, LL} for $\SL_2$, to get the transfer operator from the Kuznetsov formula to a torus. First, let me summarize the results of Langlands and Labesse:

We fix, as in Theorem \ref{thmVenkatesh}, a quadratic character $\eta$, the corresponding quadratic algebra $E$ over $F$, and the torus $T$,  the kernel of the norm map $\Res_{E/F}\Gm \to \Gm$, considered as a subgroup of $\SL_2\simeq \SL_F(\Res_{E/F}\Ga)$. In the non-Archimedean, unramified case, $F$ will be replaced by its ring of integers $\mathfrak o$ in these definitions. The $L$-group of $T$ has an $L$-embedding 
\begin{equation} \bar r: {^LT} \simeq \check T\rtimes \Gal(\bar F/F) \hookrightarrow {^LG}=\Gm \times \PGL_2 \times \Gal(\bar F/F),
\end{equation}
as in the beginning of this section, whose projection to the $\PGL_2$-factor factors through the Galois group of $E/F$, sends $\check T$ to the diagonal torus, and the non-trivial element of $\Gal(E/F)$ to the Weyl element $\begin{pmatrix} & 1 \\ 1 \end{pmatrix}$, if $T$ is non-split.

We let $\ell$ denote the isomorphism
$$ \ell: \mathcal S(T)_W =\mathcal S(T/W) \xrightarrow\sim \mathcal S(T)^W$$ whose inverse is \emph{$\frac{1}{2}$ times} the composition of $\mathcal S(T)^W \hookrightarrow \mathcal S(T)\to \mathcal S(T/W)$. In other words,  \emph{$ \ell$ preserves the measure on every measurable subset of $T$ on which the map to $T\sslash W$ is injective}. Explicitly, there is a Haar measure $d a$ on $T$ which, locally (that is, on subsets which injecto to $T\sslash W$), satisfies $d a = \frac{dt}{\sqrt{|t^2-4|}}$, and if we write an element $f\in \mathcal S(T/W)$ as $\Phi(t) dt$, we have
\begin{equation}
 \ell(f)(a) = \sqrt{|t(a)^2-4|} \Phi(t(a)) d a.
\end{equation}

\begin{theorem}\label{LLtheorem}
There is an isomorphism
$$ \mathcal T_{LL}:\mathcal S^\kappa_\eta(\frac{\SL_2}{\SL_2}) \to \mathcal S(T)^W$$
given by
$$ f \mapsto \ell\left(\frac{\lambda(E/F,\psi)}{\sqrt{|t^2-4|}} f\right),$$
such that for every unitary\footnote{``Unitary'' is automatic for $\eta\ne 1$, but the theorem includes the case of split tori.} character $\theta$ of $T$ we have
$$ \mathcal T_{LL}^* \theta = \Theta_\theta,$$
where:
\begin{itemize}
 \item in the split case, $\Theta_\theta$ is the character of the principal series representation obtained by unitary parabolic induction from $\theta$;
 \item in the non-split case, there is an $L$-packet (=restriction of an irreducible representation of $\GL_2$) of $\SL_2$, consisting of two (possibly reducible)\footnote{The representations $\pi_+, \pi_-$ are reducible if and only if $\theta=\theta^{-1}$ --- in which case the packet consists of $4$ irreducible elements. However, at no point will we need to distinguish between the irreducible components of $\pi_+$ and $\pi_-$.} representations $\pi_+$ and $\pi_-$ labelled so that $\pi_+$ admits a Whittaker model for the character $\begin{pmatrix} 1 & x \\ & 1  \end{pmatrix} \mapsto \psi^{-1}(x) $, such that $\Theta_\theta$ is equal to the difference of characters 
$$ \Theta_{\pi_+} - \Theta_{\pi_-}.$$ 
\end{itemize}

For $E/F$ unramified non-Archimedean, $\mathcal T_{LL}$ takes the basic vector of $\mathcal S^\kappa_\eta(\frac{\SL_2}{\SL_2})$ (the image of the unit element of the Hecke algebra) to the basic vector of $\mathcal S(T)$ (the unit of the Hecke algebra). Moreover, for every $h$ in the unramified Hecke algebra of $\SL_2$, it takes the image of $h$ to $\bar r^*h$, where $\bar r^*$, under Satake isomorphism, corresponds to pullback under the $L$-embedding $\bar r$.\end{theorem}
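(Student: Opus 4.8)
This theorem is a restatement, in the normalizations of the present paper, of the Labesse--Langlands description of the $L$-packets of $\SL_2$ and the associated transfer of orbital integrals \cite{LL} (see also \cite{Langlands-basechange}); accordingly, the plan is to recall their results and then carefully match all the normalizing constants. The first step is to prove that $\mathcal T_{LL}$ is a well-defined isomorphism, i.e.\ that (suitably normalized) $\kappa$-orbital integrals on $\SL_2$ are the same as $W$-invariant Schwartz functions on $T$. When $\eta=1$ there is nothing to do beyond unwinding definitions: $\mathcal S^\kappa_1(\frac{\SL_2}{\SL_2})$ is by construction the restriction of $\mathcal S(\frac{\SL_2}{\SL_2})=\mathcal S(T/W)$ to the hyperbolic locus $\C_1$ (a union of connected components of $\C(F)\smallsetminus\{\pm 2\}$ together with $\{\pm 2\}$), the constant $\lambda(F\oplus F/F,\psi)$ equals $1$, and $\ell$ is the tautological identification $\mathcal S(T/W)\xrightarrow\sim\mathcal S(T)^W$. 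For $\eta\ne 1$ the content is that $\Phi\mapsto O^\kappa_\bullet(\Phi)$ carries $\mathcal S(\SL_2)$ onto $\mathcal S(T)^W$; this is the $\SL_2$ case of Langlands--Shelstad transfer from \cite{LL}, which in rank one can also be seen directly from the Shalika germ expansion at $t=\pm 2$: unlike the individual orbital integrals, the $\kappa$-orbital integral extends smoothly across the singular set once multiplied by $|t^2-4|^{\frac12}$, and this factor is exactly the change of measure $dt\rightsquigarrow da$ effected by $\ell$. What needs verification here is that the Langlands--Shelstad transfer factor for this datum equals $\lambda(E/F,\psi)$: one computes $\Delta$ for $\SL_2$ and its elliptic torus $T$ explicitly --- in this rank-one situation it reduces to a ratio of abelian local constants, precisely the Langlands $\lambda$-factor of $E/F$ relative to $\psi$, cf.\ \eqref{lambda} and \cite{JL} --- together with the sign fixed in the next step.

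Next I would establish the character identity $\mathcal T_{LL}^*\theta=\Theta_\theta$. Dualized, this says that for $\Phi\in\mathcal S(\SL_2)$ with $\kappa$-orbital integrals $f\in\mathcal S^\kappa_\eta$ one has $\int_T \mathcal T_{LL}(f)\cdot\theta=\Theta_\theta(\Phi)$, where $\Theta_\theta$ is the character of $\Ind_B^{\SL_2}\theta$ (unitary induction) in the split case, and the virtual character $\Theta_{\pi_+}-\Theta_{\pi_-}$ in the elliptic case. This is precisely the endoscopic character identity that defines the packets $\{\pi_+,\pi_-\}$ in \cite{LL}, so the remaining issue is a compatibility of signs: the label ``$+$'' is attached to the member of the packet carrying a $\psi^{-1}$-Whittaker model, while the sign ``$+1$'' is attached, in the definition of $F_\eta$, to the conjugacy class on which the lower-left entry $c$ is a norm from $E$; these two choices must be the matching pair. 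This is the Whittaker normalization of the transfer factor (Kottwitz's sign), which in rank one can be confirmed by an explicit evaluation of the $\psi^{-1}$-Whittaker functional, or by comparison with the base-change identities of \cite{Langlands-basechange}.

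Finally, for $E/F$ unramified non-Archimedean, I would verify the Hecke-algebra assertions. That $O^\kappa_\bullet(1_{\SL_2(\mathfrak o)})$ equals the unit $1_{T(\mathfrak o)}$ of the Hecke algebra of $T$, in the chosen measure normalizations, is the rank-one fundamental lemma proved in \cite{LL} and obtainable by an elementary lattice count; the equivariance $\tr^{\eta,\kappa}_! h\mapsto \bar r^* h$ then follows by applying this to the generators $1_{\SL_2(\mathfrak o)\,\diag(\varpi^n,\varpi^{-n})\,\SL_2(\mathfrak o)}$ and comparing with the Satake isomorphism, or formally from the basic case together with the fact that $\bar r^*$ is an algebra homomorphism. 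This is what feeds, via Proposition~\ref{tokappa} and Theorem~\ref{transfertokappa}, into the Hecke-algebra compatibility of the composite transfer operator in Theorem~\ref{thmVenkatesh}.

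The genuine mathematical content --- the matching of orbital integrals and the endoscopic character identities --- is entirely contained in \cite{LL}; the main obstacle, and the chief source of potential error, is the consistent bookkeeping of the interlocking normalizations: the additive character $\psi$ and its self-dual measure; the Whittaker datum separating $\pi_+$ from $\pi_-$; the Haar measures (the measure $da$ on $T$ as the fibre measure of the norm map, the discriminant factor $|t^2-4|^{\frac12}$ built into $\ell$, the unramified normalization $dx(\bar X(\mathfrak o))=1$); and the trivialization of conjugacy classes inside a stable class by the rational canonical form. Arranging all of these so that the transfer factor comes out to be exactly $\lambda(E/F,\psi)$, with no stray sign or power of $2$, is the delicate point.
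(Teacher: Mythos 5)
Your proposal follows essentially the same route as the paper's proof: everything is reduced to \cite{LL} and \cite{Langlands-basechange} (Lemma 2.1 and Lemma 7.19, respectively), with the genuine work being the bookkeeping you identify --- the transfer factor coming out as $\lambda(E/F,\psi)$, the trivialization of conjugacy classes at $c=1$ rather than $c=-1$, the $\psi$ versus $\psi^{-1}$ Whittaker normalization (which the paper handles by conjugating by $\diag(1,-1)$), and the factor of $2$ and the measure $da$ hidden in $\ell$. One small caveat: the Hecke-algebra equivariance does not follow ``formally from the basic case together with $\bar r^*$ being an algebra homomorphism,'' since knowing the image of the unit does not determine the map on the whole Hecke algebra; but your other suggested route (direct computation on the generators, or, as the paper notes, deduction from the character identity once one knows which packets are unramified) is correct.
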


\begin{remark} 
The unramified Hecke algebra of $T$ is trivial ($\simeq \CC$) if $T$ is non-split; in that case, $\bar r^*h$ is the trace of $h$ on the principal series representation of $\SL_2$ induced from the unique non-trivial unramified character of its Cartan; this is the same as the integral $\int_{\SL_2} h F_\eta$, where $F_\eta$ is as in the definition of $\mathcal S^\kappa_\eta(\frac{\SL_2}{\SL_2})$ in \S \ref{defkappaorbital}.
\end{remark}

This formulation of the results of Labesse and Langlands  is certainly well-known to experts, but for the convenience of the reader I will outline its reduction to the statements of the original papers.

\begin{proof}
To compare with the results of \cite{Langlands-basechange, LL}, first we need to relate our trivialization of $\kappa$-orbital integrals (along the rational canonical  form $\begin{pmatrix} & -1 \\ 1 & t\end{pmatrix}$) with that used in \cite[Lemma 2.1]{LL}; the significant factor is the factor 
\begin{equation}\label{kappafactor} \kappa'\left(\frac{\gamma-\gamma}{\gamma_1^0 - \gamma_2^0}\right)
\end{equation}
of \emph{loc.cit.}, where $\kappa'$ is what we denote here by $\eta$, and the expression in brackets can be identified with 
$$ \frac{e-\bar e}{e^0 - \overline{e^0}},$$
where $e, e^0 \in T(F)\subset E^\times$, and $e^0$ fixed. The same factor appears in the definition of a certain function $\chi_\theta$ on $T$ in \cite[Lemma 7.19]{Langlands-basechange}, except that $\kappa'$ (our $\eta$) was there denoted by $\omega$. 

These factors depend on the choice of a base point $e^0\in T(F)$; it is easy to see that, if we choose the isomorphism of $\SL_F(\Res_{E/F}\Ga)$ with $\SL_2$, and a point $e^0\in T(F)$, in such a way that the lower-left entry of the matrix of $e^0$ is equal to $1$, these factors become equal to the restriction to $T$ of the function $F_\eta$ in our definition of $\kappa$-orbital integrals in \S \ref{defkappaorbital}. Thus, when we compare the statement of the above theorem with the results of \cite{Langlands-basechange, LL}, these factors will disappear, since we identified elements of $\mathcal S^\kappa_\eta(\frac{\SL_2}{\SL_2})$ as measures on $T\sslash W$ by trivializing regular stable conjugacy classes along the rational canonical  form as above. 

The statement on characters is equivalent to saying that, pulled back to the space $\frac{\SL_2}{N}$ with coordinates $(c,t)$ as in \S \ref{ssRSthespace}, the difference 
$\Theta_{\pi_+} - \Theta_{\pi_-}$ 
is equal to the function
$$ \begin{cases} 0, & \mbox{ if } F(\sqrt{t^2-4})\nsimeq E;\\
\eta(c) \lambda(E/F,\psi) \frac{\theta(\gamma) + \theta(\bar\gamma)}{\sqrt{|t^2-4|}}, & \mbox{ if } \gamma, \bar\gamma\in T\mbox{ are the two elements with }\tr = t.
   \end{cases}
$$

This is Lemma 7.19 of \cite{Langlands-basechange}, except that to fix the sign $\pm$ that appears at the statement of the lemma, one needs to look into the proof. Langlands' normalization is that $\pi_+$ has a Whittaker model for the upper triangular unipotent group with respect to the character $\psi$, not $\psi^{-1}$ as in our statement of the theorem. (The description of $\pi_+$ appears on p.\ 132 of \cite{Langlands-basechange}.) But the result on difference of characters, then, which appears in his equation (7.11), includes a factor of $\eta(-c)$ ($\omega(-c)$ in his notation), which means that the torsor of conjugacy classes inside of a stable one is trivialized at $c=-1$. Applying the automorphism of $\SL_2$ given by conjugation by the diagonal element $\diag(1,-1)$, we obtain the statement with Whittaker character $\psi^{-1}$ and trivialization at $c=1$. 

For a function $\Phi$ on $\SL_2$, the image of the measure $\Phi dg$ in $\mathcal S^\kappa_\eta(\frac{\SL_2}{\SL_2}) $ is equal to $O_t^\kappa(\Phi) \sqrt{|t^2-4|} dt$, where $O_t^\kappa(\Phi)$ is the $\kappa$-orbital integral restricted to values of $t$ with $F(\sqrt{t^2-4})=E$, and taken with a measure on the conjugacy classes that matches $dg$ (in the sense that it disintegrates it against $\sqrt{|t^2-4|} dt$).
 The statement about transfer to $\mathcal S(T)^W$ is then \cite[Lemma 2.1]{LL}, which states that a certain function $\gamma\mapsto \Phi^T(\gamma,\Phi)$ extends to a smooth function on $T$. That function, using our $\kappa$-orbital integral which is based at $c=1$ hence absorbing the factor \eqref{kappafactor} of \cite{LL} is the function 
$$ \lambda(E/F,\psi) \sqrt{|t^2-4|} O^\kappa_t(\Phi).$$
Thus, the measure $e\mapsto \lambda(E/F,\psi) \sqrt{|\tr(e)^2-4|} O^\kappa_{\tr e}(\Phi) de,$
where $de$ is any Haar measure on $T$, will live in $\mathcal S(T)$, and it is evidently $W$-invariant. Its push-forward to $T\sslash W$, one can check, is the measure $\lambda(E/F,\psi) O^\kappa_{t}(\Phi) dt = 2 \lambda(E/F,\psi) \frac{f(t)}{\sqrt{|t^2-4|}}$,  for the choice of $de$ which is locally (in neighborhoods mapping isomorphically onto their image in $T\sslash W$) equal to $\frac{dt}{\sqrt{|t^2-4|}}$. Thus, the ``pullback'' of  $\frac{\lambda(E/F,\psi) f(t)}{\sqrt{|t^2-4|}}$ to $T$ (in the sense of the lift $\ell$) is indeed an element of $\mathcal S(T)^W$. The fact that we obtain all elements of $\mathcal S(T)^W$ is implicit in \cite{LL}, and easy to see. (One can produce many elements out of one by multiplying by a smooth function on $T\sslash W$.)

The fundamental lemma, i.e., the statement about basic vectors, is proven in \cite{LL} before Lemma 2.4, and the equivariance with respect to the Hecke algebra follows either by a similar direct calculation, or by the statement on characters (once it is known, in the non-split case, that only for the trivial character $\theta$ and only when $E/F$ is unramified is the packet $\{\pi_+, \pi_-\}$ unramified, corresponding to the principal series of the non-trivial unramified character). 
\end{proof}

Having introduced the virtual character $\Theta_\theta$, we backtrack for a moment to complement the statement of Theorem \ref{transfertokappa} with a statement about characters:

\begin{theorem}\label{twistedRudnick}
For $\Theta_\theta$ as above, its evaluation at $c=1$, understood as a function in one variable $t$, satisfies
 $$ (\mathscr F_{\Id,1}\circ \eta)^* \Theta_\theta|_{c=1} = J_{\pi^+},$$
  where $J_\Pi = J_{\pi_+}$ is the Kuznetsov relative character (``Bessel distribution'') of the $L$-packet associated to $\Theta_\theta$ (or, equivalently, its $(N,\psi^{-1})$-generic element $\pi_+$), and the composition of Fourier convolution $\mathscr F_{\Id, 1}$ with multiplication by $\eta$ is applied to the space $\mathcal S(N,\psi\backslash \SL_2/N,\psi)$ of standard test measures for the Kuznetsov formula of $\SL_2$.
\end{theorem}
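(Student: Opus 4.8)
The plan is to identify both sides of the claimed identity as objects living on the space $\mathcal S(N,\psi\backslash\SL_2/N,\psi)$ of standard Kuznetsov test measures for $\SL_2$, and to prove the identity by matching their Mellin transforms (in the coordinate $\zeta$, i.e.\ under the $A$-action on $N\backslash\SL_2\sslash N$) against the relative character $J_{\pi_+}$. First I would unwind the left-hand side: by Theorem~\ref{LLtheorem}, $\Theta_\theta$ restricted to $c=1$ is, up to the factor $\eta(c)\lambda(E/F,\psi)|t^2-4|^{-1/2}$ times $\theta(\gamma)+\theta(\bar\gamma)$, precisely the $\kappa$-orbital-integral incarnation of $\mathcal T_{LL}^{-1}$ applied to the element of $\mathcal S(T)^W$ attached to $\theta$; more operationally, $\Theta_\theta|_{c=1}$ as a measure in $t$ is the stable/$\kappa$ version of the virtual character of the packet $\{\pi_+,\pi_-\}$, evaluated along the rational canonical form. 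So the composite $(\mathscr F_{\Id,1}\circ\eta)$ applied to $\mathcal S(N,\psi\backslash\SL_2/N,\psi)$ first multiplies a standard Kuznetsov test measure by the quadratic character $\eta$ in the $\zeta$-variable, then applies the degenerate-Whittaker Fourier convolution $\mathscr F_{\Id,1}$ along the identity cocharacter of $A\subset\Aut(N\backslash\SL_2\sslash N)$; the resulting measure in the single variable $t=\zeta$ is what must be shown to equal $\Theta_\theta|_{c=1}$.

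The key steps, in order, would be: (1) reinterpret the content of Theorem~\ref{transfertokappa}, namely that $\mathcal T_\kappa=\mathcal P_\kappa^\circ\circ\mathcal H_X^\circ\circ\bar U^{-1}$ carries $\mathcal S^-_{L(\Sym^2,1)}(N,\psi\backslash G/N,\psi)$ onto $\bigoplus_\eta\mathcal S^\kappa_\eta(\tfrac{\SL_2}{\SL_2})\otimes\delta^{1/2}$ by the explicit formula \eqref{tokappa-simple}, and combine it with the factorization \eqref{relationbetweenH}–\eqref{relationbetweenH-Mellin} relating $\mathcal H_X$ and $\mathcal H_X^\circ$; (2) use the proof of Theorem~\ref{Rudnickpt3} (the transfer operator $\mathcal T_{\SL_2}=\mathscr F_{\Id,1}$ between $\mathcal S^-_{L(\Ad,1)}(N,\psi\backslash\SL_2/N,\psi)$ and $\mathcal S(\tfrac{\SL_2}{\SL_2})$) together with the $(\Gm,\chi)$-equivariant descent of Proposition~\ref{propdescent}/Proposition~\ref{descent-general}, which shows that pushing $\bar U^{-1}$ forward modulo $A_\ad$ at the relevant character yields exactly a Fourier convolution $\mathscr F_{\Id,\dots}$ composed with a twisted push-forward; (3) track the extra quadratic twist: the $\kappa$-summand with character $\eta$ carries the factor $\eta(c)$ (equivalently $F_\eta$), which is precisely what distinguishes the map to $\mathcal S^\kappa_\eta$ from the plain push-forward to $\mathcal S(\tfrac{\SL_2}{\SL_2})$, and this is the source of the $\circ\,\eta$ in the statement; (4) finally, match against relative characters: by \eqref{Hankel-sym2-characters} and the endoscopic character identities of Theorem~\ref{LLtheorem}, the functional $\int\bullet\cdot\Theta_\theta|_{c=1}$ pulls back under $\mathscr F_{\Id,1}\circ\eta$ to the functional $\int\bullet\cdot J_{\pi_+}$ on $\mathcal S(N,\psi\backslash\SL_2/N,\psi)$, because both are uniquely characterized (by an $A$-equivariance/multiplicity-one argument) as the $A$-eigen-decomposition whose eigenvalues are the Satake/Mellin parameters of the packet twisted by the appropriate gamma factor $\gamma(\pi,\Ad,\tfrac12,\psi)$, which is trivial at the degenerate level and reduces to the standard Tate gamma factor $\gamma(s,\psi)$ hidden in $\mathscr F_{\Id,1}$.

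Concretely, I would prove the identity of functionals rather than of measures: for every $\varphi\in\mathcal S(N,\psi\backslash\SL_2/N,\psi)$ show $\int_{N\backslash\SL_2\sslash N}(\mathscr F_{\Id,1}\circ\eta)(\varphi)\cdot\Theta_\theta|_{c=1}=\int_{N\backslash\SL_2\sslash N}\varphi\cdot J_{\pi_+}$, using the $\mathscr F_{\Id,1}$-self-adjointness (up to $\psi\mapsto\psi^{-1}$ and a harmless volume constant) and the fact that $\mathscr F_{\Id,1}^*J_{\pi_+}$ is computed by the local functional equation of the standard Tate integral attached to the Satake parameter of $\pi_+$ — which, combined with the $\eta$-twist, reproduces exactly the spectral expansion of the stable/$\kappa$ virtual character $\Theta_\theta$ via the Labesse–Langlands identity. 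Here $J_{\pi_+}$ is the Bessel distribution, whose Mellin transform along $A$ is a ratio of $L$-values / a product of gamma factors determined by the parameter of the packet, and $\Theta_\theta|_{c=1}$, regarded as a measure in $t$ through the coordinate change $t=\zeta$, has Mellin transform governed by the same parameter by Weyl integration on $T$.

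The main obstacle I expect is the careful bookkeeping of \emph{normalizations and signs}: reconciling the Whittaker-character convention ($\psi$ vs.\ $\psi^{-1}$) between the definition of $\pi_+$ in Theorem~\ref{LLtheorem} and the $(N,\psi)$-conventions in the unfolding/Hankel formalism; the placement of the base point of the conjugacy-class torsor at $c=1$ (which absorbs the factor \eqref{kappafactor}); the factors $\lambda(E/F,\psi)$, $|t^2-4|^{1/2}$, $\gamma^*(1,\psi)$, and the $\delta^{1/2}$-twists appearing in $\mathcal P_\kappa^\circ$, $\mathcal T_{LL}$, and Remark~\ref{denstomeas}; and ensuring the $\Gm$-equivariance exponents line up so that the net operator is literally $\mathscr F_{\Id,1}\circ\eta$ with no stray scalar. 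Once the identity is reduced to a statement about $A$-Mellin transforms, the remaining work is verifying that these constants all cancel; the conceptual content is already supplied by Theorems~\ref{transfertokappa}, \ref{Rudnickpt3}, and \ref{LLtheorem}, and by the characterization of $J_{\pi_+}$ and $\Theta_\theta$ via the packet parameter together with the local functional equation of Tate's integral.
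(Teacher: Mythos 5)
There is a genuine gap, concentrated in your step (4), which is where all the content of this theorem lives. The paper's proof is a short formal argument: for $f\in\mathcal S(N,\psi\backslash\SL_2/N,\psi)$ one forms the \emph{regularized} pairing $\left<f,\Theta_{\pi_+}\right>=\int_{\frac{\SL_2}{N}}\Theta_{\pi_+}\cdot(f\times dn)$ of the character against the $(N,\psi)$-equivariant lift of $f$; by the argument of part \eqref{two} of Theorem \ref{thmRudnick} this equals $\int J_{\pi_+}\cdot f$, and --- crucially --- because $\pi_-$ is \emph{not} $(N,\psi^{-1})$-generic, the same pairing is unchanged if $\Theta_{\pi_+}$ is replaced by the virtual character $\Theta_\theta=\Theta_{\pi_+}-\Theta_{\pi_-}$. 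Then the support of $\Theta_\theta$ on the image of $T(F)$ and its $\eta(c)$-equivariance convert $\left<f,\Theta_\theta\right>$ into $\int_F\Theta_\theta(t)\bigl(\int_{F^\times}\eta(c)\frac{f\times dn(c,t)}{dt}\bigr)dt$, and the inner integral is exactly $(\mathscr F_{\Id,1}\circ\eta)(f)$. Your proposal never isolates either of the two load-bearing facts: (i) the identity between the regularized orbital pairing of a character over $\frac{\SL_2}{N}$ and the Bessel distribution, and (ii) the vanishing of the $\pi_-$ contribution by non-genericity, which is the only reason the Kuznetsov side sees the endoscopic combination $\Theta_{\pi_+}-\Theta_{\pi_-}$ rather than the stable one.

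In place of these you propose to characterize $J_{\pi_+}$ and $\Theta_\theta|_{c=1}$ by an ``$A$-equivariance/multiplicity-one'' argument and to compute $\mathscr F_{\Id,1}^*J_{\pi_+}$ by ``the local functional equation of the standard Tate integral attached to the Satake parameter of $\pi_+$.'' This would not work: the Bessel distribution $J_{\pi_+}$ is not an $A$-eigendistribution, its Mellin transform is not a ratio of $L$-values, and there is no Tate functional equation computing $\mathscr F_{\Id,1}^*J_{\pi_+}$ --- the relation $\mathscr F_{\Id,1}^*\Theta=J$ between characters and Bessel distributions is precisely the nontrivial content being proved, not an input. Separately, routing the argument through Theorem \ref{transfertokappa} and the $\mathcal H_X^\circ\circ\bar U^{-1}$ machinery risks circularity: the character identity for that transfer operator is Proposition \ref{transfertokappa-characters}, whose proof in the paper \emph{cites} the present theorem. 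Your reduction to an identity of functionals $\int(\mathscr F_{\Id,1}\circ\eta)(\varphi)\cdot\Theta_\theta|_{c=1}=\int\varphi\cdot J_{\pi_+}$ is the right formulation, and your attention to the $\psi$ vs.\ $\psi^{-1}$ and base-point conventions is well placed, but the mechanism you offer for establishing that identity must be replaced by the character-to-Bessel unfolding plus the non-genericity of $\pi_-$.
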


The Fourier convolution should be understood again as the Fourier transform of a distribution.

\begin{proof}
 The proof is identical to the proof of statement \eqref{two} of Theorem \ref{thmRudnick}, so I will only present the formal argument. Notice that the only difference to that statement is the appearance of the character $\eta$ in the Fourier convolution, and the replacement of the stable character $\Theta_\Pi$ by the endoscopic character $\Theta_\theta$.
 
 As in the proof of Theorem \ref{thmRudnick}, let $f\in \mathcal S((N,\psi)\backslash \SL_2/(N,\psi))$, and $f\times dn$ its pullback to $\frac{\SL_2}{N}$, an $(N,\psi)$-equivariant measure (under left or right translation) on the set of $N$-conjugacy classes on $\SL_2$. The pairing (which should be understood as a regularized integral, as in the proof of Theorem \ref{thmRudnick})
 $$ \left< f, \Theta_{\pi_+}\right> := \int_{\frac{\SL_2}{N}} \Theta_{\pi_+}(g) f\times dn (g)$$
 is equal to the analogous pairing 
 $$\left< f, \Theta_\Pi\right>$$
 with the stable character, since $\pi_-$ is not generic (a fact that was used in the proof of Theorem \ref{thmRudnick}),
 or to 
 $$ \left< f, \Theta_\theta\right>,$$
 for the same reason. On the other hand, as was explained in the proof of that theorem, it is also equal to 
 $$\int_{N\backslash G/N} J_{\pi_+} \cdot f.$$
 
 But $\Theta_\theta$ is only supported on the values of $t$ which are contained in the image of $T(F)$ in $T\sslash W$, and is $\eta$-equivariant in the $c$-variable. Thus, we have, formally,
 $$\left< f, \Theta_\theta\right>=  \int_F \Theta_\theta(t) \left(\int_{F^\times} \eta(c) \frac{f\times dn(c,t)}{dt}  \right) dt.$$
 
 The identification of the inner integral with $\mathscr F_{\Id, 1}\circ \eta$ now follows as in \eqref{rshriek}; hence, $ (\mathscr F_{\Id,1}\circ \eta)^* \Theta_\theta =  J_{\pi_+}$. 
\end{proof}

In \S \ref{GGPS} I will present a derivation of the formula for stable characters of $\SL_2$ based on the proposition above.

\begin{proposition}\label{transfertokappa-characters}
The pullback of $\Theta_\theta \in \left(\mathcal S^\kappa_\eta(\frac{\SL_2}{\SL_2}) \otimes\delta^{\frac{1}{2}}\right)^*$ under the transfer operator $\mathcal T_\kappa$ of Theorem \ref{transfertokappa} is equal to 
$J_\Pi,$
where $\Pi$ is the $L$-packet associated to $\theta$.
\end{proposition}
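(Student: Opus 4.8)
The plan is to reduce the statement about relative characters to the already-established functional equation \eqref{Hankel-sym2-characters} for $\mathcal H_{\Sym^2}$ (i.e.\ the theorem that $\mathcal H_{\Sym^2}^* J_\pi = \gamma(\pi,\Sym^2,\tfrac12,\psi) J_\pi$), combined with the definition $\mathcal T_\kappa = \mathcal P_\kappa^\circ\circ\mathcal H_X^\circ\circ\bar U^{-1}$ and a precise understanding of how the residue operator $\mathcal P_\kappa^\circ$ ``picks out'' a relative character on the torus side. The key point is that $\mathcal T_\kappa$ factors, up to the residue at $s=\tfrac12$ in the Tate/Mellin variable, through the $(\Gm,\eta)$-equivariant integral of Theorem \ref{thmSym2} (equivalently the twisted push-forward $p_\chi$ of Proposition \ref{propdescent}) evaluated near its pole; so pulling back $\Theta_\theta$ along $\mathcal T_\kappa$ amounts to integrating a relative character against the residue of a family $p_s f$, and this residue is governed by the poles of the symmetric-square $L$-function — precisely where the functorial lift $\bar r: {}^LT\to {}^LG$ sees the representation $\Pi$.

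Concretely, I would proceed as follows. First, spell out the pullback: for $f\in\mathcal S^-_{L(\Sym^2,1)}(N,\psi\backslash G/N,\psi)$, by \eqref{tokappa-simple} we have $\mathcal T_\kappa f(t) = \int_c\int_x f(c,\tfrac{t}{x})\eta(xct)\psi(x)\,dx\,\eta(c)$ (for $t$ with $\eta_{t^2-4}=\eta$), so $\langle \mathcal T_\kappa f,\Theta_\theta\rangle = \int_{\C_\eta}\Theta_\theta(t)\,\mathcal T_\kappa f(t)$. Using the remark after Theorem \ref{transfertokappa} that the map to $\mathcal S^\kappa_\eta(\frac{\SL_2}{\SL_2})\otimes\delta^{1/2}$ is ``essentially a twisted push-forward'', I would rewrite this integral, via the commutativity of \eqref{tokappa-all} and the explicit residue formula for $\mathcal P_\kappa^\circ$ in \eqref{Pkappacirc}, as the residue at $s=0$ of a Tate-type zeta integral $\int_c \bar\U^{-1}(\mathcal H_X^\circ f)(c,t)\,\eta(c)|c|^s$ paired against $\Theta_\theta$ in the $t$-variable. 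Second, I would compare this with the defining pairing $\langle\varphi, W_\tau\rangle = \int_Z \U(\varphi) W_\tau$ from \S\ref{ssHankelrelchars}: the relative character $J_\Pi$ for the Kuznetsov formula of $G$ restricts/pulls back, as in the proof of the theorem on \eqref{Hankel-sym2-characters}, from the $\GL_2\times_{\Gm}\GL_2$ Whittaker functional; and the endoscopic character $\Theta_\theta$ of $\SL_2$, via Theorem \ref{LLtheorem} and Theorem \ref{twistedRudnick}, is exactly the object whose pullback under $\mathscr F_{\Id,1}\circ\eta$ is $J_{\pi_+}$. Third, I would observe that the pole of the Tate integral at $s=0$ detects precisely the constituents $\pi$ of the unfolded space on which $L(\pi,\Sym^2,s)$ has a pole at $s=0$ — and by the functorial-lift description \eqref{Lgps}, \eqref{Lgps} these are exactly the $\pi$ in the image of $\bar r$, i.e.\ the lift of $\theta$, yielding $J_\Pi$ on the nose (the normalizing constants $\lambda(E/F,\psi)$, $\gamma^*$, etc.\ are arranged in \eqref{Pkappacirc} to make the match exact, and can be checked on the basic vector using Theorems \ref{thmbasicvector}, \ref{thmbasicvectorX}, \ref{transfertokappa}, \ref{LLtheorem}).

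Alternatively — and this is probably the cleanest route — I would argue purely formally by equivariance and multiplicity one: the transfer $\mathcal T_\kappa$ is $(A_\ad,\eta\delta^{-1/2})$-equivariant and surjective, so $\mathcal T_\kappa^*\Theta_\theta$ is a $G$-equivariant (Kuznetsov-type) functional on $\mathcal S^-_{L(\Sym^2,1)}(N,\psi\backslash G/N,\psi)$ supported, by construction, on the spectral locus cut out by the pole of $L(\Sym^2,\cdot)$ at $0$, which over the $\eta$-stratum is the single $L$-packet $\Pi$ lifted from $\theta$; hence $\mathcal T_\kappa^*\Theta_\theta$ is a scalar multiple of $J_\Pi$, and the scalar is pinned down to $1$ by testing on the basic vector $f_{L(\Sym^2,1)}$ (using that $\mathcal T_\kappa$ sends $h\cdot f_{L(\Sym^2,1)}$ to $\tfrac{\zeta(1)\zeta(2)}{L(\eta,1)}\diag^* h$, and that $\mathcal T_{LL}$ of Theorem \ref{LLtheorem} matches the unramified $L$-factors).

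The main obstacle I anticipate is the bookkeeping of normalizations across the chain $\bar\U^{-1}$, $\mathcal H_X^\circ$, $\mathcal P_\kappa^\circ$, $\mathcal T_{LL}$: each step carries half-density twists, $\lambda$-factors, average-volume constants, and the distinction between normalized and unnormalized $A_\ad$-actions, and the claim is that all of these conspire so that the pullback of $\Theta_\theta$ is \emph{exactly} $J_\Pi$ with no residual scalar. I would handle this by computing everything once and for all on the unramified basic vector, where Theorems \ref{thmbasicvector} and \ref{thmbasicvectorX} give $\mathcal H_X^\circ\circ\bar U^{-1}(f_{L(\Sym^2,1)}) = \zeta(1)\zeta(2) f_{\bar X}^\circ$, Theorem \ref{transfertokappa} gives the image of this under $\mathcal P_\kappa^\circ$ as $\tfrac{\zeta(1)\zeta(2)}{L(\eta,1)}$ times the Hecke unit, and Theorem \ref{LLtheorem} pushes this to the Hecke unit of $T$, whose pairing against $\theta$ reproduces the value of $J_\Pi$ on $f_{L(\Sym^2,1)}$ (the latter being, up to the same zeta factors, the local unramified value $L(\Pi,\Sym^2,\tfrac12)$-type Dirichlet series paired with $\theta$). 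Everything else then follows by Hecke-equivariance and meromorphic continuation in the family of $\pi$'s.
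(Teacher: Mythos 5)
Your proposal assembles most of the right ingredients (the factorization $\mathcal T_\kappa=\mathcal P_\kappa^\circ\circ\mathcal H_X^\circ\circ\bar U^{-1}$, the commutative diagram \eqref{tokappa-all}, Theorem \ref{twistedRudnick}, the basic-vector computations), but the crucial middle step is missing in both of your routes. In the first route, after rewriting $\langle\mathcal T_\kappa f,\Theta_\theta\rangle$ as a residue of a Tate-type integral, you conclude by asserting that ``the pole at $s=0$ detects precisely the constituents $\pi$ in the image of $\bar r$, yielding $J_\Pi$ on the nose.'' That is a spectral heuristic, not an argument: at best it localizes the functional to the dihedral spectrum attached to $E$, and it does not explain how the specific character $\theta$ (rather than the torus $T$) enters, nor why the resulting distribution is \emph{exactly} $J_\Pi$ rather than a multiple. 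The mechanism the paper uses here is geometric, not spectral: by \eqref{tokappa-all}, $\mathcal P_\kappa^\circ\circ\mathcal H_X^\circ$ agrees with $\mathcal P_\kappa\circ\mathcal H_X$, which descends from ``symplectic Fourier transform on the bundle $\bar X\to\SL_2$ followed by restriction to the zero section''; since the restriction to the zero section of a fiberwise Fourier transform is the fiberwise integral, the \emph{adjoint} of this composite sends a generalized function $\Theta$ on $\SL_2$ to its pullback $\pi^*\Theta$ along $\bar X\to\SL_2$. This is what converts $\Theta_\theta$ into a distribution on $\bar X/\SL_2=\Dfrac{\SL_2}{N}$ (namely $\eta(c)$ times a function of $t$), at which point Theorem \ref{twistedRudnick} — which you cite only as context — finishes the proof by identifying $(\mathscr F_{\Id,1}\circ\eta)^*\Theta_\theta|_{c=1}$ with $J_{\pi_+}=J_\Pi$ under the descent of $\bar\U$.

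Your ``cleanest'' fallback route has a separate gap: pinning the proportionality constant to $1$ by testing on the unramified basic vector and then invoking ``meromorphic continuation in the family of $\pi$'s'' does not work here. For a non-split torus $T$ the characters $\theta$ do not deform in continuous families (the only deformation available is the $\Gm$-twist on $G$, which leaves the $\SL_2$-packet, hence $\theta$, fixed), so the scalar computed for the unramified $\theta$ at unramified places cannot be propagated to ramified $\theta$ or to Archimedean places. The direct geometric computation above is what makes the identity hold with no residual scalar for \emph{every} $\theta$; all the normalizing constants ($\lambda(E/F,\psi)$, $\gamma^*$, average volumes) are absorbed into the definitions of $\mathcal P_\kappa^\circ$ and $\mathcal T_{LL}$ in Propositions \ref{tokappa} and Theorem \ref{LLtheorem}, rather than being checked a posteriori on a single test vector.
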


\begin{proof}
As in \eqref{quotdiagram}, the transform $\bar\U$ descends to a meromorphic family of morphisms 
$$ \mathcal S(\bar X/\SL(V))^\circ_{(A_\ad,\chi)} \to \mathcal S^-_{L(\Ad, \chi^{-1}\delta^\frac{1}{2} \circ e^{\frac{\check\alpha}{2}})} (N,\psi\backslash \SL_2/N,\psi).$$

The proof of Proposition \ref{descent-general} shows that, for almost all $\chi$, this map factors through the image $\mathcal S\left(\frac{\SL_2}{B_\ad,\chi\delta^{\frac{1}{2}}}\right)$ of a twisted push-forward map, and computes its inverse,  
$$ \mathcal S^-_{L(\Ad, \chi^{-1}\delta^\frac{1}{2} \circ e^{\frac{\check\alpha}{2}})} (N,\psi\backslash \SL_2/N,\psi) \to \mathcal S\left(\frac{\SL_2}{B_\ad,\chi\delta^{\frac{1}{2}}}\right).$$
For $\chi = \eta \delta^{-\frac{1}{2}}$,  this inverse is given by the operator $\eta\circ \mathscr F_{\Id,\eta,1} = \mathscr F_{\Id, 1}\circ \eta$. 

Notice that at $\chi = \eta\delta^{\frac{1}{2}}$ both the Mellin transforms of elements of \newline $\mathcal S^-_{L(\Ad, \chi^{-1}\delta^\frac{1}{2} \circ e^{\frac{\check\alpha}{2}})} (N,\psi\backslash \SL_2/N,\psi)$ and of elements of $\mathcal S(\bar X/\SL(V))^\circ$ have poles; the former is stated in Proposition \ref{propdescent}, and the latter is Theorem \ref{transfertokappa}, which also shows that the residue, multiplied by the scalar factor $m_\eta^\circ:=\frac{\lambda(\eta,\psi)|t^2-4|^{\frac{1}{2}}}{\gamma^*(1,\psi)}$, lives in $\mathcal S^\kappa_\eta(\frac{\SL_2}{\SL_2}) \otimes \delta^\frac{1}{2}$.

Hence, taking residues of the Mellin transforms, we have a commutative diagram:

\begin{equation}\label{diagramthetacirc} \xymatrix{ 
\mathcal S^-_{L(\Sym^2,1)} (N,\psi\backslash G/N,\psi) \ar[d]\ar[r]^{\bar\U^{-1}} &
{^1\mathcal S}(\bar X/\SL_2)^\circ \ar[r]^{\mathcal H_X^\circ} \ar[d]& 
{^2\mathcal S}(\bar X/\SL_2)^\circ \ar[d]^{m_\eta^\circ\cdot \Res_{s=0}\check f(\eta\delta^{\frac{1}{2}+s})}   
\\
\mathcal S^-_{L(\Ad, \eta\delta \circ e^{\frac{\check\alpha}{2}})} (N,\psi\backslash \SL_2/N,\psi) \ar[r]^{\mathscr F_{\Id,1}\circ\eta} &
\mathcal S(\bar X/\SL_2)^\circ_{(A_\ad,\eta\delta^{-\frac{1}{2}})} \ar[r] &
\mathcal S^\kappa_\eta(\frac{\SL_2}{\SL_2}) \otimes\delta^{\frac{1}{2}},
} \end{equation}
where we have put exponents $1, 2$ above the two copies of $\mathcal S(\bar X/\SL_2)^\circ$ in order to distinguish them. 

I claim that the pullback of $\Theta_\theta$ to ${^1\mathcal S}(\bar X/\SL_2)^\circ$ under this diagram is equal to $\Theta_\theta$ itself, understood as a function on $X/\SL_2 = \frac{\SL_2}{N}$. Granted that, the proposition follows from Theorem \ref{twistedRudnick}.

The claim follows from the commutativity of the diagram \eqref{tokappa-all}. 
Let us recall that $\mathcal H_X$ descends from symplectic Fourier transform $\mathfrak F$ on the symplectic vector bundle $\bar X\to \SL_2$; and that the map $\mathcal S(\bar X/\SL_2)\to \mathcal S^\kappa_\eta(\frac{\SL_2}{\SL_2}) \otimes\delta^{\frac{1}{2}}$ descends from the map that takes a measure $\Phi \cdot |\omega|\times dg \in \mathcal S(\bar X)$ to its restriction to the zero section $\Phi|_{\SL_2} dg \in \mathcal S(\SL_2)$ (see Proposition \ref{tokappa}). By elementary properties of Fourier transform, the pullback of any generalized function $\Theta$ on $\SL_2$ under the adjoint of 
$$ \xymatrix{\mathcal S(\bar X) \ar[r]^{\mathfrak F} & \mathcal S(\bar X)  \ar[rr]^{\mbox{\tiny restriction to}}_{\mbox{\tiny zero section}} &&\mathcal S(\SL_2)\otimes\delta^\frac{1}{2}}$$
is its pullback $\pi^*\Theta$ under the canonical projection $\pi:\bar X\to \SL_2$. We conclude that the pullback of the virtual character $\Theta_\theta$ to ${^1\mathcal S}(\bar X/\SL_2)^\circ$ is equal to $\Theta_\theta$. 

\end{proof}

Combining the above with the theorem of Labesse and Langlands, Theorem \ref{LLtheorem}, setting $\mathcal T_T = \mathcal T_{LL} \circ \mathcal T_\kappa$, we can now prove Theorem \ref{thmVenkatesh}.

\begin{proof}[Proof of Theorem \ref{thmVenkatesh}]

Composing the Labesse--Langlands transfer $\mathcal T_{LL}$ of Theorem \ref{LLtheorem} with the transfer operator $\mathcal T_\kappa$ of Theorem \ref{transfertokappa}, we obtain the desired transfer operator $\mathcal T_T$. To compute it, we combine \eqref{tokappa-simple} with the formula of Theorem \ref{LLtheorem} to write
\[ \mathcal T_T(f) = \ell\left( t\mapsto \frac{\lambda(E/F,\psi)}{\sqrt{|t^2-4|}} \int_c \int_x f \left( c, \frac{t}{x} \right) \eta(xct) \psi(x) dx  \right)\]
Explicating the lift $\ell$, we can write its argument as the product of the function $t\mapsto (dt)^{-1}\lambda(E/F,\psi)\int_c \int_x f \left( c, \frac{t}{x} \right) \eta(xct) \psi(x) dx$ with the measure $\frac{dt}{\sqrt{|t^2-4|}}$, and the definition of $\ell$ states that this lifts to the pullback of the same function to $T$, multiplied by the measure $da$.  Thus, we get
\begin{eqnarray} \nonumber
  \frac{\mathcal T_T(f)(a)}{da} & =  (dt)^{-1}\lambda(E/F,\psi)\int_c \int_x f \left( c, \frac{t}{x} \right) \eta(xct) \psi(x) dx \\ & = (dt)^{-1}\lambda(E/F,\psi)\int_x f_\eta \left(\frac{t}{x} \right) \eta(xt) \psi(x) dx,  \label{Venkatesh-formula-2}
\end{eqnarray}
with $f_\eta$ as in \eqref{Venkatesh-final-eta}.

The stated properties of this operator follow from the corresponding properties of the transfer operators $\mathcal T_\kappa$ and $\mathcal T_{LL}$, per Theorems \ref{transfertokappa}, \ref{LLtheorem} and Proposition \ref{transfertokappa-characters}.

\end{proof}

\begin{remark}
 As a check for our formulas, let us calculate the relative characters on basic vectors. Hence, let $F$ be non-Archimedean, with ring of integers $\mathfrak o$, and assume that the torus $T$ is unramified (possibly split). 
 
 Let $f\in \mathcal S(N,\psi\backslash G/N,\psi)$ be the image of the identity element of the Hecke algebra, and $\pi = \chi\otimes\tau$ an unramified representation of $G=\Gm\times\SL_2$, whose $L$-packet we will denote by $\Pi$. By \eqref{Bessel-basic}, 
 $$ \left< f, J_\Pi\right> = \frac{\zeta(2)}{L(\tau,\Ad,1)},$$
 Hence, if we replace $f$ by the basic vector 
 $f_{L(\Sym^2, 1)} \in \mathcal S^-_{L(\Sym^2, 1)} (N,\psi\backslash G/N,\psi)$, 
 $$ \left< f_{L(\Sym^2, 1)} , J_\Pi\right> = \frac{\zeta(2)L(\pi, \Sym^2,1)}{L(\tau,\Ad,1)}.$$
 Note that $\Sym^2$ denotes the descent of the $\Sym^2$-representation of $\GL_2$ under $\GL_2\to \Gm\times\PGL_2$; that is, the adjoint representation of $\PGL_2$ tensored by the scalar action of $\Gm$.

 If $f_T$ is the identity in the Hecke algebra of $T$, and $\theta$ is an unramified character, we have $\left<f_T, \theta\right> = 1$. On the other hand, the theorem states that 
 $$1= \left<f_T, \theta\right>  = \left< \frac{L(\eta,1)}{\zeta(1)\zeta(2)}\mathcal T_T f_{L(\Sym^2, 1)}, \theta\right> = \frac{L(\eta,1)}{\zeta(1)\zeta(2)} \left< f_{L(\Sym^2, 1)}, \mathcal T_T^* \theta\right> = $$
 $$=\frac{L(\eta,1)}{\zeta(1)\zeta(2) } \left< f_{L(\Sym^2, 1)}, J_\Pi \right> =  \frac{L(\eta,1)}{\zeta(1)\zeta(2)}  \cdot \frac{\zeta(2)L(\pi, \Sym^2,1)}{L(\tau,\Ad,1)},$$
 where $\Pi$ is the endoscopic lift of $\theta$ --- that is: if $T$ is split then $\Pi = 1\boxtimes$ the principal series representation unitarily induced from $\theta$, and if $T$ is non-split (so $\theta=1$), then $\Pi = \eta\boxtimes$ the principal series representation unitarily induced from $\eta$, the unique non-trivial quadratic unramified character.
 
 One easily confirms directly that the right hand side is, indeed, equal to $1$. 
\end{remark}

\subsection{An application: the Gelfand--Graev--Piatetski-Shapiro formula} \label{GGPS}

Theorem \ref{thmVenkatesh} provides the transfer operator from tori to the Kuznetsov formula. On the other hand, we have already seen in Theorem \ref{thmRudnick} how to transfer between the Kuznetsov formula and the stable trace formula for $\SL_2$. We will combine those two in order to obtain the well-known formula for the stable transfer of characters from tori to $\SL_2$, when $T$ is non-split:
\begin{equation}\label{GGPSformula}
 \Theta^\st_\theta (t) = \frac{2}{\Vol(T)} \frac{\eta}{|\bullet|} \star_{\text{\tiny +}} S_\theta (t),
\end{equation}
where:
\begin{itemize}
 \item $\Theta_\theta^\st$ is the stable character lifted from the character $\theta$ of $T$, expressed as usual in the trace variable $t$;
 \item $S_\theta(t) = \frac{\theta(a)+\theta^{-1}(a)}{\sqrt{|t^2-4|}}$ if $a\in T$ has trace $t$ (and $S_\theta$ is zero otherwise);
 \item $\star_{\text{\tiny +}}$ denotes \emph{additive convolution} in the variable $t$, with respect to an additive Haar measure $dt$;
 \item the volume of $T$ is taken with respect to the Haar measure $da$ which is locally equal to $\frac{dt}{\sqrt{|t^2-4|}}$, as before.
\end{itemize}

This is formula 2.5.4.(7) in the book \cite{GGPS} of Gelfand, Graev, and Piatetski-Shapiro. In the split case, $S_\theta$ is equal to the character of the lift, i.e., the measure $\frac{2}{\Vol(T)} \frac{\eta}{|\bullet|}$ is replaced by the delta measure at zero, and it will become clear why this is so.

Theorem \ref{thmVenkatesh} states that the adjoint of the operator $\mathcal T_T^* \theta $ takes the character $\theta$ of the torus $T$ to the relative character $J_\Pi$ for the Kuznetsov formula of $G=\SL_2\times \Gm$. This will be $\eta$-equivariant in the $\Gm$-factor; let us now ignore the $\Gm$-factor, and use $J_\Pi$ to denote the restriction of the relative character to $\SL_2$. For $f\in \mathcal S(G)$, let $f_\eta (t) = \int_c f(c,t) \eta(c) $, a measure in the variable $t$. Then, \eqref{Venkatesh-final} can be written:
$$ \frac{\mathcal T_T(f)(a)}{da} = \lambda(\eta,\psi) \int_x \frac{f_\eta\left( \frac{t}{x} \right)}{dt}  \eta(xt) \psi(x) dx,$$
where $t=t(a)$.

By Theorem \ref{thmVenkatesh}, 
$$J_\Pi =  \mathcal T_T^* \theta,$$ 
which from the above formula can easily be computed as 
\begin{equation} J_\Pi(\zeta) = \lambda(\eta,\psi) \eta(\zeta) \left(\psi(\bullet^{-1})|\bullet|^{-1} d^\times\bullet \right)\star S_\theta(\zeta),
\end{equation}
where $S_\theta(t) = \frac{\theta(a) + \theta(a^{-1})}{\sqrt{|t^2-4|}}$, with $a, a^{-1}$ being the elements of $T$ with $t(a^{\pm 1})=t$. 

On the other hand, by Theorem \ref{thmRudnick},
$$J_\Pi = T_{\SL_2}^* \Theta_\Pi,$$
where $\mathcal T_{\SL_2}$ is given by multiplicative convolution by the measure $\psi(z) |z| d^\times$. Its adjoint (on generalized functions)
will be given by multiplicative convolution by the measure $\psi(\frac{1}{z}) |z|^{-1} d^\times z$, and the inverse of that will be given by convolution by $\psi^{-1}(z) d^\times z$.

Hence, 
\begin{align*} \Theta_\Pi(t) & = \left(\psi^{-1}(\bullet) d^\times \bullet\right) \star J_\Pi (t) \\
& = \lambda(\eta,\psi) \left(\psi^{-1}(\bullet) d^\times \bullet\right) \star \left(\eta(t)\left(\psi(\bullet^{-1})|\bullet|^{-1} d^\times\bullet \right)\star S_\theta(t)\right).
\end{align*}

If $\hat\Phi$ denotes usual Fourier transform $\hat\Phi(y) = \int \Phi(x) \psi(x) dx$ (with self-dual Haar measure with respect to $\psi$), the above can be written
$$\Theta_\Pi(t) = \lambda(\eta,\psi)\int \psi^{-1}(tu) \eta(u)\widehat{S_\theta}(u) du.$$

In the split case, where $\eta=1$, $\lambda(\eta,\psi)=1$, this just says that 
\begin{equation}
 \Theta_\Pi = S_\theta,
\end{equation}
the well-known formula for the character of a principal series representation. 

Assume now that we are in the non-split case. Then $\eta$ is the Fourier transform of $\gamma(\eta,1,\psi)^{-1} \frac{\eta}{|\bullet|}$, understood as a generalized function by a Tate integral, and the above can be written 
$$ \Theta_\Pi = \frac{\lambda(\eta,\psi)}{\gamma(\eta,1,\psi)} \frac{\eta}{|\bullet|} \star_{\text{\tiny +}} S_\theta.$$

By \eqref{lambdaconstant}, $\frac{\lambda(\eta,\psi)}{\gamma(\eta,1,\psi)} = \frac{2}{\Vol(T)}$, where the volume of $T$ is taken with respect to the Haar measure $da$ that is the quotient of the self-dual measure of the associated quadratic extension $E$ with respect to the character $\psi\circ\tr$, and the self-dual, with respect to $\psi$, measure $dt$ on $F$. I claim that this is the same as the measure $da$ which, locally in $T$, is equal to $\frac{dt}{\sqrt{|t^2-4|}}$. Indeed, if $E=F(\sqrt{D})$ is the quadratic extension associated to the torus $T$, and we write $z=x+y\sqrt{D}$ for an element, the self-dual measure with respect to $\psi\circ\tr$ is $|4D|^\frac{1}{2} dx dy$. Computing differentials for the sequence
$$ 1 \to T \to \Res_{E/F} \Gm \to \Gm \to 1,$$
one sees that the multiplicative measure $d^\times z = \frac{dz}{|x^2-Dy^2|}$ factorizes as the product of the measure on $T$ which is locally equal to $ \frac{dx}{|Dy^2|^\frac{1}{2}} $, and the multiplicative measure on $\Gm$. On $T$, the kernel of the norm map, we have $|Dy^2|^\frac{1}{2} = \sqrt{|x^2-1|}$, and together with the fact that $x = \frac{t}{2}$, this shows that the quotient measure on $T$ is the measure that is locally equal to $\frac{dt}{\sqrt{|t^2-4|}}$. This proves the formula \eqref{GGPSformula} for the stable characters.

\bibliographystyle{alphaurl}
\bibliography{biblio}

\begin{thebibliography}{GGPS69}

\bibitem[BK14]{BeKr}
Joseph Bernstein and Bernhard Kr{\"o}tz.
\newblock Smooth {F}r\'echet globalizations of {H}arish-{C}handra modules.
\newblock {\em Israel J. Math.}, 199(1):45--111, 2014.
\newblock \href {https://doi.org/10.1007/s11856-013-0056-1}
  {\path{doi:10.1007/s11856-013-0056-1}}.

\bibitem[GGPS69]{GGPS}
I.~M. Gelfand, M.~I. Graev, and I.~I. Pyatetskii-Shapiro.
\newblock {\em Representation theory and automorphic functions}.
\newblock Translated from the Russian by K. A. Hirsch. W. B. Saunders Co.,
  Philadelphia, Pa.-London-Toronto, Ont., 1969.

\bibitem[Her12]{Herman}
P.~Edward Herman.
\newblock The functional equation and beyond endoscopy.
\newblock {\em Pacific J. Math.}, 260(2):497--513, 2012.
\newblock \href {https://doi.org/10.2140/pjm.2012.260.497}
  {\path{doi:10.2140/pjm.2012.260.497}}.

\bibitem[Jac72]{Jacquet-Auto2}
Herv\'e Jacquet.
\newblock {\em Automorphic forms on {${\rm GL}(2)$}. {P}art {II}}.
\newblock Lecture Notes in Mathematics, Vol. 278. Springer-Verlag, Berlin-New
  York, 1972.

\bibitem[Jac03]{Jacquet}
Herv\'e Jacquet.
\newblock Smooth transfer of {K}loosterman integrals.
\newblock {\em Duke Math. J.}, 120(1):121--152, 2003.
\newblock \href {https://doi.org/10.1215/S0012-7094-03-12015-3}
  {\path{doi:10.1215/S0012-7094-03-12015-3}}.

\bibitem[JL70]{JL}
H.~Jacquet and R.~P. Langlands.
\newblock {\em Automorphic forms on {${\rm GL}(2)$}}.
\newblock Lecture Notes in Mathematics, Vol. 114. Springer-Verlag, Berlin-New
  York, 1970.

\bibitem[KL06]{KLP}
Andrew Knightly and Charles Li.
\newblock A relative trace formula proof of the {P}etersson trace formula.
\newblock {\em Acta Arith.}, 122(3):297--313, 2006.
\newblock \href {https://doi.org/10.4064/aa122-3-5}
  {\path{doi:10.4064/aa122-3-5}}.

\bibitem[Lan70]{Langlands-epsilon}
Robert~P. Langlands.
\newblock On the functional equation of the {A}rtin {L}-functions.
\newblock 1970.
\newblock URL: \url{https://publications.ias.edu/sites/default/files/a-ps.pdf}.

\bibitem[Lan80]{Langlands-basechange}
Robert~P. Langlands.
\newblock {\em Base change for {${\rm GL}(2)$}}, volume~96 of {\em Annals of
  Mathematics Studies}.
\newblock Princeton University Press, Princeton, N.J.; University of Tokyo
  Press, Tokyo, 1980.

\bibitem[LL79]{LL}
J.-P. Labesse and R.~P. Langlands.
\newblock {$L$}-indistinguishability for {${\rm SL}(2)$}.
\newblock {\em Canad. J. Math.}, 31(4):726--785, 1979.
\newblock \href {https://doi.org/10.4153/CJM-1979-070-3}
  {\path{doi:10.4153/CJM-1979-070-3}}.

\bibitem[Sak]{SaTransfer1}
Yiannis Sakellaridis.
\newblock Transfer operators and {H}ankel transforms between relative trace
  formulas, {I}: {C}haracter theory.
\newblock To appear in \emph{Advances in Mathematics}.
\newblock \href {http://arxiv.org/abs/1804.02383} {\path{arXiv:1804.02383}}.

\bibitem[Sak13]{SaBE1}
Yiannis Sakellaridis.
\newblock Beyond endoscopy for the relative trace formula {I}: local theory.
\newblock In {\em Automorphic Representations and L-functions}, pages 521--590.
  Amer. Math. Soc., Providence, RI, 2013.
\newblock Edited by: D. Prasad, C. S. Rajan, A. Sankaranarayanan, and J.
  Sengupta, Tata Institute of Fundamental Research, Mumbai, India, 2013.
\newblock \href {http://arxiv.org/abs/1207.5761} {\path{arXiv:1207.5761}}.

\bibitem[Sak16]{SaStacks}
Yiannis Sakellaridis.
\newblock The {S}chwartz space of a smooth semi-algebraic stack.
\newblock {\em Selecta Math. (N.S.)}, 22(4):2401--2490, 2016.
\newblock \href {https://doi.org/10.1007/s00029-016-0285-3}
  {\path{doi:10.1007/s00029-016-0285-3}}.

\bibitem[Sak18]{SaStacks-erratum}
Yiannis Sakellaridis.
\newblock Correction to: {T}he {S}chwartz space of a smooth semi-algebraic
  stack.
\newblock {\em Selecta Math. (N.S.)}, 24(5):4961--4965, 2018.
\newblock \href {https://doi.org/10.1007/s00029-018-0445-8}
  {\path{doi:10.1007/s00029-018-0445-8}}.

\bibitem[Sak19a]{SaBE2}
Yiannis Sakellaridis.
\newblock Beyond endoscopy for the relative trace formula {II}: global theory.
\newblock {\em J. Inst. Math. Jussieu}, 18(2):347--447, 2019.
\newblock \href {https://doi.org/10.1017/s1474748017000032}
  {\path{doi:10.1017/s1474748017000032}}.

\bibitem[Sak19b]{SaHanoi}
Yiannis Sakellaridis.
\newblock Relative functoriality and functional equations via trace formulas.
\newblock {\em Acta Math. Vietnam.}, 44(2):351--389, 2019.
\newblock \href {https://doi.org/10.1007/s40306-018-0295-7}
  {\path{doi:10.1007/s40306-018-0295-7}}.

\bibitem[SV17]{SV}
Yiannis Sakellaridis and Akshay Venkatesh.
\newblock Periods and harmonic analysis on spherical varieties.
\newblock {\em Ast\'erisque}, (396):360, 2017.

\bibitem[Ven02]{Venkatesh-thesis}
Akshay Venkatesh.
\newblock {\em Limiting forms of the trace formula}.
\newblock ProQuest LLC, Ann Arbor, MI, 2002.
\newblock Thesis (Ph.D.)--Princeton University.

\bibitem[Ven04]{Venkatesh}
Akshay Venkatesh.
\newblock ``{B}eyond endoscopy'' and special forms on {GL}(2).
\newblock {\em J. Reine Angew. Math.}, 577:23--80, 2004.
\newblock \href {https://doi.org/10.1515/crll.2004.2004.577.23}
  {\path{doi:10.1515/crll.2004.2004.577.23}}.

\end{thebibliography}

\end{document}